\theoremstyle{plain}
\newtheorem{THEOREM}{Theorem}[section]
\newtheorem{theorem}[THEOREM]{Theorem}
\newtheorem{corollary}[THEOREM]{Corollary}
\newtheorem{lemma}[THEOREM]{Lemma}
\newtheorem{proposition}[THEOREM]{Proposition}
\theoremstyle{definition}
\newtheorem{definition}[THEOREM]{Definition}
\theoremstyle{remark}
\newtheorem{remark}[THEOREM]{Remark}
\newtheorem{claim}[THEOREM]{Claim}
\newtheorem{example}[THEOREM]{Example}
\newcommand{\thm}[1]{Theorem~\ref{#1}}
\newcommand{\lem}[1]{Lemma~\ref{#1}}
\newcommand{\defin}[1]{Definition~\ref{#1}}
\newcommand{\rem}[1]{Remark~\ref{#1}}
\newcommand{\cor}[1]{Corollary~\ref{#1}}
\newcommand{\prop}[1]{Proposition~\ref{#1}}
\newcommand{\exam}[1]{Example~\ref{#1}}
\newcommand{\sect}[1]{Section~\ref{#1}}
\newcommand{\appx}[1]{Appendix~\ref{#1}}
\def \a {\alpha}
\def \b {\beta}
\def \g {\gamma}
\def \d {\delta}
\def \e {\varepsilon}
\def \f {\varphi}
\def \k {\kappa}
\def \l {\lambda}
\def \n {\nabla}
\def \s {\sigma}
\def \t {\tau}
\def \w {\omega}
\def \D {\Delta}
\def \G {\Gamma}
\def \L {\Lambda}
\def \Th {\Theta}
\def \O {\Omega}
\def \be {{\bf e}}
\def \bk {{\bf k}}
\def \bp {{\bf p}}
\def \bv {{\bf v}}
\def \bx {{\bf x}}
\def \bzero {{\bf 0}}
\def \bF {{\bf F}}
\def \oH { \overline{H} }
\def \oW { \overline{W} }
\def \oO { \overline{\O} }
\def \oS { \overline{S} }
\def \oC { \overline{C} }
\def \cA {\mathcal{A}}
\def \cB {\mathcal{B}}
\def \cD {\mathcal{D}}
\def \cE {\mathcal{E}}
\def \cF {\mathcal{F}}
\def \cG {\mathcal{G}}
\def \cH {\mathcal{H}}
\def \cI {\mathcal{I}}
\def \cL {\mathcal{L}}
\def \cM {\mathcal{M}}
\def \cN {\mathcal{N}}
\def \cO {\mathcal{O}}
\def \cP {\mathcal{P}}
\def \cR {\mathcal{R}}
\def \barx {\bar{x}}
\def \barv {\bar{v}}
\def \baru {\bar{u}}
\def \barho {{\bar{\rho}}}
\def \orho {\bar{\rho}}
\newcommand{\N}{\ensuremath{\mathbb{N}}}   
\newcommand{\Z}{\ensuremath{\mathbb{Z}}}   
\newcommand{\R}{\ensuremath{\mathbb{R}}}   
\newcommand{\T}{\ensuremath{\mathbb{T}}}   
\newcommand{\E}{\ensuremath{\mathbb{E}}}   
\newcommand{\I}{\ensuremath{\mathbb{I}}}   
\renewcommand{\P}{\ensuremath{\mathbb{P}}} 
\def \loc {\mathrm{loc}}
\def \spec {\mathrm{spec}}
\def \sign {\mathrm{sgn}}
\def \one {{\mathds{1}}}
\def \Lip {\mathrm{Lip}}
\def \ext {\mathrm{ext}}
\newcommand{\jap}[1]{\left\langle #1 \right\rangle}
\newcommand{\ave}[1]{ \left[ #1 \right]}
\newcommand{\rest}[2]{#1\raisebox{-0.3ex}{\mbox{$\mid_{#2}$}}}
\DeclareMathOperator{\Ave}{Ave} %
\DeclareMathOperator{\supp}{supp} %
\DeclareMathOperator{\conv}{conv} %
\DeclareMathOperator{\diver}{div} %
\DeclareMathOperator{\diam}{diam} %
\DeclareMathOperator{\diag}{diag} %
\DeclareMathOperator{\Tr}{Tr} %
\DeclareMathOperator{\Id}{Id} %
\DeclareMathOperator{\re}{Re} %
\def \lan {\langle}
\def \ran {\rangle}
\def \p {\partial}
\def \ra {\rightarrow}
\def \ss {\subset}
\def \GL {Gr\"onwall's Lemma}
\def \HI {H\"older inequality}
\def \CK{Csisz\'ar-Kullback inequality}
\def \CS{Cauchy-Schwartz inequality}
\renewcommand{\geq}{\geqslant}
\renewcommand{\leq}{\leqslant}
\def \da  {\, \mbox{d}\alpha}
\def \df  {\, \mbox{d}f}
\def \dg  {\, \mbox{d}\gamma}
\def \dx  {\, \mbox{d}x}
\def \dbx  {\, \mbox{d}\bar{x}}
\def \dxi  {\, \mbox{d}\xi}
\def \dt  {\, \mbox{d}t}
\def \dy  {\, \mbox{d}y}
\def \dz  {\, \mbox{d}z}
\def \dr  {\, \mbox{d}r}
\def \domega  {\, \mbox{d}\omega}
\def \ds  {\, \mbox{d}s}
\def \dw  {\, \mbox{d}w}
\def \dB  {\, \mbox{d}B}
\def \dmu  {\, \mbox{d}\mu}
\def \dnu  {\, \mbox{d}\nu}
\def \dk  {\, \mbox{d}\kappa}
\def \domega  {\, \mbox{d}\omega}
\def \drho  {\, \mbox{d}\rho}
\def \dbv  {\, \mbox{d}\bar{v}}
\def \dv  {\, \mbox{d} v}
\def \ddt  {\frac{\mbox{d\,\,}}{\mbox{d}t}}
\def \dd  {\mbox{d}}
\def \domain {{\O \times \R^n}}
\def \uFavre {u_{\mathrm{F}}}
\def \uF {u_{\mathrm{F}}}
\def \vF {v_{\mathrm{F}}}
 \def \st {\mathrm{s}}
 \def \rmb {\mathrm{b}}
  \def \rmw {\mathrm{w}}
 \def \en {\mathrm{e}}
\def \rmB {\mathrm{B}}
\def \rmA {\mathrm{A}}
\def \LFP {\cL_{\mathrm{FP}}}
\def \cMglob {\cM_{\mathrm{glob}}}
\def \cMcs {\cM_{\mathrm{CS}}}
\def \cMcstopo {\cM_{\mathrm{CS}}^{\mathrm{topo}}}
\def \cMmt {\cM_{\mathrm{MT}}}
\def \cMmttopo {\cM_{\mathrm{MT}}^{\mathrm{topo}} }
\def \cMbtopo {\cM_{\mathrm{\beta}}^{\mathrm{topo}} }
\def \cMfmt {\cM_{\mathrm{\phi}}}
\def \cMff {\cM_{\mathrm{\phi \phi}}}
\def \cMfp {\cM_{\mathrm{\phi, p}}}
\def \cMcond {\cM_{\mathrm{cond}}}
\def \cMseg {\cM_{\mathrm{seg}}}
\def \cMF {\cM_{\cF}}
\def \cMb {\cM_{\b}}
\begin{document}

\title{Environmental averaging}

\author{Roman Shvydkoy}

\address{851 S Morgan St, M/C 249, Department of Mathematics, Statistics and Computer Science, University of Illinois at Chicago, Chicago, IL 60607}

\email{shvydkoy@uic.edu}

\subjclass{35Q84, 35Q35, 92D25, 92D50}

\date{\today}

\keywords{collective behavior, emergence, alignment, Cucker-Smale system, Motsch-Tadmor system, Fokker-Planck equation, hypocoercivity, mean-field limit, hydrodynamic limit}

\thanks{\textbf{Acknowledgment.}  
	This work was  supported in part by NSF
	grant  DMS-2107956. The author thanks E. Tadmor and C. Imbert for fruitful discussions, and acknowledges hospitality of Isaac Newton Institute for Mathematical Sciences during the preparation of this paper.}

\begin{abstract}
 Many classical examples of models of self-organized dynamics, including the Cucker-Smale, Motsch-Tadmor, multi-species, and several others, include an alignment force that is based upon density-weighted averaging protocol. Those protocols can be viewed as special cases of `environmental averaging'. In this paper we formalize this concept and introduce a unified framework for systematic analysis of alignment models. 

A series of studies are presented including the mean-field limit in deterministic and stochastic settings, hydrodynamic limits in the monokinetic and Maxwellian regimes, hypocoercivity and global relaxation for dissipative kinetic models, several general alignment results based on chain connectivity and spectral gap analysis. These studies cover many of the known results and reveal new ones, which include  asymptotic alignment criteria based on connectivity conditions,  new estimates on the spectral gap of the alignment force that do not rely on the  upper bound of the macroscopic density, uniform gain of positivity for solutions of  the Fokker-Planck-Alignment model based on smooth environmental averaging.  As a consequence, we establish unconditional relaxation result for global solutions to the Fokker-Planck-Alignment model, which presents a substantial improvement over previously known perturbative results. 
 
\end{abstract}

\maketitle 

\tableofcontents

\section{Introduction} Many mathematical models of swarming behavior reflect the tendency of every agent to align its velocity to an averaged direction of motion of the crowd around. Although the rules that describe the average may not be given explicitly, most adhere to a few basic principles. First, agents react more to the closest neighbors, and second, the density of the swarm plays constructive role in defining a particular communication protocol. Such rules, in a broad sense, give rise to what is called {\em environmental averaging}.  

Early computer simulations that incorporated an alignment mechanism along with other interaction forces produced first realistic visualizations of flocks and schools, see \cite{Aok1982,Rey1987}. A wide variety of applications ranging from swarming behavior of animals to technological implementations, see these sources \cite{ABFHKPPS,Axel97,Ben2005,Jackson2008,Edel2001,VZ2012,MT2014,MP2018,Sbook,Tadmor-notices} and references therein, has ignited  mathematical inquiries into theoretical foundation of alignment dynamics. 

A prototypical example of a static averaging model arises in opinion dynamics, where each agent labeled by index $i\in [1,N]$ has a set of other agents $\cN_i$ to which it is connected. The opinion vector $\bp_i$ aligns to the opinions of connected agents via
\begin{equation}\label{e:opinion}
\dot{\bp}_i= \l \sum_{j\in \cN_i} a_{ij}(t)(\bp_j-\bp_i) + \bF_i, \qquad \sum_j a_{ij}(t)=1.
\end{equation}
Here, $\bF_i$ incorporate all other forces such as adherence to convictions or random noise. If the graph of players is connected then the system naturally reaches the total consensus $\bp_i \to \bar{\bp}$. Forces may lead to non-trivial limiting  states, such as Nash equilibria, see \cite{MT2014,DeGroot,OS2006,LRS-friction}.

In swarming dynamics the pioneering work of Vicsek el al \cite{VCBCS1995} introduced a discrete  model of self-propelled particles with local interactions 
\begin{equation}\label{e:Viscek}
\left\{
\begin{split}
\bv_i(k+1) & = v_0  \frac{\sum_{j: |x_j - x_i|<r_0} \bv_j}{\left|\sum_{j: |x_j - x_i|<r_0} \bv_j 
	\right|}+ \s \xi_n,\\
\bx_i(k+1) &= \bx_i(k) + \bv_i(k+1).
\end{split}\right.
\end{equation}
where $\xi_n$ are random variables and $\s>0$ is the noise intensity. The Vicsek averaging is spatially local and includes normalization to reflect the tendency of agents to adhere to a fixed characteristic speed. The model produces a number of emergent phenomena developing into global patterns such as  mills or periodically rotating chains. Solutions undergo phase transitions from ordered to disordered states depending on the noise level, see \cite{VZ2012} for discussion. 

A growing number of studies on flocking behavior is based on the Cucker-Smale system introduced in \cite{CS2007a,CS2007b},
\begin{equation}\label{e:CSintro}
\left\{
\begin{split}
\dot{x}_i  &= v_i,  \\
\dot{v}_i & = \sum_{j=1}^N m_j \phi(x_i-x_j)(v_j - v_i ),
\end{split}\right. \qquad  (x_i,v_i) \in \R^n\times \R^n, \ i = 1,\ldots,N.
\end{equation}
Here, $\phi$ is a smooth radially symmetric and decreasing kernel, originally $\phi(r) = \frac{\l}{(1+ r^{2})^{\b/2}}$, where $\l,\b>0$. The model provides a well-defined mathematical framework which admits justifiable kinetic and macroscopic descriptions, see \cite{HT2008,HL2009,CFRT2010,BCC2011,FK2019,Sbook,TT2014}. It appeared, in part, in response to the need for a model whose long time behavior is not associated with perpetual connectivity assumptions on the flock as in prior studies. In fact, a simple  criterion for alignment can be stated solely based on rate of decay of the kernel.

\begin{theorem}[\cite{CS2007a,CS2007b}]\label{t:CSinto}
If $\b \leq 1$, all solutions to \eqref{e:CSintro} align exponentially fast to the mean velocity $\bar{v} = \frac{1}{ \sum_{j=1}^N m_j} \sum_{j=1}^N m_j v_j$, while flock remains bounded  
\[
\max_{i=1,\ldots,N} |v_i - \bar{v}| \leq C e^{-\d t}, \qquad \max_{i,j=1,\ldots,N} |x_i - x_j| \leq \bar{D},
\]
where $C,\d,\bar{D}$ depend only on the initial condition and parameters of the kernel. If $\b >1$ there are solutions that do not align.
\end{theorem}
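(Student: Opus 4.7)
The plan is to follow the Lyapunov strategy of Ha--Liu and Ha--Tadmor, reducing the full $2Nn$-dimensional system to a pair of scalar observables. Symmetry of $\phi$ under $i \leftrightarrow j$ in the pairwise summation immediately yields conservation of the weighted momentum $\sum_j m_j v_j$, so after subtracting $\bar v$ the task reduces to showing that the velocity cloud contracts to $0$. The two scalar observables I would track are the velocity diameter $V(t) := \max_{i,j}|v_i-v_j|$ and the spatial diameter $D(t) := \max_{i,j}|x_i-x_j|$.

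The key step is to establish, for almost every $t \ge 0$, the differential inequalities
\[
|\dot D(t)| \le V(t), \qquad \dot V(t) \le -\delta_0\, \phi(D(t))\, V(t),
\]
with a constant $\delta_0 > 0$ depending only on the masses. The bound on $\dot D$ is immediate from $\dot x_i = v_i$. For $\dot V$ I would pass to the squared diameter $V^2$, which is a maximum of finitely many smooth functions, and differentiate via Danskin's envelope formula at an extremal pair $(i_*,j_*)$. At such a pair one has the extremal relation $(v_{i_*}-v_{j_*})\cdot(v_k-v_{i_*}) \le 0 \le (v_{i_*}-v_{j_*})\cdot(v_k-v_{j_*})$ for every $k$, which renders every non-diagonal contribution to the time derivative non-positive. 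Keeping only the two diagonal terms ($k=j_*$ in the first sum, $k=i_*$ in the second) and using radial monotonicity of $\phi$ to bound $\phi(x_{i_*}-x_{j_*}) \ge \phi(D)$ produces the claimed contraction rate.

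These two inequalities combine into the Lyapunov functional
\[
\cL(t) := V(t) + \delta_0 \int_0^{D(t)} \phi(r)\, dr,
\]
which satisfies $\dot \cL \le -\delta_0 \phi(D)\, V + \delta_0 \phi(D)\, \dot D \le 0$, so $\cL$ is non-increasing. When $\b \le 1$ the tail $\int^\infty \phi(r)\, dr$ diverges, and monotonicity of $\cL$ pins $D(t)$ below a fixed $\bar D$ defined by $\delta_0 \int_0^{\bar D}\phi = \cL(0)$. Feeding this uniform bound back into the $V$-inequality yields $V(t) \le V(0)\, e^{-\delta_0 \phi(\bar D)\, t}$, which converts to the stated exponential convergence of each $|v_i - \bar v|$ together with the flock bound. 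For the converse $\b > 1$ the standard two-particle example on the line with equal masses suffices: with $D := x_1-x_2$ and $V := v_1-v_2$ the system reduces to $\dot D = V$, $\dot V = -2\phi(D)\, V$, so that $V + 2\int_0^D \phi(r)\, dr$ is an exact conserved quantity along trajectories. Choosing $V(0) > 2\int_{D(0)}^\infty \phi(r)\, dr$, which is finite precisely when $\b > 1$, forces $V$ to remain bounded away from zero while $D \to \infty$.

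The principal technical obstacle is the non-smoothness of $V$: as a maximum of finitely many smooth functions it is merely Lipschitz, and the maximizing pair can switch discontinuously in time. This is handled by Rademacher's theorem in tandem with Danskin's formula, which jointly guarantee that at almost every $t$ the derivative of $V^2$ coincides with the time derivative of $|v_{i_*}-v_{j_*}|^2$ for any extremal pair frozen at that instant. Once the a.e.\ differential inequalities are in force, a standard ODE comparison argument against $\cL$ completes the proof.
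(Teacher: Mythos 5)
Your proposal is correct and follows essentially the same route as the paper: the paper proves the generalized kinetic version (Theorem~\ref{t:CS}) via the same pair of differential inequalities $\dot D \leq A$, $\dot A \leq -\phi(D)A$ and the same Lyapunov functional $A + \int_0^D \phi(r)\,dr$ from Ha--Liu, with the fat-tail condition pinning $D$ and the two-particle conserved quantity furnishing the $\b>1$ counterexample. The only cosmetic difference is that the paper differentiates coordinate-wise max/min amplitudes $A^i = V^i_+ - V^i_-$ (where the sign of every term is automatic), whereas you differentiate the Euclidean diameter $V^2$ via Danskin and the two-ball extremal inequality at a maximizing pair; both are standard and equally rigorous.
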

Since its inception the Cucker-Smale system has seen numerous applications. A remarkable implementation to satellite navigation was proposed in \cite{Darwin}, where value of $\b = 0.4$ was found to be most optimal for the purposes of the mission. Adaptations to control problems are addressed in \cite{Bo2015,CEPT2015,CKPP2019}. Interacting agents immersed in an incompressible fluid lead to hybrid systems  with Cucker-Smale component modeling the alignment force, \cite{HKK2014}.  Multi-scale and multi-species flocks have been studied in \cite{HeTmulti,STmulti}. An important modification of the system with thermodynamic features was proposed in \cite{HRCST2017}, see also \cite{ABFHKPPS}. Flocking analysis can be extended to nonlinear alignment protocols as well \cite{Tadmor-pressure,P-ES2015,HHK2010,Mar2018}. A comprehensive review of various other features of the Cucker-Smale dynamics based on hierarchy, angle of vision, and emergence of leaders can be found in \cite{CFTV2010}. In the context of alignment dynamics which includes potential attraction/repulsion or Rayleigh frictions forces, the emergent behavior has not yet been fully understood, although it is clear from these studies \cite{DOrsogna,ShuT2019,ShuT2019anti,STmulti,LRS-friction}, that the effect of such forces on collective outcomes could be dramatic. In particular, the quadratic confinement potential drives the system to an aggregated harmonic oscillator, \cite{ShuT2019}. Some general $N$-dependent results in this direction can be achieved for the 3Zone model of Reynolds \cite{Rey1987} with the use of the corrector method introduced in \cite{DS2019}, see \cite{Sbook}. Lastly, we mention that the alignment criterion itself stated in \thm{t:CSinto} does not require the kernel to have any explicit form and has seen numerous extensions to include general fat-tail kernels and kernels with degenerate communication in short range, see  \cite{DS2019,HL2009} and \sect{s:CST} below.

It is insightful to rewrite the Cucker-Smale system as follows
\begin{equation}\label{e:ABS}
\begin{split}
\dot{x}_i & = v_i,  \hspace{0.7in} x_i   \in \O, \\
 \dot{v}_i & = \st_i ( \ave{v}_i - v_i ), \quad v_i  \in \R^n
  \end{split} \qquad  i = 1, \ldots, N.
\end{equation}
where $\O$ is an environment  (for most of our discussion either $\T^n$ or $\R^n$), $\ave{v}_i$ is an averaging protocol of the $i$th agent, $v = (v_1,\ldots,v_N)$, and $\st_i$ is a specific communication strength.
Here,
\begin{equation}\label{e:CSMintro}
\st_i = \sum_{j=1}^N m_j \phi(x_i-x_j), \qquad  \ave{v}_i = \frac{\sum_{j=1}^N m_j \phi(x_i-x_j) v_j}{\sum_{j=1}^N m_j \phi(x_i-x_j)}.
\end{equation}
This form highlights  two separate structural components of an alignment model -- the averaging and communication strength. Varying  these  components allows to adapt the system to a particular modeling scenario. For example, it is argued in \cite{MT2011,MT2014} that if a flock consists of clusters with unbalanced sizes it is more realistic to incorporate a static strength parameter $\st_i = \l >0$, leading to what is called the Motsch-Tadmor model
 \begin{equation}\label{ }
 \dot{v}_i  = \l (\ave{v}_i - v_i).
 \end{equation}
Analysis of this model presents many challenges related to the lack of symmetry and momentum conservation. However, the analogue of \thm{t:CSinto} still holds,  \cite{MT2014}. A modification that restores the symmetry was proposed in  \cite{Sbook},
\begin{equation}\label{e:Mfintro}
\st_i = 1, \qquad \ave{v}_i = \int_{\R^n} \phi(x_i - \xi) \frac{\sum_{j=1}^N m_j \phi(\xi-x_j) v_j}{\sum_{j=1}^N m_j \phi(\xi-x_j)} \dxi.
\end{equation}
This particular averaging appears instrumental in several other studies of flocking such as hydrodynamic limits \cite{Sbook}, relaxation and hypocoercivity in kinetic dynamics \cite{S-hypo}, see also Sections~\ref{s:hypo} and \ref{s:hydrolim}. Its continuous variant emerged in the analysis of non-homogeneous turbulence in \cite{LS2016}.

Another interesting example of a non-Galilean invariant environmental averaging is given by a class of segregation models. Let $\{g_l\}_{l=1}^L$ be a smooth partition of unity $\sum_{l=1}^L g_l = 1$ subordinated to an open cover $\cup_{l=1}^L \O_l = \O$, where $\O$ is a compact environment.  Let
\begin{equation}\label{e:segintro}
\st_i =1, \qquad \ave{v}_i = \sum_{l=1}^Lg_l(x_i) \frac{\sum_{j=1}^N m_j  v_j g_l(x_j) }{\sum_{j=1}^N m_j g_l(x_j)}.
\end{equation}
Here, the agents communicate predominantly in their own communities and exchange of information is facilitated through the borders. Consensus can be reached provided the border is sufficiently populated at all times, see \sect{s:flocking} for rigorous formulation.  Many more examples are discussed in \sect{s:basic}.

In the large crowd limit as $N\to \infty$ the components $\st_i, [\cdot]_i$ take macroscopic forms, which makes them suitable for statistical description of the alignment systems. For example,  denoting $f_\phi = f\ast \phi$ for a distribution $f$, we can see that the Cucker-Smale model corresponds to
\[
 \st_\rho = \rho_\phi, \qquad \ave{u}_\rho = \frac{(u \rho)_\phi}{\rho_\phi}.
 \]
This averaging rule is also known as the Favre filtration, \cite{Favre}, which was introduced in the context of  numerical simulations of turbulent flow. In the same manner, the averaging of \eqref{e:Mfintro} is given by the over-mollification of the Favre filtration 
\begin{equation}\label{ }
\ave{u}_\rho = \left(\frac{(u \rho)_\phi}{\rho_\phi}\right)_\phi,
\end{equation}
and  the averaging of \eqref{e:segintro} becomes
\begin{equation}\label{ }
\ave{u}_\rho(x) = \sum_{l=1}^L g_l(x)\frac{ \int_\O u g_l \rho \dy}{ \int_\O  g_l \rho \dy}.
\end{equation}
All the operations above make mathematical sense for any probability measure 
$\rho \in \cP(\O)$ and any bounded field  $u\in L^\infty(\rho)$.  In particular, we can go back to the discrete analogues by applying averaging on  empirical pairs
\begin{equation}\label{e:emp}
\begin{split}
\rho^N = \sum_{i=1}^N m_i \d_{x_i}, & \quad u^N = \sum_{i=1}^N v_i \one_{x_i},\\
\ave{v}_i : = \ave{u^N}_{\rho^N} (x_i), & \quad \st_i : = \st_{\rho^N}(x_i).
\end{split}
\end{equation}
It is therefore more inclusive to define averaging rules via macroscopic formulas.

Physical features of the system \eqref{e:ABS} are intimately connected to analytical properties of the  pair $(\st_\rho, \ave{\cdot}_\rho)$. In most situations those properties are more naturally expressed in terms of the strength measure given by $\dk_\rho = \st_\rho \drho$. Thus, the preservation of $\k$-momentum 
\[
\int_\O \ave{u}_\rho \dk_\rho = \int_\O u \dk_\rho ,
\]
implies conservation of the physical hydrodynamic momentum, $\ddt \int_\O u \drho = 0$. The symmetry
\[
\int_\O  v\cdot \ave{u}_\rho \dk_\rho =\int_\O  \ave{v}_\rho \cdot u \dk_\rho
\]
implies a natural energy dissipation law 
\begin{equation}\label{e:enlawintro}
\ddt  \int_{\O} |u|^2 \drho(x) = - \frac12 \int_{\O \times \O} \phi_\rho(x,y) |u(x) - u(y)|^2 \drho(x)\drho(y),
\end{equation}
where $\phi_\rho$ is a communication kernel representing a given averaging, see \sect{s:repkernel}. The long time behavior analysis becomes connected to coercivity and positive-definiteness of the averaging, see \sect{s:flocking}.

 In order to get more insight into such connections, it is useful to disassociate the averaging/strength pair $(\k_\rho, \ave{\cdot}_\rho)$  from any particular differential law they are involved in, and take a `birds eye' look on its kinematic properties.  For this purpose,  we will delegate the concept of an {\em environmental averaging model} to a family of pairs
\[
\cM = \{ (\k_\rho, \ave{\cdot}_\rho): \rho \in \cP(\O)\},
\]
parametrized by probability measures $\rho \in \cP(\O)$, and satisfying a list of continuity assumptions stated below in \sect{s:basic}. Through the study of such models it appears possible to build a unifying framework for many flocking and regularity results that have appeared scattered before, and to find substantially new ones that, otherwise, are obscured by specificity of a particular model. This will be the main objective of the present work. So, let us give a brief overview of the studies undertaken here.

\medskip
(I) First, we develop basic functional analysis of the averaging models.  Here we focus primarily on those properties that have direct physical interpretation in terms of dynamics of a particular system they are involved in. Those include representability (existence of a communication kernel), conservation, symmetry, and most importantly a quantitative version of positive definiteness  -- ball positivity, see \sect{s:classes}.  We also describe  regularity conditions on the pairs $\st_\rho, \ave{u}_\rho$ necessary for developing a meaningful well-posedness theory for kinetic models done in Sections \ref{s:mfldet} and \ref{s:WP}.

(II) Next we address the classical flocking result of Cucker and Smale for general environmental averaging models. We choose  the kinetic description in the context of measure-valued solutions:
\begin{equation}\label{e:VAintro}
\p_t f + v \cdot \n_x f =  \n_v \cdot ( \st_\rho (v - \ave{u}_\rho) f ).
\end{equation}
Here  $\rho$ and $u\rho$ are the macroscopic density and momentum, respectively. 
It is the most inclusive framework since it encapsulates the microscopic system \eqref{e:ABS} if applied to empirical measures $f = \sum_{i=1}^N m_i \d_{x_i} \otimes \d_{v_i}$, and the pressureless hydrodynamic system if applied to mono-kinetic solutions $f= \rho(x,t) \d_{0}(v - u(x,t))$, see \eqref{e:EASpresless}.  For global communication kernels the analogue of the original Cucker-Smale alignment criterion is stated in \thm{t:CS}, see also Carrillo et al \cite{CFRT2010} for the first result of this kind in kinetic formulation.

In the case of local communication, which is our primary focus, all alignment criteria can be sorted into two types -- ones that rely on a chain-connectivity of the flock, and ones that make use of the spectral gap condition.  The former approach is dynamic. It is based on the idea that connected misaligned components of the flock lose energy through the law \eqref{e:enlawintro} until full alignment is achieved. For the classical Cucker-Smale  and topological singular models this was addressed in \cite{ST-topo,MPT2018}. Here we present a new result stated in \thm{t:conn} which gives a sufficient condition of ball-thickness, see \eqref{e:thball}: as long as the flock is connected at a local communication scale $r$ of the kernel, and $\orho_r(\supp \rho) \gtrsim \frac{1}{t^{1/4}}$ in the open space or $\orho_r(\O) \gtrsim \frac{1}{t^{1/2}}$ on the torus, the flock aligns. No control on the upper bound of the density is necessary.

The spectral gap approach is kinematic in nature. It relies on finding efficient bounds on the spectral gap of the   averaging operator set in a proper function space. In fact, spectral gaps are relevant to  flocking behavior in several contexts including relaxation problem for the Fokker-Planck-Alignment model. So, it will be our primary focus in \sect{s:lowenergy}.  A criterion proved in \cite{Tadmor-notices} states that a symmetric model aligns provided $\int_0^\infty \l(t) \dt = \infty$, where 
\begin{equation}\label{e:varprobintro}
\l = \inf_{ u \in L^2_0(\rho)} \frac{ (u, \cL_\rho u)_\rho}{ (u,u)_\rho}, \quad \cL_\rho u = \st_\rho(u - \ave{u}_\rho),
\end{equation}
and $(u,v)_\rho = \int_\O u \cdot v \drho$. In  \prop{p:enalign} we present an extension of this result to the non-symmetric case. For the Cucker-Smale model the bound $\l \gtrsim \frac{\rho_-^2}{\rho_+}$ was proved in the same work \cite{Tadmor-notices}, see also Remark~\ref{r:spgap}. This result is consistent with the chain-connectivity criterion stated above provided $\rho_+$ remains bounded.  For systems with a singular kernel a similar result was established in \cite{ST-topo}. With a view towards the relaxation problem, where  reliance on $\rho_+$ is prohibitive, it is imperative to find bounds on the spectral gap  independent of $\rho_+$.

To this end we propose a somewhat different methodology -- one that focuses directly on the averaging $\ave{\cdot}_\rho$ in the framework of $\k_\rho$-weighted spaces:
\begin{equation}\label{e:egapintro}
(u, \ave{u}_\rho)_{\k_\rho} \leq (1-\e) \|u\|^2_{L^2(\k_\rho)}.
\end{equation}
We introduce the {\em low energy method} tailored to finding estimates on $\e$ solely in terms of $\rho_-$. The method applies to a special, but quite broad class of so called {\em ball-positive models}, see \prop{p:gaps}. These include the segregation \eqref{e:segintro}, the overmollified Motsch-Tadmor variant \eqref{e:Mfintro}, and most notably the classical Cucker-Smale model \eqref{e:CSMintro} provided the defining kernel is  Bochner-positive: $\phi = \psi \ast \psi$ for some $\psi \geq 0$. 
In particular, if applied to the Cucker-Smale model the method gives the following bound:
\begin{equation}\label{e:gaprho3}
\e \gtrsim \orho_r^3(\O).
\end{equation}

\smallskip
(III) The next study is dedicated to justifying the kinetic description through a mean-filed limit in both deterministic and stochastic contexts. As the number of agents grows $N\to \infty$, the microscopic system settles in the weak sense to a solution to the kinetic {\em Vlasov-Alignment} equation \eqref{e:VAintro}
\begin{equation*}\label{e}
\mu^N = \sum_{i=1}^N m_i \d_{x_i} \otimes \d_{v_i} \to f.
\end{equation*}
So far the limit has been rigorously justified for the Cucker-Smale and \eqref{e:Mfintro}-models,  \cite{HT2008,HL2009,Sbook}. In Section~\ref{s:mfldet} we establish a much broader result which covers models  with certain uniform regularity properties, see \defin{d:r}, and is insensitive to the symmetry of a model. It applies in particular, to the Motsch-Tadmor and other similar models. 

When system \eqref{e:ABS} is supplemented with density-weighted stochastic forces 
\begin{equation}\label{e:ABSstoch}
\begin{split}
\dx_i & = v_i \dt,   \\
 \dv_i & = \st_i ( \ave{v}_i - v_i ) \dt +  \sqrt{2\s \st_i } \dB_i,
  \end{split} \qquad  i = 1, \ldots, N.
\end{equation}
where $B_i$'s are independent Brownian motions in $\R^n$, the limit `in law'  settles to a solution of the {\em Fokker-Planck-Alignment} equation
\begin{equation}\label{e:FPAintro}
\p_t f + v \cdot \n_x f = \s \st_\rho \D_v f + \n_v \cdot ( \st_\rho (v - \ave{u}_\rho) f ).
\end{equation}
  For the additive noise and general convolution-type models the result was proved in \cite{BCC2011}, while the present  case is treated in \sect{s:mflstoch}.  The non-homogeneous diffusion requires a separate consideration, and is introduced for two reasons. First, it makes physical sense to put stochasticity where communication actually happens and is proportional to its strength. Random deviations get stronger with more active communication, so $\st_i$ acts as a thermalization parameter. Second, it ensures that the kinetic model \eqref{e:FPAintro} has a natural Maxwellian equilibrium. This will be instrumental in the study of relaxation.

\smallskip
(IV) Reading off the evolution of macroscopic quantities from  \eqref{e:VAintro} we obtain the hydrodynamic {\em Euler-alignment} system (EAS)
\begin{equation}\label{e:EAS}
\begin{split}
\p_t \rho + \n\cdot (u \rho) & = 0, \\
\p_t (\rho u) + \n \cdot (\rho u \otimes u) +  \n \cdot \cR & = (\ave{u}_\rho - u)\dk_\rho ,
\end{split}
\end{equation}
where $\cR$ is the Reynolds stress given by 
\[
\cR = \int_{\R^n}  ( v - u) \otimes (v-u)  f \dv.
\]
Here, we encounter the classical closure problem. One can achieve a specific form of $\cR$ by introducing various scaling regimes. This has been addressed in two situations. The monokinetic regime $f \to  \rho(x,t) \d_{0}(v-u(x,t))$ results in the pressureless EAS, $\cR = 0$, and the analysis of this limit for the classical Cucker-Smale model goes back to \cite{MV2008,KV2015,FK2019} see also \cite{Sbook}. The convergence was established quantitatively in Wasserstein-1 metric.  In \sect{s:mono} we produce a general result and upgrade the convergence to Wasserstein-2 under mild continuity assumptions on $\cM$. It applies, in particular, to all the models listed here, including non-symmetric ones such as \ref{MT}.  

By incorporating a strong penalization force of Fokker-Planck type one can achieve another regime where $f$ settles to a Maxwellian. This results in the Euler-alignment system with isothermal pressure tensor $\cR = \rho \Id$. The Cucker-Smale model was analyzed in \cite{KMT2013,KMT2014,KMT2015}, and \eqref{e:Mfintro} was analyzed in \cite{S-hypo}, see also \cite{CK2023} for a new development in the mildly singular case. \sect{s:Max} presents a general result. 

We note that kinetic closure is not the only way to model flocking on the macroscopic level. A general class of systems with entropic pressure introduced in \cite{Tadmor-pressure}, which includes kinetic ones as a particular example,  is amenable to flocking analysis as well. 

\smallskip
(V)    The most comprehensive study in this present work is related to well-posedness and relaxation of the Fokker-Planck-Alignment model \eqref{e:FPAintro} on the periodic environment $\O = \T^n$.  The motivation for this study is rooted in the original question of {\em emergence} -- formation of collective outcome from purely local interactions. On the periodic domain, if the communication kernel $\phi$ has a short reach, $\supp \phi \ss [0,r_0]$, then there exists a family of unaligned solutions where agents rotate along parallel geodesics with various velocities (or even perpendicular geodesics with mutually rational velocities). These are  called {\em locked states}.  Such solutions form a measure-zero set in the ensemble of initial data $(x_1,\ldots,x_N, v_1,\ldots,v_N)$.  No deterministic approach to establishing alignment based on generic data that avoids locked states has been explored yet, except for 1D case \cite{DS2019}.  It is natural, however, to look into this problem in stochastic settings of  \eqref{e:ABSstoch}, where locked states are being disrupted  instantly. One can expect a collective outcome in two limiting steps: first $t\to \infty$, then $\s \to 0$. For large crowd distributions governed by \eqref{e:FPAintro} this can be viewed as a relaxation problem: on the first step we obtain convergence to Maxwellian  
\begin{equation}\label{e:relaxintro}
f \to \mu_{\s,\bar{u}} = \frac{1}{|\O|(2\pi \s)^{n/2}} e^{- \frac{|v - \bar{u}|^2}{2\s}},
\end{equation}
which in turn aggregates on the monokinetic  state  $\d_0(v-\bar{u})\otimes \dx $ as $\s \to 0$. The latter represents 
a perfectly aligned configuration.

This program has seen some success in the past. The relaxation itself for the linear problem is a classical and well-understood subject, see \cite{Villani} and references therein. With the nonlinear alignment force the works \cite{DFT2010,Choi2016} establish relaxation for perturbative solutions near equilibrium in the case of the Cucker-Smale and purely local models, respectively.  The first global result was proved in \cite{S-hypo} in the context of the \eqref{e:Mfintro}-model, where linear technique was adapted to the nonlinear problem enabled by special cancelations in the alignment forcing. 

In \sect{s:hypo} we extend this technique further and prove a much more general result that pertains to a wide variety of models. \prop{p:mainrelax} lists a set of functional requirement on a given solution to imply exponential relaxation. This applies in particular to perturbative solutions, but the main application manifests itself in global relaxation for  ball-positive models. It comes in conjunction with the detailed well-posedness theory for the Fokker-Planck-Alignment equations developed in \sect{s:WP}.  We prove that most models $\cM$ with good regularity properties facilitate the classical kinetic diffusion effect --  spread of positivity of solutions expressed by the instant gain of Gaussian tails 
\begin{equation}\label{e:gaussintro}
f(t,x,v) \geq b e^{-a |v|^2}.
\end{equation}
The spread of positivity is a well-known result observed  in many kinetic equations, see \cite{Carl1933,AZ2021,GIMV2019,HST2020,Mouhot2005,IMS2020} and references therein. The novel additional aspect of our result stated in \prop{p:Gauss} is that the constants $a,b$ depend only on the entropy and $L^\infty$-bound on the drift $\st_\rho \ave{u}_\rho$. For many models, including the Cucker-Smale,  the latter two can be controlled by initial condition only. Consequently, for those models we obtain uniform control over the lower bound on the density, and hence, the spectral gap through  \eqref{e:gaprho3}.  In such cases relaxation result is unconditional. Let us summarize the result specifically for the original Cucker-Smale model.

\begin{theorem}\label{t:CSrelaxintro}
Any classical solution $f$ to \eqref{e:FPAintro} based on the Cucker-Smale model  with Bochner-positive kernel $\phi$ relaxes exponentially fast to the global Maxwellian \eqref{e:relaxintro}.
\end{theorem}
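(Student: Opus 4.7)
\medskip
\noindent\textbf{Proof proposal.} The plan is to reduce \thm{t:CSrelaxintro} to the general relaxation criterion \prop{p:mainrelax} by verifying its hypotheses uniformly in time. Two earlier ingredients supply everything that is needed: the ball-positive spectral gap estimate \eqref{e:gaprho3} and the spread-of-positivity bound of \prop{p:Gauss}. A Bochner-positive kernel $\phi = \psi \ast \psi$ places the Cucker-Smale averaging \eqref{e:CSMintro} inside the class of ball-positive models, so the low-energy method delivers a spectral gap $\e \gtrsim \orho_r^3(\O)$ that depends only on the density floor. The whole argument therefore collapses to producing a time-uniform pointwise lower bound on $\rho$.

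First I would invoke the well-posedness theory of \sect{s:WP} to propagate classical regularity of $f$ and record the basic a priori bounds. For the symmetric Cucker-Smale model the momentum $\bar u$ is conserved, and the second moment $\int |v|^2 f \dx \dv$ stays uniformly bounded along the flow by balancing the alignment dissipation \eqref{e:enlawintro} against the Fokker-Planck heating $2n\s \int \st_\rho f \dx \dv$. Together with the entropy dissipation identity for \eqref{e:FPAintro}, this furnishes uniform in time bounds on both $\int f \log f \dx \dv$ and on the drift $\st_\rho \ave{u}_\rho = (u\rho)\ast \phi$; these are precisely the two quantities that \prop{p:Gauss} accepts as input.

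Feeding these bounds into \prop{p:Gauss} then yields a Gaussian lower bound \eqref{e:gaussintro} with constants $a,b$ depending only on initial data and valid for all $t \geq t_0 > 0$. Integrating over $v$ gives a uniform pointwise floor $\rho(t,x) \geq \rmin > 0$ on $\T^n$, and hence a uniform lower bound on $\orho_r(\O)$. Combined with \eqref{e:gaprho3} this produces a uniform positive spectral gap of the averaging operator along the whole trajectory, at which point \prop{p:mainrelax} delivers the exponential relaxation to the global Maxwellian $\mu_{\s,\bar u}$.

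The delicate step is the third one. Classical spread-of-positivity arguments depend on an $L^\infty$ control of the macroscopic density, which is not available nonlinearly for \eqref{e:FPAintro}. The novelty of \prop{p:Gauss} is precisely its insensitivity to $\rho_+$: the Gaussian constants $a,b$ are controlled by entropy and drift alone. Once this is in hand the rest of the proof becomes a clean bootstrap that avoids any perturbative or smallness assumption on the initial data, which is exactly what distinguishes the present global statement from the earlier perturbative relaxation results cited in the introduction.
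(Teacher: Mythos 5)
Your proposal is correct and follows essentially the same route as the paper: ball-positivity of \ref{CS} with $\phi=\psi\ast\psi$ gives the $\rho_+$-independent gap \eqref{e:gaprho3}, conservativity plus the uniform $L^2\to L^\infty$ bound \eqref{e:unifL2Linfty} control $\oH$ and $\oW$ by the initial data, \prop{p:Gauss} then yields the uniform density floor, and \prop{p:mainrelax} closes the argument (this is exactly \thm{t:gwp} combined with \thm{t:relax}). The only small imprecision is that the uniform second-moment bound does not come from balancing alignment dissipation against the Fokker--Planck heating (that balance alone gives linear growth); it comes from the monotone relative entropy \eqref{e:elaw1} together with $\cE\leq 2\cH+C$, which you in any case invoke.
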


\thm{t:relax} contains the full list of models to which a similar result applies. We note again that previously this result was established only in perturbative regime by Duan et al \cite{DFT2010}.  Concerning other models, in particular non-symmetric models such as Motsch-Tadmor, we obtain relaxation near equilibirum in Fisher information sense. The complete statement is given in  \thm{t:relaxlocal}.

Finally, let us comment on what is not included in our study and what would be highly desirable to address in the near future.  First, we include no forces, focusing mainly on the core alignment mechanism. Potential forces, such as confinement, attraction/repulsion etc, have a great impact on collective  outcomes and play major role in applications, \cite{ShuT2019,ShuT2019anti,CCP2017,CFTV2010,DOrsogna}. Second, we treat only linear couplings in the alignment force. Several recent studies \cite{Tadmor-pressure,P-ES2015,HHK2010,Mar2018} highlight the importance of non-linear couplings as well. In our general framework nonlinearity $\G$ can be incorporated by considering the system
\[
 \dot{v}_i = \st_i  \ave{\G(v - v_i)}_i.
 \]
 Developing regularity and relaxation theory, say, for the kinetic counterpart would be crucial to understanding more intricate nonlinear phenomena of self-organization.   Third, our framework does not presume communication to be singular, either mildly or strongly.  Such models were introduced in \cite{Pe2015,ST1,ST2,ST3,DKRT2018,ST-topo} to analyze the effects of enhanced local communication and its role in emergent dynamics, see the survey \cite{MP2018}.  Finally, we leave the analysis of hydrodynamic models in our general framework to future research as it shifts the focus far from the thread of this work, see \cite{TT2014,CCTT2016,Sbook,LS-uni1,MT2014,HeT2017} and the literature therein. However, we will share a new prospective on modeling macroscopic alignment in \sect{s:EASnew}.

\section{Basic concept and examples}\label{s:basic}

Let $\O$ denote an $n$-dimensional {\em environment}. We mostly focus on the cases when $\O$ is either the open space $\R^n$, periodic domain $\T^n$, a finite set of points, or Cartesian products of the above. Denote by $\cP(\O)$ the set of probability measures on $\O$. An {\em  environmental averaging model} is a family of pairs
\[
\cM = \{ (\k_\rho, \ave{\cdot}_\rho): \rho \in \cP(\O)\}
\]
satisfying the following functional requirements:
\begin{itemize}
\item[(ev1)] For every $\rho \in \cP(\O)$, $ \k_\rho$ is a finite positive measure on $\O$. We call it communication strength.
\item[(ev2)] $\ave{\cdot}_\rho$ is a linear bounded operator on the weighted space $L^2(\O,\dk_\rho): = L^2(\k_\rho)$.
\item[(ev3)] $\ave{\cdot}_\rho$ is a linear bounded operator on $L^\infty(\k_\rho)$, with the properties ($\k_\rho$-a.e.)
\begin{equation}\label{e:order}
\ave{u}_\rho \geq 0 \text{ for all } u \geq 0, \text{ and } \ave{\one_\O}_\rho =\one_\O.
\end{equation}
\end{itemize}
Here and throughout $\one_A$ denotes the characteristic function of a set $A$. If $u=(u_1,\ldots,u_m)$ is a vector field (where $m$ may be unrelated to the dimension $n$) we assume that the operator $\ave{u}_\rho$ is acting on each coordinate:
 \begin{equation}\label{e:coordave}
\ave{u}_\rho = ( \ave{u_1}_\rho, \dots, \ave{u_m}_\rho).
\end{equation}

Although the averaging models are generally assumed to be defined over all densities $\rho \in \cP(\O)$, to fulfill further regularity assumptions on the averaging operation it may be necessary to restrict the probabilities $\rho$ to a narrower admissible class $\cD \ss \cP$. The most encountered examples include ``thick" flocks, see \sect{s:rth}.

Most natural models are {\em material} - a property of adherence to the support of the flock. Namely, we say that the model $\cM$ is material if
\begin{itemize}
\item[(ev4)] there exists  bounded family of non-negative functions $\st_\rho\in L^\infty_+(\O)$ with $\sup_{\rho \in \cP(\O)}\| \st_\rho \|_{L^\infty(\O)} \leq \oS$ such that $\k_\rho = \rho \st_\rho$. We also call  $\st_\rho$  a (specific) strength function.
 \item[(ev5)] $\ave{u}_\rho = 0$, provided $u = 0$ $\rho$-almost everywhere.
\end{itemize}

 On the microscopic level one considers discretely distributed density and velocity fields associated to a set of $N$ agents $\{ x_i \}_{i=1}^N$
\begin{equation}\label{e:rhov}
\rho^N = \sum_{i=1}^N m_i \d_{x_i}, \quad u^N = \sum_{i=1}^N v_i \one_{x_i}.
\end{equation}
Assuming that the model is material we can unambiguously compute the values of the average and strength at the agents' locations
\begin{equation}\label{e:NM}
\ave{v}_i : = \ave{u^N}_{\rho^N} (x_i),\quad \st_i : = \st_{\rho^N}(x_i).
\end{equation}
The agent-based system \eqref{e:ABS} is stated precisely in terms of these discrete components.

\subsection{Examples}\label{ss:examples} Let us list several classical examples, and some new ones, and show how they fit into the definition of environmental averaging.

\begin{example}
The most obvious example is the global averaging
\begin{equation}\label{global}
\st_\rho = 1, \qquad \ave{u}_\rho =  \int_\O u \rho \dx. \tag{$\cMglob$}
\end{equation}
and the system \eqref{e:ABS} in this case expresses alignment with all-to-all communication
\[
\dot{v}_i =  \sum_{j=1}^N m_j (v_j - v_i).
\]

The extreme opposite is the pure identity model
\begin{equation}\label{Id}
\st_\rho = 1, \qquad \ave{u}_\rho = u\, \one_{\supp \rho}.\tag{$\cM_{\mathbb{I}}$}
\end{equation}
The agent-based version obviously leads to a stalled system. However, the utility of this model in the kinetic formulation will present itself in the study of hydrodynamic limits, see  \sect{s:hydrolim}. 
\end{example}
\begin{example}
The classical Cucker-Smale system has been discussed in detail in the introduction. Let us recall that in this case the pair is given by
\begin{equation}\label{CS}
\st_\rho = \rho_\phi, \qquad \ave{u}_\rho = \frac{(u \rho)_\phi}{\rho_\phi}.\tag{$\cMcs$}
\end{equation}
Here and throughout we denote for short $f_\phi = f \ast \phi$, and $\phi$ is assumed to be infinitely smooth.  In this case the averaging $\ave{u}_\rho = \uF$ is also known as the Favre filtration used in large eddy simulations of compressible turbulence, \cite{Favre}. Its remarkable property comes from the fact that if $\rho$ satisfies the continuity equation
\[
\p_t \rho + \n \cdot( u\rho) = 0,
\]
then the filtered density $\rho_\phi$ satisfies the continuity equation relative to the Favre-filtered velocity field
\[
\p_t \rho_\phi + \n \cdot( \uF \rho_\phi) = 0.
\]
An important implication of this equation will be discussed in \sect{s:EASnew}.

Properties (ev1) and (ev3) are obvious here. To verify (ev2) we notice that $\k_\rho = \rho \rho_\phi$. So using that for any $\rho\in \cP(\O)$
\begin{equation}\label{e:Jenex}
|(u\rho)_\phi|^2 \leq (|u|^2 \rho)_\phi \rho_\phi,
\end{equation}
we obtain
\[
\int_\O | \ave{u}_\rho |^2 \dk_\rho = \int_\O |(u\rho)_\phi|^2 \frac{\drho}{\rho_\phi} \leq \int_\O (|u|^2 \rho)_\phi \drho = \int_\O |u|^2 \rho_\phi  \drho = \|u\|^2_{L^2(\k_\rho)}.
\]
We can see that the \ref{CS}-model is contractive. The contractivity generally holds even in $L^p$-spaces for any conservative model, see \lem{l:conscontr}.
\end{example}

\begin{example} If we set $\st_\rho = 1$, the example above turns into another well-known model, so called  Motsch-Tadmor model   \cite{MT2011,MT2014}:
\begin{equation}\label{MT}
\st_\rho = 1, \qquad \ave{u}_\rho = \frac{(u \rho)_\phi}{\rho_\phi}.\tag{$\cMmt$}
\end{equation}
The model was introduced to mediate some issues arising in application of the Cucker-Smale averaging to multi-scale flocks, where a large and distant sub-flock overpowers the dynamics of a smaller sub-flock, see also \cite{STmulti,Sbook} for more discussion.

The only non-trivial property (ev2) holds for the admissible class of thick densities $\cD = \{ \rho \in \cP: \inf \rho_\phi > 0 \}$ under no assumption on the kernel $\phi$. However if the kernel $\phi$ is local and compactly supported, i.e.
\begin{equation}\label{e:kerRr}
c \one_{|x|<r_0} \leq  \phi(x) \leq C \one_{|x| < R_0}, \quad R_0>r_0,
\end{equation}
(the latter holds automatically on compact $\O$), then the $L^2$-boundedness holds for any $\rho\in \cP(\O)$ uniformly over $\cP(\O)$. Indeed, using \eqref{e:Jenex},
\[
\int_\O | \ave{u}_\rho |^2 \drho \leq \int_\O (|u|^2 \rho)_\phi \frac{\drho}{\rho_\phi} =  \int_\O |u|^2 \left( \frac{\rho}{\rho_\phi}\right)_\phi  \drho.
\]
According to \cite[Lemma 5.2]{KMT2013}, and see also the Appendix, under the condition \eqref{e:kerRr} we have
\begin{equation}\label{e:KMTrho}
\left( \frac{\rho}{\rho_\phi}\right)_\phi  \leq C,
\end{equation}
where $C$ depends only on the constants the appear in  \eqref{e:kerRr} and dimension. This implies the desired.
\end{example}
\begin{example}
We can interpolate between  \ref{CS} and \ref{MT} and consider a general power law for the specific strength function:
\begin{equation}\label{Mb}
\st_\rho = \rho_\phi^\b, \qquad \ave{u}_\rho = \frac{(u \rho)_\phi}{\rho_\phi},\quad  \b \geq 0.\tag{$\cMb$}
\end{equation}

All these models satisfy the requirements (ev1) and (ev3) obviously, and (ev2) follows as above provided we have the following generalization of \eqref{e:KMTrho}, which is proved in the Appendix: under   \eqref{e:kerRr} 
\begin{equation}\label{e:KMTrhobeta}
\left( \frac{\rho}{\rho^{1-\b}_\phi}\right)_\phi  \leq C \rho_\phi^\b, \quad \forall\ 0\leq \b\leq 1,
\end{equation}
where $C$ depends only on the constants the appear in  \eqref{e:kerRr}, $\b$, and the dimension.

\end{example}
\begin{example} More suitable for modeling local communication, a symmetric version of the Motsch-Tadmor model can be defined by applying extra convolution to the Favre filtration:
\begin{equation}\label{Mf}
\st_\rho = 1, \qquad  \ave{u}_\rho = \left( \frac{(u \rho)_\phi}{\rho_\phi} \right)_\phi .\tag{$\cMfmt$}
\end{equation}
This gives rise to the discrete averaging given by \eqref{e:Mfintro}. Here we assume as always that  $\phi \in C^\infty$ and it is a mollifier: $\phi\in L^1_+(\O)$ with $\int \phi \dx = 1$.

The model was introduced in \cite{S-hypo,Sbook} and played various roles. It was proved to define a globally hypocoercive kinetic dynamics, and  was also used to extend Figalli and Kang's hydrodynamic limit in the monokinetic regime \cite{FK2019}  to flocks with compact support, see also \sect{s:hydrolim}.

More  versions of \ref{Mf} can be obtained by looking into different strengths  by analogy with the \ref{Mb}-model, or by replacing $\rho$ with a more general baratropic pressure:
\begin{equation}\label{Mfp}
\k_\rho = p(\rho), \qquad \ave{u}_\rho = \left( \frac{(u p)_\phi}{p_\phi} \right)_\phi,\tag{$\cMfp$}
\end{equation}
where $p \geq 0$ is a function of $\rho$. Here, the support of the strength function may not coincide with $\rho$, or $\st_\rho$ may be unbounded, which makes it a non-material model. Also the class of admissible densities $\cD$ may be restricted depending on the pressure law $p(\rho)$. For example, in the ideal gas case $p = \rho^\g$ we naturally assume $\cD = L^\g(\O)$.

 One interesting case  is obtained when $p=1$, resulting in 
\begin{equation}\label{Mff}
\k_\rho = 1\ , \qquad \ave{u}_\rho  = u_{\phi  \ast \phi}. \tag{$\cMff$}
\end{equation}
In this case the average and the strength do not depend on the density at all, and consequently define a non-material model.
\end{example}

\begin{example}[Topological models]
A new way of modeling interactions which implement topological, rather than Euclidean measure of distance, has long been advocated by many empirical studies \cite{Shang2014,NIIZATO201462,Bal2008,CCGPS2012}. The first symmetric topological model  was introduced in \cite{ST-topo}, see also \cite{LRS-topo,RS-topoloc,MMP2020}, although it incorporated singular communication.  Its smooth variant fits within our framework of environmental averaging.  

To define such a model let us consider a basic symmetric domain $\cO_0 = \cO(-\be_1,\be_1)$ connecting two points $-\be_1$ and $\be_1$, and for any pair $(x,y)$, let  $\cO(x,y)$ be the domain connecting $x$ and $y$ obtained by rotation and dilation  of $\cO_0$. Let $\chi_{\cO(x,y)}$ be some mollification of the characteristic function $\one_{\cO(x,y)}$.  We introduce the topological ``distance" given by 
\begin{equation}\label{ }
\dd_\rho(x,y) = \int_\O \chi_{\cO(x,y)}(\zeta) \drho(\zeta) 
\end{equation}
Now let $\phi(d,z): \R_+ \times \O \to \R_+$ be a smooth non-negative kernel, radial in $z$. We define
\begin{equation}\label{e:topoker}
\phi_\rho(x,y) = \phi( \dd_\rho(x,y), x-y ).
\end{equation}
The kernel incorporates both metric and topological distances. Note that due to the symmetry of the domain $\cO(x,y)$, the kernel is also symmetric.

Let us define 
\begin{equation}\label{CStopo}
\st_\rho(x) = \int_\O \phi_\rho(x,y) \drho(y), \qquad \ave{u}_\rho = \frac{ \int_\O \phi_\rho(x,y) u(y) \drho(y)}{ \int_\O \phi_\rho(x,y) \drho(y)}. \tag{$\cMcstopo$}
\end{equation}
This is the full topological variant of \ref{CS}.  As these models bear relevance to biological applications it makes most sense to assume inverse dependence on the topological distance. For example,
\begin{equation}\label{e:kertopo}
\phi(d,z) = \frac{\psi(z)}{(\e + d^2)^{\a /2}}, \a \geq 0,
\end{equation}
where $\psi$ is a smooth kernel and $\e>0$ is a parameter ($\e = 0$ would correspond to the fully singular case).

 By analogy we can also define a topological version of \ref{MT}:
\begin{equation}\label{MTtopo}
\st_\rho(x) = 1, \qquad \ave{u}_\rho = \text{same}, \tag{$\cMmttopo$}
\end{equation}
or the $\beta$-model
\begin{equation}\label{Mbtopo}
\st_\rho(x) = \left( \int_\O \phi_\rho(x,y) \rho(y) \dy \right)^{\beta}, \qquad \ave{u}_\rho  = \text{same}. \tag{$\cMbtopo$}
\end{equation}

There is no reasonable topological counterpart of the mollified model \ref{Mf},  since there is no way to guarantee that $\phi_\rho$ integrates to $1$ at all times.
\end{example}

\begin{example}[Models with strict Segregation]\label{ }
A family of examples with segregated alignment protocol can be built by setting $\st_\rho=1$, fixing a $\s$-algebra $\cF$ of Borel subsets of $\O$ and considering the conditional expectation $\E_\rho (f|\cF)$ relative to $\drho$. Define
\begin{equation}\label{Mcond}
\ave{u}_{\rho,\cF} =  \E_\rho (u | \cF). \tag{$\cMcond$}
\end{equation}
For a given filtration $\{ \O,\emptyset\} \ss \cF_1 \ss \cF_2 \ss \dots \to \cB$ we can define a martingale chain of averages
\[
\ave{u}_{\rho,n} = \E_\rho (u | \cF_n)
\]
which naturally connects the global averaging model with the purely local one, as $\ave{u}_{\rho,n} \to u$ in any $L^p(\rho)$, $1\leq p<\infty$.

Such an averaging operation models strict segregation between disjoint subalgebras of $\cF$, so-called ``neighborhoods". Let us consider one specific example. Suppose $\cF$ is the algebra spanned by a partitioning  of $\O$ into subsets $A_1,\dots,A_L$. Then
\begin{equation}\label{MF}
\ave{u}_{\rho,\cF} = \sum_{l=1}^L \frac{\one_{A_l}}{\rho(A_l)} \int_{A_l} u \rho \dx. \tag{$\cMF$}
\end{equation}
If $u_0 = u^l_0$ within each cube $A_l$, and initial density $\rho_0$ is stays away from the borders $\p A_l$,  then  for a short period of time the solution satisfies a pure transport equation
\[
\rho_t + u_0^l \cdot \n_x \rho = 0
\]
on each $A_l$. So, the flock will travel with constant velocity within each neighborhood and will remain segregated until one piece reaches the boundary of its neighborhood and starts communicating with others. 
\end{example}

\begin{example}[Smooth Segregation]

Since in practice there is always a gradual transition between neighborhoods, it makes sense to consider a smooth version of the model above, which is also more amenable to analysis. Let us assume that $\O$ is compact, and consider any smooth partition of unity $g_l \in C^\infty(\O)$, $g_l \geq 0$, and $\sum_{l=1}^L g_l = 1$.  Most naturally, such a partition can be obtained by subordinating it to an open cover $\{ \cO_l \}_{l=1}^L$ of $\O$,  so that $\supp g_l \ss \cO_l$. We define the model by setting all $\st_\rho = 1$, and 
\begin{equation}\label{Mseg}
\ave{u}_\rho(x) = \sum_{l=1}^L \frac{g_l(x)}{  \rho(g_l)} \int_\O u g_l \rho \dy, \quad \rho(g_l) = \int_\O g_l \rho \dx. \tag{$\cMseg$}
\end{equation}
In this model the boundaries are not sharp as in the previous version, and there is some exchange of information that occurs across the adjacent neighborhoods.

\end{example}

There are ways to combine several averaging models into one that describe evolution of a multi-flock. Here ``multi" may mean several things -- either multiple subflocks with their own communication rules combine into a mega-flock with some global communication between subflocks, or it could mean the use of several communication rules within and between subgroups which we call `species'. Both of these variants were studied in \cite{HeTmulti,STmulti}.

\begin{example}[Multi-species] When a big flock contains groups of agents with distinct characteristics, communication between different groups may be facilitated by different rules, or communication kernels $\phi^{\a\b}$.  A model that accommodates such various communication rules was introduced in \cite{HeTmulti} :
\begin{equation}\label{e:HTmulti}
\begin{split}
\dot{x}^\a_i &= v^\a_i, \qquad i=1,\dots,N^\a, \ \a = 1,\dots,A,\\
\dot{v}^\a_i & = \sum_{\b=1}^A \sum_{j=1}^{N^\b} m^\b_j \phi^{\a\b}(x^\b_j - x^\a_i)(v^\b_j - v^\a_i).
\end{split}
\end{equation}
Here, each communication protocol is of Cucker-Smale type.

Such multi-species models can be generalized and fit into the framework of environmental averaging we discuss here. To do that, suppose we have an array of $A^2$ material models $\cM_{\a\b}$, $\a,\b =1,\dots,A$ defined over the same environment $\O$.  We can combine them into a new multi-model on the product space $\O\times A$. To account for possible variations of masses of sub-flocks, we fix a set of masses $\{M^\a\}_\a$ with the total mass being  $M = \sum_\a M^\a$, and encode them into the set of admissible densities $\cD^A$ over $\O\times A$. Namely, we say that  $\rho \in \cD^A$ is admissible if 
\[
\rho = \frac{1}{M} \sum_{\a=1}^A M^\a \rho^\a \otimes \d_\a,
\]
where $\rho^\a \in \cP$.  
We define a  cumulative strength function by
\begin{equation*}\label{}
\st_\rho(x,\a) = \sum_{\b = 1}^A M^\b \st^{\a\b}_{\rho^\b}(x).
\end{equation*}
The corresponding averaging of a function $u = \{ u^\a\}_\a$ is given by
\begin{equation}\label{ }
\ave{u}_\rho(x,\a) = \frac{1}{\st_\rho(x,\a)} \sum_{\b = 1}^A  M^\b \st_{\rho^\b}^{\a\b}(x) \ave{u^\b}_{\rho^\b}^{\a\b}(x).
\end{equation}
In terms of this average one can see directly, that the model \eqref{e:HTmulti} takes the canonical form
\[
\dot{v} = \st_\rho (\ave{v}_\rho -v).
\]

\end{example}

\begin{example}[Multi-flocks]  Let us recall the multi-flock model introduced in \cite{STmulti}
\begin{equation}\label{e:multiflock}
\begin{split}
\dot{x}^\a_i&=v^\a_i,\\
\dot{v}^\a_i& =   \sum_{j = 1}^{N_\a} m^\a_{j}  \phi^\a(x^\a_i - x^\a_j) (v^\a_j-v^\a_i) + \e \sum_{\substack{\b=1 \\ \b \neq \a}}^{A} M^\b\psi(X^\a,X^\b) (V^{\b}-v^\a_i).
\end{split}
\end{equation}
The model represents $A$ groups of agents evolving according to their own communication, Cucker-Smale type in this particular case, while communication between groups is facilitated through another protocol which involves a kernel $\psi$ and alignment to macroscopic parameters of each subflock, namely their center of masses
\[
X^\a =\frac{1}{M^\a}  \sum_{i=1}^{N_\a} m_i^\a x_i^\a, \quad M^\a =  \sum_{i=1}^{N_\a} m_i^\a,
\]
and momenta
\[
V^\a = \frac{1}{M^\a}  \sum_{i=1}^{N_\a} m_i^\a v_i^\a.
\]
This idea can be made more formal via an asymptotic analysis detailed in \cite{STmulti}.

In general, let $\{\cM^\a\}_{\a=1}^A$ be a family of material models defined over the same environment $\O$.  We define the admissible set of densities $\cD^A$ as in the previous example. For any $\rho=\{\rho^\a\}_\a \in \cD^A$ we define the strength function by
\[
\st_\rho(x,\a) = M^\a \st^\a_{\rho^\a}(x),
\]
and for $u = \{ u^\a\}_\a$ the average is given by
\[
\ave{u}_\rho(x,\a) = \ave{u^\a}^\a_{\rho^\a}(x).
\]

So far this model incorporates only internal flock communications. To combine these into an interactive multi-flock we assume that the communication between sub-flocks is facilitated through another averaging model $(\st^\ext_\rho, \ave{u}^\ext_\rho)$. The multiflock model \eqref{e:multiflock} can be written as a system over $\O\times A$ :
\[
\dot{v} = \st_{\rho} (\ave{v}_{\rho} - v) + \e  \st^\ext_{\rho^A}( \ave{V}_{\rho^A}^\ext - v),
\]
where $\rho^A = \sum_{\a = 1}^A M^\a \d_{X^\a}$, and $V = \sum_{\a = 1}^A \one_{V^\a}$.

\end{example}

\begin{example}[Models on finite sets] \label{ex:finite}

The last but not least example on our list  is the family of models  on finite environments $\O = \{ x_1, \dots, x_N\}$. These will be an essential tool to prove results about continuous models, see Appendix~\ref{a:fm}. Finite models illustrate a situation when all the agents are planted in their places and simply play the role of labels. They do not give rise to any inertial systems of type \eqref{e:ABS}. However, they do give rise to families of first order linear systems for $v_i = v(x_i)\in \R^m$,
\[
 \dot{v}_i = \st_i ( \ave{v}_i - v_i ),
\]
for each distribution of masses $\rho = (m_1,\dots, m_N)$. Since the averages act coordinate-wise, \eqref{e:coordave}, the systems for each coordinate decouple and we can assume that $v_i$ are scalars. In this case the properties of the model can be reduced to the properties of the corresponding reproducing matrix associated with the average:
\[
A = ( a_{ij} )_{i,j = 1}^N, \qquad a_{ij} = \ave{\one_{x_j}}(x_i).
\]
 Property (ev3) implies that $A$ has non-negative entries, and $A \one = \one$, i.e. $A$ is right-stochastic.  
\end{example}

\section{Classes of models and their properties} \label{s:classes}

 In this section we will systematize functional properties of environmental averaging models without association with any dynamical law. We introduce several important classes based on their operator-theoretical classification, which will be used extensively in subsequent studies. 

\subsection{Mapping properties. Jensen inequality} Let us discuss functional basics of environmental averages, and direct consequences of mapping properties stated in (ev2) and (ev3). 

First of all, order preserving maps \eqref{e:order} obey the maximum principle
\begin{equation}\label{e:max}
\min f \leq \ave{f}_\rho \leq \max f,
\end{equation}
and consequently are contractive on $L^\infty(\k_\rho)$:
\begin{equation}\label{e:Linftycontractive}
\| \ave{f}_\rho \|_\infty \leq \|f\|_\infty.
\end{equation}

Next, let us look into $L^\infty$-adjoint operator $\ave{\cdot}^*$. Technically, it maps $(L^\infty)^* \to (L^\infty)^*$ and if restricted to $L^1$ it still lands into $(L^\infty)^*$ from this general prospective. However, the extra structure of the averaging allows us to conclude more.
\begin{lemma}\label{l:L1toL1}
The operator $\ave{\cdot}_\rho^*$ has the following properties:
\begin{itemize}
\item[(1)] $\ave{\cdot}_\rho^*: L^1(\k_\rho) \to L^1(\k_\rho)$, and hence,  $\ave{\cdot}_\rho$ is weak$^*$-continuous on $L^\infty(\k_\rho)$;
\item[(2)] $\ave{\cdot}_\rho^*$ is order preserving;
\item[(3)] $\ave{\cdot}_\rho^*: L_+^1(\k_\rho) \to L_+^1(\k_\rho)$ is an isometry.
\end{itemize}
\end{lemma}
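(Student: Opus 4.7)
My plan is to establish the three items in the order (2), (3), (1), treating $\ave{\cdot}_\rho^*$ throughout as the $L^2(\k_\rho)$-adjoint of $\ave{\cdot}_\rho$ (which exists by (ev2), and is well-defined in this setting because $\k_\rho$ is a finite measure so that $L^2 \subset L^1$), and extending this adjoint to all of $L^1(\k_\rho)$ as part of the argument.

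First I would prove (2) by a short testing argument. Fix $g \in L^2(\k_\rho)$ with $g \geq 0$, and suppose for contradiction that $\ave{\cdot}_\rho^* g < 0$ on a measurable set $E$ of positive $\k_\rho$-measure. Testing the defining identity against $f = \one_E \in L^\infty$ and invoking the order-preservation half of (ev3) gives
\[
0 \;\leq\; \int_\O \ave{\one_E}_\rho\, g \, \dd\k_\rho \;=\; \int_\O \one_E\, \ave{\cdot}_\rho^* g \, \dd\k_\rho \;=\; \int_E \ave{\cdot}_\rho^* g \, \dd\k_\rho \;<\; 0,
\]
a contradiction. Hence $\ave{\cdot}_\rho^* g \geq 0$ $\k_\rho$-a.e.

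Next I would derive (3). For $g \in L^\infty_+(\k_\rho)$, the preserved-constant property $\ave{\one_\O}_\rho = \one_\O$ from (ev3), combined with (2), yields
\[
\|\ave{\cdot}_\rho^* g\|_{L^1(\k_\rho)} \;=\; \int_\O \ave{\cdot}_\rho^* g \, \dd\k_\rho \;=\; \int_\O \ave{\one_\O}_\rho\, g \, \dd\k_\rho \;=\; \|g\|_{L^1(\k_\rho)}.
\]
For general $g \in L^1_+(\k_\rho)$, I would truncate by $g_n := \min(g,n) \in L^\infty_+$; the sequence $\ave{\cdot}_\rho^* g_n$ is monotone nondecreasing, by applying (2) to $g_{n+1}-g_n \geq 0$, with $L^1$-norm equal to $\|g_n\|_{L^1} \nearrow \|g\|_{L^1}$. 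Monotone convergence then produces a nonnegative $L^1$-limit which I would take as the definition of $\ave{\cdot}_\rho^* g$; passing to the limit in $\int f\, \ave{\cdot}_\rho^* g_n \, \dd\k_\rho = \int \ave{f}_\rho\, g_n \, \dd\k_\rho$ for each $f \in L^\infty$ (split into $f_\pm$) identifies this limit canonically with the functional adjoint and delivers the isometry.

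Item (1) then follows from (3) by splitting $g = g_+ - g_-$ to obtain $\|\ave{\cdot}_\rho^* g\|_{L^1} \leq \|g\|_{L^1}$, and the weak$^*$-continuity of $\ave{\cdot}_\rho$ on $L^\infty(\k_\rho) = L^1(\k_\rho)^*$ is then the standard duality consequence of having an $L^1 \to L^1$ pre-adjoint. The one point I expect to require care is reconciling the $L^2$/truncation-extended version of $\ave{\cdot}_\rho^*$ with the abstract adjoint on $(L^\infty)^*$; the isometry $\|\ave{\cdot}_\rho^* g\|_{L^1} = \|g\|_{L^1}$ established above precludes any singular part from appearing in the functional adjoint, so no Hahn--Banach-type obstruction arises and the two notions of adjoint can be identified unambiguously.
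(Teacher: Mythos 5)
Your proof is correct, but it takes a genuinely different route from the paper's. The paper's argument establishes item (1) directly: for $f \in L^1(\k_\rho)$ it defines the set function $\nu_f(A) = \int_\O f \,\ave{\one_A}_\rho \dk_\rho$, verifies $\sigma$-additivity by approximating $\one_A$ in $L^2(\k_\rho)$ and using (ev2) together with dominated convergence (with $|f|$ as the dominating function), shows $\nu_f \ll \k_\rho$, and invokes Radon--Nikodym to produce $\ave{f}^*_\rho = g \in L^1(\k_\rho)$; items (2) and (3) then fall out in one line each from the identity $\int \ave{f}^*_\rho h \,\dk_\rho = \int f \ave{h}_\rho \dk_\rho$. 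You instead start from the $L^2(\k_\rho)$-adjoint (which is supplied for free by (ev2) and basic Hilbert space theory), prove positivity first, then use truncation $g_n = \min(g,n)$ and monotone convergence to extend the adjoint to all of $L^1_+$, deriving the isometry along the way and deducing (1) last by splitting $g = g_+ - g_-$. Your route avoids Radon--Nikodym entirely at the modest cost of an explicit truncation-and-limit passage and a slightly more delicate identification step at the end; the paper's route is shorter once Radon--Nikodym is granted but requires checking $\sigma$-additivity of $\nu_f$ by hand. Your closing remark about the isometry precluding any singular component is exactly the right way to reconcile your $L^2$/truncation construction with the abstract $(L^\infty)^*$-adjoint, and your dominated-convergence passage in $\int f \,\ave{\cdot}^*_\rho g_n \,\dk_\rho = \int \ave{f}_\rho \,g_n \,\dk_\rho$ makes this identification rigorous (the splitting of $f$ into $f_\pm$ is unnecessary there, since dominated convergence applies directly with $\ave{f}_\rho$ bounded and $g_n \to g$ in $L^1$, but it does no harm).
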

\begin{proof}
Let us fix $f\in L^1(\k_\rho)$ and for every measurable set $A$ define
\[
\nu_f (A) = \int_\O f \ave{\one_A}_\rho \dk_\rho.
\]
This defines finite $\s$-additive measure. Indeed, if $A = \cup_{i=1}^\infty A_i$, a disjoint union, then $\one_{\cup_{i=1}^N A_i } \to \one_{\cup_{i=1}^\infty A_i }$ in $L^2(\k_\rho)$. By (ev2), we then also have $\ave{\one_{\cup_{i=1}^N A_i }}_\rho \to \ave{ \one_{\cup_{i=1}^\infty A_i }}_\rho$ in $L^2(\k_\rho)$. Then up to a subsequence, the same convergence holds $\k_\rho$-a.e. By the Lebesgue dominated convergence theorem we obtain $\nu_f(\one_{\cup_{i=1}^N A_i }) \to \nu_f(\one_{\cup_{i=1}^\infty A_i }) $.

Furthermore, if $\k_\rho(A) = 0$, then by (ev2) $\ave{\one_A}_\rho= 0$ a.e., and hence $\nu_f(A) = 0$. This implies that $\nu_f$ is absolutely continuous with respect to $\k_\rho$. Hence, there exists a function $g \in L^1(\k_\rho)$ such that 
$\int_\O f \ave{\one_A}_\rho \dk_\rho = \int_\O g \one_A \dk_\rho$.
By approximation and continuity \eqref{e:Linftycontractive} we obtain the same relation $\int_\O f \ave{h}_\rho \dk_\rho = \int_\O g h \dk_\rho$,
for any $h \in L^\infty(\k_\rho)$. This means that $\ave{f}^*_\rho = g \in L^1(\k_\rho)$. We have proved (1).

Preservation of order (2) follows directly from (ev3) since if $f\in L^1_+(\k_\rho)$, then
$\int \ave{f}^*_\rho g\dk_\rho = \int f \ave{g}_\rho \dk_\rho \geq 0$ for all $g\in L^\infty_+(\k_\rho)$. Hence, $\ave{f}^*_\rho \geq 0$. Moreover, $\int \ave{f}^*_\rho \dk_\rho = \int f \ave{\one }_\rho \dk_\rho = \int f \dk_\rho $, which proves (3).

\end{proof}

As a consequence, we obtain the following point-wise Jensen inequality for averagings.

\begin{lemma}\label{}
For any $u\in L^\infty(\k_\rho)$ the following Jensen inequality holds $\k_\rho$-a.e.,
\begin{equation}\label{e:JI}
\psi( \ave{u}_\rho(x)) \leq \ave{\psi(u)}_\rho(x),
\end{equation}
where $\psi$ is a continuous convex even and monotonically increasing on $\R^+$ function.
\end{lemma}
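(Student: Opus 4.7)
The plan is to dualize $\psi$ as a supremum of affine minorants and then transport each such minorant through $\ave{\cdot}_\rho$ using only linearity, order-preservation, and the normalization $\ave{\one_\O}_\rho = \one_\O$.

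First I would handle the scalar case $u \in L^\infty(\k_\rho)$. Since $\psi$ is continuous and convex on $\R$, there exists a countable family $\{(a_k, b_k)\}_{k \in \N}$ with rational coefficients such that $a_k t + b_k \leq \psi(t)$ for every $t$ and $\psi(t) = \sup_k (a_k t + b_k)$ pointwise. Apply the inequality $a_k u + b_k \leq \psi(u)$ at the level of $\k_\rho$-measurable functions; by property (ev3) the operator $\ave{\cdot}_\rho$ is order-preserving, and combined with linearity (ev2) and $\ave{\one_\O}_\rho = \one_\O$ one obtains
\begin{equation*}
a_k \ave{u}_\rho(x) + b_k = \ave{a_k u + b_k}_\rho(x) \leq \ave{\psi(u)}_\rho(x), \qquad \k_\rho\text{-a.e. } x.
\end{equation*}
Each exceptional null set depends on $k$, but since the family is countable, the union $N$ is still $\k_\rho$-null. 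Outside $N$ I take the supremum over $k$ to conclude $\psi(\ave{u}_\rho(x)) \leq \ave{\psi(u)}_\rho(x)$.

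The vector-valued case $u = (u_1, \ldots, u_m)$ follows the same template, using the standing convention \eqref{e:coordave} that $\ave{\cdot}_\rho$ acts coordinate-wise. The extra input here is that $y \mapsto \psi(|y|)$ is convex on $\R^m$, which is exactly where the evenness and $\R^+$-monotonicity of $\psi$ enter. Separability of $\R^m$ supplies a countable affine representation $\psi(|y|) = \sup_k (a_k \cdot y + b_k)$ with $a_k \in \mathbb{Q}^m$, $b_k \in \mathbb{Q}$, and since $\ave{a_k \cdot u + b_k}_\rho = a_k \cdot \ave{u}_\rho + b_k$ by coordinate-wise linearity, the scalar argument transfers verbatim.

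The only delicate point — and the closest thing to an obstacle — is this bookkeeping: one must merge the exceptional null sets arising from countably many affine inequalities into a single $\k_\rho$-null set on which the supremum is taken pointwise. This is guaranteed by continuity of $\psi$ together with separability of the parameter space, so no substantive analytic difficulty arises beyond a careful use of the defining properties (ev2) and (ev3).
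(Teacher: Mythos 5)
Your proof is correct, and it takes a genuinely different route from the paper's. The paper's argument goes through the $L^1$-adjoint operator constructed in the preceding lemma: for each set $A$ it produces a density $f_A\in L^1_+(\k_\rho)$ realizing the averaged value $\frac{1}{\k_\rho(A)}\int_A\ave{u}_\rho\,\dk_\rho$ as $\int u\,f_A\,\dk_\rho$, then invokes the classical Jensen inequality for the probability measure $f_A\,\dk_\rho$ together with the triangle inequality and evenness, and finally passes to a pointwise statement via the Lebesgue differentiation theorem. Your argument instead bypasses the adjoint entirely: it exploits only linearity, positivity, and the normalization $\ave{\one_\O}_\rho=\one_\O$ from (ev2)--(ev3), together with the representation of a finite convex function as a countable supremum of affine minorants. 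What this buys is self-containment (no dependence on Lemma~\ref{l:L1toL1} or on Lebesgue differentiation), and it isolates cleanly where each axiom enters. What the paper's route buys is exhibiting the duality structure $\ave{\cdot}_\rho^*$ that it needs later anyway. One small imprecision in your write-up: the supporting affine functionals of $\psi$ need not admit rational coefficients (the slope of the tangent to $e^{|t|}$ at a rational point is transcendental, for instance); the fix is immediate --- take subgradient lines at the rational points, which give a countable minorant family whose pointwise supremum is still $\psi$ by continuity and local Lipschitzness of convex functions, and that is all the countability of exceptional null sets requires. Your observation that $y\mapsto\psi(|y|)$ is convex on $\R^m$ precisely because $\psi$ is even and nondecreasing on $\R^+$ is the right way to handle the vector case, matching the coordinate-wise convention \eqref{e:coordave}.
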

\begin{proof}
By \lem{l:L1toL1}, for every $A\ss \O$, there exists $f_A\in L^1_+(\k_\rho)$, $\|f_A\|_1 = 1$, such that 
\[
 \frac{1}{\k_\rho(A)}\int_A \ave{u}_\rho \dk_\rho = \int_\O u  f_A \dk_\rho.
 \]
 Then by the classical Jensen inequality we have
 \begin{equation*}\label{}
\begin{split}
 \psi \left(  \frac{1}{\k_\rho(A)}\int_A \ave{u}_\rho \dk_\rho  \right) & =  \psi \left( \left| \int_\O u f_A \dk_\rho \right|  \right) \leq \psi \left( \int_\O |u| f_A \dk_\rho \right) \\
 & \leq \int_\O \psi(|u| ) f_A \dk_\rho = \int_\O \psi(u ) f_A \dk_\rho = \frac{1}{\k_\rho(A)} \int_A \ave{ \psi(u )}_\rho  \dk_\rho  .
 \end{split}
\end{equation*}
Since this holds for any $A$, by the Lebesgue differentiation theorem and continuity of $\psi$, as $A \to \{x\}$ for a.e. $x$ we obtain \eqref{e:JI}, as desired.
\end{proof}

One of the useful consequences of Jensen's inequality is extrapolation to $L^p$-spaces for $p<2$ and a bound on the $L^p$-norms. 

\begin{lemma}\label{l:extrap}
Suppose $\ave{1}_\rho^*  \in L^\infty(\k_\rho)$. Then $\ave{\cdot}_\rho : L^p(\k_\rho) \to L^p(\k_\rho)$ for all $1 \leq p \leq \infty$, and 
\begin{equation}\label{ }
\| \ave{\cdot}_\rho \|_{L^p(\k_\rho) \to L^p(\k_\rho)} \leq \| \ave{1}_\rho^* \|_\infty^{1/p}.
\end{equation}
\end{lemma}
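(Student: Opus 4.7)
The plan is to combine the pointwise Jensen inequality \eqref{e:JI} with the $L^1$--$L^\infty$ adjoint identity from \lem{l:L1toL1}. Apply Jensen with $\psi(t)=|t|^p$---which for $p\geq 1$ is continuous, convex, even, and monotonically increasing on $\R^+$---to obtain the pointwise bound
\[
|\ave{u}_\rho|^p \leq \ave{|u|^p}_\rho \qquad \k_\rho\text{-a.e.},
\]
valid for every $u\in L^\infty(\k_\rho)$. Integrating against $\k_\rho$ and using the defining adjoint relation $\int_\O \ave{h}_\rho \dk_\rho = \int_\O h \ave{1}^*_\rho \dk_\rho$ (with $h=|u|^p$), I obtain
\[
\int_\O |\ave{u}_\rho|^p \dk_\rho \leq \int_\O |u|^p \ave{1}^*_\rho \dk_\rho \leq \|\ave{1}^*_\rho\|_\infty \int_\O |u|^p \dk_\rho.
\]
Taking $p$-th roots gives the claimed operator bound for bounded $u$. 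The endpoint $p=\infty$ is already contained in \eqref{e:Linftycontractive} with constant $1$, consistent with the formal limit $\|\ave{1}^*_\rho\|_\infty^{1/p}\to 1$ as $p\to\infty$ (noting that the hypothesis forces $\ave{1}^*_\rho \geq 0$ with total mass $\k_\rho(\O)$ by \lem{l:L1toL1}(3), so $\|\ave{1}^*_\rho\|_\infty \geq 1$ in the probabilistic normalization).

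To finish, I extend $\ave{\cdot}_\rho$ from $L^\infty \cap L^p(\k_\rho)$ to all of $L^p(\k_\rho)$ by density. For $u \in L^p(\k_\rho)$ with $1 \leq p < \infty$, truncate at height $k$ to obtain bounded functions $u_k$ converging to $u$ in $L^p(\k_\rho)$. Applying the inequality just proved to the differences $u_j-u_k$ shows that $\{\ave{u_k}_\rho\}$ is Cauchy in $L^p(\k_\rho)$; the resulting limit defines the extension $\ave{u}_\rho$ and inherits the same bound. Consistency with the existing definition on $L^2(\k_\rho)$ follows from (ev2) after passing to a common subsequence.

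I do not anticipate any real obstacle: the argument is essentially \emph{Jensen plus duality}, and the only technical point---the $L^\infty$-to-$L^p$ extension---is entirely standard. The conceptual content of the lemma is the observation that $\|\ave{1}^*_\rho\|_\infty$ is precisely the constant one gets when integrating the pointwise Jensen inequality against $\k_\rho$ and redistributing the averaging via the adjoint onto the constant function $\one_\O$.
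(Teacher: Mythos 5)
Your argument is correct and is essentially identical to the paper's proof: Jensen's inequality with $\psi(t)=|t|^p$, followed by the adjoint identity to move the averaging onto $\one_\O$ and the sup bound on $\ave{1}^*_\rho$, with $p=\infty$ handled by (ev3). The only difference is that you spell out the density extension from bounded fields to all of $L^p(\k_\rho)$, which the paper leaves implicit; that added care is harmless and correct.
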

\begin{proof}
For $p = \infty$ the result is simply the axiom (ev3). For $p<\infty$, we use Jensen's inequality
\[
\int_\O | \ave{u}_\rho|^p \dk_\rho \leq \int_\O  \ave{| u |^p}_\rho \dk_\rho = \int_\O |u|^p \ave{1}^*_\rho \dk_\rho \leq \| u\|_p^p \| \ave{1}_\rho^* \|_\infty,
\]
and the result follows.
\end{proof}

In some of our studies we will encounter the need to quantify boundedness of the weighted averages $\st_\rho \ave{\cdot}_\rho$ on $L^2(\rho)$. This is weaker than the previous mapping property thanks to the uniform boundedness of $\st_\rho$. Thus, a  weaker condition is required for it to hold.

\begin{lemma}\label{l:extraprho}
Suppose $\st_\rho \ave{\st_\rho^{p-1}}_\rho^*  \in L^\infty(\rho)$, $1 \leq p < \infty$. Then $\st_\rho \ave{\cdot}_\rho : L^p(\rho) \to L^p(\rho)$, and 
\begin{equation}\label{ }
\| \st_\rho \ave{\cdot}_\rho \|_{L^p(\rho) \to L^p(\rho)} \leq \|\st_\rho \ave{\st_\rho^{p-1}}_\rho^* \|_\infty^{1/p}.
\end{equation}
\end{lemma}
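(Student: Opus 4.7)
The plan is to mimic the proof of Lemma~\ref{l:extrap}, but carry the weight $\st_\rho$ through the computation and exploit the identity $\dk_\rho = \st_\rho\drho$ to shuttle a factor of $\st_\rho$ between the two natural measures before applying the adjoint of $\ave{\cdot}_\rho$.

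First I would start from the left-hand side and apply the pointwise Jensen inequality \eqref{e:JI} with $\psi(t)=|t|^p$ to each coordinate of $\ave{u}_\rho$. This gives the $\k_\rho$-a.e.\ bound $|\ave{u}_\rho|^p \le \ave{|u|^p}_\rho$. Multiplying by $\st_\rho^p$ and integrating against $\drho$, one factor of $\st_\rho$ combines with $\drho$ to produce $\dk_\rho$, yielding
\begin{equation*}
\int_\O |\st_\rho \ave{u}_\rho|^p \drho \le \int_\O \st_\rho^{p-1} \ave{|u|^p}_\rho \dk_\rho.
\end{equation*}

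Next, I would invoke the defining relation of the $L^\infty(\k_\rho)$-adjoint established in Lemma~\ref{l:L1toL1}: for $h\in L^\infty(\k_\rho)$ and $\ave{|u|^p}_\rho$ regarded as the $L^\infty$-object paired against $h\dk_\rho$, we have $\int_\O h\, \ave{|u|^p}_\rho \dk_\rho = \int_\O |u|^p \ave{h}_\rho^* \dk_\rho$. Applying this with $h = \st_\rho^{p-1}$ (which is in $L^\infty$ since $\st_\rho$ is uniformly bounded by axiom (ev4)) gives
\begin{equation*}
\int_\O \st_\rho^{p-1} \ave{|u|^p}_\rho \dk_\rho = \int_\O |u|^p \ave{\st_\rho^{p-1}}_\rho^* \dk_\rho = \int_\O |u|^p\, \st_\rho \ave{\st_\rho^{p-1}}_\rho^* \drho,
\end{equation*}
where in the last step I again use $\dk_\rho = \st_\rho\drho$ to convert back to $\drho$.

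Finally I would bound the integrand by pulling out the $L^\infty(\rho)$-norm of the hypothesis quantity, obtaining
\begin{equation*}
\int_\O |\st_\rho \ave{u}_\rho|^p \drho \le \|\st_\rho \ave{\st_\rho^{p-1}}_\rho^*\|_\infty \, \|u\|_{L^p(\rho)}^p,
\end{equation*}
and taking $p$-th roots yields the claimed operator-norm bound. There is no real obstacle here: the entire argument reduces to a careful bookkeeping of which measure ($\drho$ vs.\ $\dk_\rho$) the factors of $\st_\rho$ are attached to, so that the Jensen step and the adjoint identity of Lemma~\ref{l:L1toL1} can both be applied cleanly. The only minor subtlety is that in the case $p=1$ the quantity $\st_\rho^{p-1}$ should be read as the constant $\one_\O$, in which case $\ave{\one_\O}_\rho^* = \one_\O$ by isometry (part (3) of Lemma~\ref{l:L1toL1}) and the bound reduces to $\|\st_\rho\|_\infty$, consistent with the contractivity of conservative-type estimates.
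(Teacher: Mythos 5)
Your proof follows the paper's own argument essentially line for line: apply the pointwise Jensen inequality, absorb one factor of $\st_\rho$ into $\drho$ to form $\dk_\rho$, pass $\ave{\cdot}_\rho$ onto $\st_\rho^{p-1}$ via the $L^1$--$L^\infty$ adjoint relation of Lemma~\ref{l:L1toL1}, convert back to $\drho$, and take the sup of the resulting weight; both the decomposition and the key lemma invoked coincide with the paper's. One small correction to your closing remark about $p=1$: part (3) of Lemma~\ref{l:L1toL1} states that $\ave{\cdot}_\rho^*$ is an \emph{isometry} on $L^1_+(\k_\rho)$, which only gives $\int_\O \ave{\one_\O}_\rho^*\dk_\rho = \k_\rho(\O)$; it does not give the pointwise identity $\ave{\one_\O}_\rho^* = \one_\O$. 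That identity is precisely the conservative property \eqref{e:*const}, which is not assumed in this lemma, so in general the $p=1$ bound is $\|\st_\rho\ave{\one_\O}_\rho^*\|_\infty$ rather than $\|\st_\rho\|_\infty$. This does not affect the main argument, which is correct.
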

\begin{proof}
Using Jensen's inequality,
\begin{equation*}\label{}
\begin{split}
\| \st_\rho \ave{u}_\rho \|^p_{L^p(\rho)} = \int_\O | \st_\rho \ave{u}_\rho|^p \drho & \leq \int_\O  \ave{| u |^p}_\rho \st_\rho^p \drho = \int_\O  \ave{| u |^p}_\rho \st_\rho^{p-1} \dk_\rho  = \int_\O  | u |^p \ave{\st_\rho^{p-1} }^*_\rho \dk_\rho \\
& = \int_\O \st_\rho \ave{\st_\rho^{p-1} }^*_\rho  | u |^p\drho \leq \|\st_\rho \ave{\st_\rho^{p-1}}_\rho^* \|_\infty \| u\|^p_{L^p(\rho)}.
\end{split}
\end{equation*}
\end{proof}
We refer to \sect{s:rth} for further discussion.

\subsection{Reproducing kernel} \label{s:repkernel}
For material models we often deal with the weighted averaging $\st_\rho \ave{u}_\rho$ rather than the bare averaging $\ave{u}_\rho$. In most models the weighted average is an integral operator  represented by a kernel.
\begin{definition}\label{d:repker}
A reproducing kernel of the  model $\cM$ is a non-negative function $\phi_\rho \in L^1_+(\rho \otimes \rho)$, such that
\[
\int_\O \phi_\rho (x,y) \drho(y) = \st_\rho(x), \quad \rho\text{-a.e.}
\]
and so that  for all $u\in L^\infty(\rho)$
\begin{equation}\label{e:warep}
\st_\rho \ave{u}_\rho (x)= \int_\O \phi_\rho(x,y) u(y) \drho(y), \quad \rho\text{-a.e.}
\end{equation}
\end{definition}

A list of examples including our core models is provided in Table~\ref{t:kernels}.

\begin{table}
\begin{center}
\caption{Reproducing kernels}\label{t:kernels}
\begin{tabular}{  c |  c | c | c | c | c | c} 
 MODEL &   \ref{global}  &   \ref{CS}  &  \ref{MT}  &  \ref{Mb} &  \ref{Mf}  &  \ref{Mseg}  \\
  \hline
$\phi_\rho$ &  $1$ &  $\phi(x-y)$  &  $\displaystyle{ \frac{\phi(x-y)}{\rho_\phi(x)} }$ &$\displaystyle{ \frac{\phi(x-y)}{\rho^{1-\b}_\phi(x)} }$ & $\displaystyle{\int_\O \frac{\phi(x-z) \phi(y-z)}{\rho_\phi(z)} \dz}$ & $\displaystyle{\sum_{l=1}^L \frac{g_l(x) g_l(y)}{\rho(g_l)}}$  
 \end{tabular}
\end{center}
\end{table}

Generally, the kernel can be recovered from a right-stochastic reproducing kernel of the average itself,
\begin{equation}\label{e:rk}
\begin{split}
\Phi_\rho \in L^1_+(\k_\rho \otimes \k_\rho), & \quad  \int_\O \Phi_\rho (x,y) \dk_\rho(y) = 1, \quad \k_\rho\text{-a.e.}\\
\ave{u}_\rho & = \int_\O \Phi_\rho (x,y) u(y) \dk_\rho(y).
\end{split}
\end{equation}
The correspondence between the two is given by
\begin{equation}\label{e:warepker}
\phi_\rho(x,y) = \st_\rho(x)  \Phi_\rho(x,y)  \st_\rho(y).
\end{equation}
The representation of the adjoint averaging is given by
\begin{equation}\label{e:warepadj}
\st_\rho(y) \ave{v}_\rho^*(y) = \int_\O \phi_\rho(x,y) v(x) \drho(x), \quad \rho\text{-a.e.}
\end{equation}

Reproducing kernels are useful for many reasons. Not only do they provide more specific structure to the averaging operator, many properties of the averaging  that we will introduce later can be restated in terms of regularity of the kernel, see \sect{s:rth}.  The alignment forces that appear on all levels of description take a more conventional form:
\begin{subequations}\label{e:sys}
\begin{align}
 \st_i ( \ave{v}_i - v_i ) & = \sum_{j=1}^N m_j \phi_{\rho^N}(x_i,x_j) (v_j - v_i), \label{e:sys1}\\
 \st_\rho (\ave{u}_\rho -v) & = \int_\domain \phi_\rho(x,y) (w-v) f(y,w) \dw \dy \label{e:sys2} \\
\st_\rho (\ave{u}_\rho - u) & = \int_\O \phi_\rho(x,y) (u(y) - u(x) ) \drho(y). \label{e:sys3}
\end{align}
\end{subequations}

\subsection{Conservative models and contractivity} \label{s:cons}
Recall that due to \eqref{e:max} every alignment system that is based on an environmental averaging has a maximum/minimum principle and therefore tends to align. If one can quantify the rate of change of the amplitude of $u$ based on properties of the couple $(\k_\rho,\ave{\cdot}_\rho)$ one can potentially obtain an alignment  $u \to \bar{u}$ to some constant velocity vector $\bar{u}$. However, not every model has a predetermined $\bar{u}$. Typically $\bar{u}$ is uniquely defined by the initial condition if the system preserves the momentum.  This property is insured if the underlying model is conservative.

\begin{definition}\label{d:cons}
We say that the model $\cM$ is {\em conservative} if for any  $\rho \in \cP(\O)$, $u\in L^2(\k_\rho)$
\begin{equation}\label{e:cons}
\int_{\O}  u   \dk_\rho = \int_{\O} \ave{u}_\rho \dk_\rho.
\end{equation}
\end{definition}

At all levels of description   \eqref{e:ABS}, \eqref{e:VAintro},  \eqref{e:EAS},  conservative models preserve momentum,
\[
\ddt \bar{u}= 0, \quad \bar{u} = \int_{\O} \rho u \dx.
\]
 Since we assume that the total mass of a flock is $1$, this also predetermines the limiting average velocity from the initial condition $\bar{u} =  \int_{\O} \rho_0 u_0 \dx$.  Non-conservative models such as \ref{MT} may also align, see \sect{s:CST} below. However, for those models the limiting velocity emerges  dynamically and is not predetermined by the initial condition.

In  operator terms being conservative simply means that the adjoint average $\ave{\cdot}^*$ also preserves constants
\begin{equation}\label{e:*const}
\ave{\one_\O}^*_\rho = \one_\O, \qquad \k_\rho\text{-almost everywhere}, \ \forall \rho  \in \cP(\O).
\end{equation}
This in turn implies that the space of mean-zero fields 
\[
L^2_0(\k_\rho) = \left\{u\in L^2(\k_\rho): \int_\O u \dk_\rho = 0 \right\}
\]
is invariant for both $\ave{\cdot}_\rho$ and $\ave{\cdot}^*_\rho$.

Together with the positivity proved in \lem{l:L1toL1}, \eqref{e:*const}  implies that $\ave{\cdot}^*_\rho : L^\infty(\k_\rho) \to L^\infty(\k_\rho)$, and so the adjoint model $\cM^*$ consisting of pairs $(\k_\rho, \ave{\cdot}_\rho^*)$ fulfills all the requirements of environmental averaging. 

\begin{lemma}\label{l:consdual}
If $\cM$ is conservative, then $\cM^*$ also defines a conservative model. If $\cM_1$ and $\cM_2$ are conservative with the identical set of strength functions, then $\cM_2 \circ \cM_1$ and $\frac12 ( \cM_1 + \cM_2)$ are also conservative.
\end{lemma}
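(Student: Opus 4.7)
The plan is to reduce the entire lemma to the dual characterization of conservativity, namely that conservativity is equivalent to the identity $\ave{\one_\O}_\rho^* = \one_\O$ on $\k_\rho$, once we observe that
\[
\int_\O u \dk_\rho = \int_\O u \cdot \ave{\one_\O}_\rho^* \dk_\rho = \int_\O \ave{u}_\rho \cdot \one_\O \dk_\rho.
\]
Taking this reformulation as the running criterion, the three claims become short bookkeeping statements.

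First I would handle $\cM^*$. Since $L^2(\k_\rho)$ is Hilbert, the biadjoint satisfies $(\ave{\cdot}_\rho^*)^* = \ave{\cdot}_\rho$, so the conservation identity for $\cM^*$ reduces to $\ave{\one_\O}_\rho = \one_\O$, which is exactly part of axiom (ev3) for $\cM$. Before concluding, however, I must verify that $\cM^*$ actually qualifies as an environmental averaging model. Axiom (ev1) is automatic (the strength is unchanged). Axiom (ev2) follows since the adjoint of a bounded operator on a Hilbert space is bounded with the same norm. For (ev3), order preservation is \lem{l:L1toL1}(2), and $\ave{\one_\O}_\rho^* = \one_\O$ is precisely conservativity of $\cM$; the $L^\infty$-boundedness then follows from these two together (positivity plus preservation of constants forces contractivity, exactly as in \eqref{e:Linftycontractive}).

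For $\cM_2 \circ \cM_1$ (which by definition uses the common strength $\k_\rho$ and the composed operator $\ave{u}_\rho^{(2,1)} := \ave{\ave{u}_\rho^{(1)}}_\rho^{(2)}$), the conservation identity falls out by applying the conservativity of $\cM_2$ and then $\cM_1$ in succession:
\[
\int_\O \ave{u}_\rho^{(2,1)} \dk_\rho = \int_\O \ave{u}_\rho^{(1)} \dk_\rho = \int_\O u \dk_\rho.
\]
The axioms (ev1)--(ev3) for the composition are immediate: composition preserves $L^2$-boundedness and order, and $\ave{\one_\O}_\rho^{(2,1)} = \ave{\one_\O}_\rho^{(2)} = \one_\O$. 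For the convex combination $\frac12(\cM_1 + \cM_2)$, the argument is even more transparent: all three axioms and the conservation identity are preserved under taking positive convex combinations of the averaging operators (with the common strength), by linearity of integration.

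There is no real obstacle here; the only mildly subtle point is making sure that $\cM^*$ satisfies all axioms of an environmental averaging model, and this is where the assumption of conservativity of $\cM$ is essential—without it, $\ave{\cdot}_\rho^*$ need not preserve constants and hence need not map $L^\infty(\k_\rho)$ into itself in the required normalized way. I would therefore present the $\cM^*$ part first, emphasizing this duality, and then dispatch the composition and convex combination cases as one-line consequences of the criterion $\ave{\one_\O}_\rho^* = \one_\O$.
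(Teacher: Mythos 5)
Your proposal is correct and follows essentially the same route the paper takes. The paper does not write out a formal proof of this lemma; instead, the surrounding discussion establishes all the ingredients you use: the reformulation of conservativity as $\ave{\one_\O}^*_\rho = \one_\O$ (equation \eqref{e:*const}), the positivity and $L^1$-mapping of the adjoint from \lem{l:L1toL1}, and the remark that these together force $\ave{\cdot}^*_\rho : L^\infty(\k_\rho) \to L^\infty(\k_\rho)$, so that $\cM^*$ fulfills the averaging axioms. Your verification of (ev1)--(ev3) for $\cM^*$ and the one-line dispatches of the composition and convex combination are exactly the intended argument; the one point worth spelling out, which you did identify correctly, is that since $\k_\rho$ is finite one has $L^\infty(\k_\rho) \subset L^1(\k_\rho)$, so \lem{l:L1toL1} lets you apply $\ave{\cdot}^*_\rho$ to bounded functions, and then order-preservation together with $\ave{\one_\O}^*_\rho = \one_\O$ gives the $L^\infty$-contractivity just as in \eqref{e:Linftycontractive}.
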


For a material model that possesses a reproducing kernel  being conservative is equivalent to $\Phi_\rho$ being doubly stochastic, or equivalently for $\phi_\rho$ to satisfy:
\begin{equation}\label{e:doublestoch}
\int_\O \phi_\rho (x,y) \drho(x)  = \st_\rho(y).
\end{equation}

A useful reformulation of conservative property can be done in terms of contractivity.

\begin{definition}\label{d:contr}
We say that the model $\cM$ is {\em $p$-contractive}, $1 \leq p \leq \infty$, if for any  $\rho \in \cP(\O)$, $u\in L^p(\k_\rho)$
\begin{equation}\label{e:contractive}
\| \ave{u}_\rho \|_{L^p(\k_\rho)} \leq \| u \|_{L^p(\k_\rho)}.
\end{equation}
\end{definition}

Note that straight from the definition part (ev3) all models are $\infty$-contractive. It is easy to show that contractivity is equivalent to being conservative.

\begin{lemma}\label{l:conscontr} The following are equivalent:
\begin{itemize}
\item[(i)] $\cM$ is conservative;
\item[(ii)] $\| \ave{\one_\O}^*_\rho \|_{L^\infty(\k_\rho)} \leq 1$  for all $\rho \in \cP(\O)$;
\item[(iii)]  $\cM$ is $p$-contractive for all $1\leq p\leq \infty$;
\item[(iv)]  $\cM$ is $1$-contractive.
\end{itemize}
\end{lemma}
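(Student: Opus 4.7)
The plan is to prove the chain (ii) $\Rightarrow$ (iii) $\Rightarrow$ (iv) $\Rightarrow$ (ii) first, and then independently show (i) $\Leftrightarrow$ (ii), using duality and the positivity of $\ave{\cdot}^*_\rho$ supplied by \lem{l:L1toL1}.

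For (i) $\Leftrightarrow$ (ii), I would test the defining identity \eqref{e:cons} against $u=\one_A$ for arbitrary measurable $A\subset\O$: this yields $\int_A \ave{\one}^*_\rho\, \dk_\rho = \int_A \one\, \dk_\rho$, so (i) forces $\ave{\one}^*_\rho = \one$ $\k_\rho$-a.e., which is stronger than (ii). For the converse, by \lem{l:L1toL1}(2)--(3) we know $\ave{\one}^*_\rho \geq 0$ and $\int_\O \ave{\one}^*_\rho\,\dk_\rho = \int_\O \one\,\dk_\rho = \k_\rho(\O)$; combined with (ii), a nonnegative function with integral $\k_\rho(\O)$ and sup-norm at most $1$ must equal $\one$ a.e., so (ii) implies $\ave{\one}^*_\rho=\one$, which by the duality identity $\int f \ave{u}_\rho\,\dk_\rho = \int u\,\ave{f}^*_\rho\,\dk_\rho$ (applied with $f=\one$) gives \eqref{e:cons}.

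For (ii) $\Rightarrow$ (iii), I would apply the Jensen inequality \eqref{e:JI} with $\psi(t)=|t|^p$ (continuous, convex, even, monotone on $\R^+$) to get $|\ave{u}_\rho|^p \leq \ave{|u|^p}_\rho$ pointwise $\k_\rho$-a.e. Integrating against $\dk_\rho$ and using the adjoint identity followed by (ii),
\begin{equation*}
\int_\O |\ave{u}_\rho|^p \dk_\rho \leq \int_\O \ave{|u|^p}_\rho \dk_\rho = \int_\O |u|^p \ave{\one}^*_\rho \dk_\rho \leq \int_\O |u|^p \dk_\rho,
\end{equation*}
which is $p$-contractivity for $1\leq p<\infty$; the case $p=\infty$ is axiom (ev3). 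The implication (iii) $\Rightarrow$ (iv) is tautological.

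For (iv) $\Rightarrow$ (ii), the plan is a standard $L^1$--$L^\infty$ duality argument. Since $\k_\rho$ is a finite measure, $L^\infty(\k_\rho)\subset L^2(\k_\rho)\subset L^1(\k_\rho)$, so $\ave{\cdot}_\rho$ is well-defined on a dense subspace of $L^1(\k_\rho)$, and (iv) says it extends to a contraction on $L^1(\k_\rho)$. Testing the adjoint identity $\int u\, \ave{\one}^*_\rho \dk_\rho = \int \ave{u}_\rho \dk_\rho$ against $u\in L^1_+(\k_\rho)$ gives $\left|\int u\,\ave{\one}^*_\rho\,\dk_\rho\right| \leq \|\ave{u}_\rho\|_1 \leq \|u\|_1$, and since $\ave{\one}^*_\rho\geq 0$ by \lem{l:L1toL1}(2), taking supremum over such $u$ yields $\|\ave{\one}^*_\rho\|_{L^\infty(\k_\rho)}\leq 1$, which is (ii).

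The one delicate point to be careful about is the first step in proving (i) $\Rightarrow$ (ii): strictly speaking the conservation identity is stated for $u\in L^2(\k_\rho)$, so I must verify that $\one_A\in L^2(\k_\rho)$, which holds because $\k_\rho$ is finite. Everything else reduces to clean functional-analytic bookkeeping.
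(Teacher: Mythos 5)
Your proposal is correct and follows essentially the same route as the paper: (ii)\,$\Rightarrow$\,(iii) via the Jensen inequality (the paper packages this as \lem{l:extrap}), (iii)\,$\Rightarrow$\,(iv) trivially, (iv)\,$\Rightarrow$\,(ii) by $L^1$--$L^\infty$ duality, and (i)\,$\Leftrightarrow$\,(ii) by the positivity of $\ave{\cdot}^*_\rho$ together with the fact that $\int_\O \ave{\one}^*_\rho \dk_\rho = \k_\rho(\O)$ forces a function bounded by $1$ to equal $\one$ a.e. Your spelled-out argument for (iv)\,$\Rightarrow$\,(ii) is just the one-line duality claim the paper makes, and your caution about $\one_A\in L^2(\k_\rho)$ is well placed but harmless, since $\k_\rho$ is finite by (ev1).
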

\begin{proof} (i) $\Rightarrow$ (ii) is trivial.  Conversely, assume (ii). Then we have
\[
\int_\O \one_\O  \dk_\rho \geq \int_\O \ave{\one_\O}^*_\rho \dk_\rho  = \int_\O \one_\O \ave{\one_\O}_\rho \dk_\rho =  \int_\O \one_\O  \dk_\rho,
\]
which proves that $\ave{\one_\O}^*_\rho = \one_\O$ $\k_\rho$-almost everywhere.

The implication (ii)  $\Rightarrow$ (iii) is a direct consequence of \lem{l:extrap}. 

Since (iii) $\Rightarrow$ (iv) is trivial, let us now assume (iv). By duality $\ave{\cdot}_\rho^*$ is $\infty$-contractive, and hence (ii) holds. 
\end{proof}

Contractivity also implies that the alignment force is dissipative. For example, for the pressureless Euler-Alignment system, see \eqref{e:EASpresless} below, we obtain
\begin{equation}\label{e:EASlessenlaw}
\ddt \frac12 \int_{\O} \rho |u|^2 \dx =   \int_{\O}  [u \cdot \ave{u}_\rho  - |u|^2] \dk_\rho \leq 0.
\end{equation}

\subsection{Symmetric models}
Most of the models on our  list  are in fact {\em symmetric}: for all  $\rho \in \cP(\O)$ and $u', u'' \in L^2(\k_\rho)$
\begin{equation}\label{e:sym}
( u' , \ave{u''}_\rho )_{\k_\rho} = (\ave{u'}_\rho, u'' )_{\k_\rho},
\end{equation}
where we generally adopt the following notation for the inner-product relative to a measure $\k$:
\begin{equation}\label{ }
(f,g)_{\k}=\int_{\O} f g   \dk. 
\end{equation}

In other words, $\ave{\cdot}^*_\rho = \ave{\cdot}_\rho$. In terms of reproducing kernel, if one is available, symmetry is equivalent to $\Phi_\rho$ being symmetric. 
Setting $u'' = \one_\O$  we can see that every symmetric model is conservative. However, not every conservative model is automatically symmetric.  Plenty of examples are provided by defining the averages with non-symmetric doubly stochastic reproducing kernels.

For symmetric models the energy law \eqref{e:EASlessenlaw} takes a more explicit form
\begin{equation}\label{e:EASlessenlawsym}
\ddt  \int_{\O} \rho |u|^2 \dx =  -  \int_{\O} \phi_\rho(x,y) | u(x) - u(y) |^2 \drho(x) \drho(y).
\end{equation}
We can see that the dissipation burns energy for as long as communicating agents of the  flock are not yet aligned. This creates a mechanism for flocking behavior to be discussed in more detail in \sect{s:flocking}.

If $\cM$ is a conservative but not symmetric model, then  canonical ways to symmetrize it would be to consider the model $\frac12 ( \cM + \cM^*)$ or $\cM^*\circ \cM$.
According to \lem{l:consdual} those define  proper environmental averages.

\subsection{Galilean invariance}
We say that the model $\cM$ is Galilean invariant if for all $x \in \O$ and $v\in \R^n$
\begin{align}
\k_{\rho(\cdot + v)} (x) & = \k_\rho (x+v), \label{e:GIk} \\
\ave{ u(\cdot + v)}_{\rho(\cdot + v) }(x) &= \ave{u}_\rho (x+v).\label{e:GIu}
\end{align}
In terms of reproducing kernel, if one is available, the Galilean invariance is equivalent to 
\begin{equation}\label{e:GIm}
\begin{split}
\st_{\rho(\cdot + v)} (x) & = \st_\rho (x+v),  \\
\phi_{\rho(\cdot + v)}(x,y) &= \phi_\rho (x+v,y+v).
\end{split}
\end{equation}

For a particular differential system $\cM$ is involved in, this property implies the conventional Galilean invariance with respect to transformation
 \begin{equation}\label{e:Gi}
 x \to x + t V, \quad v  \to v + V, \quad u \to u+V.
\end{equation}

All the models considered above except for segregation and conditional expectation ones are Galilean invariant. The segregation protocols are planted into a given geography of the map and therefore are not translation invariant.

\subsection{Ball-positivity} \label{s:ball}
If an operator $T$ on a (real in our case) Hilbert space $H_\R$  is positive semi-definite, i.e.
\begin{equation}\label{e:Tpos}
(Tu, u) \geq 0,
\end{equation}
geometrically this means that $Tu$ and $u$  lie on the same side of the hyperplane  $u^\perp$. If $Tu$ lies in an even more restricted location, namely, in the ball $\frac12 B_{ \|u\|}( u)$, i.e.
\begin{equation}\label{e:Tbpos}
\left\| Tu - \frac12 u \right\| \leq \frac12 \|u\|,
\end{equation}
then we call $T$ {\em ball-positive}. A more useful definition of ball-positivity can be stated  equivalently as follows
\begin{equation}\label{e:bcT}
(Tu,u) \geq \| Tu\|^2, \qquad \forall u\in H_\R.
\end{equation}
In other words, it is  positivity \eqref{e:Tpos} that comes with a more coercive flavor. Although, as far as we can trace, there is no standard term associated with this property in the literature, such operators appeared for instance in \cite{LT1998} (with $\eta = 1$) and \cite{Tad2002}.

In the context of environmental averaging models, where $H_\R = L^2(\k_\rho)$, and $T = \ave{\cdot}_\rho$, the ball-positivity is stated as follows
\begin{equation}\label{e:bpenergies}
(u, \ave{u}_\rho)_{\k_\rho} \geq \| \ave{u}_\rho \|_{L^2(\k_\rho)}^2, \qquad \forall u\in L^2(\k_\rho).
\end{equation}
This property has profound implications to flocking behavior of the system as we will see later in \sect{s:lowenergy}.

We identify many ball-positive models on our list with the use of a simple lemma.
\begin{lemma}\label{l:symmbp}
If  $\cM$ is symmetric, then $\cM$ is ball-positive if and only if it is positive semi-definite.
\end{lemma}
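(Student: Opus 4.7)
The forward direction of the equivalence is immediate: if $\cM$ is ball-positive, then $(u, \ave{u}_\rho)_{\k_\rho} \geq \|\ave{u}_\rho\|_{L^2(\k_\rho)}^2 \geq 0$, so $\cM$ is positive semi-definite. The content of the lemma lies in the converse.

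The plan for the reverse implication is to combine symmetry with the automatic contractivity of conservative models on $L^2(\k_\rho)$. First I would observe that, as noted right after \eqref{e:sym}, every symmetric model is conservative (set $u'' = \one_\O$ and use $\ave{\one_\O}_\rho = \one_\O$). Then \lem{l:conscontr} applied with $p = 2$ gives that $T := \ave{\cdot}_\rho$ is a contraction on $L^2(\k_\rho)$, i.e.\ $\|T\|_{L^2(\k_\rho)\to L^2(\k_\rho)} \leq 1$. By hypothesis, $T$ is self-adjoint on this real Hilbert space and positive semi-definite.

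The finishing step is purely operator-theoretic: for a self-adjoint operator $T$ on a Hilbert space with $0 \leq T \leq I$, one has $T \geq T^2$. This follows from the spectral theorem, since $\sigma(T) \subset [0,1]$ implies $\sigma(T - T^2) = \{\lambda - \lambda^2 : \lambda \in \sigma(T)\} \subset [0, 1/4]$; equivalently, $T$ and $I-T$ are commuting positive self-adjoint operators, so $T(I-T) = T^{1/2}(I-T)T^{1/2} \geq 0$. Rewriting $((T - T^2)u, u)_{\k_\rho} \geq 0$ as
\begin{equation*}
(u, \ave{u}_\rho)_{\k_\rho} \geq (T^2 u, u)_{\k_\rho} = (Tu, Tu)_{\k_\rho} = \|\ave{u}_\rho\|_{L^2(\k_\rho)}^2,
\end{equation*}
gives exactly \eqref{e:bpenergies}, which is ball-positivity.

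I do not anticipate a serious obstacle: the argument is essentially the observation that, for self-adjoint operators, ball-positivity in the sense of \eqref{e:bcT} is the same as the operator inequality $T \geq T^2$, which for contractive positive operators is automatic. The only point worth verifying carefully is that the $L^2$-contractivity required to invoke $\|T\| \leq 1$ really follows from symmetry alone, which is why the proof must route through the conservative $\Rightarrow$ $2$-contractive step of \lem{l:conscontr} rather than appealing only to (ev3).
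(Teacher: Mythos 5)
Your proof is correct, and its skeleton coincides with the paper's: both reduce the converse to the three facts that $T=\ave{\cdot}_\rho$ is self-adjoint, positive semi-definite, and a contraction on $L^2(\k_\rho)$, the last obtained exactly as you do via symmetric $\Rightarrow$ conservative $\Rightarrow$ $2$-contractive (\lem{l:conscontr}). You are also right that this contractivity step cannot be skipped in favor of (ev3) alone. The only divergence is the finishing device. The paper stays elementary: since $(u,v)_T=(Tu,v)$ is a (possibly degenerate) semi-inner product, Cauchy--Schwarz gives $|(Tu,v)|\le\sqrt{(Tu,u)}\sqrt{(Tv,v)}$, and taking the supremum over unit $v$ together with $(Tv,v)\le\|Tv\|\,\|v\|\le 1$ yields $\|Tu\|^2\le (Tu,u)$ directly, with no functional calculus. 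You instead invoke the operator inequality $T\ge T^2$ for a positive self-adjoint contraction, justified by the spectral theorem or by $T(I-T)=T^{1/2}(I-T)T^{1/2}\ge 0$; this is a standard fact but requires the existence of $T^{1/2}$ (or the spectral mapping argument), i.e.\ slightly heavier machinery for the same conclusion. Both arguments are complete; the paper's buys a self-contained two-line finish, yours makes explicit that ball-positivity for self-adjoint operators is precisely the inequality $T\ge T^2$.
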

\begin{proof}
The forward implication is trivial. Conversely, if $\cM$ is non-negative and symmetric, then $(u,v)_T = (Tu,v)$ defines a (possibly degenerate) inner product on the real Hilbert space $H_\R = L^2(\k_\rho)$. Hence, the \CS\ applies 
\begin{equation}\label{e:CauchySchwartz}
|(Tu,v)|  \leq   \sqrt{(Tu,u)}  \sqrt{(Tv,v)}.
\end{equation}
Taking supremum over all unit $v$ and using the contractivity of $T$, we obtain the result.
\end{proof}

\begin{corollary}\label{ }
If $\cM$ is conservative, then $\cM^* \circ \cM$ is ball-positive. 
\end{corollary}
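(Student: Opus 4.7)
The plan is to reduce the corollary to \lem{l:symmbp} by showing that the composed model $T := \ave{\cdot}_\rho^* \circ \ave{\cdot}_\rho$ is a symmetric, positive semi-definite environmental averaging. First I would verify that $\cM^* \circ \cM$ is a legitimate conservative environmental averaging model. Both $\cM$ and $\cM^*$ are conservative (since $\cM$ is, \lem{l:consdual} supplies the second), and they share the same strength family $\k_\rho$; the second statement of \lem{l:consdual} then gives that $\cM^* \circ \cM$ is itself conservative, and in particular $T$ is contractive on $L^2(\k_\rho)$ by \lem{l:conscontr}.

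Next I would observe that $T$ is symmetric and positive semi-definite on $L^2(\k_\rho)$ directly from the definition of the adjoint: for any $u,v \in L^2(\k_\rho)$,
\begin{equation*}
(Tu, v)_{\k_\rho} = (\ave{u}_\rho^* \ave{u}_\rho \text{ wait, let me rewrite})
\end{equation*}
More precisely,
\begin{equation*}
(Tu, v)_{\k_\rho} = \bigl( \ave{\ave{u}_\rho}_\rho^*, v \bigr)_{\k_\rho} = \bigl( \ave{u}_\rho, \ave{v}_\rho \bigr)_{\k_\rho} = \bigl( u, \ave{\ave{v}_\rho}_\rho^* \bigr)_{\k_\rho} = (u, Tv)_{\k_\rho},
\end{equation*}
and choosing $v=u$ yields $(Tu,u)_{\k_\rho} = \|\ave{u}_\rho\|_{L^2(\k_\rho)}^2 \geq 0$.

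Finally I would invoke \lem{l:symmbp}: since $\cM^* \circ \cM$ is a symmetric environmental averaging model that is positive semi-definite, it is ball-positive. No step is genuinely the main obstacle here; the only thing one needs to be careful about is that \lem{l:symmbp} implicitly uses $L^2$-contractivity in its Cauchy--Schwarz step, and this is precisely what we get for free from conservativity of $\cM^* \circ \cM$ via \lem{l:conscontr}. Thus the corollary follows.
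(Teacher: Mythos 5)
Your proposal is correct and follows exactly the route the paper intends: the corollary is placed immediately after \lem{l:symmbp} precisely because one checks that $\cM^*\circ\cM$ is symmetric and positive semi-definite via the elementary adjoint identity $(\ave{\ave{u}_\rho}^*_\rho, v)_{\k_\rho} = (\ave{u}_\rho,\ave{v}_\rho)_{\k_\rho}$, and that it is contractive (hence the Cauchy--Schwarz step in \lem{l:symmbp} closes) by \lem{l:consdual} together with \lem{l:conscontr}. The stray parenthetical ``wait, let me rewrite'' should of course be deleted, but the corrected computation that follows it is right.
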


Clearly, the conditional expectation model \ref{Mcond} is ball-positive because it consists of orthogonal projections.
For   \ref{Mseg} we have
\begin{equation}\label{ }
(u, \ave{u}_\rho)_{\rho} = \sum_{l=1}^L \frac{\rho(u g_l)^2}{\rho(g_l)} \geq 0.
\end{equation}
The classical Cucker-Smale model \ref{CS} is ball-positive, provided the kernel $\phi$ is Bochner-positive, i.e. $\phi = \psi \ast \psi$, for some smooth $\psi \geq 0$. We have
\begin{equation}\label{e:CSball}
(u, \ave{u}_\rho)_{\k_\rho} = \int_\O (u\rho) \cdot (u \rho)_\phi \dx =  \int_\O (u\rho)_\psi^2 \dx  \geq 0.
\end{equation}
The symmetric \ref{Mf} model is also ball-positive
\begin{equation}\label{ }
(u, \ave{u}_\rho)_{\rho} = \int_\O \frac{|(u\rho)_\phi|^2}{\rho_\phi} \dx \geq 0.
\end{equation}
The same argument shows that all \ref{Mfp}-models are ball-positive.

Among symmetric but not necessarily ball-positive models are the topological models \ref{CStopo}. Here, the kernel is not Bochner-positive to even imply sign definiteness of the averaging. Incidentally, ball-positivity does not imply symmetry either. This will be shown in  \appx{a:fm}. So, these two properties are completely independent.

Nonetheless, ball-positivity, does imply a host of other properties including of course positivity and $2$-contractivity.  The $2$-contractivity alone does not seem to be sufficient to imply conservation, in spite of \lem{l:conscontr},   it is still possible to show that all ball-positive models are conservative. The proof of this result is not so straightforward. We include it in \appx{a:fm}.
\begin{proposition}\label{p:bpcons}
Every ball-positive model is conservative.
\end{proposition}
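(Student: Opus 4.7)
The plan is to use the constant function $\one_\O$ as a reference point and perform a first-order variation of the ball-positivity inequality around it. Since $\ave{\one_\O}_\rho = \one_\O$ by axiom (ev3), the inequality $(u,\ave{u}_\rho)_{\k_\rho} \geq \|\ave{u}_\rho\|_{L^2(\k_\rho)}^2$ is saturated at $u=\one_\O$; any saturation point of a quadratic inequality forces first-order vanishing of the difference, which is exactly the content of conservation.

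Concretely, I would fix an arbitrary $w \in L^2(\k_\rho)$ and $\varepsilon \in \R$, and test ball-positivity against $u = \one_\O + \varepsilon w$. Using linearity of $\ave{\cdot}_\rho$ and $\ave{\one_\O}_\rho = \one_\O$, expand both sides. The constant $\k_\rho(\O)$ and the $O(\varepsilon^2)$ cross terms organize so that after cancellation one obtains
\[
\varepsilon\left(\int_\O w\dk_\rho - \int_\O \ave{w}_\rho\dk_\rho\right) \geq \varepsilon^2 \left(\|\ave{w}_\rho\|_{L^2(\k_\rho)}^2 - (w,\ave{w}_\rho)_{\k_\rho}\right).
\]
The right-hand side is $\leq 0$ by ball-positivity applied to $w$ itself, so writing $I(w) = \int_\O w\dk_\rho - \int_\O \ave{w}_\rho\dk_\rho$ and $C(w) = (w,\ave{w}_\rho)_{\k_\rho} - \|\ave{w}_\rho\|_{L^2(\k_\rho)}^2 \geq 0$, we have $\varepsilon I(w) \geq -\varepsilon^2 C(w)$ for every $\varepsilon \in \R$.

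Letting $\varepsilon \to 0^+$ gives $I(w) \geq 0$, and letting $\varepsilon \to 0^-$ gives $I(w) \leq 0$. Hence $I(w) = 0$ for every $w \in L^2(\k_\rho)$, which is precisely the conservation identity \eqref{e:cons}. No part of the argument is delicate: the only ingredients are the definition of ball-positivity, linearity of $\ave{\cdot}_\rho$, and the normalization $\ave{\one_\O}_\rho = \one_\O$. There is no real obstacle; if anything, the only thing to double-check is that the perturbation $\one_\O + \varepsilon w$ lies in $L^2(\k_\rho)$, which is immediate since $\k_\rho$ is finite and $w\in L^2(\k_\rho)$. Alternatively, one can bypass the variational argument by invoking \lem{l:conscontr}: ball-positivity gives $2$-contractivity, and then verifying $\|\ave{\one_\O}_\rho^*\|_{L^\infty(\k_\rho)}\leq 1$ can also be deduced, but the perturbation proof seems cleaner and more direct.
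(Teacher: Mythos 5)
Your proof is correct, and it takes a genuinely different and substantially more direct route than the paper's. You observe that the quadratic functional $Q(u) = (u,\ave{u}_\rho)_{\k_\rho} - \|\ave{u}_\rho\|^2_{L^2(\k_\rho)}$ is nonnegative everywhere and vanishes at $\one_\O$ (which lies in $L^2(\k_\rho)$ since $\k_\rho$ is finite, and is fixed by the averaging by (ev3)), so $\one_\O$ is a global minimizer; expanding $Q(\one_\O + \varepsilon w)$ and letting $\varepsilon\to 0^\pm$ forces the linear term to vanish, which is exactly conservation. I rechecked the expansion: $Q(\one_\O + \varepsilon w) = \varepsilon\bigl[\int_\O w\,\dk_\rho - \int_\O\ave{w}_\rho\,\dk_\rho\bigr] + \varepsilon^2 Q(w)$, and the conclusion follows from $Q \geq 0$. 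In fact you do not even need the sign of $Q(w)$: if the $O(\varepsilon)$ coefficient were nonzero, choosing $\varepsilon$ small of the opposite sign would make the whole expression negative regardless of the quadratic term. The paper instead reduces to two-point models: it shows that for any partition $\{A,B\}$ of $\O$ the $2\times 2$ matrix of averaged indicators inherits ball-positivity via Jensen/\CS, invokes a separate lemma that ball-positive $2\times 2$ right-stochastic matrices are symmetric, and deduces $\int_\O\ave{\one_A}_\rho\,\dk_\rho = \k_\rho(A)$ for all $A$, then extends by approximation. That route pays off elsewhere in the paper (the two-point and three-point machinery in \appx{a:fm} is reused to construct a ball-positive non-symmetric example), but as a proof of \prop{p:bpcons} alone your variational argument is cleaner, shorter, and avoids the approximation step. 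One small caution on your closing aside: \lem{l:conscontr} relates conservation to $1$-contractivity (via $\|\ave{\one_\O}^*_\rho\|_\infty \leq 1$), and the paper explicitly remarks that $2$-contractivity alone does not appear to imply conservation, so that alternative route is not as straightforward as you suggest; but this does not affect your main argument, which stands.
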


Let us summarize the list of properties, relations between them, and examples.
\medskip
\[
\boxed{ \text{ball-positive} \, \Longleftarrow \left\{
\begin{array}
[c]{cccccccc}
\text{symmetric} & \Longrightarrow & \text{conservative} & \Longleftrightarrow & \text{contractive} \\
  & & \Uparrow & & & \\
 \text{positive semi-definite} & \Longleftarrow & \text{ball-positive } & & 
\end{array} \right. }
\]

\bigskip
\begin{center}
\begin{tabular}{  c |  c | c |c |c } 
 MODEL & conservative  & symmetric& ball-positive & Galilean invariant  \\ 
 \hline
\ref{Id} & $\checkmark$ & $\checkmark$  & $\checkmark$ & $\checkmark$\\ 
 \hline
 \ref{global} & $\checkmark$ & $\checkmark$ &$\checkmark$ & $\checkmark$ \\ 
  \hline
\ref{CS} & $\checkmark$  & $\checkmark$& $\checkmark$ if $\phi = \psi \ast \psi$ & $\checkmark$\\
  \hline
  \ref{CStopo} & $\checkmark$  & $\checkmark$& $\times$ & $\checkmark$\\
  \hline
 \ref{MT} &  $\times$ &  $\times$ & $\times$  & $\checkmark$\\
  \hline
 \ref{Mf} &$\checkmark$  &  $\checkmark$ &$\checkmark$ & $\checkmark$\\
   \hline
 \ref{Mseg} & $\checkmark$  &  $\checkmark$ &$\checkmark$ & $\times$
 \end{tabular}
\end{center}
\bigskip

The most important applications of ball-positivity will be seen in the context of flocking and spectral gap calculations to be discussed in \sect{s:lowenergy}.

\subsection{Thickness, regularity, and well-posedness of microscopic systems}\label{s:rth}

In order to develop a meaningful analysis of alignment  models it will be necessary to make a list of continuity and regularity assumptions. We state those in terms of representing kernels and strength functions, which appears to be the most economical way.

\subsubsection{Locality of communication}
First and foremost we assume that representing kernels  support communication at a short range, i.e. 
\begin{equation}\label{e:locker}
\phi_\rho(x, y)  \geq c_0 \one_{|x-y| <r_0}, \text{ for some } r_0>0, \text{ and all } \rho \in \cP(\O).
\end{equation}
Typically, for Favre-based models, such locality follows from the corresponding locality of the defining convolution kernel $\phi$:
\begin{equation}\label{e:lockerdef}
\phi(r) \geq c_0 \one_{r<r_0}.
\end{equation}

Many models in our list satisfy this condition automatically. For the classical Cucker-Smale, it simply means that $\phi >0$ near the origin. The Motsch-Tadmor model fulfills the same via 
\begin{equation}\label{e:MTkerlow}
\phi_\rho(x,y)= \frac{\phi(x-y)}{\rho_\phi(x)} \geq \frac{1}{\|\phi\|_\infty}\phi(x-y),
\end{equation}
since $\rho_\phi(x) \leq \|\phi\|_\infty$.  Similarly, for the \ref{Mf}-model, we have
\[
\phi_\rho(x,y) \geq \frac{1}{\|\phi\|_\infty} \phi \ast \phi(x-y) \geq c_0 \one_{|x-y| <r_0}.
\]

The locality also holds for the segregation model \ref{Mseg} on a compact environment $\O$. Indeed, since $\sum_{l=1}^L g_l(x) = 1$, for every $x$ there exists $l$ such that $g_l(x) \geq 1/L$. Using continuity and compactness, there exists a $r_0>0$ such that for any $|x-y| < r_0$ we have $g_l(y) > 1/2L$. Then, since $\rho(g_l) \leq 1$,
\[
\phi_\rho(x,y) = \sum_{l=1}^L \frac{g_l(x) g_l(y)}{\rho(g_l)} \geq \frac{1}{2L^2} = c_0, \qquad \forall x,y:  |x-y| < r_0.
\]
Thus, \eqref{e:locker} is satisfied.

A better way to express \eqref{e:locker} and similar conditions that follow is through the use of a smooth cut-off function. Let us fix $\chi \in C^\infty_0(B_1(0))$ such that $\chi(x) = 1$ for $x\in B_{1/2}(0)$ and $0\leq \chi \leq 1$ throughout.  We denote the rescaling of $\chi$ by $\chi_r(x) = \chi(x/r)$.  Thus, \eqref{e:locker} implies
\begin{equation}\label{e:locchi}
\phi_\rho(x, y)  \geq c_0 \chi_{r_0}(x-y).
\end{equation}

\subsubsection{Thickness}   Flock with a certain weight present throughout its support or even the entire environment are called thick. One can use masses of balls, $\rho(B_r(x))$ as a measure of thickness. This concept was adopted, for example, in  \cite{MPT2018}. While useful in many situations (see Sections \ref{s:chains}, \ref{s:lowenergy}) for some models, however, thickness takes more individual form which is easier to satisfy. For example, in the \ref{CS} case it is more natural to measure thickness as $\rho_\phi$, while for \ref{Mseg} the thickness can be measured in terms of masses of neighborhoods, $\rho(g_l)$.  We adopt the following general definition.
\begin{definition}\label{d:th}
A {\em thickness} of a density $\rho \in \cP(\O)$ is a function $\Th_\rho : \O \to \R^+$ satisfying the following conditions
\begin{itemize}
\item[(i)] $\Th(\rho,\cdot)$ is lower semi-continuous;
\item[(ii)] $\rho( \{ x: \Th(\rho,x) = 0 \} ) = 0$, for all $\rho \in L^1(\O) \cap \cP(\O)$;
\item[(iii)] There exists $c>0$ such that $ \Th(\rho,x) \geq c \min \rho$, for all $\rho\in \O$;
\item[(iv)] Continuity-in-$\rho$: there exists a $c>0$ such that for all $\rho',\rho''\in \cP(\O)$,
\begin{equation}\label{ }
\| \Th(\rho',\cdot ) - \Th(\rho'',\cdot)\|_\infty \leq c \|\rho'-\rho''\|_{\cP}.
\end{equation}
\item[(v)] Compatibility with the continuity equation: if $\rho$ satisfies 
\[
\p_t \rho + \n_x \cdot (u\rho) = 0,
\]
then  for every point $x\in \O$, the function $t \to \Th(\rho(t),x)$ satisfies
\begin{equation}\label{e:contiv}
\p_t \Th (\rho,x) \geq - c \| u\|_{L^2(\rho)},
\end{equation}
in distributional sense.

\end{itemize}
Thickness of the flock over a subset $S \ss \O$ is defined by
\begin{equation}\label{e:thS}
\Th(\rho,S)  = \inf_{x\in S} \Th(\rho,x) .
\end{equation}
If $S=\O$ we call $\Th(\rho,\O)$ the  {\em uniform thickness} of the flock.
\end{definition}

\begin{example}\label{ } If no specific structural information is known about $\phi_\rho(x,y)$ except for locality \eqref{e:locker} then a universal choice for the thickness  would be the mass of a smoothed ball at a scale $0<r<r_0$ (called {\em ball-thickness}):
\begin{equation}\label{e:thball}
\Th(\rho,x) = \orho_r(x) = \rho \ast \chi_r (x).
\end{equation}
Most properties are easy to verify: for (i) we even have $\Th \in C^\infty$, for  (ii) we observe that 
\[
\{ x: \Th(\rho,x) = 0 \} \cap \supp \rho = \emptyset,
\]
(iii) and (iv) are trivial,  and as to (v) we have
\begin{equation}\label{e:thickrdyn}
\p_t \orho_r(x) = - \n_x \cdot (u \rho)_{\chi_r} = - (u \rho)_{\n \chi_r}   \geq  - c  \| u\|_{L^2(\rho)}.
\end{equation}
\end{example}

\begin{example}\label{ }
Thickness associated with a local convolution type kernel $\phi$, \eqref{e:lockerdef}, is given by
\begin{equation}\label{e:ThCS}
\Th(\rho,x) = \rho_\phi (x).
\end{equation}
Here all the properties are trivial. Note that locality \eqref{e:lockerdef} is necessary for (ii). This choice will be  suitable for all Favre-based models.
\end{example}

\begin{example}\label{}
Another example is associated with the segregation model \ref{Mseg}:
\begin{equation}\label{e:segthick}
\Th(\rho,x) = \min_{l: x\in \supp g_l} \rho(g_l).
\end{equation}
Here we also assume for technical reasons that $| \p (\supp g_l) | = 0$. 

To show the lower semi-continuity, let $x\in \O$ be such that $\Th(\rho,x) >a$. Suppose $l_1,\dots,l_k$ is the list of indexes such that $x\not \in \supp g_{l_i}$. Then there exists $\d>0$ such that $B_\d(x) \cap \supp g_{l_i} = \emptyset$. Then for all $y\in B_\d (x)$ the list of $l$'s for which $y\in \supp g_l$ is a subset of the list of $l$'s corresponding to $x$. So, $\Th(\rho,y) \geq \Th(\rho,x)$, and hence the set $\{ x: \Th(\rho,x)>a\}$ is open. 

To show (ii) suppose we have $x: \Th(\rho,x) = 0$, hence there exists $l$ such that $x\in \supp g_l$ and $\rho(g_l)=0$. If $g_l(x) >0$, then $\rho(B_\e(x))=0$ for a small $\e$, hence $x\not\in \supp \rho$. Otherwise, $x\in \p (\supp g_l)$. So, 
\[
 \{ x: \Th(\rho,x) = 0 \}  \ss( \O \backslash \supp \rho ) \cup \p (\supp g_1) \cup \dots \cup \p (\supp g_L),
 \]
 and the $\rho$-measure of the set on the right hand side is $0$.
 
 (iii) and (iv) are trivial, as to (v) we have similar to \eqref{e:thickrdyn}
 \[
 \p_t \rho(g_l) = \int_\O u \cdot \n g_l \drho \geq - c  \| u\|_{L^2(\rho)}
 \]
 for any $l=1,\dots, L$. So, for any fixed $x\in \O$ there is a finite collection of $l$'s such that $x\in \supp g_l$. Denote it $L(x)$. Since the minimum is taken over a fixed compact set $L(x)$ at any moment of time,  Rademacher's lemma applies to deduce \eqref{e:contiv} is distributional sense.
\end{example}

\subsubsection{Regularity of $\cM$ and continuous dependence on $\rho$}

Let us discuss now regularity and continuity-in-$\rho$ of our models.  We will encounter two type of models -- ones whose regularity depends on thickness (and therefore can be violated if the density in question is not thick), and ones that are uniformly regular independently of thickness. 

Before we make these definitions precise, let us make an observation  -- in all our models the strength is bounded from below by the native and ball-mass thicknesses: there exists an non-decreasing continuous function $\st : \R^+ \to \R^+$ such that
\begin{equation}\label{e:stlow}
\st_\rho(x)  \geq \st(\Th(\rho,x)), \ \st_\rho(x)  \geq \st(\orho_{r_0}(x))\quad \text{ for all }  x\in \O.
\end{equation}

Let us recall the classical Kantorovich-Rubinstein distance between any two finite measures $\mu',\mu''$ over $\O$:
\begin{equation}\label{e:KR}
	W_1(\mu',\mu'') = \sup_{ \Lip(h) \leq 1} \left| \int_{\O} h(x) [ \dmu'(x) - \dmu''(x) ]\right|.
\end{equation}

\begin{definition}\label{d:r}
We say that a model $\cM$ with is {\em regular} if for every $R>0$ and $\rho, \rho',\rho'' \in \cP(B_R)$ we have
\begin{align}
\| \p^k \st_\rho \|_{L^\infty(B_R)}  + \| \p^k_{x} \phi_\rho \|_{L^\infty(B_R \times B_R)} +\| \p^k_{y} \phi_\rho \|_{L^\infty(B_R \times B_R)} & \leq C_{k,R}( \Th(\rho,B_R)), \quad k= 0,1,\dots,\label{e:r1} \\
\| \st_{\rho'} -\st_{\rho''}  \|_{L^\infty(B_R)}  + \| \phi_{\rho'}  -\phi_{\rho''} \|_{L^\infty(B_R \times B_R)} & \leq C_R(\Th(\rho',B_R),\Th(\rho'',B_R))  W_1(\rho',\rho'').\label{e:r2}
\end{align}
\end{definition}

 \begin{definition}\label{d:ur}
We say that a model $\cM$ with is {\em uniformly regular} if  for every $R>0$ and $\rho, \rho',\rho'' \in \cP(B_R)$ we have
\begin{align}
\| \p^k \st_\rho \|_{L^\infty(B_R)}  + \| \p^k_{x} \phi_\rho \|_{L^\infty(B_R \times B_R)} +\| \p^k_{y} \phi_\rho \|_{L^\infty(B_R \times B_R)} & \leq C_{k,R}, \quad k= 0,1,\dots,\label{e:ur1}\\
\| \st_{\rho'} -\st_{\rho''}  \|_{L^\infty(B_R)}  + \| \phi_{\rho'}  -\phi_{\rho''} \|_{L^\infty(B_R \times B_R)} & \leq C_R  W_1(\rho',\rho'').\label{e:ur2}
\end{align}
\end{definition}


If no information is known about the thickness of one of the densities involved in \eqref{e:r2}, some of the models still retain a level of continuity if at least the other density is thick: for every $R>0$ and $\rho',\rho'' \in \cP(B_R)$ one has
\begin{align}
\int_{\O} | \st_{\rho'}(x) - \st_{\rho''}(x)|^2 \drho''(x) & \leq C_R(\Th(\rho',B_R)) W^2_1(\rho',\rho''), \label{e:glob1} \\
\int_\O \int_\O | \phi_{\rho'}(x,y)  -\phi_{\rho''}(x,y)|^2 \drho''(x) \drho''(y) & \leq C_R(\Th(\rho',B_R)) W^2_1(\rho',\rho''). \label{e:glob2} 
\end{align}
This will be useful in the study of the hydrodynamic limits.

Let us go through the main examples on our list, identify their associated thicknesses and determine which level of regularity they satisfy and  under which conditions. Our findings are summarized in Table~\ref{t:rth}.

\begin{table}
\begin{center}
\caption{Regularity type and associated thickness of selected models.}\label{t:rth}
\begin{tabular}{  c |  c | c | c | c |c } 
\multirow{2}{*}{ $\mathrm{MODEL}$} & \multirow{2}{*}{  \ref{CS} }  & \multirow{2}{*}{  \ref{CStopo} }  & \multirow{2}{*}{\ref{MT} \& \ref{Mb}, $0< \b <1$ }& \multirow{2}{*}{ \ref{Mf}}  & \multirow{2}{*}{ \ref{Mseg}  } \\
& & & & \\
  \hline
  \multirow{2}{*}{$\Th(\rho,x)$}  &  \multirow{2}{*}{$\rho_\phi(x)$}  &\multirow{2}{*}{$\rho_\psi(x)$} & \multirow{2}{*}{  $\rho_\phi(x) $}  & \multirow{2}{*}{$\rho_\phi(x) $ } &  \multirow{2}{*}{  $ \min_{l: x\in \supp g_l} \rho(g_l)$ } \\ 
 & & & & \\
  \hline
  \multirow{2}{*}{regular}  &  \multirow{2}{*}{$\checkmark$}  & \multirow{2}{*}{$\checkmark$}  & \multirow{2}{*}{ $\checkmark$}  & \multirow{2}{*}{ $\checkmark$ on compact $\O$ } &  \multirow{2}{*}{  $\checkmark$ } \\ 
&  & & & & 
  \\
  \hline
\multirow{2}{*}{uniform}  &  \multirow{2}{*}{$\checkmark$}  & \multirow{2}{*}{$\checkmark$}  &  \multirow{2}{*}{$\phi >0$} &   $\phi  >0$  on compact  $\O$    &  \multirow{2}{*}{  $\supp g_l = \O$ } \\ 
&  & & & $\phi = \frac{1}{\lan x \ran^{n+\g}}$  on   $\R^n$ &  \\
 \hline
\multirow{2}{*}{\eqref{e:glob1}-\eqref{e:glob2}}  &  \multirow{2}{*}{$\checkmark$}  & \multirow{2}{*}{$\checkmark$}  & \multirow{2}{*}{ $c \one_{|x|<r_0} \leq  \phi(x) \leq C \one_{|x| < R_0}$}  & \multirow{2}{*}{ $\checkmark$ on compact $\O$ } &  \multirow{2}{*}{  $\checkmark$ } \\ 
& & & & &  
\end{tabular}
\end{center}
\end{table}

\begin{example}[\ref{CS}, \ref{CStopo}] 
The Cucker-Smale model is trivially uniformly regular with $\Th(\rho,x) = \rho_\phi$. While \ref{CStopo} is uniformly regular with $\Th(\rho,x) = \rho_\psi$.
\end{example}
\begin{example}[\ref{Mb}, $0\leq \b <1$]  The model has the same associated thickness $\Th(\rho,x) = \rho_\phi$. Under no conditions on $\phi$, the models is trivially regular.  If $\phi>0$, then 
\begin{equation}\label{e:densphilow}
\Th(\rho,x) \geq \inf_{r<2R} \phi(r)  = \d >0
\end{equation}
on $B_R$ for any $\rho \in \cP(B_R)$. So, in this case the model is uniformly regular, and all the estimates are straightforward.  

Let us assume that $\phi$ is local and satisfies \eqref{e:kerRr}. We will prove that in this case the model is  continuous in $\rho$, \eqref{e:glob1}-\eqref{e:glob2}.
Indeed,  as to \eqref{e:glob1}, by an elementary inequality,  we have
\begin{equation}\label{e:compstb}
| (\rho'_\phi)^\b - (\rho''_\phi)^\b | \leq C |\rho'_\phi |^{\b-1} | \rho'_\phi - \rho''_\phi | \leq C(\Th(\rho',x) )\|\n \phi\|_\infty W_1(\rho',\rho''),
\end{equation}
and \eqref{e:glob1} follows.  As to the kernel continuity \eqref{e:glob2}, we have
\begin{equation*}\label{}
\begin{split}
& \int_\O \int_{\O}  | \phi_{\rho'}(x,y)  -\phi_{\rho''}(x,y)|^2 \drho''(x) \drho''(y)  \\
 \leq  & \|\phi\|_\infty \int_\O \int_\O   \left| \frac{1}{(\rho'_\phi(x))^{1-\b}}   -\frac{1}{(\rho''_\phi(x))^{1-\b}} \right|^2 \phi(x-y)\drho''(x) \drho''(y)  \\
 =  &\|\phi\|_\infty  \int_\O \left| \frac{1}{(\rho'_\phi(x))^{1-\b}}   -\frac{1}{(\rho''_\phi(x))^{1-\b}} \right|^2 \rho''_\phi(x) \drho''(x)   \\
  =& \|\phi\|_\infty  \int_\O \frac{| (\rho''_\phi(x))^{1-\b} -(\rho'_\phi(x))^{1-\b} |^2}{(\rho'_\phi(x))^{2-2\b}}  (\rho''_\phi(x))^{2\b}  \frac{\drho''(x)}{\rho''_\phi(x)} \\
 \leq & C(\Th(\rho',B_R)) W^2_1(\rho',\rho'') \int_{B_R} \frac{\drho''(x)}{\rho''_\phi(x)}.
\end{split}
\end{equation*}
Note that 
\begin{equation}\label{ }
\int_{B_R} \frac{\drho''(x)}{\rho''_\phi(x)} =\frac{1}{\|\phi\|_1} \int_{B_{R+R_0}} \int_{B_R} \phi(x-y) \frac{\drho''(x)}{\rho''_\phi(x)} \dy = \frac{1}{\|\phi\|_1} \int_{B_{R+R_0}} \left( \frac{\rho''}{\rho''_\phi} \right)_\phi(y) \dy.
\end{equation}
According to \eqref{e:KMTrho}, the expression inside is uniformly bounded, and hence the whole integral is bounded by a constant depending only on $R,R_0$.
This proves \eqref{e:glob2}. 

Finally, we note that if $\phi$ is local, then the ball-thickness \eqref{e:thball} with $r\leq r_0$ can also be used in all the estimates. This observation will be useful in the relaxation study, see \sect{s:hypo}. However it should be noted that  $\rho_\phi (x) \geq c \orho_{r}(x)$, and so it is easier for densities to be natively thick than ball-thick.

\end{example}

\begin{example}[\ref{Mseg}]
The computation is quite similar for the segregation model \ref{Mseg}, where the thickness functional $\Th$ is given by \eqref{e:segthick}.  The model is clearly uniformly thick if $\supp g_l = \O$, $l = 1,\dots,L$, since then $\Th(\rho, \O) = 1$ for any $\rho$. Generally, the global thickness is given by $\Th(\rho,\O) = \min_l \rho(g_l)$. So, it is clear that regularity holds for this model as well.  All these conclusions hold for the ball-mass thickness \eqref{e:thball} where $r$ is a small radius so that for every $l$ there exists an $x_0 \in \O_l$ such that $\rest{g_l}{B_r(x_0)} \geq c_0$ for some fixed $c_0>0$.

Let us establish \eqref{e:glob2} relative to the native thickness (the ball-thickness  \eqref{e:thball} will not work here)
\begin{equation*}\label{}
\begin{split}
& \int_\O \int_\O  | \phi_{\rho'}(x,y)  -\phi_{\rho''}(x,y)|^2 \drho''(x) \drho''(y)  \\
 \leq  &  \int_\O \int_\O \sum_{l=1}^L g_l(x)g_l(y)  \left| \frac{1}{\rho'(g_l)}   -\frac{1}{\rho''(g_l)} \right|^2 \phi(x-y)\drho''(x) \drho''(y) \\
 \leq & \sum_{l=1}^L (\rho''(g_l))^2  \left| \frac{1}{\rho'(g_l)}   -\frac{1}{\rho''(g_l)} \right|^2 =\sum_{l=1}^L  \left| \frac{\rho'(g_l) - \rho''(g_l)}{\rho'(g_l)}  \right|^2 \leq C(\Th(\rho',\O)) W^2_1(\rho',\rho'') . 
\end{split}
\end{equation*}

\end{example}

\begin{example}[\ref{Mf}] Because of the non-local dependence on $\rho_\phi$ in the kernel, there doesn't seem to be another thickness quantity that would fulfill the local continuity and regularity assumptions. However, if we set $\Th(\rho,x) = \rho_\phi(x)$, the model becomes regular on any  compact environment $\O$ and for any kernel $\phi$. Also on compact $\O$, the model is uniformly regular when $\phi>0$.  Finally, the strong continuity-in-$\rho$, \eqref{e:glob1}-\eqref{e:glob2}, holds as well:
\begin{equation*}\label{}
\begin{split}
 &\int_\O \int_{\O}  | \phi_{\rho'}(x,y)  -\phi_{\rho''}(x,y)|^2 \drho''(x) \drho''(y) \\
 \lesssim & \int_\O \int_{\O} \int_{\O}\phi(x-z)\phi(y-z) \left| \frac{1}{\rho'_\phi(z)} -\frac{1}{\rho''_\phi(z)}  \right|^2 \dz \drho''(x) \drho''(y) \\
 = & \int_\O   \left| \frac{1}{\rho'_\phi(z)} -\frac{1}{\rho''_\phi(z)}  \right|^2 (\rho''(z))^2 \dz \leq \frac{|\O|}{ \Th^{2}(\rho',\O) } W^2_1(\rho',\rho'').
\end{split}
\end{equation*}

 On the open space, if $\phi$ is compactly supported then the model would fail to fulfill any regularity assumptions. However, for the integrable kernel $\phi$  satisfying the following conditions
\begin{equation}\label{e:algker}
\begin{split}
\phi  \in W^{k,1}(\O),& \quad \forall k\in \N, \\
\frac12 |y| \leq |x| \leq 2|y| \quad & \Rightarrow \quad \phi(x) \sim \phi(y),
\end{split}
\end{equation}
one can establish uniform regularity.
The choice $\phi = \frac{1}{\lan x \ran^{n+\g}}$, $\g>0$, $\jap{x} = (1+|x|^2)^{\frac12}$, is an example of such a kernel.

To see that let $\rho \in  \cP(B_R)$. We have
\[
\p_x^k \phi_\rho(x,y) = \int_\O \frac{\p^k\phi(x-z) \phi(y-z)}{\rho_\phi(z)} \dz =  \int_{B_{2R}} \frac{\p^k \phi(x-z) \phi(y-z)}{\rho_\phi(z)} \dz +  \int_{\O \backslash B_{2R}} \frac{\p^k \phi(x-z) \phi(y-z)}{\rho_\phi(z)} \dz.
\]
Inside the ball $B_{2R}$ we have $\rho_\phi(z) \geq \d$ by \eqref{e:densphilow}. So,
\[
\int_{B_{2R}} \frac{\p^k \phi(x-z) \phi(y-z)}{\rho_\phi(z)} \dz \leq C(R) \| \phi\|_{W^{k,1}}.
\]
For $z\in \O \backslash B_{2R}$ we have by \eqref{e:algker}
\[
\rho_\phi(z) = \int_{B_R} \phi(z - w) \drho(w) \gtrsim \phi(z) \int_{B_R} \drho(w) = \phi(z).
\]
On the other hand, by the same \eqref{e:algker}, since $y \in B_R$,
\[
\phi(y-z) \lesssim \phi(z).
\]
Thus,
\[
\int_{\O \backslash B_{2R}} \frac{\p^k \phi(x-z) \phi(y-z)}{\rho_\phi(z)} \dz \lesssim \int_{\O \backslash B_{2R}}| \p^k \phi(x-z) | \dz \leq \| \phi\|_{W^{k,1}}.
\]
Since the kernel is symmetric the same holds for $\p^k_y \phi_\rho$. We have proved \eqref{e:ur1}. To show \eqref{e:ur2} let us write
\begin{equation*}\label{}
\begin{split}
 | \phi_{\rho'}(x,y)  -\phi_{\rho''}(x,y)| & \leq  \int_\O \phi(x-z) \phi(y-z) \frac{|\rho''_\phi(z) - \rho'_\phi(z) |}{\rho'_\phi(z) \rho''_\phi(z)} \dz  \\
& = \int_{B_{2R}} \phi(x-z) \phi(y-z) \frac{|\rho''_\phi(z) - \rho'_\phi(z) |}{\rho'_\phi(z) \rho''_\phi(z)} \dz  \\
&+ \int_{ \O \backslash B_{2R}} \phi(x-z) \phi(y-z) \frac{|\rho''_\phi(z) - \rho'_\phi(z) |}{\rho'_\phi(z) \rho''_\phi(z)} \dz    
\end{split}
\end{equation*}
Using again that inside the ball $B_{2R}$, $\rho'_\phi(z), \rho''_\phi(z) \geq \d$, we obtain
\[
\lesssim W_1(\rho',\rho'') + W_1(\rho',\rho'')  \int_{ \O \backslash B_{2R}} \frac{\phi(x-z) \phi(y-z)}{\rho'_\phi(z) \rho''_\phi(z)} \sup_{w\in B_{R}}|\n_w \phi(z-w)|  \dz.
\]
Arguing as before we conclude that $ \frac{\phi(x-z) \phi(y-z)}{\rho'_\phi(z) \rho''_\phi(z)} $ is uniformly bounded on $\O \backslash B_{2R}$. At the same time, $\sup_{w\in B_{R}}|\n_w \phi(z-w)|\in L^1(\dz)$. This finishes the estimate.

The native thickness in all of the above can be replaced with the ball-thickness \eqref{e:thball} as well. 
\end{example}

\subsubsection{Well-posedness of agent-based systems}
Let us establish basic well-posedness of the agent based system as a consequence of the uniform regularity:
\begin{equation}\label{e:ABSdet}\left\{
\begin{split}
\dot{x}_i & = v_i \\
 \dot{v}_i & = \st_i ( \ave{v}_i - v_i )
 \end{split}\right.\quad i=1\dots N.
\end{equation}
Here, $\O$ can be any environment.    Recall that $\st_i$ and $ \ave{v}_i $ are defined in  \eqref{e:emp}.  The  maximum principle implies that $\max_i |v_i| \leq \max_i |v_i(0)| : = A$, and therefore, $\max|x_i| \lesssim t$ a priori.  So, in order to establish global existence by the standard fix point argument it suffices to check that the right hand side of \eqref{e:ABSdet} is locally Lipschitz on $\O^N \times \R^{nN}$.

So, let us assume that $\cM$ is uniformly regular.  Let us fix masses $m_1,\ldots,m_N$ and  two configurations 
\[
(x'_1,\dots,x'_N; v'_1,\dots,v'_N) \in B_R^{N} \times B_A^{N}, \quad (x''_1,\dots,x''_N; v''_1,\dots,v''_N) \in B_R^{N} \times B_A^{N}.
\]
We only need to show Lipschitzness of the momentum equation. We have
\[
|\st_i'[v']_i - \st'_i v_i' - \st_i''[v'']_i + \st''_i v_i'' | \leq |\st_i'[v']_i - \st_i''[v'']_i | + |\st'_i v_i' - \st''_i v_i'' | = I + II.
\]
As to II,
\[
II \leq |\st'_i - \st''_i| |v'_i| +|\st''_i|| v'_i -  v_i'' | \leq A  |\st'_i - \st''_i| + \oS | v'_i -  v_i'' |.
\]
Using \eqref{e:ur1}-\eqref{e:ur2},
\begin{equation*}\label{}
\begin{split}
|\st'_i - \st''_i| & \leq |\st_{\rho'}(x'_i) -\st_{\rho'}(x''_i)| + |\st_{\rho'}(x''_i)   - \st_{\rho''}(x''_i) | \leq \oC_1 |x'_i - x''_i| + \oC W_1(\rho' , \rho'') \\
& \leq \oC_1 |x'_i - x''_i| + \oC  \sum_j m_j |x_j' - x''_j| \lesssim \max |x'_j - x''_j|.
\end{split}
\end{equation*}
We now estimate the weighted averages term using the same regularity assumptions,
\begin{equation*}\label{}
\begin{split}
I & \leq \sum_j m_j | \phi_{\rho'}(x_i',x'_j) v'_j - \phi_{\rho''}(x''_i,x''_j) v''_j | \\
& \leq \sum_j m_j  \phi_{\rho'}(x_i',x'_j) | v'_j -v''_j|  + \sum_j m_j |  \phi_{\rho'}(x_i',x'_j) -\phi_{\rho''}(x''_i,x''_j)| |v''_j| \\
& \leq  \oC_0 \max |v'_j - v''_j| + A\sum_j m_j |  \phi_{\rho'}(x_i',x'_j) -\phi_{\rho''}(x'_i,x'_j)| +  A\sum_j m_j |  \phi_{\rho''}(x_i',x'_j) -\phi_{\rho''}(x''_i,x''_j)| \\
& \leq  \oC_0 \max |v'_j - v''_j| + A W_1(\rho' , \rho'') +  2 A \oC_1  \sum_j m_j |x_j' - x''_j| \\
& \lesssim \max |v'_j - v''_j| + \max |x'_j - x''_j|.
\end{split}
\end{equation*}
We have proved the following result.
\begin{proposition}
If $\cM$ is uniformly regular, then the system \eqref{e:ABSdet} is globally well-posed.
\end{proposition}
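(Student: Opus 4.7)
The plan is to reduce global well-posedness to a standard Picard--Lindel\"of argument: establish (a) a priori $L^\infty$ bounds on $(x_i, v_i)$ on any finite time interval, and (b) local Lipschitz continuity of the right-hand side of \eqref{e:ABSdet} on compact subsets of $\O^N \times \R^{nN}$. Together these exclude finite-time blow-up and promote local to global solutions.

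For (a), axiom (ev3) says $\ave{\cdot}_\rho$ preserves order and fixes constants, so applied to the empirical field $u^N = \sum_j v_j \one_{x_j}$ it produces a convex combination of the $v_j$'s. Since $\dot v_i = \st_i(\ave{v}_i - v_i)$, a coordinatewise maximum principle gives $\max_i |v_i(t)| \leq \max_i |v_i(0)| =: A$, and hence $\max_i |x_i(t)| \leq \max_i |x_i(0)| + At$. On any fixed time interval the state remains in a bounded ball $B_R^N \times B_A^N$, so it suffices to verify the Lipschitz property there.

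For (b), the position equations $\dot x_i = v_i$ are trivially Lipschitz. For the velocity equations, I would rewrite the alignment force via the reproducing kernel identity \eqref{e:sys1}, so that $\st_i(\ave{v}_i - v_i) = \sum_j m_j \phi_{\rho^N}(x_i,x_j)(v_j - v_i)$. Comparing two configurations, one must estimate differences of $\st_{\rho'^N}(x'_i) v'_i$ and of $\phi_{\rho'^N}(x'_i,x'_j) v'_j$ against their double-primed counterparts. The uniform regularity assumptions \eqref{e:ur1}--\eqref{e:ur2} provide exactly the three ingredients needed: uniform $L^\infty$ bounds on $\st_\rho$ and $\phi_\rho$ over $B_R$, uniform Lipschitz bounds in both spatial arguments, and uniform Lipschitz dependence on $\rho$ in $W_1$. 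The $\rho$-dependence is converted to dependence on positions through the elementary bound $W_1(\rho'^N, \rho''^N) \leq \sum_j m_j |x'_j - x''_j|$, which follows directly from the Kantorovich--Rubinstein duality \eqref{e:KR} applied to empirical measures with matching masses. A telescoping argument (perturbing one quantity at a time) then produces a Lipschitz bound of the form $C(\max_j |v'_j - v''_j| + \max_j |x'_j - x''_j|)$.

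The main technical point, though mild, is bookkeeping: the kernel $\phi_\rho$ depends on $\rho$, which in turn depends on the positions, and is then evaluated at those same positions. Every spatial perturbation therefore enters twice -- through the kernel's $\rho$-argument and through its evaluation points -- and one must keep these contributions cleanly separated. Crucially, this is where \emph{uniform} regularity (as opposed to merely regular) is essential: for empirical measures one has no a priori control on thickness $\Th(\rho^N,\cdot)$, so thickness-dependent constants $C_{k,R}(\Th(\rho,B_R))$ would be useless here. With the local Lipschitz estimate in hand, Picard--Lindel\"of yields a unique local solution, and the uniform a priori bound from (a) extends it to all times.
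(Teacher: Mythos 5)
Your proposal is correct and follows essentially the same route as the paper: a priori bounds via the maximum principle, then local Lipschitzness of the momentum equation obtained by expressing the alignment force through the reproducing kernel \eqref{e:sys1}, telescoping, bounding $W_1(\rho'^N,\rho''^N)$ by $\sum_j m_j|x'_j - x''_j|$, and invoking the uniform regularity estimates \eqref{e:ur1}--\eqref{e:ur2}. Your remark about why \emph{uniform} regularity (rather than thickness-dependent regularity) is indispensable for empirical measures is also the same structural point the paper is implicitly relying on.
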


Note that this well-posedness result is robust -- the Lipschitzness is independent of the number of agents or their masses. That is why it can be extended to kinetic formulation as well, see \sect{s:mfldet}.  However, the well-posedness in a less robust form also extends to some non-regular models such as \ref{MT} if $\phi$ is finitely supported and satisfies \eqref{e:locker}.  This is based on the fact that for any atomic $\rho$ we have $\rho_\phi(x_i) \geq m_i \phi(0)$. So, there is a residual mass-dependent thickness of the flock left on its support. Indeed, we have in this case 
\[
|[v']_i - [v'']_i | \leq \frac{\|\phi\|_\infty}{\phi(0)m_i}\sum_j m_j  | v'_j -v''_j|  + A \sum_j m_j |  \phi_{\rho'}(x_i',x'_j) -\phi_{\rho''}(x''_i,x''_j)| ,
\]
and 
\[
  \phi_{\rho'}(x_i',x'_j) -\phi_{\rho''}(x''_i,x''_j) = \frac{ \phi(x'_i - x'_j) }{ \sum_k m_k \phi(x_i' - x_k')} -\frac{ \phi(x''_i - x''_j) }{ \sum_k m_k \phi(x''_i - x''_k)}  \lesssim \frac{1}{m_i^2} \max |x'_k - x''_k|.
\]
Similar computation works for \ref{Mb}. Models \ref{Mseg} and \ref{Mf} do not seem to have good well-posedness properties when it comes to agent-based systems with purely local communication kernels.

\begin{proposition}
The models \ref{MT}, \ref{Mb} are globally well-posed provided the defining kernel $\phi$ is locally supported \eqref{e:lockerdef}.
\end{proposition}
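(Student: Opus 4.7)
The plan is to follow exactly the same Picard-Lindelöf template used for the proof of the preceding proposition, only replacing the uniform regularity input by a residual, mass-dependent thickness bound that is available for \ref{MT} and \ref{Mb} once $\phi$ satisfies \eqref{e:lockerdef}. The a priori velocity bound $\max_i |v_i(t)| \leq \max_i |v_i(0)| = A$ from \eqref{e:max} is still in force because the right-hand side of \eqref{e:ABSdet} is a convex combination correction $\st_i(\ave{v}_i - v_i)$, so it suffices to establish that the field $(x,v) \mapsto (v,\, \st_\cdot(\ave{\cdot}_\cdot - \cdot))$ is locally Lipschitz on $\O^N \times B_A^N$ with fixed (positive) masses $m_1,\dots,m_N$. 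Globalization in time then follows since $|x_i(t)|\leq |x_i(0)| + At$ keeps the flock in a fixed compact set on any finite interval.

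The key observation, already highlighted in the text, is that for an empirical density $\rho^N = \sum_j m_j \delta_{x_j}$ the locality hypothesis \eqref{e:lockerdef} forces a pointwise lower bound on each node,
\begin{equation*}
\rho^N_\phi(x_i) \;=\; \sum_j m_j \phi(x_i - x_j) \;\geq\; m_i \phi(0) \;\geq\; c_0 m_i ,
\end{equation*}
so the quantity that could otherwise vanish (and that is responsible for the failure of uniform regularity of \ref{MT} and \ref{Mb}) is bounded from below by a strictly positive constant $c_0 m_i$ \emph{depending on the masses but not on the configuration}. Consequently, for $\b\in[0,1]$ the specific strength $\st_i = (\rho^N_\phi(x_i))^\b$ is bounded uniformly from below and above along the flow, and the representing kernel
\begin{equation*}
\phi_{\rho^N}(x_i,x_j) \;=\; \frac{\phi(x_i-x_j)}{(\rho^N_\phi(x_i))^{1-\b}}
\end{equation*}
is smoothly differentiable in $(x_i,x_j)$ on $\O^N$ with derivatives bounded in terms of $\min_k m_k$, $\|\phi\|_{C^1}$ and $\phi(0)$.

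With this in hand, the two displays recorded just before the statement give the required Lipschitz estimate: the term $\frac{\|\phi\|_\infty}{\phi(0) m_i}\sum_j m_j |v'_j - v''_j|$ controls the velocity-dependence of $[v']_i - [v'']_i$, and the quotient rule bound $|\phi_{\rho'}(x'_i,x'_j)-\phi_{\rho''}(x''_i,x''_j)| \lesssim m_i^{-2}\max_k |x'_k - x''_k|$ controls its position-dependence. Assembling these with the analogous bound on $|\st'_i - \st''_i|$, which is also Lipschitz in the positions via the mass-dependent denominator, one obtains
\begin{equation*}
\bigl|\st'_i[v']_i - \st'_i v'_i - \st''_i[v'']_i + \st''_i v''_i\bigr| \;\leq\; C(\{m_k\},\phi,A)\, \bigl(\max_k |x'_k - x''_k| + \max_k |v'_k - v''_k|\bigr).
\end{equation*}
Local well-posedness follows from the Picard-Lindelöf theorem, and the a priori bound on $(x_i,v_i)$ on any finite interval extends the solution globally.

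The only genuine subtlety is the very point that makes the previous ``robust'' proposition fail here: the Lipschitz constant above scales like $1/\min_k m_k$ and $1/(\min_k m_k)^2$, so the estimate is not uniform in the number of agents and cannot be passed to the mean-field limit. This is not an obstacle for the proposition itself, which fixes $N$ and $\{m_k\}$ at the outset, but it explains why these models require the separate treatment of \sect{s:mfldet} and cannot be handled by the general uniform-regularity machinery.
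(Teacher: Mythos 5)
Your proposal is correct and follows essentially the same route as the paper: it exploits the mass-dependent residual thickness $\rho^N_\phi(x_i) \geq m_i \phi(0)$ for empirical densities to obtain the Lipschitz bound on the vector field, then applies Picard--Lindel\"of together with the a priori velocity bound to globalize. Your closing remark on the $1/\min_k m_k$ scaling and its incompatibility with the mean-field limit accurately captures why this is the ``less robust'' argument the paper contrasts with the uniformly regular case.
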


\subsubsection{Uniform mapping properties}  When studying well-posedness of kinetic models it will be essential to have a uniform boundedness of the weighted averages at the base level independent of $\rho$.  These can be readily stated in terms of Lebesgue integrability conditions on the kernel.  We will isolate two such conditions.

First, the uniform boundedness on $L^2(\rho)$:
\begin{equation}\label{e:unifL2L2}
\st_\rho \ave{\cdot}_{\rho} : L^2 (\rho) \to L^2 (\rho)
\end{equation}
can be stated using the result of \lem{l:extraprho}. It is guaranteed to hold under a simpler condition:
\begin{equation}\label{e:inftyuniform2}
\sup_{\rho\in \cP}  \| \st_\rho \ave{1}_\rho^* \|_\infty < \infty.
\end{equation}
Recalling the action of the adjoint in terms of reproducing kernel \eqref{e:warepadj}, \eqref{e:inftyuniform2} can be stated as $\phi_\rho \in L^\infty_y L^1_x(\rho)$ uniformly in $\rho$:
\begin{equation}\label{e:L2L2}
 \sup_{\rho \in \cP(\O)} \sup_{y\in \O} \int_\O \phi_\rho(x,y) \drho(x) <\infty.
\end{equation}
This condition was first documented in the context of \ref{MT}-model in \cite{KMT2013}. It holds trivially for all conservative models, see \eqref{e:doublestoch}. For \ref{Mb}, including the Motsch-Tadmor model \ref{MT}, this follows from \eqref{e:KMTrhobeta}. So, all the core models on our list satisfy \eqref{e:L2L2}.

Second, a stronger uniform boundedness
\begin{equation}\label{e:unifL2Linfty}
\st_\rho \ave{\cdot}_{\rho} : L^2 (\rho) \to L^\infty (\rho)
\end{equation}
is guaranteed by the membership $\phi_\rho \in L^\infty_x L^2_y(\rho)$ uniformly in $\rho$ (by the \HI):
\begin{equation}\label{e:L2Linfty}
 \sup_{\rho \in \cP(\O)} \sup_{x\in \O} \int_\O |\phi_\rho(x,y)|^2 \drho(y) <\infty.
\end{equation}

Examples on our list include all \ref{Mb} and \ref{Mbtopo} models for $\b \geq \frac12$, and  in particular, the classical Cucker-Smale model \ref{CS}.  Indeed, we have for \ref{Mb},
\[
\int_\O \left| \frac{\phi(x-y)}{\rho^{1-\b}_\phi(x)} \right|^2 \drho(y)\leq \|\phi\|_\infty \frac{\rho_\phi(x)}{\rho^{2-2\b}_\phi(x)} = \|\phi\|_\infty \rho^{2\b-1}_\phi(x) \leq \|\phi\|_\infty^2.
\]
Unfortunately, \ref{MT}, \ref{Mf}, and \ref{Mseg} are not regular enough to satisfy \eqref{e:L2Linfty} for arbitrary kernels. However, if $\inf \phi >0$, that is of course the case for  \ref{Mf} and all \ref{Mb}, and similarly if $\supp g_l = \O$ for \ref{Mseg}.

The results are summarized in Table~\ref{t:admis}.

\begin{table}
\begin{center}
\caption{Conditions under which models are uniformly bounded}\label{t:admis}
\begin{tabular}{  c |  c | c | c | c  |c} 
\multirow{2}{*}{ $\mathrm{MODEL}$} & \multirow{2}{*}{  \ref{CS} }  & \multirow{2}{*}{  \ref{CStopo} } & \multirow{2}{*}{\ref{MT} \& \ref{Mb}, $0< \b <1$ }& \multirow{2}{*}{ \ref{Mf}}  & \multirow{2}{*}{ \ref{Mseg}  } \\
& & & & &\\
  \hline
\multirow{2}{*}{$ L^2 (\rho) \to L^2 (\rho)$} &  \multirow{2}{*}{$\checkmark$ }  &  \multirow{2}{*}{$\checkmark$ } & \multirow{2}{*}{$c \one_{|x|<r_0} \leq  \phi(x) \leq C \one_{|x| < R_0}$} & \multirow{2}{*}{$\checkmark$ } & \multirow{2}{*}{$\checkmark$ } \\ 
 & & & & & \\
\hline
\multirow{2}{*}{$ L^2 (\rho) \to L^\infty (\rho)$} &  \multirow{2}{*}{$\checkmark$ }  &  \multirow{2}{*}{$\checkmark$ } & \multirow{2}{*}{ $\b \geq \frac12$  or  $\inf \phi >0$ } & \multirow{2}{*}{ $\inf \phi >0$} & \multirow{2}{*}{ $\supp g_l = \O$} \\
 & &  & &  &
\end{tabular}
\end{center}
\end{table}

\section{Flocking}\label{s:flocking}

\subsection{The Cucker-Smale Theorem} \label{s:CST}
We start with an extension of the classical Cucker-Smale Theorem that originally appeared in \cite{CS2007a} for the \ref{CS}-model. The result declares how strong the long-range communication must be in order to ensure alignment from any initial condition.   The discrete, kinetic, and hydrodynamic analogues of this result are proved in exact same way, due to essentially the same structure of the characteristic equations taking one of the forms \eqref{e:sys}, see \cite{Sbook} for a detailed account. We adhere to the context of kinetic Vlasov-Alignment model 
\begin{equation}\label{e:VA}
\p_t f + v \cdot \n_x f =  \n_v \cdot ( \st_\rho (v - \ave{u}_\rho) f ), \quad \diam (\supp f_0) < \infty,
\end{equation}
where
\[
\rho(x) = \int_{\R^n} f(x,v) \dv, \qquad u\rho(x) = \int_{\R^n} v f(x,v) \dv.
\]
It incorporates the agent based dynamics as a special case of a weak solution, and does not require any particular closure assumption, for more on this see \cite{Tadmor-notices,Tadmor-pressure}.  The pressureless Euler-alignment system allows the same treatment if written in Lagrangian coordinates, see \thm{t:CShydro} below. The main idea conveyed here is that the result does not require any special properties of the model and can be extended to any general material environmental averaging that has a reproducing kernel $\phi_\rho$.

We consider $\O$ to be an arbitrary environment, although the unbounded ones, such as $\R^n$, is where the result is most meaningful.    If $f$ is a measure-valued solution to \eqref{e:VA} starting from a compactly supported initial condition $f_0$, then at any point of time $f$ is given by the push-forward of $f_0$ along the characteristics (see  \sect{s:mfldet})
\begin{alignat}{2}
\ddt X(t,x,v) & = V(t,x,v), \quad  & X(0,x,v) & = x,\label{e:XK} \\
 \ddt V(t,x,v) & = \st_\rho(X)( [u]_\rho(X) - V), \quad &   V(0, x,v) & = v. \label{e:VK}
\end{alignat}
We abbreviate $\w = (x,v)$ for short. The representation formula \eqref{e:sys2} gives the $V$-equation a more specific form (using the characteristic change of coordinates)
\begin{equation}\label{e:charVrep}
\ddt V(t,\w) = \int_\domain \phi_\rho(X(t, \w),X(t,\w')) (V(t,\w')-V(t,\w)) \df_0(\w') ,
\end{equation}
from which the maximum principle for $V$-characteristics is evident. This fundamental principle holds  even for models without a representation kernel which we prove next.

\begin{lemma}[Maximum Principle]\label{l:maxpr}
Suppose $\cM$ is a material model, and $\supp f_0 \ss \O\times \R^n$ is compact. Then for any $\w \in \supp f_0$ and any $t>0$, we have
\[
V(t,\w) \in \conv \supp \left( \int_\O f_0(x,v) \dx \right).
\]
\end{lemma}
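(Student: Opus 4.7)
Let $K = \conv \supp \bigl( \int_\O f_0(x,v) \dx \bigr) \ss \R^n$ denote the target closed convex set. Since $K$ is closed convex, it coincides with the intersection of all closed half-spaces containing it, so it suffices to show that for every direction $\xi \in \R^n$,
\[
  \xi \cdot V(t,\w) \leq h_K(\xi) := \sup_{v \in K} \xi \cdot v,
  \qquad \forall \w \in \supp f_0, \ t \geq 0.
\]
At $t=0$, $V(0,\w)=v$ and $\xi\cdot v \le h_K(\xi)$ by construction, so the statement holds initially. The plan is to run a one-sided envelope argument on $M_\xi(t):= \sup_{\w \in \supp f_0} \xi \cdot V(t,\w)$ and show that it is non-increasing.

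Since $\supp f_0$ is compact and $V(\cdot,\cdot)$ is continuous (which follows from the regularity of $\cM$ and Cauchy--Lipschitz for the characteristic ODE \eqref{e:XK}--\eqref{e:VK}), $M_\xi$ is attained and is locally Lipschitz in $t$. The key algebraic step is a pointwise upper bound on the forcing in terms of $M_\xi$: by the coordinate-wise action \eqref{e:coordave} and the maximum principle \eqref{e:max}, applied to the scalar field $x \mapsto \xi \cdot u(x,t)$,
\[
  \xi \cdot \ave{u}_\rho(x) \;=\; \ave{\xi \cdot u}_\rho(x) \;\leq\; \esssup_{x \in \supp \rho(t)} \xi \cdot u(x,t).
\]
Because $f(t)$ is the push-forward of $f_0$ along the characteristic flow $(X,V)$, the macroscopic velocity $u(x,t)$ is a $\rho$-conditional average of values $V(t,\w')$ taken over $\w' \in \supp f_0$ with $X(t,\w')=x$; consequently the essential supremum on the right is bounded by $M_\xi(t)$. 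Hence at every maximizer $\w^*(t)$ of $\xi \cdot V(t,\cdot)$ we get from \eqref{e:VK}
\[
  \frac{\dd}{\dt}\bigl[ \xi \cdot V(t,\w^*)\bigr] \;=\; \st_\rho(X(t,\w^*))\bigl(\xi\cdot \ave{u}_\rho(X(t,\w^*)) - \xi\cdot V(t,\w^*)\bigr) \;\leq\; \st_\rho(\cdot)\bigl(M_\xi(t) - M_\xi(t)\bigr) = 0.
\]

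A standard Danskin/envelope argument (using that $M_\xi$ is Lipschitz, hence a.e. differentiable by Rademacher's lemma, and that its Dini derivative coincides with $\tfrac{\dd}{\dt} \xi\cdot V(t,\w^*)$ at any maximizer) then yields $M_\xi'(t) \leq 0$ almost everywhere, so $M_\xi(t) \leq M_\xi(0) \leq h_K(\xi)$. Intersecting this inequality over all $\xi \in \R^n$ gives $V(t,\w) \in K$ for every $\w \in \supp f_0$, as desired.

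The main technical obstacle is ensuring that the pointwise evaluations $\ave{u}_\rho(X(t,\w^*))$ are well defined, since in the absence of a reproducing kernel $\ave{u}_\rho$ is only specified $\k_\rho$-almost everywhere. This is handled via the regularity of $\cM$ (\defin{d:r}), which provides a continuous version of $\ave{u}_\rho$ on $\supp \rho(t)$, together with the observation that the characteristic trajectories $X(t,\w)$ starting in $\supp f_0$ remain in $\supp \rho(t)$; for measure-valued initial data one first proves the statement for smoothed approximations and passes to the limit using weak$^*$-continuity and continuous dependence of characteristics on $\rho$.
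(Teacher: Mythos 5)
Your proof is correct and follows essentially the same route as the paper: expressing the convex hull as an intersection of half-spaces, using the order-preservation/maximum principle of the averaging operator to bound $\xi \cdot \ave{u}_\rho$ by the running supremum, invoking the push-forward/transport structure to relate $u$ back to the characteristics $V$, and closing with Rademacher's lemma at a maximizer. The paper phrases the key step as $\ave{\ell(u) - \ell(V)}_\rho \leq 0$ via a mollified push-forward identity rather than via the essential supremum bound, but these are the same argument.
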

\begin{proof} 
The convex hull in question can be represented as the intersection of hyperspaces:
\[
\conv \supp \left( \int_\O f_0(x,v) \dx \right)  = \bigcap_{\ell \in F \ss \R^n} \{v:  \ell(v) \leq c_\ell\}.
\]
Let us fix an $\ell \in F$. Since the action of $\ell$ is just a linear combination of coordinates we have
\[
\ddt \ell(V(t,\w)) =  \st_\rho(X)( \ave{\ell(u)}_\rho(X) - \ell(V)) = \st_\rho(X) \ave{\ell(u) - \ell(V)}_\rho(X).
\]
By Rademacher's Lemma we can evaluate the above at a point $w\in \supp f_0$ where maximum of $\ell(V(t,\w))$ is achieved. Looking into the field under the average we have
\[
\ell(u)(y,t) - \ell(V) = \frac{ \int_{\R^n} \ell (w - V) f(t,y,w) \dw }{ \int_{\R^n} f(t,y,w) \dw }
\]
Now let $\psi_\d$ be a standard compactly supported mollifier. We have using the transport property
\[
\begin{split}
\int_{\R^n} \ell (w - V) f(t,y,w) \dw & = \lim_{\d \to 0} \int_{\R^n} \ell (w - V) \psi_\d(y-z) f(t,z,w) \dw \dz \\
& =  \lim_{\d \to 0} \int_{\R^n} \ell (V(t,\w') - V(t,\w)) \psi_\d(y-X(\w',t)) f_0(t,\w') \domega' \leq 0.
\end{split}
\]
Thus, $\ell(u) - \ell(V) \leq 0$ point-wise. By the order preserving property of the averages (ev3), we have
\[
\ddt \ell(V(t,\w)) \leq 0.
\]
In other words, $\max_{\supp f_0} \ell(V(t,\w)) \leq c_\ell$ for all times.  This finishes the proof.
\end{proof}

As a consequence, the macroscopic velocity $u$ of $f$ remains bounded by the initial condition:
\begin{equation*}\label{}
\begin{split}
| u(x,t)| = \lim_{\d \to 0} \left|  \frac{ \int_\domain V(t,z,w) \psi_\d(x-z) f_0(z,w) \dz \dw}{  \int_\domain  \psi_\d(x-z) f_0(z,w) \dz \dw} \right| \leq \| V\|_{L^\infty(\supp f_0)} \leq  \max |\supp f_0|.
\end{split}
\end{equation*}

\begin{theorem}[Kinetic Cucker-Smale]\label{t:CS}
Suppose there exists $\phi \in C^\infty$, a positive, non-increasing, radially symmetric kernel with fat tail, $\int_0^\infty \phi(r) \dr = \infty$, such that
\begin{equation}\label{e:Phiphi}
\phi_\rho(x,y)  \geq \phi(x-y), \quad \forall \rho\in \cP.
\end{equation}
Then any measure-valued solution to \eqref{e:VA} starting from a compactly supported initial condition $f_0$ aligns and flocks exponentially fast
\begin{align}
D(t) & = \max_{\w',\w'' \in \supp f_0} | X(t,\w') - X(t,\w'')| < C, \quad \forall t>0 \\
A(t) &=\max_{\w',\w'' \in \supp f_0} |V(t,\w')-V(t,\w'')| \leq Ce^{-\d t}, \label{e:ACS}
\end{align}
where $C,\d>0$ depend on the initial condition and the parameters of the model. Moreover, there exists $u_\infty \in \R^n$ such that
\begin{equation}\label{e:uinfty}
\max_{\w \in \supp f_0} |V(t,\w)-u_\infty | \leq Ce^{-\d t}.
\end{equation}
If the model $\cM$ is conservative then $u_\infty = \bar{u} = \int_\O u \rho \dx$, the total conserved momentum.
\end{theorem}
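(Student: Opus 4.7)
\textbf{Proof Proposal for Theorem \ref{t:CS}.}

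The plan is to mimic the classical Cucker-Smale Lyapunov argument at the level of characteristics, replacing the symmetric kernel with the pointwise lower bound \eqref{e:Phiphi}. Since measure-valued solutions are transported by the flow \eqref{e:XK}--\eqref{e:VK} and the characteristic $V$-equation takes the integral form \eqref{e:charVrep}, all quantitative work reduces to differential inequalities for the two geometric scalars
\[
D(t)=\max_{\w',\w''\in\supp f_0}|X(t,\w')-X(t,\w'')|,\qquad A(t)=\max_{\w',\w''\in\supp f_0}|V(t,\w')-V(t,\w'')|.
\]
From $\dot X=V$ one immediately gets $\dot D\le A$ in the Dini sense. The main task is to extract a differential inequality of the form $\dot A\le-\phi(D)\,A$ from \eqref{e:charVrep} using only $\phi_\rho(x,y)\ge\phi(x-y)$ and $\phi\ge 0$.

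For the amplitude, I would work directionally. For a fixed unit vector $\ell\in\R^n$ set $A_\ell(t)=\max_\w\ell\cdot V(t,\w)-\min_\w\ell\cdot V(t,\w)$, so that $A(t)=\sup_\ell A_\ell(t)$. Pick extremal characteristics $\w_+,\w_-$ realizing the max and min (they exist by compactness of $\supp f_0$). By Rademacher's lemma,
\[
\dot A_\ell=\int_{\O\times\R^n}\phi_\rho(X_+,X')\,\ell\cdot(V'-V_+)\,\df_0(\w')-\int_{\O\times\R^n}\phi_\rho(X_-,X')\,\ell\cdot(V'-V_-)\,\df_0(\w').
\]
By the choice of $\w_+,\w_-$ the quantities $\ell\cdot(V_+-V')$ and $\ell\cdot(V'-V_-)$ are nonnegative $f_0$-a.e.; since $\phi_\rho\ge\phi(X_\pm-X')\ge\phi(D)$ (using the monotonicity of $\phi$ and $|X_\pm-X'|\le D$), both integrands admit a consistent lower bound, yielding
\[
-\dot A_\ell\ge\phi(D)\int_{\O\times\R^n}\bigl[\ell\cdot(V_+-V')+\ell\cdot(V'-V_-)\bigr]\df_0(\w')=\phi(D)\,A_\ell,
\]
since $f_0$ is a probability measure. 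Taking $\sup_\ell$ gives $\dot A\le-\phi(D)A$ in the upper Dini sense.

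The rest follows from the standard fat-tail Lyapunov trick: define
\[
\mathcal L(t)=A(t)+\int_0^{D(t)}\phi(r)\,\dr,
\]
so that $\dot{\mathcal L}\le-\phi(D)A+\phi(D)\dot D\le 0$. Hence $\int_0^{D(t)}\phi(r)\,\dr\le\mathcal L(0)$ for all $t$, and because $\int_0^\infty\phi=\infty$ this bounds $D(t)\le\bar D$ uniformly. Substituting back, $\dot A\le-\phi(\bar D)A$ yields \eqref{e:ACS} with $\d=\phi(\bar D)$. Since $|\dot V(t,\w)|\le\oS\,A(t)$ along each characteristic (by the maximum principle \lem{l:maxpr} and boundedness of $\st_\rho$), the exponential integrability of $A$ ensures each $V(t,\w)$ is Cauchy in $t$, converging to a single limit $u_\infty$ at rate \eqref{e:uinfty}. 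In the conservative case, $\int_\O u\rho\,\dx$ is preserved in time, and taking $t\to\infty$ identifies it with $u_\infty$.

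The main technical obstacle is the rigorous use of the Dini derivative for $A$ and $A_\ell$ over a potentially nonsmooth family of extremal characteristics, and the justification of \eqref{e:charVrep} for genuinely measure-valued solutions (addressed by the push-forward representation developed in \sect{s:mfldet}). Everything else is an essentially routine adaptation of the Ha-Liu / Carrillo-Fornasier-Rosado-Toscani scheme to the general material model, enabled entirely by the pointwise comparison $\phi_\rho\ge\phi$ and the order-preserving, mass-one nature of the averaging.
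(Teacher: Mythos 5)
Your proof is correct and follows essentially the same route as the paper's: Rademacher's lemma at extremal characteristics, the pointwise comparison $\phi_\rho\ge\phi$, the coupled differential inequalities $\dot D\le A$, $\dot A\le-\phi(D)A$, and the Ha--Liu fat-tail Lyapunov functional $A+\int_0^D\phi$. The only cosmetic difference is that you define the amplitude by projecting along arbitrary unit directions $\ell$ and then take $\sup_\ell$, whereas the paper tracks each coordinate amplitude $A^i$ and passes to the Euclidean norm $A=\sqrt{\sum_i(A^i)^2}$ at the end; both are standard and equivalent.
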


\begin{proof}
Following characteristics let us fix at any point of time a label $\w_{\pm} \in \supp f_0$ where $V^i$ achieves its maximum and minimum, respectively, $V^i_\pm$. So, by the Rademacher lemma, we have distributionally,
\begin{equation}\label{e:Vpm}
\ddt V^i_\pm  =  \int_{\domain}  \phi_\rho(X(t, \w_\pm),X(t,\w')) (V^i(t,\w')-V^i_\pm) \df_0(\w') .
\end{equation}
In view of \eqref{e:Phiphi},
\[
\ddt V^i_+  \leq  \int_{\domain}  \phi(X(t, \w_+)- X(t,\w')) (V^i(t,\w')-V^i_+) \df_0(\w')  \leq \phi(D)\int_{\domain} (V^i(t,\w')-V^i_+) \df_0(\w').
\]
 And similarly,
\[
\ddt V^i_-  \geq   \phi(D)\int_{\domain} (V^i(t,\w')-V^i_-) \df_0(\w').
\]
Subtracting the two, we obtain for the amplitude $A^i = V^i_+ - V^i_-$,
\[
\ddt A^i \leq  - \phi(D)   A^i.
\]
Taking the Euclidean amplitude $A = \sqrt{(A^1)^2+\dots+(A^n)^2}$, we obtain the system
\[
\ddt D \leq A, \quad \ddt A \leq  - \phi(D)   A.
\]
Following \cite{HL2009} we form the Lyapunov function 
\[
L = A + \int_0^D \phi(r) \dr,
\]
which remains bounded. Hence, in view of the fat-tail condition, $D$ remains bounded, and going back to the $A$-equation we obtain exponential decay on the amplitudes.

To conclude \eqref{e:uinfty} let us notice that as a consequence of \eqref{e:ACS}, we have 
\[
\max_{\w \in \supp f_0} |\dot{V}| \leq Ce^{-\d t}.
\]
So, every characteristic $V(\w,t)$ will converge exponentially fast to a limit $u_\infty(\w)$. In view of \eqref{e:ACS}, $u_\infty$ must be a constant vector.
\end{proof}

The alignment of characteristics stated in \thm{t:CS} implies corresponding behavior of the distribution $f$ itself by transport.  First, we can see that its $v$-marginal $f^v = \int_\O f(t,x,v)\dx$ converges weakly to Dirac, 
\[
f^v \to \d_0(v - u_\infty).
\]
Moreover, since $f$ is a push-forward of $f_0$ along \eqref{e:XK} - \eqref{e:VK}, the $v$-support of $f$ will belong to an exponentially shrinking ball around $u_\infty$. This implies uniform convergence of the macroscopic velocity
\[
u(x,t) - u_\infty = \frac{\int_{|v - u_\infty|\leq Ce^{-\d t}} (v - u_\infty) f(x,v,t) \dv}{ \int_{|v - u_\infty|\leq Ce^{-\d t}} f(x,v,t) \dv},
\]
so,
\begin{equation}\label{e:macroconvfat}
\sup_{x\in \supp \rho} |u(x,t) - u_\infty|  \leq Ce^{-\d t}.
\end{equation}
And it also implies exponential alignment in the energy sense, to be discussed in greater detail in \sect{s:flockingspec}:
\begin{equation}\label{ }
\bar{u} = \int_\O u\rho \dx,
\end{equation}
\begin{equation}\label{e:enpert}
\d\cE:= \frac12 \int_{\domain} |v - \bar{u}|^2 f \dv \dx \leq Ce^{-\d t}.
\end{equation}

Unfortunately the result doesn't seem to provide much insight into behavior of the macroscopic density $\rho$. See, however, \cite{ST2} for a convergence result to a traveling wave in 1D case.

The exact same result can be stated for the hydrodynamic alignment model without pressure, so called pressureless Euler-Alignment system (see \sect{s:mono} for derivation)
\begin{equation}\label{e:EASpresless}
\begin{split}
\rho_t + \n\cdot (u \rho) & = 0, \\
 u_t + u \cdot \n u & = \st_\rho (\ave{u}_\rho - u).
\end{split}
\end{equation}
If passed to Lagrangian coordinates
\begin{equation*}\label{}
\begin{split}
\dot{x}(\a,t) &= v(\a,t): = u(x(\a,t),t), \qquad \a\in  \O,\\
\dot{v}(\a,t) & = \int_\O \phi_\rho(x(t, \a),x(t,\a')) (v(t,\a')-v(t,\a)) \drho_0(\a') .
\end{split}
\end{equation*}
the system is structurally similar to \eqref{e:XK} - \eqref{e:VK}.  So, the proof goes through exactly as before. 
\begin{theorem}[Hydrodynamic Cucker-Smale]\label{t:CShydro}
Under the assumptions of \thm{t:CS},  any classical solution to the pressureless Euler-alignment system \eqref{e:EASpresless} with compactly supported initial $\rho_0$ aligns and flocks exponentially fast
\begin{equation}\label{ }
	\sup_{t\geq 0} (\diam(\supp \rho )) < \infty , \quad	\sup_{x \in\supp \rho} |u(t,x) - u_\infty|\leq C_0 e^{- \d t}.
\end{equation}
\end{theorem}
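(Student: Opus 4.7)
The plan is to transcribe the Eulerian system into Lagrangian coordinates and then reuse the Lyapunov argument from \thm{t:CS} verbatim. Introduce the flow map $x(\a,t)$ defined by $\dot{x}(\a,t)=u(x(\a,t),t)$ with $x(\a,0)=\a$, and set $v(\a,t)=u(x(\a,t),t)$. Because $\cM$ is material, the reproducing kernel identity \eqref{e:sys3} rewrites the momentum equation as
\begin{equation*}
\dot v(\a,t)=\int_\O\phi_\rho(x(\a,t),x(\a',t))\bigl(v(\a',t)-v(\a,t)\bigr)\,d\rho_0(\a'),
\end{equation*}
after transporting $\rho$ along the flow via $\rho(t)=x(\cdot,t)_\#\rho_0$. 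This is structurally identical to \eqref{e:charVrep} with $f_0$ replaced by $\rho_0$ and $(X,V)$ replaced by $(x,v)$, which is precisely the remark the authors make just before the statement.

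Next, define $D(t)=\max_{\a,\a'\in\supp\rho_0}|x(\a,t)-x(\a',t)|$ and the componentwise extrema $v^i_\pm(t)=\max/\min_{\a\in\supp\rho_0}v^i(\a,t)$, with amplitudes $A^i=v^i_+-v^i_-$ and Euclidean amplitude $A=\sqrt{\sum_i(A^i)^2}$. At a label where the max (resp.\ min) is attained, Rademacher's lemma applied to the integral representation above gives, thanks to the pointwise lower bound \eqref{e:Phiphi} and the monotonicity of $\phi$,
\begin{equation*}
\ddt v^i_+\leq\phi(D)\int_\O(v^i(\a',t)-v^i_+)\,d\rho_0(\a'),\qquad \ddt v^i_-\geq\phi(D)\int_\O(v^i(\a',t)-v^i_-)\,d\rho_0(\a').
\end{equation*}
Subtracting yields $\dot A^i\leq-\phi(D)A^i$, and summing-in-$i$ gives the coupled Cucker--Smale differential inequality $\dot D\leq A$, $\dot A\leq-\phi(D)A$. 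Introducing the Ha--Liu Lyapunov functional $L(t)=A(t)+\int_0^{D(t)}\phi(r)\,dr$, one has $\dot L\leq 0$, so $L$ stays bounded by $L(0)$; the fat-tail assumption $\int_0^\infty\phi(r)\,dr=\infty$ then forces $D(t)\leq\bar D<\infty$, and plugging this back into $\dot A\leq-\phi(\bar D)A$ produces the exponential decay $A(t)\leq Ce^{-\d t}$.

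Finally, since $\dot v(\a,t)$ inherits the same exponential bound in $A(t)$, each Lagrangian velocity $v(\a,t)$ is Cauchy in $t$ and converges to a common limit $u_\infty\in\R^n$ uniformly in $\a\in\supp\rho_0$. Because $u(t,x)=v(\a,t)$ for $x=x(\a,t)$, this translates into $\sup_{x\in\supp\rho(t)}|u(t,x)-u_\infty|\leq Ce^{-\d t}$, as required. The only points that require a touch of care are the Rademacher step (justified because $v\in C^1$ and $\supp\rho_0$ is compact, so the max/min are attained and the envelope derivative formula applies distributionally) and the push-forward identification $\supp\rho(t)=x(\supp\rho_0,t)$, which is automatic for classical solutions of the continuity equation. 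No genuinely new analytical obstacle arises beyond what was already handled in \thm{t:CS}; the main conceptual point is simply recognizing that the Lagrangian reformulation makes \eqref{e:EASpresless} a particular instance of the kinetic dynamics on the measure $\rho_0\otimes\d_{u_0(\a)}$.
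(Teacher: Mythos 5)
Your argument is correct and is exactly the route the paper takes: the authors state the result by observing that the Lagrangian rewriting of \eqref{e:EASpresless} reproduces the characteristic system \eqref{e:XK}--\eqref{e:VK}, and they conclude ``the proof goes through exactly as before.'' You have simply spelled out the details that the paper leaves implicit, including the push-forward identity for $\rho(t)$, the use of (ev4) to bound $|\dot v|$ by $\oS A(t)$, and the Rademacher step at the extremal labels.
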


For the \ref{CS}-model where $\phi_\rho(x,y) = \phi(x-y)$ the statements above are classical. The kinetic and hydrodynamics versions appeared in \cite{CFRT2010} and \cite{TT2014}, respectively.

In the Motsch-Tadmor case, we can apply the same fat-tail condition on the defining kernel $\phi$ due to \eqref{e:MTkerlow}.
However, the limiting velocity $u_\infty$ is not determined by the initial condition and emerges dynamically. 

The theorem does not apply to either the over-mollified model $\cMfmt$ or  the segregation model $\cMseg$ as those are inherently local, which brings us to the next main question -- what conditions guarantee emergent behavior when communication is strictly local?

\subsection{Chain connectivity. The $\frac{1}{t^{1/4}}$ and $\frac{1}{t^{1/2}}$ results } \label{s:chains}

It is obvious that locality  \eqref{e:locker} itself is insufficient for unconditional alignment of the system.  In the open space $\R^n$ one can simply direct two agents away from each other starting at a distance larger than communication range. On $\T^n$ one can launch two agents with misaligned velocities along two parallel geodesics at a distance larger than communication range. So, it is clear that some kind of connectivity is necessary to obtain alignment. In this section we explore how to achieve this for symmetric models and for quantitatively thick flocks.  

\begin{definition}\label{}
We say that the flock $(u,\rho)$ is {\em chain connected at scale $r$} if 
for any two points $x',x'' \in \supp \rho$ there exists a chain 
\[
x'=x_1, x_2,\dots,x_K = x'' 
\]
 such that $x_i \in  \supp \rho$ and  $|x_i - x_j| <r$.
\end{definition}

Our main result shows alignment under connectivity assumption at a sub-local scale $r_0$ and proper thickness rate. Here we use the term thickness to refer to the ball-thickness defined in \eqref{e:thball}.

\begin{theorem}\label{t:conn} Let $\O = \R^n$, and $\cM$ is a symmetric model with kernel satisfying \eqref{e:locker}. If the flock remains chain connected at the scale $r = r_0/8$ for all time and has thickness satisfying $\orho_r(\supp \rho) \geq \frac{c}{t^{1/4}}$, then the flock aligns
\begin{equation}\label{e:logflock}
\sup_{\w,\w' \in\supp f_0} |V(t,\w) - V(t,\w')| \lesssim \frac{1}{\sqrt{\ln t}}.
\end{equation}
On the torus $\O = \T^n$ the result holds under weaker condition $\orho_r(\supp \rho) \geq \frac{c}{t^{1/2}}$.
\end{theorem}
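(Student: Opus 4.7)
The strategy is to replace the fat-tail assumption of \thm{t:CS} with a quantitative dissipation estimate that draws on the chain structure of $\supp\rho$ and the ball-thickness $\theta(t) := \orho_r(\supp\rho(t))$. The backbone is the symmetric-model energy identity at the characteristic level: differentiating the second moment $\int V^2 f_0 \domega$, using the representation \eqref{e:charVrep} together with the symmetry of the reproducing kernel $\phi_\rho$, one obtains
\begin{equation*}
	\ddt \int V^2 f_0 \domega = -\cD(t), \qquad \cD(t) := \int\!\!\int \phi_\rho(X(\w),X(\w'))\,|V(\w) - V(\w')|^2 \df_0(\w)\df_0(\w'),
\end{equation*}
and hence $\int_0^\infty \cD(t)\dt \leq E_0 < \infty$.

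I would then reduce everything to a scalar, coordinate-wise amplitude $A(t) = V_+(t) - V_-(t)$, where $V_\pm$ are the pointwise max and min of a fixed coordinate of $V(t,\cdot)$ over $\supp f_0$, attained at labels $\w_\pm(t)$. By Rademacher's Lemma applied to the maximum principle argument of \lem{l:maxpr}, $A$ is non-increasing. Fix a minimal chain $y_1 = X(\w_+),\dots,y_K = X(\w_-) \in \supp\rho(t)$ with $|y_i - y_{i+1}| < r = r_0/8$, and set $\bar V_i = \orho_r(y_i)^{-1} \int_{X(\w)\in B_r(y_i)} V(\w) \df_0(\w)$. Since \eqref{e:locker} gives $\phi_\rho(X(\w_\pm),X(\w'))\geq c_0$ whenever $X(\w')$ lies in the adjacent $r$-ball of $B_r(y_1)$ or $B_r(y_K)$, restricting the characteristic evolution \eqref{e:charVrep} to this ball produces the edge bounds
\begin{equation*}
	\ddt V_+ \leq -c_0\theta(t)(V_+ - \bar V_1), \qquad -\ddt V_- \leq -c_0\theta(t)(\bar V_K - V_-).
\end{equation*}
Moreover, for pairs $(z,z')\in B_r(y_i)\times B_r(y_{i+1})$ one has $|z-z'|\leq 3r < r_0$, so by the same locality bound and Jensen
\begin{equation*}
	\cD(t) \geq c_0\,\theta^2(t)\,(\bar V_i - \bar V_{i+1})^2\qquad \text{for every } i.
\end{equation*}

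Writing $A = (V_+ - \bar V_1) + (\bar V_1 - \bar V_K) + (\bar V_K - V_-) = \alpha + \beta$ with $\alpha\geq 0$, a dichotomy arises: if $\alpha\geq A/2$ the edge inequalities give $\ddt A^2 \leq -c\,\theta(t)\,A^2$, whereas if $\beta\geq A/2$ then \CS\ along the chain with the above dissipation bound yields
\begin{equation*}
	\tfrac{1}{4} A^2 \leq \beta^2 \leq K \sum_i (\bar V_i - \bar V_{i+1})^2 \leq \frac{K^2}{c_0 \theta^2}\cD(t).
\end{equation*}
Unifying the alternatives produces the single inequality
\begin{equation*}
	-\ddt A^2 + \cD(t) \;\geq\; c\,\frac{\theta^2(t)}{K^2(t)}\, A^2(t).
\end{equation*}

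The last input is a bound on $K(t)$. On $\T^n$ the diameter of $\supp\rho$ is bounded by $\pi n$, so $K(t) \leq C/r$, uniformly. On $\R^n$ I would select a chain with node spacing $|y_i - y_{i+1}| \in [r/2, r]$, which forces the balls $B_{r/4}(y_i)$ to be pairwise disjoint, each carrying $\rho$-mass $\geq c\theta(t)$; the total mass normalization then gives $K(t) \leq C/\theta(t)$. Plugging into the unified inequality, the hypotheses $\theta^2(t) \geq c/t^{1/2}$ on $\T^n$, respectively $\theta(t) \geq c/t^{1/4}$ on $\R^n$, both yield $-\ddt A^2(t) + \cD(t) \geq c\,A^2(t)/t$. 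Integrating this against $\int_0^\infty \cD \dt \leq E_0$ gives $\int_1^\infty A^2(s)/s \ds \leq C$, and the monotonicity $A^2(s)\geq A^2(t)$ for $s\leq t$ upgrades this to $A^2(t)\ln t \leq \int_1^t A^2(s)/s\ds \leq C$, which is exactly \eqref{e:logflock}. The two main difficulties are the edge/chain dichotomy that bundles the local-decay and dissipation mechanisms into one scalar inequality, and selecting a single chain simultaneously supporting small-$K$ \CS\ and the disjoint-balls estimate required on $\R^n$.
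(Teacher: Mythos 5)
Your argument is sound in outline and shares the key ingredients with the paper's proof -- the symmetric energy dissipation identity, ball averages along a chain, a length bound $K\lesssim 1/\orho_r$, and the edge-flattening differential inequalities for $V_\pm$. Where it genuinely diverges is in the final mechanism. The paper proceeds by \emph{pigeonholing over time}: from $\int_0^\infty \cD\,\dt<\infty$ and $\int_0^\infty \orho_r[(v_+-\bar v(x_+))+(\bar v(x_-)-v_-)]\,\dt<\infty$ it finds, inside any window $[T,T+T']$, a good time $t$ at which both integrands are $\lesssim 1/(\orho_r\, t\ln t)$, then runs a \emph{Chebyshev/bad-set} argument to show that in the overlap of adjacent $4r$-balls there exist velocities simultaneously $\delta$-close to both ball averages, where $\delta\sim 1/(\orho_r\sqrt{t\ln t})$; telescoping along the chain and invoking the maximum principle finishes. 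You instead derive a \emph{pointwise-in-time} coercivity inequality $-\ddt A^2 + \cD \gtrsim (\theta^2/K^2)A^2$ by splitting the amplitude into edge and chain contributions and using the direct Jensen bound $\cD\geq c_0\theta^2(\bar V_i-\bar V_{i+1})^2$, then integrate against $\int\cD<\infty$ and use monotonicity of $A$ to extract the $1/\sqrt{\ln t}$ rate. This avoids the Chebyshev step altogether and is arguably cleaner; the trade-off is that you need the dichotomy to be set up so it holds at almost every time, which your construction does achieve.

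There is one genuine gap in the chain-length bound on $\R^n$: you select a subchain with spacing $|y_i-y_{i+1}|\in[r/2,r]$ and use disjoint balls $B_{r/4}(y_i)$, each allegedly of mass $\geq c\theta$. But the hypothesis controls $\orho_r(\supp\rho)$, i.e.\ the mass of $r$-balls; there is no lower bound on $\rho(B_{r/4}(y_i))$, and with only consecutive spacing bounded below the $B_{r/4}$-balls need not be \emph{pairwise} disjoint anyway. The fix is exactly the construction of the paper's Lemma~\ref{l:chainK}: extract a subchain with \emph{all pairwise} distances $\geq 2r$ (hence consecutive distances in $[2r,3r]$), so that the $B_r(y_i)$ are pairwise disjoint and each carries mass $\geq\orho_r(\supp\rho)=\theta$, giving $K\leq 2/\theta$. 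Your pairwise dissipation estimate still holds since for $(z,z')\in B_r(y_i)\times B_r(y_{i+1})$ one has $|z-z'|\leq 5r<r_0=8r$, which is within the reach of \eqref{e:locker}. The same disjointness fix also underlies the uniform bound for $K$ on the torus. With this adjustment the proof closes.
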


In the case of the torus we can consider a non-vacuous flock $\rho_- = \min_{\T^n} \rho >0$.
Such a flock remains trivially connected at any scale and is uniformly thick $\orho_r(\supp \rho) \gtrsim \rho_-$.  So, one important consequence of the above theorem is a statement in terms of quantitative no-vacuum condition.

\begin{corollary}\label{c:squareroot}
Let $\O = \T^n$, $\cM$ is symmetric and material, and the kernel satisfies \eqref{e:locker}. If $\rho_- \geq \frac{c}{t^{1/2}}$, then the flock aligns \eqref{e:logflock}.
\end{corollary}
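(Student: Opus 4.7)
The plan is to reduce Corollary \ref{c:squareroot} directly to the torus case of Theorem \ref{t:conn} by verifying its two structural hypotheses: chain connectivity at scale $r = r_0/8$ and the thickness lower bound $\orho_r(\supp \rho) \gtrsim t^{-1/2}$.

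First I would handle connectivity. The assumption $\rho_-(t) \geq c t^{-1/2} > 0$ forces $\rho(x,t) > 0$ for every $x \in \T^n$, so $\supp \rho(\cdot,t) = \T^n$ for all $t > 0$. Because $\T^n$ is path-connected and compact, one can fix, once and for all, a finite cover of $\T^n$ by balls of radius $r/4$ and observe that any two points admit a chain through the centers of overlapping balls in this cover; this gives chain connectivity at scale $r = r_0/8$ at every time, without reference to the dynamics.

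Second I would verify the ball-thickness bound. Using the definition $\orho_r(x) = (\rho \ast \chi_r)(x)$ together with $\chi_r \geq 0$, the pointwise lower bound
\[
\orho_r(x) = \int_{\T^n} \rho(y) \chi_r(x-y) \dy \geq \rho_-(t) \int_{\T^n} \chi_r(y) \dy = c_n r^n \rho_-(t)
\]
holds uniformly in $x$, where $c_n = \|\chi\|_{L^1}$. Since $\supp \rho = \T^n$, this gives $\orho_r(\supp \rho) \geq c_n r^n \rho_-(t) \geq c' t^{-1/2}$ after absorbing the fixed constant $c_n r^n$ into the constant $c$ of the hypothesis.

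With both hypotheses in hand, the torus version of Theorem \ref{t:conn} applies directly and yields the logarithmic alignment rate \eqref{e:logflock}. There is no real obstacle here beyond bookkeeping: the whole content of the corollary is that on the torus a quantitative no-vacuum condition is strictly stronger than the abstract chain-connectivity-plus-ball-thickness condition used in the main theorem.
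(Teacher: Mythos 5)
Your argument is correct and matches the paper's reasoning: the paper derives the corollary from the torus case of Theorem~\ref{t:conn} by the same two observations — a nowhere-vanishing density makes $\supp\rho=\T^n$, hence trivially chain connected, and $\orho_r(\supp\rho)\gtrsim\rho_-$ gives the required thickness bound. No difference in approach, just a bit more bookkeeping spelled out.
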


Before we get to the proof we first explore how one can reduce the number of links in a chain.

\begin{lemma}\label{l:chainK}
If the flock is chain connected at scale $r$, then  between any pair of points there is a $3r$-chain with the number of links limited to $K \leq \frac{2}{\orho_r(\supp \rho)}$.

If the diameter of the flock is bounded, then $K$ can be chosen independent of thickness but dependent on the diameter, $K \leq C(\diam(\supp \rho))$.
\end{lemma}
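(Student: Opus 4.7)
The plan is to start from the hypothesized $r$-chain and iteratively shortcut it, then count the remaining nodes with a disjoint-ball packing argument. Fix any $r$-chain $x' = x_1, x_2, \ldots, x_K = x''$ in $\supp \rho$ (which is automatically a $3r$-chain) and apply the following reduction: whenever there are indices $a < b$ with $b - a \geq 2$ and $|x_a - x_b| < 3r$, delete the intermediate points $x_{a+1}, \ldots, x_{b-1}$. The shortened sequence remains a $3r$-chain since the new link $x_a \to x_b$ has length strictly less than $3r$, and each reduction strictly decreases the chain length, so the process terminates at some $3r$-chain $y_1, \ldots, y_L$ in $\supp \rho$ joining $x'$ to $x''$.

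The key consequence of maximal reduction is that any two non-consecutive points of the final chain are separated: $|y_a - y_b| \geq 3r$ whenever $b - a \geq 2$. I then restrict to the odd-indexed subset $\{y_1, y_3, y_5, \ldots\}$, which is pairwise non-consecutive and therefore pairwise at distance $\geq 3r$; in particular the balls $B_r(y_{2k+1})$ are disjoint. Since $\chi_r \leq \one_{B_r}$ we have $\rho(B_r(y)) \geq \orho_r(y) \geq \orho_r(\supp \rho)$ for every $y \in \supp \rho$, and summing over the odd indices together with $\rho(\O) = 1$ gives $\lceil L/2 \rceil \cdot \orho_r(\supp \rho) \leq 1$, which yields the advertised link bound $L - 1 \leq 2/\orho_r(\supp \rho)$.

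For the diameter-bounded statement the same reduction step applies; I simply replace the thickness count with a volume comparison. The pairwise disjoint balls $B_r(y_{2k+1})$ lie inside a single ball of radius $\diam(\supp \rho) + r$, so their number is at most $C_n(\diam(\supp \rho)/r + 1)^n$, independent of any thickness of the flock. The main point to watch, and the only slightly nontrivial step, is that the shortcut rule only forces separation for non-consecutive points in the reduced chain (one cannot rule out $|y_a - y_{a+1}|$ being small); splitting into odd and even subsequences bypasses this cleanly and preserves the scaling in $1/\orho_r(\supp \rho)$.
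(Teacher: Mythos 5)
Your argument is correct, and it achieves the same endpoint (a $3r$-chain whose points carry pairwise-disjoint $r$-balls, then a packing count against $\rho(\O)=1$ or against volume), but the reduction step is genuinely different from the paper's. The paper builds the subchain left-to-right in one pass: from $x_{i_1}$ it scans forward to the last index still within $2r$, jumps to the next point, and so on; this forces $|y_j-y_k|\geq 2r$ for \emph{all} distinct pairs (except possibly the unconditionally appended terminal point), so every $y_j$ in the reduced chain contributes a disjoint $r$-ball. Your scheme instead applies an arbitrary ``collapse any long-range shortcut of length $<3r$'' rule until no more apply; that only guarantees separation $\geq 3r$ between \emph{non-consecutive} points, and you then recover disjointness by discarding the even-indexed points. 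Both give $J\leq 2/\orho_r(\supp\rho)$; the paper's single-pass greedy uses the whole reduced chain in the packing count and so is numerically tighter by about a factor of two, while your rule is order-agnostic and perhaps slightly easier to justify as terminating at a genuinely irreducible chain. One small thing to note: the passage to the odd subsequence is what saves you, since after your maximal reduction consecutive $y_a,y_{a+1}$ may still be arbitrarily close and would spoil a naive all-points packing. You flagged this explicitly, which is the only nontrivial subtlety in the argument, so the proof stands as written.
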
 
\begin{proof}
Suppose we have a chain $x'=x_1, x_2,\dots,x_K = x''  \in \supp \rho$ with the properties listed in the definition. We now choose a subchain in the following manner. Let $x_{i_1} = x_1$. Then let us pick $i_2-1$ to be the largest index $>i_1$ for which $|x_{i_1} - x_{i_2-1}| < 2r$. So, all subsequent elements will stay at a distance at least $2r$ from $x_{i_1}$. In particular $|x_{i_1} - x_{i_2}| \geq 2r$, and yet since $|x_{i_2-1} - x_{i_2}|<r$, we have $|x_{i_1} - x_{i_2}| < 3r$.   Pick $i_3$ similarly to $i_2$, etc. Eventually $x_K$ will be selected last unconditionally. 

According to construction we have a new chain $y_j = x_{i_j}$, $j=1,\dots,J$, such that $|y_j - y_{j+1}| <3r$  and $|y_j - y_k| \geq 2r$ for any $j \neq k <J$.  Hence, the chain is connected at scale $3r$. At the same time, by disjointness
\[
\orho_r(\supp \rho) (J-1) \leq \sum_{j=1}^{J-1} \rho(B_r(y_j)) = \rho \left( \cup_{j=1}^{J-1}B_r(y_j)\right) \leq 1.
\]
Hence, $J \leq1+ \frac{1}{\orho_r(\supp \rho)}\leq  \frac{2}{\orho_r(\supp \rho)}$. 

Alternatively, if the flock is bounded, and the balls around $y_j$'s are disjoint,  $J$ is limited by volume to $c_n \diam(\supp \rho)^n / r^n$.  This proves the lemma.
\end{proof}

The primary technical use of this lemma will be in construction of chains with thick links. Specifically,  if the flock is $r$-connected then we find it also $3r$-connected by chains of size $K\leq \frac{2}{\orho_r(\supp \rho)}$, and since any ball $B_{4r}(x_i)$ contains the balls $B_r(x_{i-1}) \cup B_r(x_{i+1})$, then
 \begin{equation}\label{e:intersection}
 \rho(B_{4r}(x_i) \cap B_{4r}(x_{i+1})) \geq \orho_r(\supp \rho).
\end{equation}

\begin{proof}[Proof of \thm{t:conn}] Let us assume for now that $\O = \R^n$. 

By symmetry of the model we have the following energy law
\begin{equation}\label{ }
\ddt \cE = - \frac12 \int_\domain \phi_\rho(x, x') |v-v'|^2 f(t,\w')f(t,\w) \domega'\domega.
\end{equation}
Hence, in view of \eqref{e:locker},
\begin{equation}\label{ }
\int_0^\infty \int_{\{|x-x'|<r_0\} \times \R^n}  |v-v'|^2 f(t,\w')f(t,\w) \domega'\domega \dt < \infty.
\end{equation}

Consider the averages of macroscopic momenta over balls of radius $4r$:
\[
\bar{v}(x) = \frac{1}{\rho(B_{4r} (x))} \int_{ \R^n \times B_{4r} (x)  } w f(t,y,w) \dw \dy.
\]
The quadratic deviations from the averages are all subordinated to the dissipation rate:
\begin{equation*}\label{}
\begin{split}
 \int_{\R^n \times B_{4r} (x^*)}  | v - \barv(x^*)|^2 & f(t,x,v) \dv \dx  \\
 =  &\int_{\R^n \times B_{4r} (x^*)}  \left|  \frac{1}{\rho(B_{4r} (x^*))} \int_{\R^n \times B_{4r} (x^*)} (v - w )f(t,y,w) \dw \dy \right|^2  f(t,x,v) \dv \dx  \\
 \leq  &   \frac{1}{\rho(B_{4r} (x^*))}  \int_{\R^n \times B_{4r} (x^*) \times \R^n \times B_{4r} (x^*) }   |v - w|^2 f(t,y,w)  f(t,x,v) \dw \dy \dv \dx \\
\intertext{using that $|x-y| < 8 r = r_0$,}
& \leq \frac{1}{\rho(B_{4r} (x^*))}\int_{\{|x-x'|<r_0\} \times \R^n}  |v-v'|^2 f(t,\w')f(t,\w) \domega'\domega.
\end{split}
\end{equation*}
Thus, in view of \eqref{e:totaldiss}, and the fact that $\orho_r(\supp \rho) \leq \rho(B_{4r} (x^*))$,
\begin{equation}\label{e:totalave}
\int_0^\infty \sup_{x^*\in \O} \orho_r(\supp \rho) \int_{\R^n \times B_{4r} (x^*)}  | v - \barv(x^*)|^2  f(t,x,v) \dv \dx \dt <\infty.
\end{equation}

Let us now estimate the flattening near extremes. Let us fix one coordinate of $v \supp f$, say $v^i$ and denote by $v^i_+ = V^i(t,\w_+) = \max_{\w \in \supp f_0} V^i(t,\w)$, and $x_+ = X(t,\w_+)$. We drop superindex $i$ for shortness of notation. Then
\begin{equation*}\label{}
\begin{split}
\ddt v_+  & =   \int_{\domain} \phi_\rho(x_+, y) (w - v_+) f(t,y,w) \dy \dw \leq  c_0  \int_{B_{4r}(x_+)}  (w - v_+) f(t,y,w) \dy \dw \\
&= c_0 \rho(B_{4r}(x_+)) (\barv(x_+) - v_+) \leq c_0 \orho_r(\supp \rho) (\barv(x_+) - v_+).
\end{split}
\end{equation*}
Similarly,
\[
\ddt v_- \geq c_0 \orho_r(\supp \rho)(\barv(x_-) - v_-).
\]
Consequently,
\begin{equation}\label{e:totalext}
\int_0^\infty \orho_r(\supp \rho) [ (\barv(x_-) - v_-) + (v_+ - \barv(x_+))] \dt <\infty.
\end{equation}

Combining \eqref{e:totalave} and \eqref{e:totalext}, and fixing an $T'>0$ large enough we can ensure that for any $T>0$ there is a time $t\in [T,T+T']$ such that
\begin{equation}\label{e:ext2ave}
(\barv(x_-) - v_-) + (v_+ - \barv(x_+)) + \sup_{x^*\in \O} \int_{\R^n \times B_{4r} (x^*)}  | v - \barv(x^*)|^2  f(t,x,v) \dx \dv < \frac{1}{\orho_r(\supp \rho) t \ln t}.
\end{equation}
In particular, the extreme values are close to the averages around them. Let us now show that all the averages are close to each other, and this will finish the proof.

We have for any $x^* \in \O$,
\[
\int_{\R^n \times B_{4r} (x^*)}  | v - \barv(x^*)|^2  f(t,x,v) \dx \dv  \leq \frac{1}{\orho_r(\supp \rho) t \ln t}.
\]
Denote $\d =  \frac{2}{\Th_r(\rho)\sqrt{t \ln t}}$. Then by the Chebyshev inequality,
\begin{equation}\label{e:cheb}
f( \{ |v - \barv(x^*)| > \d\} \times  B_{4r}(x^*) ) \leq \frac{1}{\d^2}\int_{\R^n \times B_{4r} (x^*)}  | v - \barv(x^*)|^2  f(t,x,v) \dx \dv \leq  \frac14 \orho_r(\supp \rho).
\end{equation}

Let us now consider a $3r$-chain $x_1,\dots,x_K$ with $K< C / \orho_r(\supp \rho)$, which connects two points $x_-$ and $x_+$. According to \eqref{e:intersection}, $\rho(B_{4r}(x_i) \cap B_{4r}(x_{i+1})) \geq \orho_r(\supp \rho)$. Thus, $f( \R^n \times (B_{4r}(x_i) \cap B_{4r}(x_{i+1}))) \geq \orho_r(\supp \rho)$. Yet according to \eqref{e:cheb},
\[
f( (\{ |v - \barv(x_i)| > \d\} \times  B_{4r}(x_i)) \cup  (\{ |v - \barv(x_{i+1})| > \d\} \times  B_{4r}(x_{i+1}))  )  \leq  \frac12 \orho_r(\supp \rho).
\]
Consequently,
\[
B_{\d}(\barv(x_i)) \times  B_{4r}(x_i) \cap  B_{\d}(\barv(x_{i+1})) \times  B_{4r}(x_{i+1})  \neq \emptyset.
\]
Hence, 
\[
|  \barv(x_i) -  \barv(x_{i+1}) | \leq 2 \d.
\]
Summing up over all $i$, we obtain
\begin{equation}\label{e:vpm}
|  \barv(x_+) -  \barv(x_-) | \leq 2 \d K \lesssim \frac{1}{\orho^2_r(\supp \rho) \sqrt{t \ln t}} \sim \frac{1}{ \sqrt{ \ln t}} .
\end{equation}
Combining with \eqref{e:ext2ave} we have
\[
v_+ - v_- \leq \frac{1}{ \sqrt{ \ln t}}.
\]

Since this holds at time $t < T+T'$, it must hold at time $T+T'$ by the maximum principle. But since $t>T$,  $\frac{1}{ \sqrt{ \ln t}} \leq \frac{1}{ \sqrt{ \ln T}} \lesssim \frac{1}{ \sqrt{ \ln (T+T')}}$.  Since $T$ is arbitrary, this finishes the proof in the open space.

On the torus the diameter of the flock is uniformly bounded, and consequently, by \lem{l:chainK}, $K$ remains uniformly bounded. In this case the estimate \eqref{e:vpm} gets improved to the following 
\[
|  \barv(x_+) -  \barv(x_-) | \leq 2 \d K \lesssim \frac{1}{\orho_r(\supp \rho) \sqrt{t \ln t}} \sim \frac{1}{ \sqrt{ \ln t}},
\]
provided $\orho_r(\supp \rho) \gtrsim \frac{1}{t^{1/2}}$. The rest of the proof is the same.
\end{proof}

\begin{remark}
The exact same result holds for solutions of the pressureless Euler-Alignment System \eqref{e:EASpresless}, thanks to the fact that it has a similar form of the energy dissipation
\begin{equation}\label{e:totaldiss}
\int_0^\infty \int_{\O \times \O} \phi_\rho(x,y) |u(x,t) - u(y,t)|^2 \drho(y) \drho(y) \dt <\infty.
\end{equation}
\end{remark}

\subsection{Alignment in the energy sense. Spectral gaps} \label{s:flockingspec}

The alignment of characteristics stated in \thm{t:conn} implies alignment in the energy sense. Recalling that $\bar{u} = \int_\O u \rho \dx$, we have
\[
\d\cE= \frac12 \int_{\domain} |v - \bar{u}|^2 f \dv \dx \leq \int_{\domain\times \domain} |V - V'|^2 f_0 f_0' \domega \domega'  \lesssim \frac{1}{ \sqrt{ \ln t}}.
\]

In this section we explore alignment  in this weaker sense 
\begin{equation}\label{}
\d\cE \to 0,
\end{equation}
by appealing to the most basic energy law of the Vlasov-alignment equation \eqref{e:VA}.  

We will not make any special assumptions on the underlying model  $\cM$ except  that $\cM$ is material just to make sense of the strength function in equation \eqref{e:VA}.  In particular, the momentum $\bar{u}$ may not be conserved.

 In order to write the equation for $\d \cE$, let us note the identity
\[
\cE : =  \frac12 \int_{\domain} |v|^2 f \dv \dx  = \d \cE + \frac12 |\bar{u}|^2.
\]
The momentum satisfies
\begin{equation}\label{ }
\ddt \frac12 |\bar{u}|^2 = (\bar{u}, \ave{u}_\rho - u)_{\k_\rho},
\end{equation}
and the equation for total energy  is given by
\[
\ddt \cE = -   \int_{\domain}\st_\rho |v|^2 f  \dv \dx + (u,\ave{u}_\rho)_{\k_\rho}.
\]
Subtracting the two we obtain
\[
\ddt \d \cE = -   \int_{\domain} \st_\rho |v|^2 f  \dv \dx + (u,\ave{u}_\rho)_{\k_\rho} - (\bar{u}, \ave{u}_\rho - u)_{\k_\rho}.
\]

Let us further notice the identity
\[
\int_{\domain} \st_\rho |v|^2 f  \dv \dx = \int_{\domain}  \st_\rho|v - \bar{u}|^2 f \dv \dx  + 2 (u, \bar{u})_{\k_\rho} - (\bar{u}, \bar{u})_{\k_\rho} .
\]
Collecting the macroscopic terms together we obtain
\[
\ddt \d \cE = -   \int_{\domain}  \st_\rho  |v - \bar{u}|^2 f\dv \dx + (\d u,\ave{\d u}_\rho)_{\k_\rho}, \qquad \d u = u - \bar{u}.
\]

Next, let us decompose the energy on the right hand side into the internal and macroscopic part,
\[
  \int_{\domain}  \st_\rho  |v - \bar{u}|^2 f \dv \dx =   \int_{\domain} \st_\rho  |v - u|^2 f \dv \dx +  (\d u, \d u)_{\k_\rho} .
  \]
We obtain the energy law
\begin{equation}\label{e:devenlaw}
\ddt \d \cE = -   \int_{\domain} \st_\rho |v - u|^2 f  \dv \dx + (\d u, \ave{\d u}_\rho)_{\k_\rho} - (\d u, \d u)_{\k_\rho} .
\end{equation}

Naturally, we will seek to relate the right hand side back to the energy. This comes from two assumptions.  First, we require that the averaging operator has a numerical range separated from $1$, i.e. at any time there exists $\e = \e(t) \in (0,1)$ such that
\begin{equation}\label{e:sgap}
\sup \left\{ (u, \ave{u}_\rho)_{\k_\rho}: \,  u\in L^2(\k_\rho),\, \bar{u} = 0,\, \ \|u\|_{L^2(\k_\rho)}=1 \right\} \leq 1-  \e.
\end{equation}
This in turn implies
\begin{equation}\label{e:lgap}
(\d u, \d u)_{\k_\rho} - (\d u, \ave{\d u}_\rho)_{\k_\rho} \geq \e (\d u, \d u)_{\k_\rho}.
\end{equation}
Second, we require the strength function to have a positive lower bound
\begin{equation}\label{e:suniform}
\inf_{x \in \supp \rho} \st_\rho(x,t) = s(t).
\end{equation}

Plugging these back into \eqref{e:devenlaw} we obtain
\begin{equation}\label{e:enlawgap3}
\begin{split}
\ddt \d \cE  &\leq  -  \int_{\domain} \st_\rho |v - u|^2 f \dv \dx  -  \e (\d u, \d u)_{\k_\rho} \\
& \leq - \e  \left(  \int_{\domain}\st_\rho |v - u|^2 f   \dv \dx + (\d u, \d u)_{\k_\rho}\right)\\
& = - \e \int_{\domain}\st_\rho |v - \bar{u}|^2 f  \dv \dx \leq  - \e s \, \d \cE.
\end{split}
\end{equation}
This implies a general sufficient condition for alignment.
\begin{proposition}\label{p:enalign} 
Let  $\cM$ be a material model on an arbitrary environment $\O$.
The kinetic model \eqref{e:VA} aligns in the energy sense provide the following condition holds 
\begin{equation}\label{ }
\int_0^\infty \e(t)s(t) \dt = \infty.
\end{equation}
\end{proposition}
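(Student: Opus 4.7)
The derivation already done in the excerpt brings us right to the doorstep: combining \eqref{e:devenlaw} with the spectral‑gap hypothesis \eqref{e:sgap}--\eqref{e:lgap} and the strength lower bound \eqref{e:suniform}, the chain of inequalities in \eqref{e:enlawgap3} collapses to the scalar Gr\"onwall-type inequality
\begin{equation*}
\ddt \d \cE(t) \leq - \e(t)\, s(t)\, \d \cE(t).
\end{equation*}
So the plan is simply to integrate this inequality and read off alignment from the divergence of $\int_0^\infty \e(t) s(t) \dt$.

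More precisely, I would first argue that along a sufficiently regular measure-valued solution of \eqref{e:VA} the functional $t \mapsto \d\cE(t)$ is absolutely continuous, so that \eqref{e:enlawgap3} may be applied in integrated form. Then an application of \GL\ yields
\begin{equation*}
\d\cE(t) \leq \d\cE(0) \exp\!\left( - \int_0^t \e(\tau) s(\tau) \dtau \right).
\end{equation*}
By the maximum principle \lem{l:maxpr} and the boundedness of $\supp f_0$, the initial deviation energy $\d\cE(0)$ is finite. Combined with the divergence assumption $\int_0^\infty \e(t) s(t)\dt = \infty$, the right-hand side tends to $0$, so $\d\cE(t) \to 0$ as $t \to \infty$, which is the desired alignment in the energy sense.

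The only mildly delicate points are (i) measurability/integrability of $t \mapsto \e(t) s(t)$, which can be taken as part of the standing hypothesis since $\e,s$ are defined pointwise from the macroscopic density and the hypothesis $\int_0^\infty \e s\dt = \infty$ implicitly presumes measurability; and (ii) justifying the energy identity \eqref{e:devenlaw} at the level of regularity available for \eqref{e:VA}. The latter is routine: for smooth solutions it is a direct computation using \eqref{e:VA} and integration by parts in $v$, while for measure-valued solutions one passes through a mollification / characteristic representation as in \sect{s:mfldet}. No step requires more than the ingredients already assembled, which is why I do not expect a genuine obstacle --- the content of the proposition is really the clean packaging of the spectral-gap and strength hypotheses into the single scalar criterion $\int_0^\infty \e s \dt = \infty$.
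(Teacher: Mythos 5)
Your proposal is correct and follows exactly the argument the paper intends: the inequality \eqref{e:enlawgap3}, $\ddt \d\cE \leq -\e(t)s(t)\,\d\cE$, is derived in the lines immediately preceding the proposition, and the proposition is just the Gr\"onwall conclusion from that differential inequality together with the divergence of $\int_0^\infty \e s\,\dt$. The two caveats you raise (measurability of $\e s$ and justification of the energy identity for measure-valued solutions) are reasonable but routine, and match how the paper implicitly treats them.
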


A few remarks are in order.

\begin{remark}\label{r:gaps}
Let us note that for symmetric models with $\st_\rho \equiv 1$, the space of vanishing momentum $L^2_0(\rho)$ is invariant under $\ave{\cdot}_\rho$, and the numerical range determines the range of the spectrum. So, condition \eqref{e:sgap}  is equivalent to a spectral gap between the trivial eigenvalue $1$ and the rest of the spectrum to the left 
\begin{equation}\label{e:spe}
\spec\{\ave{\cdot}_\rho; L^2_0(\rho)\} \ss (-\infty, 1-\e],
\end{equation}
where 
\[
L^2_0(\rho) = \left\{ u\in  L^2(\rho): \int_\O u \drho = 0 \right\}.
\]
For this reason, although in general \eqref{e:sgap} is not a spectral property, we still  refer to it as a {\em spectral gap}. 

In general, however, conservative models leave the null-space  
\[
L^2_0(\k_\rho) = \left\{ u\in  L^2(\k_\rho): \int_\O u \dk_\rho = 0 \right\}
\]
invariant. In this case it is possible to relate $\e$ to the actual spectral gap of $\ave{\cdot}_\rho$ on $L^2_0(\k_\rho)$ if $\st_\rho$ is bounded from below. Details are provided in Appendix~\ref{a:gap}.
\end{remark}

\begin{remark} \label{r:spgap} \prop{p:enalign}  can be viewed as a generalization of Tadmor's \cite{Tadmor-notices} to the non-symmetric case.  The argument there is slightly different in the interpretation of the spectral gap condition \eqref{e:lgap}. As opposed to \eqref{e:lgap} where all the inner products are related to the common $\k_\rho$-weight, one can make a more direct relation to the physical macroscopic energy, i.e. the $\rho$-weighted product:
\[
(\d u, \d u)_{\k_\rho} - (\d u, \ave{\d u}_\rho)_{\k_\rho} \geq \l (\d u, \d u)_{\rho}.
\]
The corresponding alignment statement in terms of $\l$ reads
\begin{equation}\label{e:propTad}
\int_0^\infty \min\{s(t),\l(t)\} \dt = \infty.
\end{equation}

Such $\l$ can be expressed in variational form as the second (approximate) eigenvalue of the alignment operator 
\begin{equation}\label{}
\cL_\rho u = \st_\rho(u - \ave{u}_\rho).
\end{equation}
We have
\begin{equation}\label{e:varprob}
\l = \inf_{ u \in L^2_0(\rho)} \frac{ (u, \cL_\rho u)_\rho}{ (u,u)_\rho}.
\end{equation}
The advantage of this approach consists in the fact that for symmetric models represented by a kernel  the formulation \eqref{e:varprob} takes a more explicit form:\begin{equation}\label{e:varprob2}
\l = \inf_{ u \in L^2_0(\rho), \|u\|_2 = 1} \int_{\O \times \O} |u(x) - u(y)|^2 \phi_\rho(x,y) \drho(y)\drho(x).
\end{equation}
Theorem 2 of \cite{Tadmor-notices} gives a kinematic estimate in terms of lower and upper bounds on the density, in case when $\O = \T^n$. Namely,  
\begin{equation}\label{e:Tad+-}
\l \gtrsim \frac{\rho_-^2}{\rho_+}.
\end{equation}
The result is proved under condition \eqref{e:philow} below, however it can be recast for physically local kernels \eqref{e:locker} as well. Let us reproduce the argument as it will be used later in Example~\ref{ex:MTgap}. 
\begin{proof}[Proof of \eqref{e:Tad+-}]
We obtain
\[
(u, \cL_\rho u)_\rho  \geq c_0   \rho_-^2 \int_{|x-y| < r_0 } |u(x) - u(y)|^2 \dx \dy.
\]
As shown in \cite[Lemma 2.1]{LS-entropy} this can be further estimated from below by
\[
\geq  c_0 c_1 \rho_-^2 \frac{c_0}{(2\pi)^n} \int_{\T^n} |u(x) - \Ave(u) |^2 \dx,
\]
where $c_1 = c_1(r_0)$, and $\Ave(u) = \frac{1}{(2\pi)^n} \int_{\O} u(x) \dx$. Recalling that $u$ has momentum zero, we finish with
\[
 \gtrsim  \frac{\rho_-^2}{\rho_+}   \int_{\T^n} |u(x) - \Ave(u) |^2 \drho(x)  \geq   \frac{\rho_-^2}{\rho_+}  (u,u)_\rho.
\]
\end{proof}

Estimate \eqref{e:Tad+-} shows that under global control on $\rho_+$ one obtains alignment under the root-assumption $\rho_- \gtrsim 1/ \sqrt{t}$, the same result as proved in \cor{c:squareroot} under no assumption on $\rho_+$.  The difference between the two approaches is fundamental -- dynamic vs kinematic. It appears that the dynamic approach is not sensitive to the density growth and gives a better result for symmetric models on the torus. However, as we will see later in \sect{s:lowenergy} the kinematic approach, although in somewhat different form than presented here, gives estimates  independent of $\rho_+$ as well, and in some cases can even  beat the root-result, see \prop{p:gaps}. Any bound on the spectral gap that does not rely on $\rho_+$ will prove to be a crucial in the study of  relaxation for kinetic Fokker-Planck models in \sect{s:hypo}.
\end{remark}

Let us present two applications of \prop{p:enalign} that are distinctly different from the root-result.  In both cases we assume $\O = \T^n$. 

\begin{example}[\ref{CS}-model]\label{ex:CSgap}
Let us assume that $\phi$ is a mollification kernel, $\phi \geq 0$, $\int_\O \phi \dx = 1$, local or not. Then its non-zero Fourier modes will necessarily be smaller than unit:
\begin{equation}\label{e:philow} 
c_0 = \sup_{k \in \Z^n \backslash \{0\} } | \hat{\phi}(k)| <1.
\end{equation}

Let us compute the spectral gap as defined by \eqref{e:sgap}. Using that $\int u \rho \dx = 0$ by the Plancherel identity,
\[
 (u, \ave{u}_\rho)_{\k_\rho} = \int_\O u \rho (u\rho)_\phi \dx  = \sum_{k \in \Z^n \backslash \{0\} } | \widehat{u \rho}(k) |^2 \re( \hat{\phi}(k)) \leq  c_0 \int_\O |u \rho|^2 \dx.
 \] 
We now relate it back to the $L^2(\k_\rho)$-norm:
\[
(u, \ave{u}_\rho)_{\k_\rho} \leq c_0 \int_\O |u|^2 \rho \rho_\phi  \frac{\rho}{\rho_\phi} \dx \leq c_0 \left\|\frac{\rho}{\rho_\phi} \right\|_{\infty} \|u\|_{L^2(\k_\rho)}^2 .
\]
Suppose now that $ \left\|\frac{\rho}{\rho_\phi} \right\|_{\infty} < \frac{1}{c_0}$. We define 
\begin{equation}\label{e:flatconc}
\e = 1 - c_0 \left\|\frac{\rho}{\rho_\phi} \right\|_{\infty} .
\end{equation}
 Naturally, $\e<1-c_0$ since at the point of maximum of $\rho$ we have $\rho \geq \rho_\phi$, and so the $L^\infty$-norm is at least $1$.  Also, note that if $\rho$ is convex in a ball $B_r(x)$, where $r$ is the range of the communication kernel, then $\rho(x) \leq \rho_\phi(x)$, and \eqref{e:flatconc} holds if restricted to that ball. So, the spectral gap \eqref{e:flatconc} essentially quantifies flatness of the density $\rho$ in those regions where it is not convex. 

Also, note that for $\e = 1-c_0$, the only flock that satisfies \eqref{e:flatconc} is the uniformly distributed one. So, the smaller the  $\e$ the more room there is for variations in distribution. However,  \eqref{e:flatconc} still ensures sufficient spread of the support across the domain (for otherwise the geodesic counterexample applies).

Now,  a lower bound on $\st_\rho$ can be interpreted as a measure of thickness, see \sect{s:rth},
\begin{equation}\label{e:CSthick}
s(t) = \Th(\rho,\supp \rho).
\end{equation}

Collecting the computations above and applying \prop{p:enalign} we obtain the following alignment result.
\begin{corollary}\label{c:CSgap}
For the Cucker-Smale model \ref{CS} a sufficient condition for alignment in the energy sense is the flatness \eqref{e:flatconc} and thickness \eqref{e:CSthick} to satisfy $\int_0^\infty \e(t) s(t) \dt = \infty$.
\end{corollary}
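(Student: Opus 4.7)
The plan is to invoke \prop{p:enalign} directly, with $\e(t)$ and $s(t)$ identified from the computations already carried out in \exam{ex:CSgap}. Since \ref{CS} is a material model on $\T^n$, the hypotheses of the proposition are met, and alignment in the energy sense will follow once we verify \eqref{e:sgap} with $\e$ given by \eqref{e:flatconc} and \eqref{e:suniform} with $s$ given by \eqref{e:CSthick}.

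First I would establish the spectral gap \eqref{e:sgap}. Fix $u \in L^2(\k_\rho)$ with $\bar u = \int_\O u \rho \dx = 0$, equivalently $\widehat{u\rho}(0)=0$. Writing the \ref{CS} pairing as $(u,\ave{u}_\rho)_{\k_\rho} = \int_\O (u\rho)(u\rho)_\phi \dx$ and applying Plancherel on $\T^n$,
\begin{equation*}
(u,\ave{u}_\rho)_{\k_\rho} = \sum_{k\neq 0} |\widehat{u\rho}(k)|^2 \, \re \hat\phi(k) \leq c_0 \|u\rho\|_2^2,
\end{equation*}
where $c_0$ is the constant from \eqref{e:philow}. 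Factoring out the $\k_\rho$-weight,
\begin{equation*}
\|u\rho\|_2^2 = \int_\O u^2 \rho \rho_\phi \cdot \frac{\rho}{\rho_\phi}\dx \leq \Bigl\|\frac{\rho}{\rho_\phi}\Bigr\|_\infty \|u\|^2_{L^2(\k_\rho)},
\end{equation*}
which yields \eqref{e:sgap} with $\e(t) = 1 - c_0 \|\rho(t)/\rho_\phi(t)\|_\infty > 0$, by hypothesis.

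Second, for the strength lower bound \eqref{e:suniform} the \ref{CS}-model has $\st_\rho = \rho_\phi$, so $\inf_{x\in\supp \rho}\st_\rho(x,t) = \rho_\phi(\supp\rho)(t) = \Th(\rho,\supp\rho)(t) = s(t)$, which is exactly the thickness in \eqref{e:CSthick}. With these two ingredients in hand, the assumed integral divergence $\int_0^\infty \e(t) s(t)\dt = \infty$ is precisely the hypothesis of \prop{p:enalign}, and the conclusion $\d\cE(t) \to 0$ follows from \eqref{e:enlawgap3}. The only point requiring minor care is that \prop{p:enalign} is stated as $\bar u$ being the macroscopic momentum rather than the $\k_\rho$-momentum; in the derivation of \eqref{e:enlawgap3} it is the condition $\bar u = \int u\rho\dx = 0$ on the test field $\d u = u - \bar u$ that is used in \eqref{e:sgap}, which matches what we just verified via $\widehat{u\rho}(0)=0$. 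No further argument is needed.
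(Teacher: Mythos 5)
Your proposal is correct and follows essentially the same route as the paper: the Plancherel computation with the bound $c_0$ on nonzero Fourier modes of $\phi$, the factoring of $\|\rho/\rho_\phi\|_\infty$ to pass from $\|u\rho\|_2^2$ to $\|u\|_{L^2(\k_\rho)}^2$, the identification $\st_\rho = \rho_\phi = \Th(\rho,\cdot)$, and the final appeal to \prop{p:enalign} all match \exam{ex:CSgap}. The closing remark correctly confirms that the vanishing-momentum condition $\bar u = \int u\rho\dx = 0$ used in \eqref{e:sgap} is exactly $\widehat{u\rho}(0)=0$, which is what kills the $k=0$ Fourier mode.
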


\end{example}

\begin{example}[\ref{MT}-model] \label{ex:MTgap} For the Motsch-Tadmor non-symmetric model  \ref{MT}  computation of the gap is more technical and require heavier assumptions on the density. 

Let us assume that the defining kernel $\phi$ is  local, \eqref{e:locker}, and $\int \phi \dx =1$.
We have
\begin{align}
(u, u)_\rho - (u, \ave{u}_\rho)_\rho & = \int_{\O \times \O} u(x) \cdot (u(x) - u(y)) \rho(x) \rho(y) \frac{\phi(x-y)}{\rho_\phi(x)} \dy \dx, \notag\\
\intertext{symmetrizing in $x$ and $y$}
&= \frac12 \int_{\O \times \O} |u(x) - u(y)|^2 \rho(x) \rho(y) \frac{\phi(x-y)}{\rho_\phi(x)} \dy \dx \label{e:diss2nd} \\
&+ \frac12 \int_{\O \times \O} u(y) \cdot (u(x) - u(y)) \rho(x) \rho(y) \left(\frac{1}{\rho_\phi(x)} - \frac{1}{\rho_\phi(y)} \right) \phi(x-y) \dy \dx.  \notag
\end{align}
Now, using that  $\rho_\phi(x) \leq \|\phi\|_\infty$ we bound the first term from below by a multiple of $(u, \cL_\rho u)_\rho$, which by \eqref{e:Tad+-} is bounded from below by $ c \frac{\rho_-^2}{\rho_+}  \|u\|_2^2$.  As to the second term, note that the component with the dot-product $u(y) \cdot u(x)$ vanishes by symmetry, and hence we are left with 
\begin{equation*}\label{}
\begin{split}
- \frac12 \int_{\O \times \O} |u(y)|^2  \rho(y) \rho(x) \left(\frac{1}{\rho_\phi(x)} - \frac{1}{\rho_\phi(y)} \right) \phi(x-y) \dy \dx  & = - \frac12 \int_{\O \times \O} |u(y)|^2  \rho(y) \left(\frac{\rho}{\rho_\phi}\right)_\phi(y) \dy + \frac12 \|u \|_2^2\\
&= \frac12 \int_{\O \times \O} |u(y)|^2  \rho(y) \left(1- \frac{\rho}{\rho_\phi}\right)_\phi(y) \dy .
\end{split}
\end{equation*}
We now impose the following condition on the smallness of variation
\begin{equation}\label{e:smallvar}
\rho_+ - \rho_- \leq c \frac{\rho_-^3}{\rho_+}.
\end{equation}
Then
\[
 \left(1- \frac{\rho}{\rho_\phi}\right)_\phi(y)\leq \frac{\rho_+ - \rho_-}{\rho_-} \leq  c\frac{\rho_-^2}{\rho_+}.
 \]
 Consequently, this term becomes less than half of the main dissipation term \eqref{e:diss2nd},
\[
(u, u)_\rho - (u, \ave{u}_\rho)_\rho \geq  \frac{c}{2} \frac{\rho_-^2}{\rho_+} (u, u)_\rho.
\]
So, similar to the symmetric case under the flatness assumption \eqref{e:smallvar}, the size of the spectral gap is still estimated at $\l = \e \gtrsim \frac{\rho_-^2}{\rho_+} $.

\begin{corollary}\label{c:MTspgap} There exists a $c>0$ which depends only on the parameters of the model such that 
any solution to the kinetic equation \eqref{e:VA} on $\T^n$ governed by the Motsch-Tadmor averaging aligns in the energy sense, provided
\[
\rho_+ - \rho_- \leq c \frac{\rho_-^3}{\rho_+}, \qquad 
\int_0^\infty \frac{\rho_-^2}{\rho_+} \ds = \infty.
\]
\end{corollary}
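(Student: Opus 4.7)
The plan is to package the computation already carried out in Example~\ref{ex:MTgap} and feed the resulting spectral gap into \prop{p:enalign}. Since the \ref{MT}-model has $\st_\rho \equiv 1$, the strength lower bound is $s(t) \equiv 1$ and the measures $\k_\rho$ and $\rho$ coincide, so the numerical-range gap \eqref{e:sgap} and the variational gap $\l$ of \eqref{e:varprob} agree. Thus it suffices to produce a lower bound $\e(t) \gtrsim \rho_-^2(t)/\rho_+(t)$ on the spectral gap and then invoke \prop{p:enalign} with the prescribed integrability assumption on $\rho_-^2/\rho_+$.

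First I would fix $u\in L^2_0(\rho)$ and expand $(u,u)_\rho - (u,\ave{u}_\rho)_\rho$ against the reproducing kernel $\phi(x-y)/\rho_\phi(x)$ of \ref{MT}. Symmetrizing in $x\leftrightarrow y$ splits this quantity into the ``symmetric part'' $\tfrac12\int |u(x)-u(y)|^2 \rho(x)\rho(y)\phi(x-y)/\rho_\phi(x)\dy\dx$ plus a remainder coming from the asymmetry of $1/\rho_\phi(x)$ versus $1/\rho_\phi(y)$. Using $\rho_\phi \leq \|\phi\|_\infty$, the symmetric part dominates a multiple of the symmetric-model dissipation $\int|u(x)-u(y)|^2 \phi(x-y)\drho(x)\drho(y)$, which in turn is bounded below by $c\,\rho_-^2/\rho_+ \cdot \|u\|_{L^2(\rho)}^2$ via the Tadmor-type estimate \eqref{e:Tad+-} (locally in the ball of communication, combined with the mean-zero condition and the Poincaré-type bound from \cite[Lemma 2.1]{LS-entropy}).

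Next I would bound the asymmetric remainder. The cross term $u(y)\cdot u(x)$ vanishes upon reversing $x\leftrightarrow y$, leaving
\[
\tfrac12 \int_{\O} |u(y)|^2 \rho(y)\Big(1-\tfrac{\rho}{\rho_\phi}\Big)_{\!\phi}(y)\dy,
\]
whose absolute value is controlled by $\|1-\rho/\rho_\phi\|_\infty \|u\|_{L^2(\rho)}^2 \leq \tfrac{\rho_+-\rho_-}{\rho_-}\|u\|_{L^2(\rho)}^2$. The flatness hypothesis \eqref{e:smallvar} makes this at most one-half of the main dissipation, yielding
\[
(u,u)_\rho - (u,\ave{u}_\rho)_\rho \geq \tfrac{c}{2}\,\tfrac{\rho_-^2}{\rho_+}\,(u,u)_\rho,
\]
which is the required spectral gap with $\e(t) \gtrsim \rho_-^2(t)/\rho_+(t)$.

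The main obstacle is the sign of the asymmetric remainder: there is no \emph{a priori} reason for it to be nonnegative, so the lower bound on $\l$ is obtained only after it is absorbed into the symmetric term, and this absorption is exactly what forces the flatness condition \eqref{e:smallvar}. Once the gap is established, the conclusion is immediate from \prop{p:enalign}: with $s(t)\equiv 1$ the integrability hypothesis $\int_0^\infty \rho_-^2/\rho_+\dt=\infty$ implies $\int_0^\infty \e(t)s(t)\dt=\infty$, and hence $\d\cE(t)\to 0$.
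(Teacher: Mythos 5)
Your argument is correct and follows the paper's own reasoning essentially step for step: you reproduce the decomposition of $(u,u)_\rho-(u,\ave{u}_\rho)_\rho$ into a symmetric dissipation term plus an asymmetry remainder as in Example~\ref{ex:MTgap}, bound the former below via $\rho_\phi\leq\|\phi\|_\infty$ and the Tadmor estimate \eqref{e:Tad+-}, absorb the remainder using the flatness condition \eqref{e:smallvar}, and then invoke \prop{p:enalign} with $s(t)\equiv 1$. The only cosmetic difference is that you phrase the remainder bound through the absolute value $\|1-\rho/\rho_\phi\|_\infty$, which is a slightly cleaner way of saying what the paper's one-sided inequality means; otherwise this is the same proof.
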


\end{example}

 \subsection{Spectral gap of a ball-positive model. Low energy method}\label{s:lowenergy}
 
As we have seen  the spectral gap condition \eqref{e:sgap} plays a central role in alignment dynamics and will be important in the study of relaxation, see \sect{s:hypo}. It will be essential to find bounds on $\e$ that are independent of  $\rho_+$, since the growth of the density cannot be controlled away from equilibrium. In this section we present the so-called  {\em low energy method} which allows one to obtain such bounds for ball-positive models on $\T^n$.

To describe the method let us first discuss energetics of ball-positive models.  Since $\|\ave{\cdot}_\rho \|_{L^2(\k_\rho)}\leq 1$, we obtain  a streak of three inequalities,
\begin{equation}\label{e:energies}
(u,u)_{ \k_\rho} \geq (u,\ave{u}_\rho)_{ \k_\rho} \geq (\ave{u}_\rho,\ave{u}_\rho)_{ \k_\rho}.
\end{equation}
This defines the hierarchy of three $\k$-energies  (not to be confused with the physical $\rho$-energies)
\begin{equation}\label{e:EEE}
\cE_0 = (u,u)_{ \k_\rho} , \quad \cE_1 =(u,\ave{u}_\rho)_{ \k_\rho}, \quad \cE_2 = (\ave{u}_\rho,\ave{u}_\rho)_{ \k_\rho}.
\end{equation}

As seen from \eqref{e:EASlessenlaw} the difference between the first two energies 
$\cA_0 = \cE_0 - \cE_1$ controls the rate of alignment in collective systems. The next difference $\cA_1 = \cE_1 - \cE_2$ is also non-negative by the very definition of ball-postivity, and in fact by the \CS\ one has the relation
\[
\cA_0 \geq \cA_1.
\]
So, it is clear that the strength of ball-positivity measured by $\cA_1$ bears direct relevance to alignment.

To adopt it for spectral gap calculations, we note that the spectral gap condition \eqref{e:sgap} can be expressed directly in terms of top tier energies
\begin{equation}\label{e:AE0}
 \cA_0 \geq \e \cE_0,\qquad \forall u\in L^2(\k_\rho), \ \bar{u} = \int_\O u \rho \dx= 0.
\end{equation}
The lower energy method seeks to achieve \eqref{e:AE0} through  comparison  between the two terms down in the hierarchy (low energies)
\begin{equation}\label{e:AE1}
\cA_1 \geq \e \cE_1, \qquad \forall u\in L^2(\k_\rho), \ \bar{u} = 0.
\end{equation}
Indeed, let us observe that \eqref{e:AE1}  is equivalent to 
\begin{equation}\label{e:lowen5}
(1-\e) (u, \ave{u}_\rho)_{\k_\rho} \geq (\ave{u}_\rho,\ave{u}_\rho)_{ \k_\rho}, \qquad \forall u\in L^2(\k_\rho), \ \bar{u} = 0,
\end{equation}
and hence 
\[
\| \ave{ u }_\rho\|_{L^2(\k_\rho)} \leq (1 - \e)\| u \|_{L^2(\k_\rho)},\qquad \forall u\in L^2(\k_\rho), \ \bar{u} = 0,
\]
which implies \eqref{e:sgap}$\sim$\eqref{e:AE0}. 

One can see from \eqref{e:lowen5} that the method is necessarily restricted to the class of ball-positive models.  
It turns out that estimating the low energy gap \eqref{e:AE1} sometimes gives substantial improvements over the direct approach \eqref{e:AE0} in the sense of giving a bound independent of $\rho_+$. Let us present several examples from our list.

Throughout we assume that the kernel in question is local \eqref{e:locker} - \eqref{e:lockerdef}, and the environment is periodic $\O = \T^n$. The summary of estimates to be obtained below is given in the following proposition. 

\def \rseg {r_{\mathrm{seg}}}

\begin{proposition}\label{p:gaps}
For each of the ball-positive models \ref{CS},  \ref{Mf},  \ref{Mseg}  we have the following bounds from below on the spectral gap up to a constant multiple:
\medskip
\begin{center}
\begin{tabular}{  c |  c | c |c } 
 MODEL &  \ref{CS} &  \ref{Mf} &  \ref{Mseg}  \\ 
 \hline
\multirow{2}{*}{spectral gap} &\multirow{2}{*}{ $ \orho^3_{r_0/2}(\O) $ }&  \multirow{2}{*}{$ \orho_{r_0/2}(\O)$ }& \multirow{2}{*}{$ \orho^{2L}_{\rseg}(\O)$, see \eqref{e:commball}}\\
 & & & 
 \end{tabular}
\end{center}
\medskip
In particular, if the kernel is all-to-all, $\inf \phi_\rho >0$, then the spectral gap is automatically uniform.
\end{proposition}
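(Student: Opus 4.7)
The plan is to prove all three inequalities through a single application of the low-energy criterion \eqref{e:AE1}: for each model I would (i) rewrite $\cA_1=\cE_1-\cE_2$ as an explicit nonnegative Dirichlet-type quadratic form, (ii) use locality \eqref{e:lockerdef} to bound its kernel below by a local indicator scaled by the ball-thickness $\orho_r(\O)$, and (iii) close via a Poincar\'e inequality on $\T^n$ (or its discrete analogue) combined with the zero-momentum constraint $\bar u=0$.

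For $\cMfmt$, setting $h = (u\rho)_\phi/\rho_\phi$ and integrating the pointwise variance identity $(h^2)_\phi(x)-(h_\phi)^2(x) = \tfrac12\iint(h(y)-h(z))^2\phi(x-y)\phi(x-z)\,\dy\,\dz$ against $\rho(x)\dx$, I would arrive at
\[
\cE_1-\cE_2 \;=\; \tfrac12\iint (h(y)-h(z))^2 K_\rho(y,z)\,\dy\,\dz,\qquad K_\rho(y,z) \;=\; \int \phi(y-x)\phi(z-x)\rho(x)\,\dx.
\]
Locality yields $K_\rho(y,z)\gtrsim \orho_{r_0/2}(\O)\one_{|y-z|<r_0/2}$ by placing the midpoint ball inside the convolution, and the torus Poincar\'e estimate used in the proof of \eqref{e:Tad+-} dominates this from below by $\orho_{r_0/2}(\O)\|h-\bar h\|_2^2$. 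The constraint $\bar u=0$ translates to $\int h\rho_\phi\,\dy=0$, so the trivial bound $\rho_\phi\le\|\phi\|_\infty$ lets me control $\cE_1=\int h^2\rho_\phi\,\dy$ by a multiple of $\|h-\bar h\|_2^2$, producing $\e\gtrsim\orho_{r_0/2}(\O)$. For $\cMcs$ with Bochner-positive $\phi=\psi*\psi$ I would apply the same variance identity one level deeper: letting $M=\psi*\rho$ and $U=(\psi*(u\rho))/M$, so that $\cE_1=\int U^2M^2\,\dx$, one obtains
\[
\cE_1-\cE_2 \;=\; \tfrac12\iint (U(y)-U(z))^2\,M(y)M(z)\,K(y,z)\,\dy\,\dz,\qquad K(y,z) \;=\; \int\frac{\rho(x)\psi(x-y)\psi(x-z)}{\rho_\phi(x)}\,\dx.
\]
The bound $\rho_\phi\le\|\phi\|_\infty$ yields $K\gtrsim\orho_{r_0/2}(\O)\one_{|y-z|<r_0/2}$, while local positivity of $\psi$ forces $M\gtrsim\orho_{r_0/2}(\O)$ pointwise. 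The three factors of $\orho$ (one from $K$, two from $M(y)M(z)$), combined with Poincar\'e and the constraint $\int UM\,\dy=0$, deliver $\e\gtrsim\orho^3_{r_0/2}(\O)$.

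For $\cMseg$ the setup $\a_l=\rho(ug_l)/\rho(g_l)$ gives directly $\cE_1=\sum_l\rho(g_l)\a_l^2$ and, using $\sum_k g_k\equiv 1$, the finite-dimensional Dirichlet form
\[
\cE_1-\cE_2 \;=\; \tfrac12\sum_{l,k}(\a_l-\a_k)^2\rho(g_l g_k),\qquad \text{subject to}\qquad \sum_l\rho(g_l)\a_l=0.
\]
Choosing $\rseg$ so that any two adjacent cells $\cO_l,\cO_{l'}$ admit a common ball $B_{\rseg}$ on which $g_l,g_{l'}\ge c$, the overlap weights satisfy $\rho(g_l g_k)\gtrsim \orho_{\rseg}(\O)$ along the edges of the overlap graph. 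Iterating a chaining estimate of the form $(\a_l-\a_k)^2\lesssim \sum_{\text{path}(l,k)}(\a_i-\a_{i+1})^2/\rho(g_ig_{i+1})$ over paths of at most $L$ links, combined with the constraint to rewrite $\a_l$ as a $\mu$-convex combination of differences, yields a discrete Poincar\'e constant of order $\orho^{2L}_{\rseg}(\O)$. The final uniform-gap assertion, $\inf\phi_\rho>0\Rightarrow\inf\e>0$, follows from the same three bounds with $\orho$ replaced by a universal positive constant.

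The main obstacle is the $\cMcs$ case: unlike $\cMfmt$, the Dirichlet form carries the additional multiplicative weight $M(y)M(z)$ which cannot be absorbed without a pointwise lower bound, and this is the precise source of the two extra powers of $\orho$. Some technical care is also required in replacing the Lebesgue mean $\bar U$ by the $M$-weighted mean dictated by the constraint $\int UM\,\dy=0$, but this only produces harmless constants controlled by $\|\psi\|_\infty$ and $|\T^n|$, and does not affect the final exponent.
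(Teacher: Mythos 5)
Your proposal is correct, and for \ref{CS} and \ref{Mf} it is essentially the paper's own argument: the Dirichlet forms you write down are exactly the paper's representations of $\cA_1$ (your $K_\rho$ is $\rho_{\phi\phi}$ from \eqref{e:AMf}, and your $M(y)M(z)K(y,z)$ is the paper's $\varrho(x)\varrho(z)(\rho/\rho_\phi)_{\psi\psi}(x,z)$), and the subsequent steps — the lower bound $\gtrsim \orho_{r_0/2}(\O)\one_{|y-z|<r_0/2}$ on the kernel, the pointwise bound $M\gtrsim\orho_{r_0/2}(\O)$ supplying the two extra powers in the \ref{CS} case, the local Poincar\'e inequality of \cite[Lemma 2.1]{LS-entropy}, and the passage from the Lebesgue mean to the weighted mean via the zero-momentum constraint — all coincide. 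Your derivation of these identities through the pointwise (weighted) variance identity is a somewhat cleaner route than the paper's expand-and-symmetrize computations, but it is not a different proof. The one genuine divergence is \ref{Mseg}: after arriving at the same finite Dirichlet form, the paper does not chain via Cauchy--Schwarz but iterates the eigenvalue relation $(1-\e)x_l=\sum_{l'}\rho(g_lg_{l'})\rho(g_l)^{-1}x_{l'}$ along a path from the maximal to the minimal coordinate, gaining a factor $\d^2$ per step from $\rho(g_lg_{l'})/\rho(g_l)\geq\d^2$ and reading off $\e\geq\d^{2L}$ directly. Your discrete-Poincar\'e chaining also closes, but it needs one step you left implicit: to control the per-edge weights $1/\rho(g_ig_{i+1})$ you must bound the individual cell masses from below, which follows from \eqref{e:segconn} via $\rho(g_l)\geq\rho(g_lg_{l'})\geq\d\sqrt{\rho(g_l)\rho(g_{l'})}$, hence $\rho(g_l)\geq\d^2\rho(g_{l'})$ for adjacent cells and $\rho(g_l)\gtrsim\d^{2L}/L$ overall; with this the chaining gives $\e\gtrsim\d^{2L-1}/L^2$, which, since $\d\leq1$ and $L$ is a fixed parameter of the model, is at least as strong as the stated $\orho_{\rseg}^{2L}(\O)$ bound.
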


 \begin{proof}[Proof of \prop{p:gaps} for the \ref{Mf}-model]

For the \ref{Mf}-model  the following formula was proved in \cite{S-hypo}:
\begin{equation}\label{e:AMf}
\begin{split}
\cA_1 & = \frac12 \int_{\O \times \O} \rho_{\phi \phi}(x,y) | \uFavre(x) - \uFavre(y)|^2 \dx \dy, \\
\rho_{\phi \phi}(x,y) & = \int_{\O} \phi(x - \xi )\phi(y - \xi) \rho(\xi) \dxi,
\end{split}
\end{equation}
where $\uFavre$ is the Favre-filtration given by \ref{MT}.  The proof goes as follows
\begin{equation*}\label{}
\begin{split}
\cA_1 & = \int_{\O} (\rho_\phi |\uFavre|^2 - \rho |(\uFavre)_\phi |^2 ) \dx = \int_{\O} (\rho_\phi \uFavre \cdot \uFavre - \rho (\uFavre)_\phi \cdot (\uFavre)_\phi  ) \dx\\
& = \int_{\O} (\rho_\phi \uFavre - (\rho (\uFavre)_\phi)_\phi  )  \cdot \uFavre \dx = \int_{\O \times \O} \phi(x - \xi) \rho(\xi)( \uFavre(x) - (\uFavre)_\phi(\xi)  )  \cdot \uFavre(x) \dxi \dx\\
& = \int_{\O \times \O \times \O} \phi(x - \xi)\phi(y - \xi)  \rho(\xi)( \uFavre(x) - \uFavre(y)  )  \cdot \uFavre(x) \dxi \dx \dy\\
& = \int_{\O \times \O} \rho_{\phi \phi}(x,y)( \uFavre(x) - \uFavre(y)  )  \cdot \uFavre(x) \dx \dy = \frac12 \int_{\O  \times \O } \rho_{\phi \phi}(x,y) | \uFavre(x) - \uFavre(y)|^2 \dx \dy,
\end{split}
\end{equation*}
where in the last step we performed symmetrization in $x,y$.

We now estimate $\rho_{\phi \phi}$ from below:  let $|x-y|< r_0/2$, then
\begin{equation*}\label{}
\begin{split}
\rho_{\phi \phi}(x,y) & = \int_{\O} \phi(x -y + \xi )\phi(\xi) \rho(y- \xi) \dxi \geq \int_{|\xi|<r_0/2} \phi(x -y + \xi )\phi(\xi) \rho(y- \xi) \dxi \\
& \geq  c_0^2 \int_{|\xi|<r_0/2} \rho(y- \xi) \dxi  \geq c_0^2 \orho_{r_0/2}(\O).
\end{split}
\end{equation*}
Thus,
\begin{equation}\label{e:rhophiphi}
\rho_{\phi \phi}(x,y) \gtrsim  \orho_{r_0/2}(\O) \one_{|x-y|< r_0/2}.
\end{equation}

With this at hand we have
\[
\cA_1 \gtrsim  \orho_{r_0/2}(\O) \int_{|x-y|<r_0/2} | \uFavre(x) - \uFavre(y)|^2 \dx \dy
\]
by \cite[Lemma 2.1]{LS-entropy},
\[
\gtrsim  \orho_{r_0/2}(\O)  \int_{\O}  |\uFavre -   \Ave(\uFavre)|^2 \dx\gtrsim  \orho_{r_0/2}(\O) \int_{\O} \rho_\phi |\uFavre -   \Ave(\uFavre)|^2 \dx.
\]
 Using the vanishing momentum , $\Ave((u\rho)_\phi) = 0$, we continue
\begin{equation*}\label{}
=  \orho_{r_0/2}(\O) \left( \int_{\O} \rho_\phi |\uFavre |^2 \dx - \underbrace{ 2  \Ave(\uFavre)\cdot \Ave((u\rho)_\phi) }_{=0} + (2\pi)^n |\Ave(\uFavre)|^2 \right).
\end{equation*}
Noting that $ \int_{\O} \rho_\phi |\uFavre |^2 \dx = \cE_1$, we conclude
\[
\geq  \orho_{r_0/2}(\O)  \cE_1.
\]
So, we have a bound
\begin{equation}\label{e:Mfgap}
\e \geq c  \orho_{r_0/2}(\O),
\end{equation}
where $c>0$ depends only on the parameters of the model.

\end{proof}

We obtain the following improvement over the general root-result of \cor{c:squareroot}.
\begin{corollary}\label{c:Mfalign}
Under the \ref{Mf}-averaging protocol a solution to the Vlasov-Alignment equation \eqref{e:VA} aligns if $\rho_- \gtrsim \frac{1}{t}$.
\end{corollary}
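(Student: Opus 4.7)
The plan is to chain together three ingredients already established in the paper: the spectral-gap bound for the \ref{Mf}-model in \prop{p:gaps}, the elementary control of the ball-thickness from below by the density infimum, and the alignment criterion in \prop{p:enalign}. Since \ref{Mf} has $\st_\rho\equiv 1$, the quantity $s(t)$ defined in \eqref{e:suniform} is identically $1$, so the hypothesis $\int_0^\infty \e(t)s(t)\dt = \infty$ of \prop{p:enalign} collapses to $\int_0^\infty \e(t)\dt = \infty$.

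Next, I would bound $\e(t)$ from below in terms of $\rho_-(t)$. By \prop{p:gaps} (or the explicit estimate \eqref{e:Mfgap}), $\e(t)\gtrsim \orho_{r_0/2}(t,\O)$. For any $x\in\T^n$,
\[
\orho_{r_0/2}(x) = (\rho\ast\chi_{r_0/2})(x) = \int_{\T^n}\rho(y)\chi_{r_0/2}(x-y)\dy \geq \rho_-(t)\,\|\chi_{r_0/2}\|_{L^1},
\]
and $\|\chi_{r_0/2}\|_{L^1}$ is a positive constant depending only on $r_0$ and $n$. Taking the infimum over $x$ gives $\orho_{r_0/2}(t,\O)\gtrsim \rho_-(t)$, and hence $\e(t)\gtrsim \rho_-(t)$.

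Combining the two estimates under the assumption $\rho_-(t)\gtrsim 1/t$, we obtain $\e(t) s(t)\gtrsim 1/t$ for $t$ large, and $\int_1^\infty \dt/t = \infty$. Applying \prop{p:enalign} yields $\d\cE(t)\to 0$, which is alignment in the energy sense. There is no real obstacle here; the main point to verify is merely that \prop{p:enalign} applies, which is immediate since \ref{Mf} is material (indeed symmetric and conservative), so $\bar{u}$ is a fixed constant determined by the initial data and the decay $\d\cE\to 0$ produces genuine alignment of $u$ to $\bar{u}$ in $L^2(\rho)$.
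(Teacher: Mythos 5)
Your proposal is correct and follows exactly the route the paper intends: the bound \eqref{e:Mfgap} from \prop{p:gaps}, the trivial observation that $s(t)\equiv 1$ and $\orho_{r_0/2}(\O)\gtrsim\rho_-$, and then \prop{p:enalign} with $\int_1^\infty \dt/t=\infty$. Nothing is missing.
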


Let us note that under this weak assumption on the density the only known alignment result  was established in \cite{ST-topo} for singular topological models.  And in 1D it was proved to hold automatically for  any non-vacuous solutions to the Euler-Alignment system \eqref{e:EASvelless} based on the metric or topological Cucker-Smale averaging protocol. For the system based on the \ref{Mf}-model such a bound is unknown a priori.

  \begin{proof}[Proof of \prop{p:gaps} for the \ref{CS}-model]

By the assumptions of ball-positivity and locality,  $\phi = \psi \ast \psi$, where $\psi$ is a non-negative smooth kernel satisfying
\begin{equation}\label{e:locker3}
\psi(x) \geq c_0 \one_{|x| \leq r_0}.
\end{equation}
 Let us apply the low energy method. We aim to prove the following bound:
\begin{equation}\label{e:CSbpgap}
\e \gtrsim \orho^3_{r_0/2}(\O).
\end{equation}

To prove \eqref{e:CSbpgap} we will quantify the alignment term $\cA_1$ in a way similar to the previous example.  To achieve this we notice that for the Bochner-positive $\phi$ the \ref{CS}-averaging is nothing but a nested application of two distinct Favre filtrations. Indeed, let us denote
\begin{equation}\label{e:vvarr}
v = \frac{(u \rho)_\psi}{\rho_\psi}, \quad \varrho = \rho_\psi.
\end{equation}
Then denoting $\vF= \frac{(v \varrho)_\psi}{\varrho_\psi}$,  we obtain
\begin{equation}\label{e:CSorder}
\ave{u}_\rho = \frac{(u \rho)_\phi}{\rho_\phi} = \frac{((u \rho)_\psi)_\psi}{\varrho_\psi} =  \frac{\left(\frac{(u \rho)_\psi}{\rho_\psi} \rho_\psi \right)_\psi}{\varrho_\psi}  =\vF.
\end{equation}
Observe that
\[
\cA_1 = \int_\O (u\rho)_\psi^2 \dx - \int_\O |\ave{u}_\rho|^2 \rho \rho_\phi \dx = \int_\O |v|^2 \varrho \rho_\psi \dx - \int_\O |\vF|^2 \rho \varrho_\psi \dx.
\]
Let us examine the second term now:  $|\vF|^2 \rho \varrho_\psi$. We use the fact that the Favre-filtration with respect to $\psi, \varrho$ is a symmetric  operation relative to the measure $\varrho \varrho_\psi$. So, we can write
\[
\int_\O |\vF|^2 \rho \varrho_\psi \dx =\int_\O \vF \cdot \left( \vF  \frac{\rho}{\varrho} \right) \varrho \varrho_\psi \dx = \int_\O v \cdot \left( \vF  \frac{\rho}{\varrho}\right)_{\mathrm{F}} \varrho \varrho_\psi \dx = \int_\O v \cdot (\vF \rho)_\psi \varrho \dx.
\]
Now let us factor out the common $v \varrho$ term:
\begin{align*}
\cA_1 &= \int_\O \varrho v \cdot ( \rho_\psi v - (\vF \rho)_\psi) \dx = \int_{\O^2} \varrho(x) \rho(y) v(x) \cdot ( v(x) - \vF(y)) \psi(x-y)  \dy \dx \\
\intertext{expanding further in $v_F(y)$, we obtain}
&= \int_{\O^2} \frac{\varrho(x) \rho(y)}{\varrho_\psi(y)} v(x) \cdot ( v(x) \varrho_\psi(y) - (v \varrho)_\psi (y)) \psi(x-y)  \dy \dx  \\
& = \int_{\O^3} \frac{\varrho(x) \rho(y) \varrho(z)}{\varrho_\psi(y)} v(x) \cdot ( v(x)  - v(z) )\psi (z-y) \psi(x-y) \dz \dy \dx \\
\intertext{symmetrizing in $x,z$,}
& = \frac12 \int_{\O^3} \frac{\varrho(x) \rho(y) \varrho(z)}{\varrho_\psi(y)} | v(x)  - v(z) |^2\psi (z-y) \psi(x-y) \dz \dy \dx .
\end{align*} 
Notice that the integral in $y$ represents the application of the variable doubling convolution to $\rho / \rho_\phi$ as in \eqref{e:AMf} using kernel $\psi$. So we obtain the following exact formula for $\cA_1$:
\begin{equation}\label{ }
\cA_1 = \frac12 \int_{\O^2}\varrho(x) \varrho(z) \left(\frac{\rho}{\rho_\phi}\right)_{\psi \psi}(x,z) | v(x)  - v(z) |^2 \dz \dx.
\end{equation}

 Since $\rho_\phi \leq c_1$ pointwise, we have, using \eqref{e:rhophiphi},
 \[
\left(\frac{\rho}{\rho_\phi}\right)_{\psi \psi} \geq c_1 \rho_{\psi \psi} \gtrsim  \orho_{r_0/2}(\O)\one_{|x-z|< r_0/2 }.
 \]
 So,
\begin{align*}
\cA_1&  \gtrsim  \orho_{r_0/2}(\O) \int_{|x-z|< r_0/2}\varrho(x) \varrho(z) | v(x)  - v(z) |^2 \dz \dx \gtrsim   \orho^3_{r_0/2}(\O) \int_{|x-z|< r_0/2} | v(x)  - v(z) |^2 \dz \dx\\
\intertext{proceeding as for the \ref{Mf}-model, }
&\geq \orho^3_{r_0/2}(\O) \int_\O |v(x) - \Ave(v)|^2 \dx \gtrsim \orho^3_{r_0/2}(\O) \int_\O \varrho |v(x) - \Ave(v)|^2 \dx \\
& \geq \orho^3_{r_0/2}(\O) \int_\O \varrho |v(x)|^2 \dx =  \orho^3_{r_0/2}(\O) \int_{\O} \frac{(u \rho)_\psi^2}{\rho_\psi} \dx \gtrsim \orho^3_{r_0/2}(\O) \int_{\O} (u \rho)_\psi^2 \dx =\orho^3_{r_0/2}(\O)  \cE_1.
\end{align*}

We arrive at \eqref{e:CSbpgap}.

\end{proof}

 \begin{proof}[Proof of \prop{p:gaps} for the \ref{Mseg}-model]
 Since this model is symmetric and non-negative definite it is automatically ball-positive by \lem{l:symmbp}. So, it is natural to apply the low-energy approach.  We start with the analogue \eqref{e:AMf} which in this case reads
\begin{equation}\label{ }
\cA_1 =  \frac12 \sum_{l, l'}  \rho(g_l g_{l'}) \left|  \frac{ \rho(u g_l) }{\rho(g_l)} - \frac{ \rho(u g_{l'}) }{\rho(g_{l'})}  \right|^2.
\end{equation}
Indeed, 
\begin{align*}
\cA_1 & =  \sum_l \frac{ ( \rho(u g_l) )^2}{\rho(g_l)} - \int_\O \left( \sum_l g_l  \frac{ \rho(u g_l) }{\rho(g_l)} \right)^2 \rho \dx \\
& = \sum_l \frac{ ( \rho(u g_l) )^2}{\rho(g_l)}  - \sum_{l,l'} \rho(g_l g_{l'}) \frac{ \rho(u g_l) }{\rho(g_l)} \frac{ \rho(u g_{l'}) }{\rho(g_{l'})}  \\
& = \sum_l \rho(u g_l) \left( \frac{ \rho(u g_l) }{\rho(g_l)} - \sum_{l'} \frac{  \rho(g_l g_{l'}) }{ \rho(g_l) }  \frac{ \rho(u g_{l'}) }{\rho(g_{l'})}  \right)\\
\intertext{noting that the coefficients $ \frac{  \rho(g_l g_{l'}) }{ \rho(g_l) }$ add up to $1$ over $l'$,}
& =  \sum_l \rho(u g_l) \sum_{l'} \frac{  \rho(g_l g_{l'}) }{ \rho(g_l) }  \left( \frac{ \rho(u g_l) }{\rho(g_l)} - \frac{ \rho(u g_{l'}) }{\rho(g_{l'})}  \right)\\
& = \sum_{l, l'}  \rho(g_l g_{l'})  \frac{ \rho(u g_l) }{\rho(g_l)} \left( \frac{ \rho(u g_l) }{\rho(g_l)} - \frac{ \rho(u g_{l'}) }{\rho(g_{l'})}  \right)\\
\intertext{ symmetrizing over $l,l'$,}
& = \frac12 \sum_{l, l'}  \rho(g_l g_{l'}) \left|  \frac{ \rho(u g_l) }{\rho(g_l)} - \frac{ \rho(u g_{l'}) }{\rho(g_{l'})}  \right|^2.
\end{align*}

The formula indicates that the energy keeps dissipating as long as discrepancies remain between local averages in adjacent and connected neighborhoods, $\rho(g_l g_{l'})>0$. To extract a working criterion out of it, we rewrite $\cA_1$ is a different way:
\[
\cA_1 =  \sum_l \frac{ ( \rho(u g_l) )^2}{\rho(g_l)}  -  \sum_{l,l'} G_{ll'} \frac{  \rho(u g_l) }{\sqrt{\rho(g_l)}}  \frac{\rho(u g_{l'})}{\sqrt{\rho(g_{l'})}},
\]
where 
\[
G_{ll'}=   \frac{\rho(g_l g_{l'})}{\sqrt{\rho(g_l) \rho(g_{l'})}}.
\]
Considering those as entries of the symmetric matrix $G = \{ G_{ll'} \}_{l,l'=1}^L$ and denoting the vector 
\[
X = \left(  \frac{  \rho(u g_1) }{\sqrt{\rho(g_1)}}, \ldots,  \frac{  \rho(u g_L) }{\sqrt{\rho(g_L)}} \right),
\]
 the above expression can be written as
\[
\cA_1 = |X|^2 - \lan G X, X \ran.
\]
The vanishing momentum condition means that the vector $X$ belongs to the hyperplane  orthogonal to the vector of roots $Y = (\sqrt{\rho(g_1)}, \ldots, \sqrt{\rho(g_L)})$, denoted $Y^\perp$. Such plane remains invariant under the action of $G$, while $GY= Y$. So, the low-energy bound \eqref{e:AE1} becomes equivalent to the spectral gap condition on $G$: 
\begin{equation}\label{e:gapG}
\spec\{ G; Y^\perp \} \leq 1-\e.
\end{equation}

It is not easy, however, to compute the spectrum of $G$ exactly.  A more practical approach to \eqref{e:gapG} would be to find a condition on the entries of $G$ that implies a bound like \eqref{e:gapG}.  To this end, let us assume that non-zero entries are uniformly bounded from below, i.e. the neighborhoods  have  `populated intersections':
\begin{equation}\label{e:segconn}
 \rho(g_{l} g_{l'} ) \geq \d \sqrt{\rho(g_l) \rho(g_{l'})}, \qquad \forall l,l': \, \supp g_l \cap \supp g_{l'} \neq \emptyset,
\end{equation}
for some $\d>0$. 

Under this condition let us consider the eigenvalue problem
\[
(1-\e) X = GX, \quad X \cdot Y = 0.
\]
Renormalizing $X = (X_1,\ldots,X_L)$ via $x_l = \frac{ X_l}{ \sqrt{\rho(g_l)}}$ we obtain the system
\begin{equation}\label{e:eigenx}
(1-\e) x_{l} = \sum_{l': \, \supp g_{l'} \cap \supp g_{l} \neq \emptyset} \frac{\rho(g_{l'} g_{l} )}{\rho(g_{l} )} x_{l'} .
\end{equation}
Note that the sum on the right represents a convex combination of coordinates.

Denote $x^+=x_{l^+}$ the positive maximal and $x^-=x_{l^-}$ the negative minimal values. Since $X\in Y^\perp$, those must be strictly signed.  Since $g$'s form a partition of unity, there is a sequence of indexes $l^+ = l_0, l_1,\dots, l_p = l^-$ with $p\leq L$ such that $\supp g_{l_i} \cap \supp g_{l_{i+1}} \neq \emptyset$.
Let us start with \eqref{e:eigenx} at $l = l_0$. Then $l_1$ is one of the neighbors. We can assume without loss of generality that $x_{l_1} < x^+$ for otherwise, we relabel and start with the first index $l_1$ having this property. 

We leave the $l_1$-term unchanged, and estimate rest of $x$'s by $x^+$ to obtain
\[
(1-\e) x^+ \leq \left(1-\frac{\rho(g_{l_0} g_{l_1} )}{\rho(g_{l_0} )}\right) x^+ + \frac{\rho(g_{l_0} g_{l_1} )}{\rho(g_{l_0} )} x_{l_1}.
\]
Solving for $x_{l_1}$    we obtain
\[
x_{l_1} \geq \left(1 - \frac{\e}{\frac{\rho(g_{l_0} g_{l_1} )}{\rho(g_{l_0} )}}\right) x^+.
 \]
Since $x_{l_1} < x^+$ it implies in particular that $\e >0$. It also follows from \eqref{e:segconn} that  $\frac{\rho(g_{l_0} g_{l_1} )}{\rho(g_{l_0} )} \geq \d^2$ and hence,
\[
x_{l_1} \geq \left(1 - \frac{\e}{\d^2}\right) x^+.
 \]

By the same computation centered this time at $x_{l_1}$ and with $\e$ reset to $\frac{\e}{\d^2}$ we obtain
\[
x_{l_2} \geq \left(1 - \frac{\e}{\d^4}\right) x^+.
\]
Continuing the process to the last term we obtain
\[
x^- \geq \left(1 - \frac{\e}{\d^{2p}}\right) u^+ \geq  \left(1 - \frac{\e}{\d^{2L}}\right) x^+.
\]
Recalling that $x^-<0$, it implies  $\e \geq \d^{2L}$. Thus, the spectral gap is estimated to be at least 
\begin{equation}\label{ }
\e = \d^{2L}.
\end{equation}
To estimate $\d$ in terms of thickness, let us observe that by continuity in any overlapping neighborhoods there exists a ball of fixed radius $\rseg >0$
\begin{equation}\label{e:commball}
B_{\rseg}(x) \ss \supp(g_l) \cap \supp(g_{l'})
\end{equation}
such that $g_l, g_{l'} \geq c_1$ on $B_{\rseg}(x)$ for some fixed $c_1>0$. Thus, we have using that $\rho(g_l) \leq 1$,
\[
 \frac{\rho(g_l g_{l'})}{\sqrt{\rho(g_l) \rho(g_{l'})}} \geq c^2_1 \orho_{\rseg}(\O).
 \]
So, $\d \gtrsim \orho_{\rseg}(\O)$.

\end{proof}

\subsubsection{Application of the low energy method to non-ball-positive models}
For non-ball-positive models such as Motsch-Tadmor, or more generally, for \ref{Mb} the low energy method can still  produce estimates on spectral gap for almost uniformly distributed densities, 
\begin{equation}\label{e:rhonearunif}
\left\|\rho - \frac{1}{|\O|} \right\|_1 \leq \d.
\end{equation}
Here, we make the same Bochner positivity assumption on the defining kernel $\phi = \psi \ast \psi$ and the locality \eqref{e:locker3}.  

Let us start  as in  \exam{ex:MTgap} by symmetrizing and using cancellation 
\begin{equation*}\label{}
\begin{split}
(u, u)_{\k_\rho} - (u, \ave{u}_\rho)_{\k_\rho} & = \int_{\O \times \O} u(x) \cdot (u(x) - u(y)) \rho(x) \rho(y) \frac{\phi(x-y)}{\rho^{1-\b}_\phi(x)} \dy \dx \\
&= \frac12 \int_{\O \times \O} |u(x) - u(y)|^2 \rho(x) \rho(y) \frac{\phi(x-y)}{\rho^{1-\b}_\phi(x)} \dy \dx \\
&- \frac12 \int_{\O \times \O}  | u(y)|^2 \rho(x) \rho(y) \left(\frac{1}{\rho^{1-\b}_\phi(x)} - \frac{1}{\rho^{1-\b}_\phi(y)} \right) \phi(x-y) \dy \dx = I + II.
\end{split}
\end{equation*}
First, note that
\[
I \geq c_1  \int_{\O \times \O} u(x) \cdot (u(x) - u(y)) \rho(x) \rho(y) \phi(x-y) \dy \dx = c_1[ (u, u)_{\rho_\phi \rho} - (u, \ave{u}_\rho)_{\rho_\phi \rho} ],
\]
which is exactly the spectral gap form that appears for the \ref{CS} model. So, using  \prop{p:gaps} and \eqref{e:rhonearunif} we obtain
\[
I \geq c_2  \orho^3_{r_0/2}(\O) (u, u)_{\rho_\phi \rho} \geq c_2( c_3 - \d)^3 (c_4 - \d)^{1-\b} (u, u)_{\rho^\b_\phi \rho} \geq c_5(u, u)_{\k_\rho} ,
\]
provided $\d <  \frac12 \min\{c_3,c_4\}$.  Next,
\begin{equation*}\label{}
\begin{split}
II = \frac12 \int_{\O \times \O}  | u(y)|^2  \rho(y)\rho^{\b}_\phi(y) \left[ 1 - \frac{1}{\rho^\b_\phi}  \left(  \frac{\rho}{\rho^{1-\b}_\phi} \right)_\phi \right]  \dy.
\end{split}
\end{equation*}
Using  again \eqref{e:rhonearunif},
\begin{equation*}\label{}
\begin{split}
 \frac{1}{\rho^\b_\phi}  \left(  \frac{\rho}{\rho^{1-\b}_\phi} \right)_\phi & \leq \frac{\rho_\phi}{c_4 - \d} \leq  \frac{c_4 + \d}{c_4 - \d} \leq 1 + \frac{2\d}{c_4 - \d} \\
  \frac{1}{\rho^\b_\phi}  \left(  \frac{\rho}{\rho^{1-\b}_\phi} \right)_\phi & \geq \frac{\rho_\phi}{c_4 + \d} \geq  \frac{c_4 - \d}{c_4 + \d} \geq 1 - \frac{2\d}{c_4 + \d}
\end{split}
\end{equation*}
 So, if $\d$ is small enough we have 
 \[
 \left| 1 - \frac{1}{\rho^\b_\phi}  \left(  \frac{\rho}{\rho^{1-\b}_\phi} \right)_\phi \right| \leq c_6 \d.
 \]
We arrive at 
\[
| II | \leq c_6 \d (u, u)_{\k_\rho} .
\]
Combining the two together we obtain
\[
(u, u)_{\k_\rho} - (u, \ave{u}_\rho)_{\k_\rho} \geq (c_5 - c_6 \d) (u, u)_{\k_\rho} \geq c_7 (u, u)_{\k_\rho},
\]
provided $\d < c_8$, where $c_8$ is an absolute constant depending only on the parameters of the model. We have thus proved a version of \prop{p:gaps} for \ref{Mb} models.

\begin{proposition}\label{p:gapsMb}
There exist  constants $\d, c_0 >0$ depending only on the parameters of the model \ref{Mb}, $0\leq \b \leq 1$, such that for any density satisfying \eqref{e:rhonearunif} the size of the spectral gap is estimated as $\e_0 > c_0$.
\end{proposition}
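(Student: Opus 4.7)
The plan is to adapt the symmetrization trick of Example~\ref{ex:MTgap} and combine it with the \ref{CS}-spectral gap already supplied by \prop{p:gaps}. The key observation is that for $\rho$ close to the uniform density, $\rho_\phi$ is uniformly close to a positive constant $c_*$, so the non-symmetric reciprocal weight $\rho_\phi^{\b-1}$ that distinguishes \ref{Mb} from \ref{CS} can be treated as a small perturbation of the leading symmetric form.

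First I would expand $(u,u)_{\k_\rho} - (u,\ave{u}_\rho)_{\k_\rho}$ as a double integral against $\rho(x)\rho(y)\phi(x-y)\rho_\phi^{\b-1}(x)$ and symmetrize in $x,y$ to split it as $I + II$, where $I$ carries the symmetric kernel and $II$ is the residual proportional to $\int |u(y)|^2 \rho(x)\rho(y)\phi(x-y)[\rho_\phi^{\b-1}(x) - \rho_\phi^{\b-1}(y)]\dx\dy$. The near-uniformity assumption \eqref{e:rhonearunif} yields $\|\rho_\phi - c_*\|_\infty \lesssim \d$, hence $\rho_\phi^{1-\b}$ is bounded above and below by universal constants and $\orho_{r_0/2}(\O) \gtrsim 1$. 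Therefore $I$ is comparable to the \ref{CS}-dissipation $(u,u)_{\rho\rho_\phi} - (u,\ave{u}_\rho)_{\rho\rho_\phi}$, and invoking \prop{p:gaps} for \ref{CS} (Bochner-positivity of $\phi$ is already assumed throughout \sect{s:lowenergy}) produces $I \geq c_5 \|u\|^2_{L^2(\k_\rho)}$ for some absolute $c_5 > 0$.

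The delicate step is bounding $II$, which has no definite sign. Here I would integrate first in $x$ to rewrite the bracket as $1 - \rho_\phi^{-\b}(y)\bigl(\rho/\rho_\phi^{1-\b}\bigr)_\phi(y)$, factor out $\rho(y)\rho_\phi^{\b}(y)$ as a common weight inside $L^2(\k_\rho)$, and then use both $|\rho_\phi(y) - c_*| \lesssim \d$ and the analogous two-sided closeness estimate for the convolved quantity $(\rho/\rho_\phi^{1-\b})_\phi(y)$ to conclude that the bracket is $O(\d)$ uniformly in $y$. This gives $|II| \leq c_6 \d \|u\|^2_{L^2(\k_\rho)}$. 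Combining the two yields $(u,u)_{\k_\rho} - (u,\ave{u}_\rho)_{\k_\rho} \geq (c_5 - c_6 \d) \|u\|^2_{L^2(\k_\rho)}$, and choosing $\d$ small enough that $c_6 \d \leq c_5/2$ delivers a uniform gap $c_0 = c_5/2$.

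The main obstacle I anticipate is uniformity of the estimates across the full range $\b \in [0,1]$: at $\b = 0$ the strength weight $\k_\rho = \rho$ carries no factor of $\rho_\phi$ to absorb into the \ref{CS}-form, while at $\b = 1$ one recovers Cucker--Smale directly; the unified argument must therefore route through the upper/lower bound $\rho_\phi^{1-\b} \sim 1$ rather than through a $\b$-specific symmetric dissipation estimate, which is precisely what the near-uniformity condition \eqref{e:rhonearunif} is engineered to provide.
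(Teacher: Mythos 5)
Your proposal follows the paper's proof essentially verbatim: the same symmetrization into $I+II$, the same reduction of $I$ to the \ref{CS} gap via \prop{p:gaps} under the Bochner-positivity assumption, the same rewriting of $II$ as $\frac12\int |u|^2\rho\,\rho_\phi^\b\bigl[1-\rho_\phi^{-\b}(\rho/\rho_\phi^{1-\b})_\phi\bigr]$ with the bracket controlled by $O(\d)$, and the same final absorption for $\d$ small. The uniformity in $\b$ you flag is handled in the paper exactly as you suggest, via the two-sided bound $\rho_\phi^{1-\b}\sim 1$ furnished by \eqref{e:rhonearunif}.
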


\section{Deterministic mean-field limit}\label{s:mfldet}

In this section we consider either the periodic or open environments $\O = \T^n,\ \R^n$. 

The goal of this section will be to derive  the Vlasov-Alignment equation \eqref{e:VA},
as the weak limit of empirical measures
\begin{equation}\label{e:empir}
\mu^N_t = \sum_{i=1}^N m_i \d_{x_i(t)} \otimes \d_{v_i(t)},
\end{equation}
where $(x_i,v_i)_{i=1}^N$ solve the agent based system \eqref{e:ABSdet}.
We will focus on the measure-valued solutions with bounded support. Although this is not a necessary assumption, it simplifies some of the technical issues considerably.
\begin{definition}\label{d:weakkin} We say that $\{\mu_t\}_{0\leq t<T} \in C_{w^*}([0,T); \cP(B_R \times B_R))$ is a measure-valued solution to  \eqref{e:VA} with initial condition $\mu_0$ if for any test-function $g\in C^\infty([0,T)\times \O \times \R^{n})$ one has, for all $0<t<T$,
	\begin{equation}\label{e:weakkin}
	\begin{split}
\int_{\domain} g(t,x,v)\dmu_t(x,v)  &=	\int_{\domain} g(0,x,v)\dmu_0(x,v) \\
&+ \int_0^t \int_{\domain}  ( \p_s g + v\cdot \n_x  g +  \st_{\rho_s} (\ave{u_s}_{\rho_s} - v) \cdot \n_v g)\dmu_s(x,v)\ds .
\end{split}
\end{equation}
\end{definition}
The definition makes sense provided $\st_{\rho_s}$ and $\st_{\rho_s}\ave{u_s}_{\rho_s}$ are bounded and continuous functions in $(s,x)$. This typically can be derived from the regularity of the model as stipulated in \sect{s:rth}. 
But since we cannot rely on any a priori thickness of solutions we must assume that the model $\cM$ is uniformly regular.

With this assumption, the continuity of $\st_{\rho_s}(x)$ in $x$ follows from \eqref{e:ur1}. Continuity in $s$ follows from \eqref{e:ur2}:
\[
\| \st_{\rho_{s'}} - \st_{\rho_{s''}} \|_\infty \lesssim W_1(\rho_{s'},\rho_{s''}) \leq W_1(\mu_{s'}, \mu_{s''}).
\]
Since for compactly supported measures $W_1$ determines the weak$^*$-convergence, the claim follows. As to the weighted averages, we have
\[
\st_{\rho_s}\ave{u_s}_{\rho_s} = \int_{B_R \times B_R} \phi_{\rho_s}(x,y) v \dmu_s(y,v).
\]
So, again the continuity in $x$ follows from \eqref{e:ur1}. In terms of time, we use 
\eqref{e:ur2}
\begin{equation*}\label{}
\begin{split}
& \int_{B_R \times B_R} \phi_{\rho_{s'}}(x,y) v \dmu_{s'}(y,v) - \int_{B_R \times B_R} \phi_{\rho_{s''}}(x,y) v \dmu_{s''}(y,v) \\
& =\int_{B_R \times B_R} \phi_{\rho_{s'}}(x,y) v [\dmu_{s'}(y,v) - \dmu_{s''}(y,v)] \\
& + \int_{B_R \times B_R}[ \phi_{\rho_{s'}}(x,y) - \phi_{\rho_{s''}}(x,y)] v \dmu_{s''}(y,v) \\
& \leq \oC_1 W_1(\mu_{s'}, \mu_{s''}) +  \oC R W_1(\rho_{s'},\rho_{s''}) \lesssim W_1(\mu_{s'}, \mu_{s''}) .
\end{split}
\end{equation*}

The crucial and elementary observation is that the empirical measure 
\eqref{e:empir} satisfies \eqref{e:weakkin} if and only if $\{(x_i,v_i)\}_i$ solve the agent-based system \eqref{e:ABSdet}.  As a consequence, solutions to \eqref{e:ABSdet} fall naturally into the framework of the Vlasov-Alignment equation.  Our goal will be to prove the following theorem by showing contractivity of the map $\mu_0 \to \mu_t$ on any finite time interval.

\begin{theorem}\label{t:mfl}
Suppose $\cM$ is uniformly regular. Let $\mu_0 \in \cP(\domain)$ be any measure with compact support. Then for any $T>0$ there exists a unique measure-valued solution  $\{\mu_t\}_{0\leq t<T} \in C_{w^*}([0,T); \cP(B_{R(T)})$ to \eqref{e:VA} which can be reconstructed from solutions to \eqref{e:ABSdet} as follows. Let all $(x_i^0,v_i^0) \in \cO$, where $\cO$ is some fixed neighborhood of $\supp \mu_0$ and such that $\mu^N_0 \to \mu_0$ weakly. Then $\mu^N_t \to \mu_t$ weakly uniformly on $[0,T)$.
\end{theorem}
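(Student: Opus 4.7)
The plan is to establish existence, uniqueness, and mean-field convergence simultaneously through a Dobrushin-type fixed-point argument on the characteristic flow, using only the uniform regularity hypotheses \eqref{e:ur1}--\eqref{e:ur2}. The key observation already noted in the excerpt is that the empirical measure $\mu^N_t$ satisfies the weak formulation \eqref{e:weakkin} precisely when the underlying agents obey \eqref{e:ABSdet}, so mean-field convergence will follow automatically from a quantitative stability estimate for the solution map with respect to initial data.

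First I would pin down the a priori support. By the maximum principle \lem{l:maxpr}, any measure-valued solution with $\supp \mu_0 \ss B_{R_0} \times B_{A_0}$ retains the velocity confinement $\supp_v \mu_t \ss B_{A_0}$, so the characteristic equation $\dot X = V$ forces $\supp_x \mu_t \ss B_{R_0 + A_0 t}$. Fix $R = R_0 + A_0 T$ and work in the complete metric space $\cX_T = C([0,T]; \cP(B_R \times B_{A_0}))$ equipped with $d(\mu,\nu) = \sup_{t\in [0,T]} W_1(\mu_t, \nu_t)$. For any $\nu \in \cX_T$ with position marginal $\rho^\nu$, define the driving field $F^\nu(t,y,w) = \st_{\rho^\nu_t}(y)\bigl(\ave{u^\nu_t}_{\rho^\nu_t}(y) - w\bigr)$, which by \eqref{e:sys2} equals $\int \phi_{\rho^\nu_t}(y, y')(w' - w)\,\dnu_t(y',w')$. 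Uniform regularity \eqref{e:ur1} gives a Lipschitz constant for $F^\nu$ in $(y,w)$ depending only on $R$ and $A_0$, hence the ODE
\begin{equation*}
\dot X^\nu = V^\nu, \quad \dot V^\nu = F^\nu(t, X^\nu, V^\nu), \quad (X^\nu,V^\nu)|_{t=0} = (x,v)
\end{equation*}
admits a unique smooth flow. Set $(\Lambda \nu)_t = (X^\nu_t, V^\nu_t)_\# \mu_0$; a standard calculation shows $\mu$ solves \eqref{e:weakkin} iff $\Lambda \mu = \mu$.

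The heart of the argument is the stability estimate. Given $\nu', \nu'' \in \cX_T$, couple characteristics through the common initial datum $\mu_0$ and set
\begin{equation*}
D(t) = \int_{\domain} \bigl(|X^{\nu'}_t - X^{\nu''}_t| + |V^{\nu'}_t - V^{\nu''}_t|\bigr) \dmu_0.
\end{equation*}
Differentiating and splitting $F^{\nu'}(t,X',V') - F^{\nu''}(t,X'',V'')$ into a Lipschitz piece in $(y,w)$ (controlled by \eqref{e:ur1}) and a coefficient piece in $\nu$ (controlled by \eqref{e:ur2} together with $W_1(\rho^{\nu'}_t, \rho^{\nu''}_t) \leq W_1(\nu'_t, \nu''_t)$), I would obtain $D'(t) \leq C D(t) + C W_1(\nu'_t, \nu''_t)$ with $C = C(R,A_0)$. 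Since $W_1((\Lambda\nu')_t,(\Lambda\nu'')_t) \leq D(t)$ by the coupling, Gr\"onwall yields $d(\Lambda\nu', \Lambda\nu'') \leq CT e^{CT} d(\nu',\nu'')$. For $T$ small enough $\Lambda$ is a contraction, giving existence and uniqueness on a short interval; the a priori support bound lets one iterate to any finite horizon. Replaying the same coupling estimate for two solutions $\mu^{(1)}, \mu^{(2)}$ with distinct initial data (using an optimal coupling between $\mu^{(1)}_0$ and $\mu^{(2)}_0$ in $D(0)$) yields $d(\mu^{(1)}, \mu^{(2)}) \leq C(T) W_1(\mu^{(1)}_0, \mu^{(2)}_0)$. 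Applying this with $\mu^{(1)} = \mu^N$ and $\mu^{(2)} = \mu$ delivers $\sup_{t \leq T} W_1(\mu^N_t, \mu_t) \leq C(T) W_1(\mu^N_0, \mu_0) \to 0$, since weak convergence of compactly supported measures implies $W_1$-convergence.

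The main technical obstacle is the $\nu$-dependence of the kernel $\phi_{\rho^\nu_t}$; controlling its variation in a manner insensitive to the underlying density is exactly what \eqref{e:ur2} provides. This is precisely why \emph{uniform} regularity is indispensable for the mean-field limit: under mere regularity \defin{d:r} the Lipschitz constants degenerate with the thickness $\Th(\rho,\cdot)$, which collapses on atomic empirical measures $\mu^N$, so this Dobrushin scheme cannot be started. Everything else in the scheme is bookkeeping modulo standard properties of pushforwards and the Kantorovich--Rubinstein duality.
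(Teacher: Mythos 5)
Your proof is correct, and it takes a genuinely different route from the paper's.

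The paper constructs the solution by \emph{approximation}: it first obtains gradient bounds $\|\n X\|_{L^\infty(\cO)} + \|\n V\|_{L^\infty(\cO)} \leq C(R,\cO,T)$ for the flow, then derives $W_1(\mu'_t,\mu''_t) \leq C(R,\cO,T)\,W_1(\mu'_0,\mu''_0)$ directly for any two weak solutions, and finally produces the solution as the $W_1$-Cauchy limit of empirical solutions $\mu^N_t$ (which exist because the agent-based ODE system \eqref{e:ABSdet} was shown separately to be well-posed), closing with a lemma that the weak limit indeed solves \eqref{e:weakkin}. You instead set up a Dobrushin-type Banach fixed-point contraction for the map $\Lambda\nu = (X^\nu,V^\nu)_\#\mu_0$ on $C([0,T];\cP(B_R\times B_{A_0}))$ with the sup-$W_1$ metric. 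This is a cleaner and more classical scheme: the coupling through the common initial datum lets you bypass the gradient bounds on the flow entirely, existence and uniqueness come together from the fixed-point theorem with no need to separately invoke the ODE well-posedness proposition or to verify that the constructed limit is a weak solution, and the mean-field statement drops out of the stability estimate for distinct initial data (with the optimal coupling of $\mu^{(1)}_0,\mu^{(2)}_0$ seeding $D(0)$, and $W_1(\mu^{(1)}_t,\mu^{(2)}_t)\leq D(t)$ supplied by the pushed-forward coupling $(X^{(1)}_t,V^{(1)}_t,X^{(2)}_t,V^{(2)}_t)_\#\gamma_0$). The only ingredients you rely on are those the paper uses too — the maximum principle to confine $v$-support, \eqref{e:ur1} for the $(y,w)$-Lipschitz constant of the frozen drift, \eqref{e:ur2} for the $\nu$-dependence of $\phi_\rho$ through $W_1(\rho^{\nu'},\rho^{\nu''})\leq W_1(\nu',\nu'')$, and Kantorovich-Rubinstein duality — and your closing remark on why \emph{uniform} (as opposed to thickness-dependent) regularity is indispensable for launching the iteration from atomic data is exactly the right diagnosis. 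What the paper's construction buys in exchange is that the mean-field limit $\mu^N_t\to\mu_t$ is not a corollary but the existence mechanism itself; what your approach buys is a self-contained short-time existence and uniqueness proof with less bookkeeping.
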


As a corollary we obtain validity of the mean-field limit in all the cases listed in the last row of Table~\ref{t:rth}.

The theorem will be proved via a Lagrangian approach using the transport structure of \eqref{e:VAE}. 
To this end, we introduce the characteristic flow 
\begin{alignat}{2}
\ddt X(t,s,x,v) & = V(t,s,x,v), \quad  & X(s,s,x,v) & = x,\label{e:XKs} \\
 \ddt V(t,s, x,v) & = \st_\rho(X) ( \ave{u}_\rho(X) - V), \quad &   V(s,s, x,v) & = v. \label{e:VKs}
\end{alignat}
We also denote $X(t,0,x,v) = X(t,x,v)$, $V(t,0,x,v) = V(t,x,v)$, and $(x,v) = \w$. Note that the right hand side of \eqref{e:VKs} is Lipschitz in $(X,V)$, so the flow is well-defined on $[0,T]$. Define the test-function $g(s,\w) = h(X(t,s,\w),V(t,s,\w))$ for some $h \in C_0^\infty(\R^{2n})$, for which we have 
\[
\p_s g + v\cdot \n_x  g + \st_{\rho} (\ave{u}_{\rho} - v) \cdot \n_v g = 0 .
\]
So, plugging it into \eqref{e:weakkin} we obtain
\begin{equation}\label{e:consK}
\int_{\domain} h(\w) \dmu_t(\w) = \int_{\domain} h(X(t,\w),V(t,\w)) \dmu_0(\w). 
\end{equation}
This means that that $\mu_t$ is a {\em push-forward} of the initial measure $\mu_0$ along the flow-map $(X,V)$, $\mu_t = (X,V) \# \mu_0$.

The proof of the mean-field limit consists of two steps: establishing control over the deformation $(\n X, \n V)$ on a given time interval, and proving Lipschitzness of the push-forward map in the $W_1$-metric.

So, let us assume that on a time interval $[0,T]$ we have a solution $\mu_t \in \cP(B_R)$. By the maximum principle of \lem{l:maxpr} 
\begin{equation}\label{e:maxVO}
\|V(t)\|_{L^\infty(\cO)} \leq \max_{(x,v) \in \cO} |v| \leq \diam \cO.
\end{equation}

Let us fix a compact domain $\cO$ with $\supp \mu_0 \ss \cO$. Then
\[
\ddt \| \n X \|_{L^\infty(\cO)} \leq  \| \n V \|_{L^\infty(\cO)}.
\]
Next,
\begin{equation*}\label{}
\begin{split}
\ddt  \n V  \leq \n X^\top\n (\st_\rho \ave{u}_\rho)(X) + \n X^\top \n \st_\rho (X) V + \st_\rho (X) \n V,
\end{split}
\end{equation*}
so, in view of (ev4), \eqref{e:maxVO}, and \eqref{e:ur1}, we obtain the inequality up to a constant depending only on $R,m,\cO,\oS$, 
\begin{equation*}\label{}
\ddt \| \n V  \|_{L^\infty(\cO)} \leq \| \n X  \|_{L^\infty(\cO)}  + \|\n V \|_{L^\infty(\cO)}.
\end{equation*}
We thus conclude that 
\begin{equation}\label{e:XVRT}
\sup_{[0,T]} \| \n X  \|_{L^\infty(\cO)}  + \|\n V \|_{L^\infty(\cO)} \leq C(R,m,\cO,T).
\end{equation}

Let us now proceed to continuity estimates. Let us fix two measures $\mu'_t, \mu''_t \in \cP(B_R)$ for all $t\in [0,T]$. We also fix a common initial domain  $\cO$,  $\supp \mu'_0 \cup \supp \mu''_0 \ss \cO$.   Clearly, 
\begin{equation}\label{e:X'X''}
\ddt \|  X' - X'' \|_{L^\infty(\cO)} \leq \|  V' - V'' \|_{L^\infty(\cO)}.
\end{equation}

For velocities we have
\begin{equation*}\label{}
\begin{split}
\ddt (V' - V'') & = \st_{\rho'}(X')  \ave{u'}_{\rho'}(X') -\st_{\rho''}(X'') \ave{u''}_{\rho''}(X'')  +\st_{\rho''}(X'')   V'' - \st_{\rho'}(X')  V'.
\end{split}
\end{equation*}
So, from \eqref{e:ur1}-\eqref{e:ur2}, we have
\[
| \st_{\rho'}(X')  \ave{u'}_{\rho'}(X') -\st_{\rho''}(X'') \ave{u''}_{\rho''}(X'') | \lesssim W_1(\rho',\rho'')  + W_1(u'\rho',u''\rho'') +\|  X' - X'' \|_{L^\infty(\O)},
\]
and
\[
| \st_{\rho'}(X') - \st_{\rho''}(X'') | \lesssim  W_1(\rho',\rho'')+ \|  X' - X'' \|_{L^\infty(\O)}.
\]
Thus,
\begin{equation*}\label{}
\ddt \|  V' - V''\|_{L^\infty(\cO)} \lesssim W_1(\rho',\rho'')  + W_1(u'\rho',u''\rho'') +  \|  V' - V''\|_{L^\infty(\cO)}+ \|  X' - X'' \|_{L^\infty(\O)}.
\end{equation*}
But for any $\| g\|_\Lip \leq 1$ we have
\begin{equation*}\label{}
\begin{split}
\int_\O g(x) ( \drho_t' - \drho_t'') &= \int_\domain g(x) (\dmu_t' - \dmu_t'') = \int_\domain g(X') \dmu'_0 -  \int_\domain g(X'') \dmu''_0 \\
& =  \int_\domain g(X') (\dmu'_0 - \dmu_0'') +  \int_\domain (g(X') - g(X'')) \dmu'_0 \\
& \leq \|  \n X' \|_{L^\infty(\O)} W_1(\mu_0' , \mu_0'')  +  \|  X' - X'' \|_{L^\infty(\O)}
\end{split}
\end{equation*}
In view of \eqref{e:XVRT} we conclude that 
\begin{equation}\label{e:W1rho}
W_1(\rho'_t,\rho''_t) \lesssim W_1(\mu_0' , \mu_0'') +  \|  X' - X'' \|_{L^\infty(\O)}.
\end{equation}

Similarly, for any $\| g\|_\Lip \leq 1$ we have
\begin{equation*}\label{}
\begin{split}
\int_\O g(x) ( \mathrm{d}(u' \rho_t') - \mathrm{d}(u''\rho_t'') )&= \int_\domain g(x) v (\dmu_t' - \dmu_t'') = \int_\domain g(X') V' \dmu'_0 -  \int_\domain g(X'') V'' \dmu''_0 \\
& =  \int_\domain g(X') V' (\dmu'_0 - \dmu_0'') +  \int_\domain (g(X') V' - g(X'')V'') \dmu'_0 \\
& \leq (\diam{\cO} \|  \n X' \|_{L^\infty(\cO)} + \|g\|_{L^\infty(B_R)} \|\n V'\|_{L^\infty(\cO)}) W_1(\mu_0' , \mu_0'')  \\
& + m \diam{\cO} \|  X' - X'' \|_{L^\infty(\cO)} +  \|g\|_{L^\infty(B_R)} \|  V' - V'' \|_{L^\infty(\cO)}.
\end{split}
\end{equation*}
In view of \eqref{e:XVRT} we conclude that 
\begin{equation}\label{e:W1urho}
W_1(u'\rho', u'' \rho'') \lesssim W_1(\mu_0' , \mu_0'') +  \|  X' - X'' \|_{L^\infty(\cO)} +\|  V' - V'' \|_{L^\infty(\cO)}.
\end{equation}
Thus, we obtain
\begin{equation*}\label{}
\ddt \|  V' - V''\|_{L^\infty(\cO)} \lesssim W_1(\mu_0' , \mu_0'') +  \|  X' - X'' \|_{L^\infty(\cO)} +\|  V' - V'' \|_{L^\infty(\cO)}.
\end{equation*}
Combining with \eqref{e:X'X''} we conclude that 
\begin{equation}\label{e:stab'''}
 \|  X' - X'' \|_{L^\infty(\cO)} +\|  V' - V'' \|_{L^\infty(\cO)} \leq C(R,T) W_1(\mu_0' , \mu_0'') .
\end{equation}
 
Let us now fix a function $h$ with $\Lip (h) \leq 1$, and use the transport identity \eqref{e:consK}:
\begin{align*}
\int_{\domain}& h(\w) \dmu'_t - \int_{\domain} h(\w) \dmu''_t = \int_{\domain} h(X',V') \dmu'_0 - \int_{\domain} h(X'',V'') \dmu''_0 \\
& =  \int_{\domain} h(X',V') (\dmu'_0 - \dmu''_0 ) + \int_{\domain} [h(X',V') - h(X'',V'') ] \dmu''_0 \\
& \leq \Lip_\cO(h(X',V') ) W_1(\mu'_0,\mu''_0) +  \| X_\mu - X_\nu\|_{L^\infty(\cO)} + \|V_\mu - V_\nu\|_{L^\infty(\cO)}.
\end{align*}
Using that 
\[
\Lip_\cO(h(X',V') ) \leq  \| \n V' \|_{L^\infty(\cO)}  + \| \n X' \|_{L^\infty(\cO)}  ,
\]
and applying \eqref{e:XVRT}, \eqref{e:stab'''} we conclude the following bounds
\begin{equation}\label{e:stabt0}
W_1(\mu'_t,\mu''_t) \leq C(R,\cO,T) W_1(\mu'_0,\mu''_0).
\end{equation}
This immediately implies uniqueness and stability of measure-valued solutions.

So, we start now with an arbitrary measure $\mu_0$, and approximate it weakly with a sequence of empirical measures \begin{equation}\label{e:empir0}
\mu^N_0 = \sum_{i=1}^N m_i \d_{x_i} \otimes \d_{v_i},
\end{equation}
with all $(x_i,v_i) \in \cO$, where $\cO$ is some fixed neighborhood of $\supp \mu_0$. Then let us run the agent-based alignment model alignment \eqref{e:ABSdet}. For any time $T$, we have $\supp \mu_t^N \ss B_{|\cO|+T A_0} \times B_{A_0}$, $t<T$. Thus, 
according to \eqref{e:stabt0}, $\mu_t^N$ is weakly Cauchy, and hence $\mu_t^N \to \mu_t$ for some $\mu_t$.  To finish the proof \thm{t:mfl} we now prove a short lemma showing that the limit solves the Vlasov-alignment equation weakly.

\begin{lemma} Suppose a sequence of solutions  $\mu^N \in C_{w^*}([0,T); \cP(B_R))$ converges weakly pointwise, i.e. $\mu_t^N \to \mu_t$ for all $0\leq t<T$. Then $\mu \in C_{w^*}([0,T); \cP(B_R))$ is a weak solution to \eqref{e:VAE}.
\end{lemma}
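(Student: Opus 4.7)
The plan is to pass to the limit in the weak formulation \eqref{e:weakkin} written for each $\mu^N$: for any test function $g \in C_0^\infty([0,T)\times\domain)$,
\begin{equation*}
\int_{\domain} g(t,\w)\dmu_t^N(\w) - \int_{\domain} g(0,\w)\dmu_0^N(\w) = \int_0^t \int_{\domain} \left( \p_s g + v \cdot \n_x g + \st_{\rho_s^N}(\ave{u_s^N}_{\rho_s^N} - v) \cdot \n_v g \right) \dmu_s^N(\w) \ds.
\end{equation*}
The boundary terms and the linear transport term $\p_s g + v \cdot \n_x g$ pass to the limit immediately from the pointwise weak convergence $\mu_t^N \to \mu_t$ in $\cP(B_R)$, combined with dominated convergence in time (uniform boundedness of the integrand, since $g$ is compactly supported and $\mu_t^N$ stays in the fixed ball $B_R$).

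The crux is to handle the nonlinear alignment term. Since all $\mu_s^N$ are supported in $B_R$, the weak convergence $\mu_s^N \to \mu_s$ is equivalent to $W_1$-convergence, and so $W_1(\rho_s^N, \rho_s) \to 0$ as well. Invoking uniform regularity of $\cM$ (property \eqref{e:ur2}), I obtain uniform-in-$x$ convergence
\begin{equation*}
\| \st_{\rho_s^N} - \st_{\rho_s} \|_{L^\infty(B_R)} \lesssim W_1(\rho_s^N,\rho_s), \qquad \| \phi_{\rho_s^N} - \phi_{\rho_s} \|_{L^\infty(B_R \times B_R)} \lesssim W_1(\rho_s^N,\rho_s),
\end{equation*}
and hence, using the integral representation \eqref{e:warep} together with $\mu_s^N \to \mu_s$ weakly on $B_R$,
\begin{equation*}
\st_{\rho_s^N}(x)\ave{u_s^N}_{\rho_s^N}(x) = \int_{\domain} \phi_{\rho_s^N}(x,y)\, w \dmu_s^N(y,w) \longrightarrow \int_{\domain} \phi_{\rho_s}(x,y)\, w \dmu_s(y,w) = \st_{\rho_s}(x)\ave{u_s}_{\rho_s}(x)
\end{equation*}
uniformly in $x \in B_R$. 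Combining uniform convergence of the drift with the weak convergence $\mu_s^N \to \mu_s$ and the fact that $\n_v g$ is continuous with compact support in $v$, I can pass to the limit in the spatial integral for each fixed $s$. The time integral is then handled by dominated convergence, using the uniform bound $\sup_N \| \st_{\rho_s^N} \|_\infty \leq \oS$ from (ev4) and the $v$-cutoff of $g$.

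It remains to verify $\mu \in C_{w^*}([0,T);\cP(B_R))$. This is inherited from the uniform equicontinuity of the family $\{\mu_s^N\}$: the weak formulation for $\mu^N$ gives, for any $1$-Lipschitz test function $h$ on $B_R$, a bound $|\int h \dmu_{t'}^N - \int h \dmu_{t''}^N| \leq C|t' - t''|$ with $C$ depending only on $R$ and $\oS$. This bound is preserved in the limit, giving $W_1$-continuity of $t \mapsto \mu_t$. The main technical obstacle is precisely the convergence of the nonlinear drift, which is why uniform regularity of $\cM$ — rather than the weaker thickness-dependent regularity — is indispensable here: without it, one would need to control $\Th(\rho_s^N,\cdot)$ along the approximating sequence, which is not available for empirical measures.
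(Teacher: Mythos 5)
Your proof is correct and follows essentially the same route as the paper: both hinge on converting $W_1$-convergence of the macroscopic quantities into uniform convergence of the drift $\st_{\rho^N_s}\ave{u^N_s}_{\rho^N_s}$ via the uniform regularity property \eqref{e:ur2}. The only cosmetic differences are that the paper obtains a uniform-in-$s$ bound on $W_1(\rho^N_s,\rho_s)$ by invoking the stability estimate \eqref{e:stabt0} (so that dominated convergence in time is immediate), and deduces the weak$^*$-continuity of the limit directly from the established weak formulation rather than from a separate equicontinuity argument as you do; both are sound.
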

\begin{proof}
	 The weak$^*$-continuity of the limit will follow immediately from \eqref{e:weakkin} once it is established. Clearly, all the linear terms in \eqref{e:weakkin} converge to their natural limits. As to the force  let us note for any $s<T$, we have (by computations done above)
\[
W_1(\rho^N_s, \rho^M_s) + W_1(u^N_s\rho^N_s, u^M_s \rho^M_s) \leq  C W_1(\mu^N_s,\mu^M_s) \leq  C W_1(\mu^N_0,\mu^M_0),
\]
since both are solutions to the Vlasov-alignment equation. Sending $M\to \infty$ we obtain
\[
W_1(\rho^N_s, \rho_s) + W_1(u^N_s\rho^N_s, u_s \rho_s) \leq  C W_1(\mu^N_0,\mu_0),
\]
which by continuity \eqref{e:ur2}  implies that 
\[
\| \st_{\rho^N_s} (\ave{u^N_s}_{\rho^N_s} - v) - \st_{\rho_s} (\ave{u_s}_{\rho_s} - v) \|_{L^\infty(B_R)} \to 0
\]
uniformly in $s$.  Together with the weak convergence assumed for $\mu^N_s$ we obtain
\[
 \int_0^t \int_{\domain}  ( \st_{\rho^N_s} (\ave{u^N_s}_{\rho^N_s} - v) )\dmu^N_s(x,v)\ds \to  \int_0^t \int_{\domain}  ( \st_{\rho_s} (\ave{u_s}_{\rho_s} - v) )\dmu_s(x,v)\ds .
 \]
This finishes the proof.
\end{proof}

Finally, let us discuss the implementation of \thm{t:mfl} to global well-posedness of smooth solutions. Since all solutions are transported according to \eqref{e:consK}  regularity of a solution will depend on the regularity of initial data and the parameters of the model.  First, let us notice that the Jacobian of the characteristic map, by the Liouville formula, is given by
\[
\det \n_{\w} (X,V) (t,\w) = \exp \left\{ - n \int_0^t \st_\rho(X(s,\w)) \ds \right\}.
\]
Then if $\mu_0 = f_0 \dw$, with $f_0\in C^k$, $k\in \N$ and compactly supported, then for any $t>0$,
\begin{equation}\label{ }
f(t, X(t,\w),V(t,\w)) = f_0(\w) \exp \left\{ n \int_0^t \st_\rho(X(s,\w)) \ds \right\}.
\end{equation}
Inverting the flow and noting that $(X,V)$ and $\st_\rho$ are $C^k$ implies $f \in C^k$ at all times with support in $v$ being confined to its original bounds and support in $x$ growing at most linearly.

\begin{theorem}\label{t:VAgwp}
Suppose $\cM$ is uniformly regular. Let $f_0 \in C^k_0(\domain)$ be any compactly supported distribution. Then for any $T>0$ there exists a unique  solution $f \in L^\infty([0,T); C^k_0)$ to \eqref{e:VA} which is supported on $\supp f_0 + B_{t A_0 } \times \{0\}$, where $A_0$ is the maximal initial velocity.
\end{theorem}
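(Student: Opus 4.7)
The plan is to bootstrap Theorem \ref{t:mfl} from the measure-valued level to the level of $C^k$ densities. Starting from $f_0 \in C^k_0(\domain)$ with $\supp f_0 \Subset \cO$ and $\|v\|_{L^\infty(\supp f_0)} = A_0$, view $\mu_0 = f_0 \dw$ as a compactly supported probability measure (after normalization) and invoke Theorem \ref{t:mfl} to obtain a unique measure-valued solution $\mu_t$ on $[0,T]$, represented as the push-forward $\mu_t = (X(t,\cdot),V(t,\cdot))\# \mu_0$ along the characteristic flow \eqref{e:XKs}-\eqref{e:VKs}. Lemma \ref{l:maxpr} immediately gives the velocity support confinement $|V(t,\w)| \leq A_0$ for $\w \in \supp f_0$, and integration of the $X$-equation gives $|X(t,\w) - x| \leq t A_0$, yielding the stated support property $\supp \mu_t \subset \supp f_0 + B_{tA_0} \times \{0\}$.

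Next, I would upgrade the $C^1$ Lipschitz estimate \eqref{e:XVRT} to $C^k$ regularity of the flow map $(X,V)$. The key input is that under uniform regularity \eqref{e:ur1}-\eqref{e:ur2}, both $\st_\rho(x)$ and $\st_\rho \ave{u}_\rho(x) = \int \phi_\rho(x,y) v \dmu_s(y,v)$ are $C^k$ in $x$ with bounds depending only on $R$ and the model constants, uniformly as $s$ ranges over $[0,T]$ (since $\mu_s$ stays supported in a fixed ball). Differentiating \eqref{e:XKs}-\eqref{e:VKs} repeatedly in $\w$ and applying Gr\"onwall inductively on $|\n^j X|, |\n^j V|$ for $j \leq k$, one obtains
\begin{equation*}
\sup_{[0,T]} \left( \|\n^j X\|_{L^\infty(\cO)} + \|\n^j V\|_{L^\infty(\cO)} \right) \leq C(k, R, T, \cO)
\end{equation*}
for $j = 1,\ldots,k$. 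At each step the new derivative satisfies a linear ODE whose inhomogeneity is polynomial in the lower-order derivatives, so the induction is routine.

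With the flow established in $C^k$, the Liouville formula
\begin{equation*}
\det \n_\w (X,V)(t,\w) = \exp\left\{ -n \int_0^t \st_\rho(X(s,\w))\ds \right\} > 0
\end{equation*}
shows the flow is a $C^k$ diffeomorphism onto its image, and the transport identity yields the explicit representation
\begin{equation*}
f(t, X(t,\w), V(t,\w)) = f_0(\w) \exp\left\{ n \int_0^t \st_\rho(X(s,\w))\ds \right\}.
\end{equation*}
Composing with the $C^k$ inverse flow $(X,V)^{-1}(t,\cdot)$ gives $f(t,\cdot) \in C^k_0(\domain)$ with the asserted support. Uniqueness in $L^\infty([0,T); C^k_0)$ follows from the already established uniqueness at the measure-valued level in Theorem \ref{t:mfl}.

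The main obstacle, and the only nontrivial point, is the higher-derivative regularity of the coefficient field $\st_\rho \ave{u}_\rho$ along the flow: one must verify that differentiating inside the integral against $\dmu_s$ is justified uniformly in $s$, which is precisely what \eqref{e:ur1} provides (the kernel $\phi_\rho$ is smooth in $x,y$ with bounds independent of $\rho$ in the uniformly regular case). Everything else reduces to standard ODE bootstrapping.
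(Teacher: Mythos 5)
Your proposal is correct and takes essentially the same route as the paper: start from the measure-valued solution of Theorem \ref{t:mfl}, note that uniform regularity makes $\st_\rho$ and $\st_\rho\ave{u}_\rho$ smooth coefficient fields along the flow, upgrade the $C^1$ bound \eqref{e:XVRT} to $C^k$ of the flow map by the obvious bootstrap, then read off $C^k$ regularity of $f$ from the Liouville formula and the transport identity, with uniqueness inherited from the measure level. The only thing you add beyond the paper's (deliberately terse) two-sentence argument is the explicit inductive Gr\"onwall scheme for $\n^j X,\n^j V$, which is a reasonable way to make "$(X,V)$ and $\st_\rho$ are $C^k$" precise.
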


\section{Stochastic mean-field limit}\label{s:mflstoch} 

As discussed in \sect{s:CST} one of the main obstacles for  alignment on the torus $\T^n$  is existence of so-called locked states: solutions with agents locked on periodic orbits that stay at a positive distance greater than the communication length scale  $r_0$.  A natural way to avoid such unstable states is to introduce  stochastic noise 
\begin{equation}\label{e:ABSs}
\begin{split}
\dx_i & = v_i \dt \\
\dv_i & = \st_i ( \ave{v}_i - v_i )\dt + \sqrt{2\s \st_i } \dB_i,
\end{split}
\end{equation}
where $B_i$'s are independent Brownian motions in $\R^n$.
Note that the noise here is assumed to be ``material", i.e. it places stochasticity only within the influence of the flock. As $N \to \infty$ and assuming that the agents are indistinguishable, i.e. $m_1 = \dots = m_N = \frac{1}{N}$, the system comes in natural correspondence with what we call the {\em 
Fokker-Planck-Alignment equation}
\begin{equation}\label{e:FPA}
\p_t f + v \cdot \n_x f = \s \st_\rho \D_v f + \n_v( \st_\rho (v - \ave{u}_\rho) f ).
\end{equation}

A major advantage of using material noise is that the kinetic model \eqref{e:FPA}  possesses a family of thermodynamic equilibria
\begin{equation}\label{e:Maxwellian0}
\mu_{\s,\bar{u}} = \frac{1}{|\O|(2\pi \s)^{n/2}} e^{- \frac{|v - \bar{u}|^2}{2\s}}.
\end{equation} 
 If the underlying model $\cM$ is conservative every solution is centered around the constant averaged momentum $\bar{u}$, which predetermines the corresponding equilibrium and opens a possibility for potential relaxation towards that distribution. 
 The collective behavior interpretation of this result would say that, as expected, the noise disrupts the locked states and redistributes initial velocities symmetrically around the mean value $\bar{u}$.  Alignment is then restored in the sense of the vanishing noise limit:
\begin{equation}\label{e:ultimate}
\lim_{\s \to 0} \lim_{t \to \infty} f^\s(t)  = \frac{1}{|\O|} \d_{v = \bar{u}} \otimes \dx.
\end{equation}

The problem of relaxation and hypocoercivity will be discussed in \sect{s:hypo}.
In this section we provide a rigorous derivation of equation \eqref{e:FPA}  as a mean-field limit of solutions to the stochastic system \eqref{e:ABSs}.  To make this statement precise, let  us consider $f$  a solution to \eqref{e:FPA} on a time interval $[0,T]$ with initial distribution $f_0$.   Consider now $N$ independent identically distributed random variables $(x_i^0,v_i^0)$, $i\leq N$, with  $f_0 = \mathrm{law}(x_i^0,v_i^0)$, and let $(x_i,v_i)$ solve \eqref{e:ABSs}. Form the empirical measure-valued random variables
\[
\mu_t^N = \frac1N \sum_{i=1}^N \d_{x_i(t)} \otimes \d_{v_i(t)}.
\]
The mean-field limit consist of showing that for all $t \leq T$, we have $\mu_t^N \to f_t$ in law, i.e. for any Lipschitz function $h$ on $\domain$,
\begin{equation}\label{e:Emf}
\E \left|  \frac1N \sum_{i=1}^N h(x_i(t),v_i(t)) - \int_{\domain} h(x,v) f(t,x,v) \dx\dv \right|^2  \to 0.
\end{equation}
Note that $f \dx \dv$ in this context is considered as a constant random measure.

In general, the convergence \eqref{e:Emf} is equivalent to propagation of chaos,  see Sznitman \cite{sznitman}: if $f^{N}$ denotes the joint probability distribution of the process $(x_1,v_1,\dots, x_N,v_N)$ solving \eqref{e:ABSs}, then for any $k\geq 1$, the $k$-th marginal $f^{(k)}$ converges weakly to the product of $k$ copies of $f$, $f^{\otimes k}$, as $N\to \infty$:
\begin{equation}\label{e:prop}
\lan f^{(k)},  \f_1\otimes \ldots \otimes \f_k \ran = \lan f^{N}, \f_1\otimes \ldots \otimes \f_k \otimes 1 \otimes \dots \otimes 1 \ran \to \prod_{j=1}^k \lan f, \f_j \ran, \qquad \f_j \in C_b(\R^{2n}).
\end{equation}

The strategy of proving \eqref{e:Emf}  is based on the classical coupling method. Note that  if $(x_i,v_i)$'s were independent and identically distributed by $f$, then \eqref{e:Emf} would have been nothing but the Law of Large Numbers. So, to achieve the limit we couple \eqref{e:ABSs} with another system of separate $N$ copies of the characteristic processes for \eqref{e:FPA}:
\begin{equation}\label{e:ABSc}
\begin{split}
\dbx_i & = \bar{v}_i \dt \\
\dbv_i & = \st_\rho(\barx_i) ( \ave{u}_\rho(\barx_i)  - \barv_i ) \dt + \sqrt{2  \s \st_\rho(\barx_i)} \dB_i,
\end{split}
\end{equation}
with initial condition $(x_i^0,v_i^0)$. Here, $\rho$ and $u$ are the macroscopic values of $f$. Note that because the equations are decoupled, the pairs $(\barx_i,\barv_i)$ remain independent and identically distributed. By the It\^o formula, $f$  is their common law.

To establish \eqref{e:Emf} one can add and subtract the intermediate average of $h$ with $\barx_i(t),\barv_i(t)$ pairs:
\begin{equation}\label{}
\begin{split}
& \E \left|  \frac1N \sum_{i=1}^N h(x_i(t),v_i(t)) - \int_{\R^{2n}} h(x,v) f(t,x,v) \dx\dv \right|^2  \\
& \leq \E  \left|  \frac1N \sum_{i=1}^N h(x_i(t),v_i(t)) - \frac1N \sum_{i=1}^N h(\barx_i(t),\barv_i(t))  \right|^2  \\
& +  \E  \left|  \frac1N \sum_{i=1}^N h(\barx_i(t),\barv_i(t)) - \int_{\R^{2n}} h(x,v) f(t,x,v) \dx\dv \right|^2 .
\end{split}
\end{equation}
The second term goes to zero by the Law of Large Numbers, while the first can be estimated using symmetry by
\[
\E \left|  \frac1N \sum_{i=1}^N h(x_i(t),v_i(t)) - \frac1N \sum_{i=1}^N h(\barx_i(t),\barv_i(t))  \right|^2  \leq \| \n h\|_\infty \E[ |x_1 - \barx_1|^2 + |v_1 - \barv_1|^2].
\]
So the proof of  \eqref{e:Emf} reduces to obtaining control over separation of  characteristics:
\begin{equation}\label{e:Echar}
E(t) = \E[ |x_i - \barx_i|^2 + |v_i - \barv_i|^2] \to 0, \quad \text{as } N\to \infty .
\end{equation}

This approach was carried out by Bolley, et al., \cite{BCC2011} in the case of convolution-type alignment systems and with additive noise (no strength $\st_\rho$ thermalization). We now provide a proper extension that includes general environmental averaging models and material noise as stated.

Let us also note, following \cite{BCC2011}, that a bound on \eqref{e:Echar} entails a bound on the rate of decorrelation $f^{(k)} \to f^{\otimes k}$. Indeed,
\[
W_2^2(f^{(k)}, f^{\otimes k}) \leq \E\left[ \sum_{i=1}^k |x_i - \barx_i|^2 + |v_i - \barv_i|^2 \right] = k E(t) \to 0.
\]
where $W_2$ is the Wesserstein-2 distance.

\subsection{Law of large numbers}
We will work on the torus $\O = \T^n$ and assume that $\cM$ is uniformly regular in the sense of \defin{d:ur} with a minor modification.
For all our averaging models the $W_1$-metric used to define continuity in $\rho$ can in fact be replaced with a weaker  $W_1$ semi-metric determined by finitely many fixed Lipschitz functions: for $h_1,\ldots,h_K \in \Lip(\O)$ with $\|h_k\|_\Lip \leq 1$, 
\begin{equation}\label{e:W1weak}
W_1^{h_1,\ldots,h_K}(\rho',\rho'')  = \max_{k=1,\ldots,K} \left| \int_{\O} h_k(x) [ \drho'(x) - \drho''(x) ]\right|.
\end{equation}
Such is the case for all Favre-based models where $h = \phi$, or for  \ref{Mseg} where $h_l = g_l$. Thus, the uniform continuity can be understood as follows
\begin{equation}\label{e:contglobweak}
\| \st_{\rho'} -\st_{\rho''}  \|_{\infty} +  \| \phi_{\rho'}  -\phi_{\rho''} \|_{\infty}  \leq C W_1^{h_1,\ldots,h_K}(\rho',\rho'').
\end{equation}

Let us now discuss consequences of the assumed regularity of the model on the Law of Large Numbers.  The basic idea is that the model is compatible with the LLN in the averaged sense. Let us recall the classical law first, see \cite{sznitman}: for a sequence of i.i.d. random variables $X_j: \Sigma \to \R^d$ with bounded second momentum $\E|X_j|^2 \leq E_0$ and mean $\E X_j = m$ we have
\begin{equation}\label{ }
\E \left| \frac1N \sum_{j=1}^N X_j - m\right|^2 \leq \frac{E_0}{N}.
\end{equation}
Consequently, if $h\in C_b(\R^d)$ and $\mu$ is the law of $X_j$'s, then in terms of $\mu$ the above reads
\begin{equation}\label{e:LLNgen}
\int_{\R^{Nd}} \left| \frac1N \sum_{j=1}^N h(\w_j) -  \int_{\R^d} h(\w) \mu(\w) \right|^2 \dmu(\w_1) \dots \dmu(\w_N) \leq \frac{\|h\|_\infty^2}{N}.
\end{equation}
We will encounter \eqref{e:LLNgen} in two interpretations. Namely, for any $h\in C_b(\O)$ and $f\in \cP(\domain)$, we have
\begin{align}
\int_{\O^N} &\left| \frac1N \sum_{j=1}^N h(y_j) - \int_\O h(z) \drho(z) \right|^2 \drho(y_1)\ldots \drho(y_N) \leq  \frac{C\|h\|^2_\infty}{N}, \label{e:LLNh}\\
\int_{\O^N\times \R^{nN}}& \left| \frac1N \sum_{j=1}^N v_j h(y_j) - \int_\O h(z) u(z) \drho(z) \right|^2 \df(y_1,v_1)\ldots \df(y_N,v_N)  \leq  \frac{C \cE(f) \|h\|^2_\infty}{N}, \label{e:LLNhf}
\end{align} 
where $\cE(f) = \int_\domain |v|^2 \df $. 

The next two lemmas show that the analogue of these two laws of large numbers also holds with respect to the components of the model $\cM$. 
 
\begin{lemma}\label{l:aN} We have
\begin{equation}\label{e:LLN1}
\a_N = \sup_{\rho\in \cP(\O)} \int_{\O^N} \left| \st_{\rho^N} (y_i) - \st_\rho(y_i) \right|^2 \drho(y_1)\ldots \drho(y_N) \lesssim \frac1N, 
\end{equation}
where $\rho^N = \frac{1}{N} \sum_{j=1}^N \d_{y_j}$. Note that $\a_N$ is independent of $i$ by symmetry. 
\end{lemma}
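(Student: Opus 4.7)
The plan is to reduce the estimate to the classical law of large numbers \eqref{e:LLNh} via the uniform-in-$\rho$ continuity of the model.

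First, since $\cM$ is uniformly regular (in the weak sense expressed through \eqref{e:contglobweak}), there exist Lipschitz test functions $h_1,\dots,h_K$ and a constant $C$ depending only on the model such that
\[
\| \st_{\rho^N} - \st_\rho \|_\infty \leq C\, W_1^{h_1,\dots,h_K}(\rho^N,\rho)
= C \max_{1\leq k \leq K} \left| \frac{1}{N}\sum_{j=1}^N h_k(y_j) - \int_\O h_k \drho \right|.
\]
In particular, evaluating at $y_i$ and squaring,
\[
\bigl| \st_{\rho^N}(y_i) - \st_\rho(y_i) \bigr|^2 \leq C^2 \sum_{k=1}^K \left| \frac{1}{N}\sum_{j=1}^N h_k(y_j) - \int_\O h_k \drho \right|^2.
\]
Notice that the right-hand side no longer depends on $y_i$.

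Next, integrate this pointwise bound against $\drho(y_1)\cdots \drho(y_N)$. For each fixed $k$, the $y_j$'s are i.i.d.\ with common law $\rho$, and $h_k$ is a bounded Borel function with $\|h_k\|_\infty\leq C_k$ independent of $\rho$. The classical law of large numbers \eqref{e:LLNh} therefore gives
\[
\int_{\O^N} \left| \frac{1}{N}\sum_{j=1}^N h_k(y_j) - \int_\O h_k \drho \right|^2 \drho(y_1)\cdots \drho(y_N) \leq \frac{C \|h_k\|_\infty^2}{N}.
\]
Summing over $k=1,\dots,K$ and taking the supremum over $\rho \in \cP(\O)$ yields $\a_N \leq C'/N$, with $C'$ depending only on the fixed parameters of the model and on $K$ and $\|h_k\|_\infty$, but \emph{not} on $\rho$.

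The only delicate point is ensuring that the continuity constant in \eqref{e:contglobweak} is genuinely uniform in $\rho$ and that the finite collection $\{h_k\}$ is independent of $\rho$. For the core models in our list this is verified case by case in Section \ref{s:rth}: for Favre-based averagings one takes $h=\phi$ (up to translations, handled by Lipschitz regularity), and for \ref{Mseg} one takes $h_l=g_l$. With that uniformity in place the argument is essentially a symmetric application of Chebyshev/variance combined with uniform continuity, and no further obstacle arises.
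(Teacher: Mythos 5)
Your argument is correct and follows exactly the paper's route: reduce via the weak-$W_1$ continuity \eqref{e:contglobweak} to the $h_k$-averages, observe that the bound no longer depends on $y_i$, and invoke the product-measure law of large numbers \eqref{e:LLNh} term by term. The only difference is cosmetic — you spell out the triangle/Chebyshev steps and the i.i.d.\ structure that the paper leaves implicit.
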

\begin{proof}
 To see that we have by \eqref{e:contglobweak} and \eqref{e:LLNh},
\begin{equation*}\label{}
\begin{split}
\int_{\O^N} \left| \st_{\rho^N} (y_i) - \st_\rho(y_i) \right|^2 \drho(y_1)\ldots \drho(y_N) & \lesssim \sum_{k=1}^K \int_{\O^N} \left| \int_\O h_k(z)[ \drho^N -\drho ] \right|^2 \drho(y_1)\ldots \drho(y_N) \\
& = \sum_{k=1}^K \int_{\O^N} \left|  \frac1N \sum_{j=1}^N h_k(y_j) - \int_\O h_k(z) \drho(z) \right|^2 \drho(y_1)\ldots \drho(y_N) \\
& \leq \frac{C}{N} ( \| h_1\|_\infty + \dots \|h_K\|_\infty).
\end{split}
\end{equation*}
\end{proof}

\begin{lemma}\label{l:bN} We have
\begin{equation}\label{e:LLN2}
\b_N = \sup_{f: \cE(f)\leq \cE_0} \int_{\O^N\times \R^{nN}} \left| \st_{\rho^N} (y_i) \ave{ u^N }_{\rho^N}(y_i) - \st_\rho(y_i) \ave{u}_\rho (y_i) \right|^2 \df(y_1,v_1)\ldots \df(y_N,v_N) \lesssim \frac1N, 
\end{equation}
 where $\rho^N$ is as before, $u^N = \sum_{j=1}^N v_j \one_{\{y_j\}} $, and $\rho , u$ are the macroscopic density and velocity of $f$. 
\end{lemma}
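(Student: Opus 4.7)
The plan is to exploit the reproducing kernel representation \eqref{e:warep} and split the error into a kernel-replacement contribution and a Law of Large Numbers contribution. Writing $\rho^N = \frac{1}{N}\sum_j \d_{y_j}$ and the empirical momentum $u^N = \sum_j v_j \one_{\{y_j\}}$, \eqref{e:warep} yields
\[
\st_{\rho^N}(y_i)\ave{u^N}_{\rho^N}(y_i) = \frac{1}{N}\sum_{j=1}^N \phi_{\rho^N}(y_i,y_j)\, v_j, \qquad \st_\rho(y_i)\ave{u}_\rho(y_i) = \int_\O \phi_\rho(y_i,z)\, u(z)\drho(z) =: G(y_i).
\]
I would decompose the difference as $I_i + II_i$ with
\[
I_i = \frac{1}{N}\sum_j \big(\phi_{\rho^N}(y_i,y_j) - \phi_\rho(y_i,y_j)\big) v_j, \qquad II_i = \frac{1}{N}\sum_j \phi_\rho(y_i, y_j) v_j - G(y_i),
\]
and estimate each in $L^2(f^{\otimes N})$ separately.

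For $II_i$, which is a classical LLN remainder, I would isolate the diagonal term $j=i$ (contributing $O(\cE_0/N^2)$ in squared expectation) and the $O(1/N)$ bias from normalizing by $N$ rather than $N-1$. What remains is $\frac{1}{N}\sum_{j\neq i}[\phi_\rho(y_i,y_j)v_j - G(y_i)]$; conditional on $y_i$, the pairs $(y_j,v_j)_{j\neq i}$ are i.i.d.\ from $f$ and each summand has conditional mean $0$. Independence then gives conditional second moment bounded by $\frac{1}{N}\E[\phi_\rho(y_i,y_1)^2|v_1|^2\mid y_i]\leq \|\phi\|_\infty^2\cE_0/N$ uniformly in $y_i$, so $\E|II_i|^2 \lesssim \cE_0/N$.

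For $I_i$ I would first apply the uniform continuity \eqref{e:contglobweak} to get $\|\phi_{\rho^N}-\phi_\rho\|_\infty \lesssim W_1^{h_1,\ldots,h_K}(\rho^N,\rho)$, and then Cauchy--Schwarz to obtain
\[
|I_i|^2 \leq C\, W_1^{h_1,\ldots,h_K}(\rho^N,\rho)^2 \cdot \frac{1}{N}\sum_{m=1}^N |v_m|^2.
\]
Here lies the main obstacle: the factor $W_1^{h_1,\ldots,h_K}(\rho^N,\rho)^2$ depends on all the $y_j$'s and is not independent of the $v$-sum, so a naive Cauchy--Schwarz inside the expectation would require a fourth $v$-moment we do not control. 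I would overcome this by expanding the semi-metric in terms of the fixed Lipschitz test functions: setting $\tilde h_k = h_k - \int h_k \drho$,
\[
W_1^{h_1,\ldots,h_K}(\rho^N,\rho)^2 \leq \sum_{k=1}^K \frac{1}{N^2}\sum_{j,l} \tilde h_k(y_j)\tilde h_k(y_l),
\]
so that $\E|I_i|^2 \lesssim \sum_k \frac{1}{N^3}\sum_{j,l,m} \E[\tilde h_k(y_j)\tilde h_k(y_l)|v_m|^2]$. By independence of the triples $(y_p,v_p)$ and the centering $\E\tilde h_k = 0$, every term in which $j$ or $l$ is distinct from all other indices vanishes. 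The surviving contributions are the $O(N)$ triply diagonal terms $j=l=m$ bounded by $\E|v|^2=\cE_0$, and the $O(N^2)$ doubly diagonal terms $j=l\neq m$ which factor as $\E\tilde h_k(y)^2\cdot \E|v|^2\leq \|h_k\|_\infty^2\cE_0$; both produce $O(\cE_0/N)$ after the $N^{-3}$ normalization. Combining with the bound on $II_i$ and noting independence of $i$ by exchangeability yields $\b_N \lesssim \cE_0/N$, uniform over $f$ with $\cE(f)\leq \cE_0$.
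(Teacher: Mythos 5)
Your proof is correct and follows the same line as the paper: the identical decomposition into a kernel-replacement term $I$ (controlled by \eqref{e:contglobweak}) and an LLN remainder $II$ (controlled by conditioning on $y_i$ and isolating the diagonal term). The only implementation difference is in $I$, where the paper sidesteps the fourth-moment issue you flag by singling out the $j=1$ summand so that the remainder of the LLN sum is independent of $(y_1,v_1)$ and factors against $\E|v_1|^2$, whereas you expand the squared semi-metric and the energy average into a triple sum and count surviving diagonal terms combinatorially --- a more pedestrian but equally valid variance computation leading to the same $O(1/N)$ bound.
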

\begin{proof}
Let us assume $i=1$ for definiteness.  We have
\begin{equation}\label{e:cross1}
\begin{split}
\st_{\rho^N} (y_1) \ave{ u^N }_{\rho^N}(y_1) - \st_\rho(y_1) \ave{u}_\rho (y_1) & = \int_\O (\phi_{\rho^N} (y_1,z)  - \phi_{\rho} (y_1,z)  ) u^N(z) \drho^N(z)\\
& + \int_\O  \phi_{\rho} (y_1,z)[u^N(z) \drho^N(z) - u(z)\drho(z)] = I + II.
\end{split}
\end{equation}
Let us examine $I$ first. We have by \eqref{e:W1weak},
\[
| I | \leq W_1^{h_1,\ldots,h_K}(\rho^N,\rho) \frac1N\sum_{j=1}^N |v_j|.
\]
Thus,
\[
\int_{\O^N\times \R^{nN}} |I|^2  \df(y_1,v_1)\ldots \df(y_N,v_N) \leq   \frac1N\sum_{j=1}^N \int_{\O^N\times \R^{nN}} |v_j|^2  (W_1^{h_1,\ldots,h_K}(\rho^N,\rho) )^2 \df(y_1,v_1)\ldots \df(y_N,v_N)
\]
and by symmetry in $j$,
\begin{multline*}\label{}
=  \int_{\O^N\times \R^{nN}} |v_1|^2 ( W_1^{h_1,\ldots,h_K}(\rho^N,\rho) )^2 \df(y_1,v_1)\ldots \df(y_N,v_N) \\ \lesssim
\sum_{k=1}^K  \int_{\O^N\times \R^{nN}} |v_1|^2  \left|  \frac1N \sum_{j=1}^N h_k(y_j) - \int_\O h_k(z) \drho(z) \right|^2 \df(y_1,v_1)\ldots \df(y_N,v_N).
\end{multline*}
Let us focus on one $k$th term. We single out the $j=1$ term from the rest:
\begin{multline*}\label{}
\frac{1}{N^2} \int_{\O^N\times \R^{nN}} |v_1|^2  | h_k(y_1)|^2 \df(y_1,v_1)\ldots \df(y_N,v_N) \\
+ 
\int_{\O^N\times \R^{nN}} |v_1|^2  \left|  \frac1N \sum_{j=2}^N h_k(y_j) - \int_\O h_k(z) \drho(z) \right|^2 \df(y_1,v_1)\ldots \df(y_N,v_N) \\
\leq \frac{1}{N^2} \|h_k\|_\infty^2 \cE_0 + \cE_0 \int_{\O^{N-1}\times \R^{n(N-1)}}  \left|  \frac1N \sum_{j=2}^N h_k(y_j) - \int_\O h_k(z) \drho(z) \right|^2 \drho(y_2)\ldots \drho(y_N)
\end{multline*}
The latter integral is $\lesssim \frac1N $ with a minor adjustment to $N\to N-1$ in the average. Thus, by \eqref{e:LLNh},
\[
\int_{\O^N\times \R^{nN}} |I|^2  \df(y_1,v_1)\ldots \df(y_N,v_N) \lesssim \frac1N .
\]

It remains to analyze $II$.  We will treat $y_1$ as a parameter, and let us denote $h_{y_1}(z) = \phi_\rho(y_1,z)$. By the regularity assumption, $h_{y_1}\in C_b(\O)$ and is even Lipschitz. We have
\[
II =  \frac1N \sum_{j=1}^N v_j h_{y_1}(y_j) - \int_\O h_{y_1}(z) u(z) \drho(z).
\]
Again, we single out the $j=1$ term, and by \eqref{e:LLNhf},
\begin{equation*}\label{}
\begin{split}
& \int_{\O^N\times \R^{nN}} |II|^2  \df(y_1,v_1)\ldots \df(y_N,v_N)  \leq \frac{1}{N^2} \int_{\O^N\times \R^{nN}} |v_1|^2  | h_{y_1}(y_1)|^2 \df(y_1,v_1)\ldots \df(y_N,v_N) \\
+& \int_{\O^N\times \R^{nN}} \left| \int_\O \frac{1}{N} \sum_{j=2}^N v_j h_{y_1}(y_j) -  \int_\O h_{y_1}(z) u(z) \drho(z)\right|^2 \df(y_1,v_1)\ldots \df(y_N,v_N)  \\
 \leq  & \frac{1}{N^2} \|\phi_\rho \|_\infty^2 \cE_0  + \frac{1}{N} \|\phi_\rho \|_\infty^2 \cE_0 
\end{split}
\end{equation*}
with a minor adjustment to the index $N \to N-1$ in the latter.
\end{proof}

\subsection{Main result}

As discussed earlier we now focus on obtaining an estimate on separations of characteristics to achieve \eqref{e:Echar}. The result holds on a finite time interval $[0,T]$ where $f$ is a smooth solution to \eqref{e:FPA} by which we mean existence of sufficiently many derivatives in weighted Sobolev spaces to sufficient to understand \eqref{e:FPA} classically, see \sect{s:WP}. 

\begin{theorem}\label{t:MFLs}
Suppose $\cM$ is uniformly regular satisfying \eqref{e:contglobweak}. Let $f$ be a classical solution to the Fokker-Planck-Alignment equation \eqref{e:FPA} on a time interval $[0,T]$ satisfying
\begin{equation}\label{e:stlowstoch}
\Th(\rho,\O) \geq \d, \quad \forall 0\leq t\leq T,
\end{equation}
and
\begin{equation}\label{e:expf1}
\int_{\O \times \R^n} e^{a|v|^2} f(x,v,t)\dx\dv \leq c_8, \quad \forall 0\leq t\leq T.
\end{equation}
Then for any solution to the particle system \eqref{e:ABSs} and \eqref{e:ABSc} on the time interval $[0,T]$ with i.i.d.\,initial datum $(x_i^0,v_i^0)$ distributed according to the law $f_0$ one has the following estimate
\begin{equation}\label{ }
\E[ |x_i - \barx_i|^2 + |v_i - \barv_i|^2] \leq C_1 \frac{1}{N^{e^{-C_2 t}}},
\end{equation}
for some $C_1,C_2>0$ depending on $T$ and all the constants involved in the assumptions above.  Consequently, the mean-field limit \eqref{e:Emf} holds.
\end{theorem}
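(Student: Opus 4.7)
The plan is the classical synchronous coupling of Sznitman, driving both \eqref{e:ABSs} and the decoupled characteristic system \eqref{e:ABSc} by the same Brownian motions $B_i$ and starting from the same i.i.d.\ initial data distributed according to $f_0$. By exchangeability, $E(t):=\E[|x_i - \bar x_i|^2 + |v_i - \bar v_i|^2]$ is independent of $i$, and the task is to close a Gr\"onwall inequality for $E$ on $[0,T]$. First I would write the SDE for the difference: $d(x_i - \bar x_i) = (v_i - \bar v_i)\,dt$ for positions, and
\begin{align*}
d(v_i - \bar v_i) &= \bigl[\st_i(\ave{v}_i - v_i) - \st_\rho(\bar x_i)(\ave{u}_\rho(\bar x_i) - \bar v_i)\bigr]\,dt + \bigl(\sqrt{2\sigma \st_i} - \sqrt{2\sigma \st_\rho(\bar x_i)}\bigr)\,dB_i
\end{align*}
for velocities. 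Applying It\^o's formula to $|x_i - \bar x_i|^2 + |v_i - \bar v_i|^2$, taking expectation, and using Young's inequality together with the lower bound $\st_\rho \ge \delta$ from \eqref{e:stlowstoch} (which yields $|\sqrt{\st_i} - \sqrt{\st_\rho(\bar x_i)}|^2 \le \tfrac{1}{4\delta} |\st_i - \st_\rho(\bar x_i)|^2$) reduces matters to controlling the two mean-field discrepancies $\mathcal{D}_1 = \E|\st_{\rho^N}(x_i)\ave{u^N}_{\rho^N}(x_i) - \st_\rho(\bar x_i)\ave{u}_\rho(\bar x_i)|^2$ and $\mathcal{D}_2 = \E|\st_{\rho^N}(x_i) - \st_\rho(\bar x_i)|^2$.

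The core step is to decompose each $\mathcal{D}_k$ through the decoupled empirical pair $\bar\rho^N = \tfrac{1}{N}\sum_j \delta_{\bar x_j}$, $\bar u^N = \sum_j \bar v_j \one_{\bar x_j}$, writing
\begin{align*}
\st_{\rho^N}(x_i)\ave{u^N}_{\rho^N}(x_i) - \st_\rho(\bar x_i)\ave{u}_\rho(\bar x_i)
&= \underbrace{\bigl[\st_{\rho^N}(x_i)\ave{u^N}_{\rho^N}(x_i) - \st_{\bar\rho^N}(\bar x_i)\ave{\bar u^N}_{\bar\rho^N}(\bar x_i)\bigr]}_{\text{coupling error}} \\
&\quad + \underbrace{\bigl[\st_{\bar\rho^N}(\bar x_i)\ave{\bar u^N}_{\bar\rho^N}(\bar x_i) - \st_\rho(\bar x_i)\ave{u}_\rho(\bar x_i)\bigr]}_{\text{LLN error}}.
\end{align*}
The coupling error is controlled entirely by the uniform regularity of $\cM$: \eqref{e:ur1}--\eqref{e:ur2} combined with the generalized $W_1$-continuity \eqref{e:contglobweak} yield
$$|\text{coupling error}| \lesssim |x_i - \bar x_i| + |v_i - \bar v_i| + \frac{1}{N}\sum_{j=1}^N \bigl(|x_j - \bar x_j| + |v_j - \bar v_j|\bigr).$$
The LLN error is evaluated at the i.i.d.\ sample $\{(\bar x_j,\bar v_j)\}$ with common law $f$, so its squared expectation is bounded by $\beta_N \lesssim 1/N$ via \lem{l:bN}, where \eqref{e:expf1} supplies the required energy bound uniformly on $[0,T]$. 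The analogous split of $\mathcal{D}_2$ uses \lem{l:aN} for the rate $\alpha_N \lesssim 1/N$.

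Collecting these estimates and applying Cauchy--Schwarz to the squared coupling residual so that $\E\bigl(\tfrac1N \sum_j |x_j - \bar x_j|\bigr)^2 \le \tfrac1N\sum_j \E|x_j - \bar x_j|^2 = E(t)$ by exchangeability, one arrives at a differential inequality of the form $E'(t) \le C E(t) + C/N$, which yields the claimed bound upon integration; the propagation of chaos \eqref{e:Emf} then follows by the triangle inequality against the standard law-of-large-numbers error for bounded Lipschitz test functions, which is $O(1/\sqrt{N})$. The main obstacle is the coupling residual itself: the discrete average $\ave{u^N}_{\rho^N}(x_i)$ couples all $N$ particles simultaneously, so the Lipschitz-in-$\rho$ estimate feeds back a global sum which must be shown not to inflate the bookkeeping; exchangeability together with Cauchy--Schwarz performs precisely this absorption without any dimension penalty, and this is what makes the $L^2$-form of the LLN lemmas \ref{l:aN}--\ref{l:bN} (rather than any $W_1$-convergence bound on empirical measures) essential. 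A secondary subtlety is the diffusion mismatch, which closes in $E$ only because \eqref{e:stlowstoch} keeps $\st_\rho$ bounded away from zero uniformly on $[0,T]$; without this hypothesis the square root would not be Lipschitz on the relevant range and the stochastic integral would fail to be absorbed.
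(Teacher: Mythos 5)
The skeleton of your argument is the right one and matches the paper: synchronous coupling, splitting through the intermediate empirical pair $(\bar\rho^N,\bar u^N)$, and invoking the $L^2$ forms of the law of large numbers in Lemmas~\ref{l:aN}--\ref{l:bN}. But there is a genuine gap in how you estimate the coupling error, and it propagates to the end of your proof: you claim
\[
|\text{coupling error}| \lesssim |x_i - \bar x_i| + |v_i - \bar v_i| + \tfrac{1}{N}\textstyle\sum_{j}\bigl(|x_j - \bar x_j| + |v_j - \bar v_j|\bigr),
\]
which cannot be right. Writing $\st_{\rho^N}(x_i)\ave{u^N}_{\rho^N}(x_i) = \frac1N\sum_j \phi_{\rho^N}(x_i,x_j)v_j$ and subtracting the same expression for $(\bar x,\bar v)$, the kernel difference $\phi_{\rho^N}(x_i,x_j) - \phi_{\bar\rho^N}(\bar x_i,\bar x_j)$ is indeed Lipschitz in the position displacements via \eqref{e:ur1}--\eqref{e:ur2} and \eqref{e:contglobweak}, but it is multiplied by $v_j$ (equivalently, by $\bar v_j$ plus the velocity displacement), and $\bar v_j$ is \emph{unbounded}. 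Here there is no deterministic maximum principle: the particle velocities are driven by Brownian noise, so their only control is through moments of $f$. When you square and take expectations you encounter terms like $\E\bigl[|\bar v_j|^2\,|x_i - \bar x_i|^2\bigr]$, which you cannot factor (the coupled particle $x_i$ is correlated with every $\bar v_j$) and which therefore do not reduce to a constant multiple of $E(t)$.

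This is precisely where the paper spends most of its effort. It introduces a truncation at level $R$ and splits $\E[|\bar v_j|^2 |x_i - \bar x_i|^2] \le R^2 E_x + \E^{1/2}[|\bar v_j|^4]\,\E^{1/2}[\one_{|\bar v_j|\ge R}]$, then uses the exponential moment hypothesis \eqref{e:expf1} to bound the tail by $C e^{-aR^2/2}$. Setting $r = aR^2/2$ produces the differential inequality
\[
\ddt \E \leq C_1(r+1)\E + C_2 \gamma_N + C_3 e^{-r},
\]
which is \emph{not} of Gr\"onwall type: the drift coefficient grows with the truncation parameter, and optimizing $r$ against the current value of $\E$ leads to a log-Lipschitz inequality $v' \leq -c\,v\ln v + c\gamma_N$ and hence to the rate $C_1 N^{-e^{-C_2 t}}$. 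Your proposed closure $E' \leq CE + C/N$ would, upon integration from $E(0)=0$, give $E(t) \lesssim 1/N$, which is both stronger than and inconsistent with the theorem's stated bound --- a sign that the estimate leading to it cannot hold. You invoke \eqref{e:expf1} only inside Lemma~\ref{l:bN}, but its essential role is in this truncation of the coupling residual; without it the closure fails. The diffusion mismatch you handle correctly; it is the drift terms bearing the unbounded velocity weight that break the naive Gr\"onwall argument.
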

\begin{proof}  We set $\s=1$  for simplicity.  First, we notice that  the solution has a uniformly bounded energy on $[0,T]$ and thus  \eqref{e:LLN2} applies uniformly on $[0,T]$.

Let us denote 
\begin{equation}\label{ }
\begin{split}
\E = \E_x &+ \E_v \\
\E_x = \E[ |x_i - \barx_i|^2],& \quad \E_v = \E[ |v_i - \barv_i|^2].
\end{split}
\end{equation}
Taking the derivative of the $x$-component we obviously obtain
\[
\ddt \E_x = 2  \E[(x_i - \barx_i) \cdot (v_i - \barv_i) ] \leq \E.
\]

For the velocity component we use the It\^o formula,
\[
\ddt \E_v = \E[ (v_i - \barv_i ) \cdot   ( \st_i ( \ave{v}_i - v_i ) - \st_\rho(\bar{x}_i) ( \ave{u}_\rho(\bar{x}_i)  - \bar{v}_i )  ) ] + \E  \left| \sqrt{2 \st_i} - \sqrt{2 \st_\rho(\bar{x}_i)} \right|^2 .
\]

Let us start with the noise term using \eqref{e:stlowstoch} and \eqref{e:stlow},
\[
 \E \left| \sqrt{2 \st_i} - \sqrt{2 \st_\rho(\bar{x}_i)} \right|^2  =  2 \E \left| \frac{\st_i - \st_\rho(\bar{x}_i)}{ \sqrt{\st_i} + \sqrt{\st_\rho(\bar{x}_i)}} \right|^2  \leq C\,\E \left| \st_i - \st_\rho(\bar{x}_i)\right|^2 .
\]
Recalling that $\st_i = \st_{\rho^N} (x_i)$, where $\rho^N = \frac{1}{N} \sum_{j=1}^N \d_{x_j} $ and denoting  $\barho^N = \frac{1}{N} \sum_{j=1}^N \d_{\barx_j} $, we add and subtract intermediate terms
\begin{equation}\label{e:auxsE}
 \E \left| \st_i - \st_\rho(\bar{x}_i)\right|^2   \lesssim  \E \left| \st_{\rho^N} (x_i) - \st_{ \barho^N} (\barx_i) \right|^2   +  \E \left| \st_{\barho^N } (\barx_i) -   \st_\rho(\bar{x}_i) \right|^2 
\end{equation}
By regularity of the strength function \eqref{e:ur1} and symmetry,
 \[
  \E  \left| \st_{\rho^N} (x_i) - \st_{\barho^N } (\barx_i)  \right|^2  \lesssim \E_x +  \E\left[ \frac{1}{N} \sum_{j=1}^N |x_j - \barx_j|^2 \right] =C \E_x.
  \]
The second term is bounded by $\a_N$ as defined in \eqref{e:LLN1}, since $\rho$ is the law of $\barx_i$ and the latter are independent,
 \[
 \E \left| \st_{ \barho^N } (\barx_i) -   \st_\rho(\bar{x}_i) \right|^2 = \int_{\O^N} | \st_{\frac{1}{N} \sum_{j=1}^N \d_{y_j} } (y_i) - \st_\rho(y_i) |^2 \drho(y_1)\ldots \drho(y_N) \leq \a_N.
 \]
 In conclusion, we obtain
 \[
 \E \left| \sqrt{2 \st_i} - \sqrt{2 \st_\rho(\bar{x}_i)} \right|^2  \leq C\E + \a_N.
 \]

 Let us now turn to the alignment term. By adding and subtracting several intermediate terms we expand it as follows 
 \begin{equation}\label{}
\begin{split}
& \E[ (v_i - \barv_i ) \cdot   ( \st_i ( \ave{v}_i - v_i ) - \st_\rho(\bar{x}_i) ( \ave{u}_\rho(\bar{x}_i)  - \bar{v}_i )  ) ]  = - \E[ \st_i  | v_i - \barv_i  |^2] \\
 &+ \E[ (v_i - \barv_i ) \cdot (\st_i  \ave{v}_i  -  \st_\rho( \bar{x}_i) \ave{u}_\rho(\bar{x}_i) )] + 
 \E[ (v_i - \barv_i ) \cdot \barv_i ( \st_\rho( \bar{x}_i) - \st_i) ].
\end{split}
\end{equation}

The first term is non-positive, so we simply drop it.  Let us estimate the last term. We fix an $R>0$ to be determined later and split the integrand as follows
\begin{equation}\label{e:Eaux2}
\begin{split}
 \E[ (v_i - \barv_i ) \cdot \barv_i ( \st_\rho( \bar{x}_i) - \st_i) ] & =  \E[ (v_i - \barv_i ) \cdot \barv_i \one_{|\barv_i|<R}( \st_\rho( \bar{x}_i) - \st_i) ] \\
 &+ \E[ (v_i - \barv_i ) \cdot \barv_i \one_{|\barv_i|\geq R}( \st_\rho( \bar{x}_i) - \st_i) ] \\
 & \leq R^2 \E  | v_i - \barv_i  |^2 + \E \left| \st_i - \st_\rho(\bar{x}_i)\right|^2   + C \E  | v_i - \barv_i  |^2 + C\E[ |\barv_i|^2 \one_{|\barv_i|\geq R} ] ,
\end{split}
\end{equation}
where in the last term we simply used the global boundedness of the strength functions. For the second term we use the same estimate as before \eqref{e:auxsE}, hence continuing 
\[
\leq (R^2+C) \E +  \a_N +C\E[ |\barv_i|^2 \one_{|\barv_i|\geq R} ] .
\]
Now, 
\[
\E[ |\barv_i|^2 \one_{|\barv_i|\geq R} ] \leq \E^{1/2}[  |\barv_i|^4] \E^{1/2}[   \one_{|\barv_i|\geq R} ] .
\]
Here the first term is bounded by the fourth moment of $f$ which is clearly bounded from the assumption \eqref{e:expf1}. And using again \eqref{e:expf1} we estimate the last term by
\[
 \E[   \one_{|\barv_i|\geq R} ] = \int_{\O} \int_{|v| \geq R} f_t(x,v)\dv \dx \leq \frac{c_4}{e^{aR^2}}.
\]
The latter remains bounded on the interval $[0,T]$ by a constant by assumption.
We thus obtained
\begin{equation}\label{e:2ndaux}
 \E[ (v_i - \barv_i ) \cdot \barv_i ( \st_\rho( \bar{x}_i) - \st_i) ] \leq  (R^2+C) \E(t) + \a_N +c_4 e^{-a R^2/2}. 
\end{equation}

Lastly, let us estimate the second term on the right hand side of \eqref{e:Eaux2}. We have, denoting
$u^N = \sum_{j=1}^N v_j \one_{\{x_j\}} $ and $\bar{u}^N = \sum_{j=1}^N \barv_j \one_{\{\barx_j\}} $,
\begin{equation}\label{e:auxEEE}
\begin{split}
&\E[ (v_i - \barv_i ) \cdot (\st_i  \ave{v}_i  -  \st_\rho( \bar{x}_i) \ave{u}_\rho(\bar{x}_i) )] \leq  \E_v \\
+& \E \left|  \st_{\rho^N} (x_i) \ave{ u^N }_{\rho^N}(x_i)  -  \st_{\barho^N} (x_i) \ave{ \baru^N}_{\barho^N}(x_i)  \right|^2  \\
+& \E \left|  \st_{\barho^N} (x_i) \ave{ \baru^N }_{\barho^N}(x_i)  -  \st_{\barho^N} (\barx_i) \ave{\baru^N }_{\barho^N}(\barx_i)  \right|^2  \\
+&   \E \left|  \st_{\barho^N} (\barx_i) \ave{ \baru^N  }_{\barho^N}(\barx_i)   -  \st_\rho( \bar{x}_i) \ave{u}_\rho(\bar{x}_i) \right|^2  .
\end{split}
\end{equation}
The last term here is bounded by $\b_N$, see \eqref{e:LLN2}. The elements in the first term are evaluated at the same point $x_i$. So, by a similar computation as in \eqref{e:cross1} we have 
\begin{equation}\label{e:aux2.5}
\begin{split}
 &\E \left|  \st_{\rho^N} (x_i) \ave{ u^N }_{\rho^N}(x_i)  -  \st_{\barho^N} (x_i) \ave{\baru^N }_{\barho^N}(x_i)  \right|^2 \\
 \lesssim &  \E \left[\left( \frac1N \sum_{j=1}^N |\barv_j|^2 \right) W_1^2(\rho^N, \barho^N) \right] +\E\left[ W_1^2\left(  u^N \rho^N , \baru^N \barho^N \right)\right] \\
 \leq & \frac{1}{N^2} \sum_{i, j=1}^N \E[ |\barv_j|^2  |x_i-\barx_i|^2] + \frac1N \sum_{i=1}^N \E[|\barv_i|^2 |x_i-\barx_i|^2 ] +  \E_v.
\end{split}
\end{equation}
Each term here will be estimated by the same splitting method as before:
\begin{equation}\label{e:aux3}
\begin{split}
\E[ |\barv_j|^2  |x_i-\barx_i|^2  ] & = \E[|\barv_j|^2\one_{|\barv_j| < R} |x_i-\barx_i|^2 ] +\E[|\barv_j|^2 \one_{|\barv_j| \geq R} |x_i-\barx_i|^2 ] \\
& \leq R^2 \E_x + \E^{1/2}[  |\barv_j|^4] \E^{1/2}[   \one_{|\barv_j|\geq R} ] \leq R^2 \E + C e^{-a R^2/2},
\end{split}
\end{equation}
and similarly for the middle term. Thus,
\[
\eqref{e:aux2.5} \leq R^2 \E + C e^{-a R^2/2}.
\]

It remains to estimate the middle term in \eqref{e:auxEEE}. Here we use the regularity of the kernel \eqref{e:ur1} and obtain
\[
 \E \left|  \st_{\barho^N} (x_i) \ave{ \baru^N }_{\barho^N}(x_i)  -  \st_{\barho^N} (\barx_i) \ave{\baru^N }_{\barho^N}(\barx_i)  \right|^2 \lesssim \frac1N \sum_{j=1}^N \E\left[ |x_i - \barx_i|^2   | \barv_j|^2   \right].
\]
This term becomes exactly as the previous one. So, the same estimate applies.

Putting the above estimates together and denoting $r= a R^2/2$ and $\g_N = \a_N + \b_N$, we arrive at
\begin{equation}\label{e:Efinal}
\ddt \E \leq C_1 (r+1) \E + C_2 \g_N + C_3  e^{- r}.
\end{equation}

Inequality \eqref{e:Efinal} is exactly the one that appeared in \cite{BCC2011}. Let us recap the conclusion for completeness. First, by choosing $r=1$ we see that $\cE$ remains uniformly bounded on $[0,T]$, $\E \leq \E_0$. Thus,  $-\ln (\E/e\E_0) \geq 1$. Denoting $v = \E/e\E_0$ and picking $r = -\ln v$ we obtain
\[
v' \leq - c_1 v \ln v + c_2 \g_N  \leq -c v \ln v + c \g_N,
\]
where $c = \max\{c_1,c_1\}$. Rescaling time $u(t) = v(t/c)$ we further obtain
\[
u' \leq - u \ln u + \g_N.
\]
Letting $w = u \g_N^{-e^{-t}}$ we conclude
\[
w' \leq - w \ln w + 1 \leq e^{-1} + 1.
\]
Thus, $w \leq T(e^{-1} + 1) = C_T$ and hence unwrapping the notation, $\E \leq C_1 \g_N^{e^{-C_2 t}}$ as claimed.

\end{proof}

\section{Fokker-Planck-Alignment equation}\label{s:WP}

 In this section we develop a well-posedness theory of classical solutions to  Fokker-Plank-Alignment equations \eqref{e:FPA} that is suitable for applications to flocking. This means that in addition to the standard regularity questions we will pay close attention to thickness as related to the spectral gap computations  discussed in \sect{s:flocking}. We will restrict ourselves to the periodic domain $\O = \T^n$ as that is the setting where most of our results will be used in the sequel.   We also set $\s=1$ as it plays no role in the analysis. So, we consider the FPA equation 
\begin{equation}\label{e:FPA1}
\p_t f + v\cdot \n_x f =  \st_\rho \D_v f + \n_v \cdot (\st_\rho(v - \ave{u}_\rho)f ).
\end{equation}

Classical solutions to \eqref{e:FPA1} are defined to be solutions that belong to a  high regularity weighted Sobolev class. For reasons that will be clarified later it is essential to distribute velocity weights in the manner defined as follows
\begin{equation}\label{e:Sobdef}
H^k_l(\domain) =  \left\{ f :  \sum_{k'\leq k} \sum_{|\bk'| = k'}  \int_\domain \jap{v}^{l + 2(k-k')}  |\p^{\bk'}_{x,v} f |^2 \dv\dx <\infty \right\},
\end{equation}
where $\jap{v} = (1+|v|^2)^{\frac12}$.  Some remarks are in order to elaborate on this choice. First, we note that the alignment term in  \eqref{e:FPA} prevents the persistence of a sub-Gaussian bound $f \leq C \mu$ if it holds initially. So, setting the problem traditionally in sub-Gaussian H\"older classes, c.f.  \cite{IM2021,AZ2021}, is not natural for the FPA equations. One exception is the class of perturbative solutions developed for particular models in \cite{Choi2016,DFT2010}.  Inclusion of the weights in \eqref{e:Sobdef} is necessary to achieve uniqueness primarily due to, again, the presence of alignment components, see however \cite{Villani} for the classical much weaker result.  The use of progressively increasing weights for lower order terms is required to control terms coming from the inhomogeneity in front of the Fokker-Planck operator, which prevents closing a priori estimates for any single-weight choice.  Single weight spaces, however, would have been sufficient for models with $\st_\rho=1$.

\subsection{Local well-posedness}\label{s:lwp}

Let us first discuss local well-posedness for thick data on compact domain.

\begin{theorem}\label{t:lwp}
Suppose the model $\cM$ is regular in the sense of \defin{d:r}, and $\O = \T^n$. Let $f_0\in H^k_l(\domain)$, $k,l \geq n+3$, be an initial condition such that 
\[
\Th(\rho_0,\O) > 0.
\]
Then there exists a unique local solution to \eqref{e:FPA1} on a time interval $[0,T)$, where $T >0$ depends only on the initial energy $\cE_0$ and thickness $\Th(\rho_0,\O)$,  in the regularity class
\begin{equation}\label{e:solclass}
f\in C_w ([0,T); H^k_l ), \quad  \n_{v} f \in L^2([0,T]; H^{k}_l).
\end{equation}
Moreover, if $f\in L^\infty_\loc ([0,T); H^k_l)$ is a given solution such that
\begin{equation}\label{e:rholow}
\inf_{[0,T)}\Th(\rho,\O) >0,
\end{equation}
then $f$ can be extended to an interval $[0,T+\e)$ in the same class.
\end{theorem}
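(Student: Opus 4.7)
The plan is standard for hypoelliptic kinetic equations: construct the solution by Picard iteration on a linearized problem, derive a priori estimates in $H^k_l$ that survive as long as the thickness does not collapse, and close the argument by showing that $\Th(\rho,\O)$ remains positive on a quantitative time interval depending only on $\cE_0$ and $\Th(\rho_0,\O)$.

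First I would freeze $\tilde\rho,\ave{\tilde u}_{\tilde\rho}$ coming from a previous iterate $\tilde f$ and solve the linear equation $\p_t f + v\cdot\n_x f = \st_{\tilde\rho}\D_v f + \n_v\cdot(\st_{\tilde\rho}(v-\ave{\tilde u}_{\tilde\rho})f)$. Regularity of $\cM$ together with the lower bound \eqref{e:stlow} makes this a non-degenerate Fokker--Planck equation with smooth bounded coefficients, so the existence of a smooth linear solution is classical. The main work is the a priori estimate. Applying $\p^\alpha_{x,v}$ with $|\alpha|=k'\leq k$ and pairing against $\jap{v}^{l+2(k-k')}\p^\alpha f$, three types of error must be absorbed: (i) the transport commutator $[\p^\alpha_v,v\cdot\n_x]$, which trades a $v$-derivative for an $x$-derivative; (ii) integration by parts in $\st_\rho \D_v$ under the weight, which generates a lower-order term at weight $\jap{v}^{l+2(k-k')-2}$; (iii) the drift $\st_\rho v\cdot\n_v$, which is unbounded in $v$. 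The specific weight cascade $\jap{v}^{l+2(k-k')}$ in \eqref{e:Sobdef} is engineered precisely so that terms (i)--(iii) reduce to the $H^{k}_{l}$-norm at order $k'-1$ or interpolate against the top dissipation. Sobolev embedding (dictating the threshold $k,l\geq n+3$) moves $\cM$-coefficient factors to $L^\infty$, and \eqref{e:r1} bounds those norms in terms of $\Th(\rho,\O)$, delivering an inequality $\tfrac{d}{dt}\|f\|_{H^k_l}^2 + \|\n_v f\|_{H^k_l}^2 \leq P(\|f\|_{H^k_l},\Th(\rho,\O)^{-1})$.

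The thickness itself is propagated using property (v) of \defin{d:th}: $\p_t\Th(\rho,\O)\geq -c\|u\|_{L^2(\rho)}\geq -c\sqrt{\cE(t)}$. Since the weighted $H^k_l$-norm controls the kinetic energy, one obtains $\Th(\rho,\O)\geq \tfrac12\Th(\rho_0,\O)$ on an explicit interval $[0,T]$ with $T=T(\cE_0,\Th(\rho_0,\O))$, closing the bootstrap. The iteration then converges to a solution $f\in C_w([0,T);H^k_l)$ with $\n_v f\in L^2([0,T];H^k_l)$. For uniqueness, writing $g=f_1-f_2$ and invoking the continuity-in-$\rho$ bound \eqref{e:r2} to control $\|\st_{\rho_1}-\st_{\rho_2}\|_\infty$ and $\|\phi_{\rho_1}-\phi_{\rho_2}\|_\infty$ by $W_1(\rho_1,\rho_2)\lesssim \|g\|_{L^1_{\jap{v}}}$ yields a Gronwall estimate in a low-order weighted $L^2$-norm. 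The continuation criterion is then immediate: under \eqref{e:rholow} the a priori estimate remains uniformly valid on $[0,T)$, preventing blow-up of $\|f\|_{H^k_l}$, and the short-time construction applied near $T$ extends the solution past it.

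The principal obstacle is the interplay between the three thickness-dependent quantities: the non-degeneracy of the Fokker--Planck operator, the $C^k$-regularity of the coefficients, and the evolution of $\Th$ itself, which is governed only by the one-sided bound (v). The delicate point is ensuring that the energy bounds used to control $\p_t\Th$ are quantitatively consistent with the a priori $H^k_l$ estimate, so that the bootstrap closes with a time of existence depending only on $\cE_0$ and $\Th(\rho_0,\O)$ and not on any auxiliary scale. The peculiar weight arrangement in \eqref{e:Sobdef}, rather than a uniform weight $\jap{v}^l$, is the key technical device making this possible for models with variable diffusion coefficient $\st_\rho$.
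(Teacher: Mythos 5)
Your proposal is correct and follows essentially the same route as the paper: freezing the coefficients from a previous iterate, closing weighted $H^k_l$ estimates via the cascading weights $\jap{v}^{l+2(k-k')}$, propagating thickness through $\p_t\Th\geq -c\sqrt{\cE}$ so that the existence time depends only on $\cE_0$ and $\Th(\rho_0,\O)$, and proving uniqueness by a Gronwall argument in a low-order weighted $L^2$ space using \eqref{e:r2}. The only detail you gloss over is the construction of the linear solutions themselves (the paper uses a full viscous regularization $\e\D_{x,v}$ plus Aubin--Lions), but this does not affect the substance of the argument.
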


We can view the right hand side of \eqref{e:FPA1}  as a sum of a weighted Fokker-Planck operator and a smooth drift
\begin{equation}\label{e:FPA2}
\p_t f + v\cdot \n_x f  =  \st_\rho \LFP f+ \rmw_\rho \cdot \n_v f,
\end{equation}
where 
\begin{equation}\label{ }
\LFP f = \n_v \cdot (\n_v f + v f), \quad \rmw_\rho = - \st_\rho \ave{u}_\rho.
\end{equation}
Let us first disassociate the weights $\st_\rho$ and $\rmw_\rho$ from the solution and consider the linear problem
\begin{equation}\label{e:FPAlinear}
\p_t f + v\cdot \n_x f =  \st(x,t) \LFP f+ \rmw(x,t) \cdot \n_v f,
\end{equation}
where  $\st,\rmw$ is a given smooth set of data on  $\O \times [0,T]$ with uniform bounds
\begin{equation}\label{e:sbelow1}
\st \geq c_0>0, \quad  \| \st\|_{C^k} + \| \rmw \|_{C^k} <C_0 \qquad  \text{ on } \O \times [0,T].
\end{equation}

\begin{lemma}\label{l:llwp}
Under the assumptions \eqref{e:sbelow1} for any initial condition $f_0 \in H_l^k$ there exists a unique solution to \eqref{e:FPAlinear} on $[0,T]$ with $f\in C_w([0,T]; H^k_l)$, $\n_{v} f \in L^2_\loc([0,T]; H^{k}_l)$, and moreover,
\[
\|f\|_{H^k_l} \leq \|f_0\|_{H^k_l} e^{C t},
\]
where $C$ depends only on $c_0$ and $C_0$.
\end{lemma}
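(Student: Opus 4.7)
The plan is to reduce everything to an a priori energy estimate in $H^k_l$; the rest is routine. Uniqueness is immediate from the linearity of \eqref{e:FPAlinear} applied to the difference of two solutions with zero initial data, together with the a priori bound. Existence is obtained by regularizing with $\e \Delta_x$ to make the problem uniformly parabolic (for which classical smooth solutions exist), deriving the estimate uniformly in $\e$, and passing to the limit as $\e \to 0$; a Galerkin scheme in a suitable basis of $L^2(\T^n \times \R^n)$ serves equally well. Weak continuity in time comes standardly from the uniform bound and the equation itself.

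For each multi-index $\alpha = (\alpha_x,\alpha_v)$ with $|\alpha|=k' \leq k$, I would apply $\p^\alpha$ to \eqref{e:FPAlinear}, test against $\p^\alpha f \cdot \jap{v}^{w_{k'}}$ with $w_{k'} := l+2(k-k')$, and integrate over $\T^n \times \R^n$. The principal transport term $v \cdot \n_x \p^\alpha f$ integrates to zero against this test function since $v \cdot \n_x$ is $x$-divergence-free and the weight is $x$-independent. The commutator
\[
[\p^\alpha, v \cdot \n_x] f = \sum_j (\alpha_v)_j \p_x^{\alpha_x+e_j} \p_v^{\alpha_v-e_j} f
\]
trades one $v$-derivative for one $x$-derivative, so it has total order $k'$ and is bounded in $L^2(\jap{v}^{w_{k'}})$ by the $k'$-th piece of the $H^k_l$ norm; no weight adjustment is needed here.

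The Fokker-Planck contribution is the delicate part. Testing $\int \st\,\LFP \p^\alpha f \cdot \p^\alpha f \jap{v}^{w_{k'}}$ produces, after integration by parts in $v$, the principal dissipation $-\int \st \jap{v}^{w_{k'}} |\n_v \p^\alpha f|^2$, a helpful confining contribution of size $-\tfrac{w_{k'}}{2}\int \st \jap{v}^{w_{k'}} |\p^\alpha f|^2$, and lower-weight remainders. The real source of trouble is the commutator $[\p_x^{\alpha_x}, \st]$: for each $\gamma < \alpha_x$ one obtains $\int (\p_x^{\alpha_x - \gamma}\st)\,\LFP \p_x^\gamma \p_v^{\alpha_v} f \cdot \p^\alpha f \jap{v}^{w_{k'}}$. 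Integrating $\LFP$ by parts in $v$ and applying Cauchy-Schwarz with a small parameter $\d > 0$ splits this into
\[
\d \int \st \jap{v}^{w_{k'}} |\n_v \p^\alpha f|^2 + C(\d, C_0) \int \jap{v}^{w_{k'}} |\n_v \p_x^\gamma \p_v^{\alpha_v} f|^2 + \text{l.o.t.}
\]
The first piece is absorbed into the principal dissipation at level $k'$ by choosing $\d$ small. The second is a weighted $v$-gradient at derivative order $|\gamma|+|\alpha_v| \leq k'-1$, and because $w_{k'-1} = w_{k'}+2 \geq w_{k'}$, it is dominated by the dissipation at the level immediately below. This is precisely why the weights in $H^k_l$ are graded: the loss of one $x$-derivative to a coefficient is compensated by two extra units of $v$-weight at the level below. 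The residual commutator $[\p_v^{\alpha_v},\LFP] = |\alpha_v|\,\p_v^{\alpha_v}$, the drift $\rmw \cdot \n_v f$ with its commutator, and the weight-differentiation terms from $\jap{v}^{w_{k'}}$ all generate standard contributions bounded by $\|f\|_{H^k_l}^2$.

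Summing the level-$k'$ identities from $k'=k$ downward, the commutator losses at each level are absorbed by a fixed fraction of the dissipation at the level immediately below, with the base case $k'=0$ closed by the confining term together with $\st \geq c_0$. The outcome is
\[
\ddt\|f\|_{H^k_l}^2 + c\sum_{|\alpha|\leq k} \int \st \jap{v}^{w_{|\alpha|}} |\n_v \p^\alpha f|^2 \dv \dx \leq C \|f\|_{H^k_l}^2,
\]
with $c,C$ depending only on $c_0, C_0, k, l, n$. Grönwall gives the claimed exponential bound, and the dissipation on the left delivers $\n_v f \in L^2([0,T]; H^k_l)$. The main obstacle I expect is exactly this hierarchical bookkeeping: every $[\p_x^{\alpha_x},\st]$ loss at level $k'$ must be paired with dissipation at level $k'-1$ at the correct $v$-weight, uniformly in $\alpha$, which is the precise purpose of the weight gap of $2$ between consecutive levels.
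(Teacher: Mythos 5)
Your proposal is essentially the same approach as the paper's: energy estimates in $H^k_l$ at each derivative order, using the graded weight $w_{k'}=l+2(k-k')$ to absorb the commutator losses from $[\p_x^{\alpha_x},\st]$, followed by regularization, uniform a priori bounds, and compactness. The one place you deviate is in the treatment of the $\D_v$-part of the commutator: the paper integrates by parts in $v$, rewrites the resulting bilinear form as $\tfrac12\,\p_x\st\,\p_x|\n_v\p^{\beta}f|^2$ when exactly one $x$-derivative falls on $\st$, and integrates by parts a second time in $x$, which lands the whole term at top order with weight $\jap{v}^{w_{k'}}$, hence directly bounded by $\|f\|_{H^k_l}^2$ with no dissipation used at all. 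Your Cauchy--Schwarz-with-small-$\d$ route works too, but note a small inaccuracy in your bookkeeping: the leftover piece $C(\d)\int\jap{v}^{w_{k'}}|\n_v\p_x^\gamma\p_v^{\alpha_v}f|^2$, where $|\gamma|+|\alpha_v|\le k'-1$, has total derivative order $\le k'$ and weight $\jap{v}^{w_{k'}}$, so it is already a piece of $\|f\|_{H^k_l}^2$; you do \emph{not} need to invoke dissipation at level $k'-1$ to control it, which in turn means no geometric weighting across levels is required to close the sum. Dissipation at level $k'-1$ is genuinely needed only for the $v\cdot\n_v$-part of the commutator, and there (as you note, and as the paper shows) the $|v|$ factor is sent via Cauchy--Schwarz onto the lower-order factor, where the extra weight $\jap{v}^{2}$ is exactly what the $H^k_l$ grading provides; the paper absorbs the small piece into the \emph{same}-level dissipation $\tfrac{c_0}{2}\int\jap{v}^{w_{k'}}|\n_v\p^\alpha f|^2$, again avoiding cross-level absorption. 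So the estimate closes level by level without the iterative coefficient-matching you anticipate; otherwise your argument is sound.
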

\begin{proof}
To construct a solution to \eqref{e:FPAlinear}  from initial data $f_0 \in H_l^k$ one first considers a fully viscous regularization 
\begin{equation}\label{e:FPAe}
\p_t f + v\cdot \n_x f =  \st \LFP f + \rmw \cdot \n_v f + \e \D_{x,v} f.
\end{equation}
A local solution to \eqref{e:FPAe} on a time interval $[0,T_\e]$ is obtained via the standard fixed point argument, see \cite{Krylov-book}. In order to extend it to all of $[0,\infty)$ we provide a priori estimates for \eqref{e:FPAlinear} which automatically apply to \eqref{e:FPAe} independently of $\e$. As a result we obtain a bound on $\|f\|_{H^k_l}$ which depends only on its initial value, on $c_0$ and $C^k$-norms of $\st$ and $\rmw$.  So, we have a family of solutions $f^\e$ uniformly in $C([0,T];H^k_l)$,  and clearly also in $f_t^\e \in L^\infty([0,T];L^2)$. By the Aubin-Lions compactness lemma we can pass to the limit $\e\to 0$ in any $H^{k'}_{l'}$ for $k'<k$, $l'<l$ and weakly in $H^k_l$ extracting a subsequence converging to a solution to \eqref{e:FPAlinear}.  Weak continuity in $H^k_l$ also follows classically.

 Thus, the problem reduced to obtaining proper a priori bounds for solutions to \eqref{e:FPAlinear}.

Let us estimate the top $v$-derivative $\p_v^\bk f$ first (here and further on we use a less formal notation for the partials, only keeping track of the order)
\[
\p_t \p_v^\bk f + v\cdot \n_x  \p_v^\bk f + \p_v^{\bk-1} \p_x f = \st \LFP \p_v^\bk f +  \st  \p_v^{\bk} f + \rmw \cdot \n_v \p_v^\bk f.
\]
Testing with $ \jap{v}^l \p_v^\bk f$ we obtain
\begin{equation*}\label{}
\begin{split}
\ddt &  \int_\domain  \jap{v}^l    | \p_v^{\bk} f |^2\dv \dx +\frac12  \int_\domain \jap{v}^l  v\cdot \n_x | \p_v^\bk f |^2 \dv \dx +  \int_\domain \jap{v}^l \p_v^{\bk-1} \p_x f \p_v^{\bk}  f \dv \dx \\
 = & -  \int_\domain  \st \jap{v}^l    |\p_v^{\bk+1} f |^2 \dv \dx + \frac12  \int_\domain \st \D_v( \jap{v}^l)  | \p_v^{\bk} f |^2  \dv \dx\\
& + \int_\domain \st \p_v(\jap{v}^l v) | \p_v^{\bk} f |^2\dv \dx + \int_\domain \st \jap{v}^l| \p_v^{\bk} f |^2\dv \dx - \int_\domain \rmw \cdot \n_v ( \jap{v}^l )    | \p_v^{\bk} f |^2\dv \dx
\end{split}
\end{equation*}
While the first integral on the left hand side vanishes, the second is bounded by $\|f\|_{H^k_l}^2$. All the terms on the right hand side, using that $\p_v \jap{v}^p \lesssim \jap{v}^{p-1}$, are also bounded by $\|f\|_{H^k_l}^2$ except for the dissipation which has a uniform bound from below by \eqref{e:sbelow1}. Thus,
\[
\ddt   \int_\domain  \jap{v}^l    | \p_v^{\bk} f |^2\dv \dx \leq C \|f\|_{H^k_l}^2 - c_0  \int_\domain  \jap{v}^l    |\p_v^{\bk+1} f |^2 \dv \dx,
\]
where $C$ is a constant depending on all the $L^\infty$-norms of $\st, \rmw$.

Let us now estimate the rest of  the other top derivatives $\p_v^{\bk - \bk'} \p_x^{\bk'} f$, $\bk' >\bzero$. Sparing the tedious details, most of terms are all bounded by $C \|f\|_{H^k_l}^2$, where $C$  is an upper bound for the $C^k$-norms  of $\st, \rmw$.  The rest is given by

\begin{equation*}\label{}
\begin{split}
\ddt &  \int_\domain  \jap{v}^l    | \p_v^{\bk - \bk'} \p_x^{\bk'} f|^2\dv \dx \lesssim C \|f\|_{H^k_l}^2  -  c_0 \int_\domain   \jap{v}^l    |\p_v^{\bk - \bk'+1} \p_x^{\bk'} f |^2 \dv \dx \\
 + & \int_\domain \jap{v}^l \p_x \st \,  \D_v \p_v^{\bk - \bk'} \p_x^{\bk'-1} f  \p_v^{\bk - \bk'} \p_x^{\bk'} f  \dv \dx + \int_\domain \p_x \st \jap{v}^l  \,  v \cdot \n_v \p_v^{\bk - \bk'} \p_x^{\bk'-1} f  \p_v^{\bk - \bk'} \p_x^{\bk'} f  \dv \dx .
\end{split}
\end{equation*}
For the penultimate term we have
\begin{equation*}\label{}
\begin{split}
 \int_\domain \jap{v}^l \p_x \st \,  \D_v \p_v^{\bk - \bk'} \p_x^{\bk'-1} f  \p_v^{\bk - \bk'} \p_x^{\bk'} f  \dv \dx & \leq -  \int_\domain \jap{v}^l \p_x \st \,  \n_v \p_v^{\bk - \bk'} \p_x^{\bk'-1} f \cdot \n_v  \p_v^{\bk - \bk'} \p_x^{\bk'} f  \dv \dx \\
& \hspace{2in} + C\|f\|_{H^k_l}^2\\
 & = -  \frac12 \int_\domain \jap{v}^l \p_x \st \,  \p_x |\n_v \p_v^{\bk - \bk'} \p_x^{\bk'-1} f|^2 \dv \dx  + C\|f\|_{H^k_l}^2\\
 & =   \int_\domain \jap{v}^l \p^2_x \st \,  | \p_v^{\bk - \bk'+1} \p_x^{\bk'-1} f|^2 \dv \dx  + C\|f\|_{H^k_l}^2 \\
 &\leq C\|f\|_{H^k_l}^2.
\end{split}
\end{equation*}

In the remaining term we take advantage of the dissipation and the higher weight assigned to the lower order derivatives. Integrating by parts in $v$ we have
\begin{equation*}\label{}
\begin{split}
& \int_\domain \p_x \st \jap{v}^l  \,  v \cdot \n_v \p_v^{\bk - \bk'} \p_x^{\bk'-1} f  \p_v^{\bk - \bk'} \p_x^{\bk'} f  \dv \dx\\
 &= -  \int_\domain \p_x \st \n_v\cdot (\jap{v}^l   v) \p_v^{\bk - \bk'} \p_x^{\bk'-1} f  \p_v^{\bk - \bk'} \p_x^{\bk'} f  \dv \dx 
-  \int_\domain \p_x \st \jap{v}^l  \, \p_v^{\bk - \bk'} \p_x^{\bk'-1} f \, v \cdot \n_v \p_v^{\bk - \bk'} \p_x^{\bk'} f   \dv \dx \\
& \leq  C\|f\|_{H^k_l}^2 + \frac{c_0}{2} \int_\domain   \jap{v}^l    |\p_v^{\bk - \bk'+1} \p_x^{\bk'} f |^2 \dv \dx + C  \int_\domain \jap{v}^{l+2} | \p_v^{\bk - \bk'} \p_x^{\bk'-1} f|^2  \dv \dx \\
& \leq C\|f\|_{H^k_l}^2 + \frac{c_0}{2} \int_\domain   \jap{v}^l    |\p_v^{\bk - \bk'+1} \p_x^{\bk'} f |^2 \dv \dx.
\end{split}
\end{equation*}
As a result we obtain
\[
\ddt   \int_\domain  \jap{v}^l    | \p_v^{\bk - \bk'} \p_x^{\bk'} f|^2\dv \dx \leq C\|f\|_{H^k_l}^2 -  \frac{c_0}{2} \int_\domain   \jap{v}^l    |\p_v^{\bk - \bk'+1} \p_x^{\bk'} f |^2 \dv \dx.
\]

The same argument works to estimate any positive order derivatives, each time taking advantage of the higher weight put on one order below. It remains to estimate the zeroth-order term,
\begin{equation}\label{e:0thorder}
\begin{split}
\ddt &  \int_\domain  \jap{v}^{l+2 k}    |f|^2\dv \dx =  \frac12 \int_\domain \st \D_v \jap{v}^{l+2k} f^2  \dv \dx -  \int_\domain \st \jap{v}^{l+2k} | \n_v  f |^2  \dv \dx \\
- &\int_\domain \st \n_v \jap{v}^{l+2k} \cdot v  f^2 \dv \dx - \int_\domain \st \n_v\cdot(\jap{v}^{l+2k} v)  f^2  \dv \dx - \int_\domain  (\rmw \cdot \n_v \jap{v}^{l+2k} )  f^2  \dv \dx\\
& \leq C \int_\domain  \jap{v}^{l+2k}    |f|^2\dv \dx - c_0 \int_\domain \jap{v}^{l+2k} | \n_v  f |^2  \dv \dx.
\end{split}
\end{equation}
So, the estimate on the $0$-th order term closes on itself. 

We obtained
\[
\ddt \|f\|_{H^k_l}^2\leq C\|f\|_{H^k_l}^2 - \frac{c_0}{2} \|\n_v f\|_{H^k_l}^2 ,
\]
and the estimate stated in the lemma follows. It also proves uniqueness since the equation is linear.

\end{proof}

\begin{proof}[Proof of \thm{t:lwp}] To construct solutions to the fully non-linear problem \eqref{e:FPA2} we use iteration scheme based on solving a sequence of linear problems
\begin{equation}\label{e:FPAn}
\begin{split}
\p_t f^{m+1} + v\cdot \n_x f^{m+1}  & =  \st_{\rho^m} \LFP f^{m+1} + \rmw_{\rho^m} \cdot \n_v f^{m+1},\\
f^{m+1} (0) & = f_0,
\end{split}
\end{equation}
for $m = 0 ,1, \ldots$. In order to pass to the limit as $m \ra \infty$ we need to ensure that $f^{m+1}$ remain uniformly bounded in $H_l^k$ on a fixed time interval $[0,T]$. According to \lem{l:llwp} a bound on $f^{m+1}$ depends on smoothness of $\st_{\rho^m}$ and $\rmw_{\rho^m}$ and a lower bound on $\st_{\rho^m}$. Thanks to the regularity of $\cM$ and \eqref{e:stlow}, those can be controlled by the thickness $\Th(\rho^m,\O)$  and the energies
\[
\cE^m = \frac12 \int_\domain |v|^2 f^m \dv \dx.
\]

Let us show that there exists a common time interval $[0,T]$ on which all the energies are uniformly bounded and the all the densities $\rho^m$ are uniformly thick. 

Starting with the energy, testing \eqref{e:FPAn} with $\frac12 |v|^2$ we can see that the Fokker-Planck component yields a bound $\|\st_{\rho^m}\|_\infty \cE^{m+1} \leq \oS \cE^{m+1}$. Let us denote $2 \d = \Th(\rho_0,\O)$. Assuming for a moment that the $m$-th flock remains thick $\Th(\rho^m,\O) \geq \d$ then using the bound
\[
\| \rmw_{\rho^m} \|_\infty \leq C_0(\d) \sqrt{\cE^m},
\]
we obtain 
\begin{equation*}\label{}
\begin{split}
\ddt \cE^{m+1} & \leq C - \int_\domain \rmw^n \cdot v f^{m+1} \dv \dx  \\
& = C +  \int_\O \st^m \ave{u^m}_{\rho^m} \cdot u^{m+1} \rho^{m+1}  \dx \\
& \leq C  + C_0(\d) \sqrt{\cE^m} \int_\O |u^{m+1}| \rho^{m+1} \dx \leq  C + C_0(\d)\sqrt{\cE^m\cE^{m+1}}\\
& \leq C +  C_0(\d) \max\{ \cE^m, \cE^{m+1} \}.
\end{split}
\end{equation*}
Hence, denoting $\bar{\cE}^m = \max\{ \cE^0, \cE^1,\ldots, \cE^m\}$, we obtain from the above
\begin{equation}\label{e:En+1}
\ddt \bar{\cE}^{m+1} \leq C +  C_0(\d) \bar{\cE}^{m+1}.
\end{equation}
At the same time, by \eqref{e:contiv} 
\begin{equation}\label{e:rn+1}
\p_t \Th (\rho^m,\O) \geq - c \| u^m \|_{L^2(\rho^m)} \geq -c \sqrt{\bar{\cE}^{m+1}}.
\end{equation}

Let us argue by induction. The initial interval of existence for $m=0$ is $T_0=\infty$. On that interval $\Th (\rho_0,\O) = 2\d >\d$. Then from \eqref{e:En+1} we have
\[
\bar{\cE}^1 \leq \cE_0 e^{C_0(\d) t} + C_1(\d)  e^{C_0(\d) t}.
\]
So, for $t \leq \frac{\ln 2 }{ C_0(\d)} $ we have
\[
\bar{\cE}^1(t) \leq 2\cE_0 + 2C_1(\d).
\]
Using \eqref{e:rn+1} we conclude on the same time interval (recall that $\rho^m(0) = \rho^0$ initially)
\[
 \Th (\rho^1,\O)\geq  \Th (\rho_0,\O) - t C_1(\d) \sqrt{ 2\cE_0 + 2C_1(\d)} \geq 2 \d - t C_1(\d) \sqrt{ 2\cE_0 + 2C_1(\d)}.
\]
Consequently, for $t< \frac{\d}{  C_1(\d) \sqrt{2\cE_0 + 2C_1(\d)}}$  we have $ \Th (\rho^1,\O) \geq \d$.

Setting $T = \min\{\frac{\ln 2 }{ C_0(\d)}  , \frac{\d}{  C_1(\d) \sqrt{2\cE_0 + 2C_1(\d)}}\}$ we obtain exact same estimates for the next elements in the sequence:
\[
\bar{\cE}^2(t) \leq 2\cE_0 + 2C_1(\d), \ t<T
\]
and 
\[
 \Th (\rho^2,\O) \geq  2 \d - t C_1(\d) \sqrt{ 2\cE_0 + 2C_1(\d)} \geq \d, \ t<T.
\]
Continuing in the same manner it follows that  $\cE^m \leq  2\cE_0 + 2C_1(\d)$ and $\Th (\rho^m,\O) \geq \d$ on the same time interval $[0,T]$ for all $m \in \N$. 

\lem{l:llwp} implies that each solution in the sequence $f^m$ will exist and be uniformly bounded  in class $C_w([0,T]; H^k_l)$. By compactness we conclude that there exists a converging subsequence in any lower regularity class, and that the limit solves the equation \eqref{e:FPA2} classically by continuity properties of the model \eqref{e:r2}.

From the above we see that the local time of existence $T$ depends only on the initial energy $\cE_0$ and thickness $\d$. With this observation let us assume that we are given a solution on an interval $[0,T')$ in the Sobolev class $H^k_l$ and such that \eqref{e:rholow} holds for all $t<T'$. Then the estimate analogous to \eqref{e:En+1} shows that the energy $\cE(t)$ remains bounded on $[0,T')$ by a constant depending only on $\d$ and $\cE_0$.  Starting from $T'-\e$ where $\e>0$ is small we construct a solution on a time interval $[T'-\e, T'-\e + T)$ where $T$ depends only on $\d$ and $E_0$ and not on $\e$. This extends the solution beyond $T'$ by uniqueness, which we address next.

\def \tf {\tilde{f}}
\def \tu {\tilde{u}}
\def \tuF {\tilde{u}_{\mathrm{F}}}
\def \tr {{\tilde{\rho}}}

Let us have two thick solutions $f$ and $\tf$ in class \eqref{e:solclass} starting from the same initial condition $f_0$. Denote $g = f - \tf$. We will estimate evolution of this difference in the weighted class $L^2_l = H^0_l$ and show that estimates close if $l$ is large enough. Note that according to definition of $H^k_l$ for $k$ large as assumed, we have $\n_v f, \n_v \tf, \LFP f, \LFP \tf \in L^2_l $ uniformly. 

 Let us take the difference
\[
\p_t g + v \cdot \n_x g = \st_{\rho} \LFP g + (\st_\rho - \st_\tr) \LFP \tf + \rmw_\rho \cdot \n_v g + (\rmw_\rho - \rmw_\tr) \cdot \n_v \tf.
\]
Testing with $\jap{v}^l g$ and integrating the $x$-transport term drops out. The rest of the terms are estimated using continuity assumption \eqref{e:r2} and the usual energy estimates
\begin{equation*}\label{}
\ddt \|g\|_{L^2_l}^2  \lesssim  \|g\|_{L^2_l}^2 + \| \rho - \tr \|_1 \|\LFP \tf \|_{L^2_l}  \|g\|_{L^2_l} + (\| \rho  u- \tr \tu \|_1  +\| \rho - \tr \|_1 \cE) \|\n_v \tf \|_{L^2_l}  \|g\|_{L^2_l}.
\end{equation*}
Here, we replaced the $W_1$-metrics with $L^1$ since this is not essential.  The $\tf$ components and the energy are uniformly bounded as noted above. So, we have
\begin{equation*}\label{}
\ddt \|g\|_{L^2_l}^2  \lesssim  \|g\|_{L^2_l}^2 + \| \rho - \tr \|_1 \|g\|_{L^2_l} + \| \rho  u- \tr \tu \|_1   \|g\|_{L^2_l}
\end{equation*}

Now,
\begin{equation}\label{ }
 \| \rho - \tr \|_1 \leq \int_\domain |g| \dv \dx = \int_\domain \jap{v}^{l/2} |g| \jap{v}^{-l/2}\dv \dx  \lesssim \|g\|_{L^2_l},
\end{equation}
provided $l > n$. Similarly,
\begin{equation}\label{ }
 \| \rho  u- \tr \tu \|_1 \leq \int_\domain |v| |g| \dv \dx = \int_\domain \jap{v}^{l/2} |g| \jap{v}^{-l / 2  + 1}\dv \dx  \lesssim \|g\|_{L^2_l},
\end{equation}
provided $l  > n +2$.

So,  we arrive at
\begin{equation*}\label{}
\ddt \|g\|_{L^2_l}^2  \lesssim  \|g\|_{L^2_l}^2,
\end{equation*}
and uniqueness follows.

\end{proof}

\subsection{Spread of positivity} \label{s:spread}

In order to extend local solutions globally it is clear that we have to generate a lower bound on the macroscopic density. Since regularity of the local solution can deteriorate propagation of the thickness is impossible to prove with the local existence estimates. Instead we resort to what is called spread of positivity. 

Solutions to many kinetic equations tend to develop instantaneous spread of support  across the domain, in the sense of gaining a Gaussian bound
\begin{equation}\label{e:gaussab}
f(t,x,v) \geq b e^{-a |v|^2},
\end{equation}
see \cite{AZ2021,Kolm1934,Hormander1967,FranPoli2006,GIMV2019,AZ2021,DesVill2000,HST2020,Mouhot2005,IMS2020}. The constants $a,b$, however, depend on either the regularity of the solution on a given time interval or bounds on macroscopic quantities such as the mass-density, energy-density and entropy-density. 
Such bounds may deteriorate in time which puts constants $a,b$ in dependence on time as well. With a view towards  flocking and regularity the primary purpose of a bound like \eqref{e:gaussab} would be to translate into a global lower bound on the density $\rho \geq \rho_-$ dependent only on the basic quantities such as drift and entropy.  At this point we are essentially using the advantage of a compact environment. 
As a consequence, for those models where the drift and entropy can be controlled in time we can develop global existence and relaxation results. 

So, our primary goal in this section will be to establish the Gaussian bound \eqref{e:gaussab} with parameters that depend only on the entropy/energy and the drift. 

\begin{proposition}\label{p:Gauss}
For a given classical solution $f\in C_w ([0,T); H^k_l (\T^n))$ of \eqref{e:FPA1} on a  time interval $[0,T)$ there exists $a,b>0$ which depend only on the parameters of the model $\cM$, time $T$, and 
\begin{equation*}\label{}
\begin{split}
\oW & = \sup_{t\in [0,T)} \| \st_\rho \ave{u}_\rho \|_\infty,\\
\oH &=  \sup_{t\in [0,T)}  \int_{\T^n \times \R^n} |v|^2 f \dv \dx + \int_{\T^n \times \R^n}  f | \log f| \dv \dx,
\end{split}
\end{equation*}
such that
\begin{equation}\label{e:Gauss}
f(t,x,v) \geq b e^{-a |v|^2} , \qquad \forall (t,x,v) \in \T^n \times\R^n \times [T/2,T).
\end{equation}
\end{proposition}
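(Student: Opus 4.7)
The plan is to follow the standard spread-of-positivity program for hypoelliptic kinetic equations (in the spirit of \cite{HST2020,GIMV2019,IMS2020}), with the twist that the constants must depend only on the entropy/energy functional $\oH$ and the drift bound $\oW$. Rewriting \eqref{e:FPA1} as
$$(\p_t + v\cdot\n_x)f - \st_\rho \D_v f + \st_\rho(v-\ave{u}_\rho)\cdot\n_v f = n\,\st_\rho\, f,$$
the principal part is a Kolmogorov-type operator with variable but uniformly bounded diffusion $0 \leq \st_\rho \leq \oS$, bounded zeroth-order coefficient, and first-order drift whose ``alignment'' component is controlled by $\oW$ while the ``$v$-drift'' component is harmlessly affine in $v$. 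On such operators one has well-established De Giorgi/weak Harnack theory which, together with the explicit Gaussian kernel of the constant-coefficient Kolmogorov flow, drives the spread.

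\textbf{Main steps.} First, from $\oH$ alone, a standard concentration-compactness argument produces a cylinder
$$Q^\sharp = [t^\sharp-\tau_0,t^\sharp]\times B_{r_0}(x^\sharp)\times B_{\rho_0}(v^\sharp),\qquad t^\sharp\in[T/3,T/2],$$
on which $\int_{Q^\sharp} f \geq m_0 > 0$, with $\tau_0,r_0,\rho_0,m_0$ depending only on $\oH$: the entropy bound prevents $f$ from being uniformly tiny on a large set, while the energy bound confines significant mass to a bounded $v$-ball. Second, in this cylinder one must verify a non-degeneracy $\st_\rho \geq \sigma_0>0$; for this we pick $x^\sharp$ inside a region where the density has a positive ball-thickness (again forced by $\oH$) and invoke \eqref{e:stlow} to bound $\st_\rho$ from below by a model-dependent function of that thickness. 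Third, on $Q^\sharp$ the equation is uniformly hypoelliptic with drift bounded by $\oW+|v|$, and the kinetic weak Harnack inequality promotes the mass bound to a pointwise one:
$$f(t^\natural,x,v) \geq m_1 \qquad \text{on } B_{r_1}(x^\sharp)\times B_{\rho_1}(v^\sharp),$$
for some $t^\natural$ slightly after $t^\sharp$. Fourth, starting from this pointwise bound one iteratively applies the Duhamel-type comparison against the Kolmogorov fundamental solution: over a short time $\tau$, convolution with the Kolmogorov kernel turns an indicator-type floor into a Gaussian floor in $v$ centered at the initial $v^\sharp$ (with widened $x$-ball of radius $\sim\tau$). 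Chaining finitely many such spreads (diffusion in $v$, transport in $x$) and using periodicity of $\T^n$ to sweep out the entire $x$-variable, one produces a global bound $f(t,x,v)\geq b\,e^{-a|v|^2}$ on $[T/2,T)\times\T^n\times\R^n$ with $a,b$ depending only on $T$, $\oH$, $\oW$, and the parameters of $\cM$.

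\textbf{Main obstacle.} The sole serious difficulty is that the diffusion $\st_\rho$ is not uniformly elliptic: it can vanish wherever the flock is absent, which in principle forbids any Harnack-type improvement. The resolution — and the reason why $\oH$ and $\oW$ together suffice — is that the entropy-energy bound automatically produces a ``non-vacuum island'' (a point $x^\sharp$ with $\Th(\rho,\cdot)\gtrsim \sigma_0$ in a neighborhood) whose thickness is quantitatively controlled by $\oH$ and the model's lower-bound function $\st$. Once the island is identified, Harnack and the Kolmogorov spread take over and propagate positivity first through the $v$-fiber above it, then along characteristics across the torus; no non-degeneracy is ever needed outside the original island, since the spread is carried out against the \emph{constant-coefficient} Kolmogorov kernel obtained by freezing $\st_\rho$ to $\sigma_0$ on the island and using non-negativity of the principal operator elsewhere. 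Tracking constants carefully through this scheme yields parameters $a,b$ depending only on the claimed quantities.
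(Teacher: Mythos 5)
Your overall architecture — locate a thick island from $\oH$, verify non-degeneracy there via \eqref{e:stlow}, promote a mass bound to a pointwise bound by a weak Harnack inequality, then spread across $\T^n$ and gain the Gaussian tail — is the same four-step scheme the paper follows. The paper's Step 1 is essentially your first two steps (but note it also needs the time derivative of $\orho_r$ along the continuity equation to show the island persists for an admissible time $T_1$, not just that it exists at $t=0$); Steps 2--3 are your Harnack step, executed through the Guerand--Imbert weak Harnack inequality together with a careful $L\log L$ versus $L^p$ interpolation (needed because only entropy is controlled), and then iterated along a specific chain of cylinders $z_l \circ rQ_{\w_0}$, of length $N \sim \jap{v}^2$, which is what produces the exact $e^{-a|v|^2}$ tail. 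All of this is broadly consistent with your outline.

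The gap is in your spread-across-the-torus mechanism. You propose to carry the spread by Duhamel comparison against the constant-coefficient Kolmogorov kernel $K_{\sigma_0}$, freezing $\st_\rho = \sigma_0$ on the island and ``using non-negativity of the principal operator elsewhere.'' But $K_{\sigma_0}$ is a subsolution of $\p_t+v\cdot\n_x - \st_\rho\D_v$ only when $(\sigma_0 - \st_\rho)\D_v K_{\sigma_0} \leq 0$, and $\D_v K_{\sigma_0}$ changes sign (negative near the peak, positive on the tails), so neither the choice $\sigma_0 = \inf\st_\rho$ nor $\sigma_0 = \sup\st_\rho$ gives a one-sided comparison once $\st_\rho$ varies. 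Outside the island you have no lower bound on $\st_\rho$ at all, so the kernel comparison does not go through there. The paper circumvents this precisely by abandoning kernel comparison for the $x$-spread and instead building a concave-in-$(x,v)$ barrier $\chi = -At + \d\bigl(1 - |x-tv|^2/r^2 - |v|^2/R^2\bigr)$, as in \cite{AZ2021}: because $\D_v\chi \leq 0$, the diffusion term $\st_\rho\D_v\chi$ can only help, and the subsolution inequality is enforced by choosing $A$ larger than quantities that depend only on \emph{upper} bounds $\oS, \oW$. This is a purely transport-driven spread that needs no ellipticity outside the island. After it reaches every $x_0\in\T^n$, the density is now bounded below everywhere, ellipticity is global, and the Harnack $v$-spread can be re-run at each $x_0$ to finish. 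If you replace your Kolmogorov-kernel step by a barrier of this type, your outline becomes a correct proof.
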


Central to our proof will be the weak Harnack inequality proved in \cite{GI2021}. To state it we need to introduce some notation.  

We will be looking at solutions on kinetic cylinders defined by, for $z_0 = (t_0,x_0,v_0) \in \R \times \R^n \times \R^n$,
\[ 
Q_r(z_0) = \{z:  - r^2 < t - t_0 \leq 0,\ |x- x_0 - (t-t_0)v_0 | <r^3,\ |v-v_0| < r \}.
\]
One can define the Lie-group action on triplets $z$ by
\[
z_0 \circ z = (t_0+t, x_0 + x + tv_0, v_0+v).
\]
Then 
\[
z^{-1} = (- t, -x +t v, -v).
\]
And we define the kinetic multiplication by a scalar as
\[
r z = (r^2 t, r^3 x, r v).
\]
The cylinders $Q_r(z_0)$ can then be considered as the shift and rescaling of the $0$-centered cylinder $Q_r = Q_r(0)$
\[
Q_r(z_0) = z_0 \circ  Q_r.
\]
And by scaling, $rQ_1 = Q_r$.

We consider super-solutions to the following general Fokker-Planck equation
\begin{equation}\label{e:FPsuper}
\p_t f + v \cdot \n_x f \geq \n_v \cdot (\rmA \n_v f) + \rmB \cdot \n_v f.
\end{equation}
The equation has natural scaling invariance. If 
\[
f_{r,z_0}(z) = f( z_0 \circ rz),
\]
then $f_{r,z_0}$ satisfies
\begin{equation}\label{e:FPrescale}
\p_t f_{r,z_0}+ v \cdot \n_x f_{r,z_0} \geq \n_v \cdot(\rmA_{r,z_0} \n_v f_{r,z_0}) + \rmB_{r,z_0} \cdot \n_v f_{r,z_0},
\end{equation}
where 
\begin{equation}\label{e:ABrescale}
\rmA_{r,z_0} (z) = \rmA( z_0 \circ rz), \quad \rmB_{r,z_0} (z) = r \rmB ( z_0 \circ rz).
\end{equation}
Thus, the following rule applies:
\begin{claim} If $f$ solves \eqref{e:FPsuper} on $Q_{r'}(z')$, then $f_{r,z_0}$ solves the rescaled equation \eqref{e:FPrescale} on $Q_{r'/r}((z_0^{-1} \circ z')/r) $. Furthermore, if $A,B$ satisfy 
\begin{equation}\label{e:ABhypo}
\l \I \leq \rmA \leq \L \I, \qquad | \rmB| \leq \L, \quad z\in Q_{r'}(z'),
\end{equation}
for some $\l, \L >0$, then the new coefficients  $\rmA_{r,z_0}, \rmB_{r,z_0}$ satisfy the same bounds on $Q_{r'/r}((z_0^{-1} \circ z')/r)$, provided $r<1$.
\end{claim}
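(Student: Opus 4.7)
The plan is to verify both assertions by direct chain-rule computation, exploiting two elementary properties of the Lie-group structure on kinetic triples which both follow from a two-line inspection of the definitions: (i) associativity of $\circ$, i.e.\ $(z_1\circ z_2)\circ z_3 = z_1\circ(z_2\circ z_3)$, and (ii) compatibility between the group action and the scaling $r\cdot$, namely $r(z\circ z') = (rz)\circ (rz')$. I will also use $rQ_{r'/r}=Q_{r'}$, which is immediate from the definition of the cylinders.

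For the rescaled equation I will differentiate $f_{r,z_0}(z)=f(z_0\circ rz)$ using the explicit coordinates $z_0\circ rz=(t_0+r^2 t,\,x_0+r^3 x+r^2 t v_0,\,v_0+rv)$. A direct chain rule gives
\[
\partial_t f_{r,z_0}+v\cdot\nabla_x f_{r,z_0} = r^2\bigl[\partial_t f+(v_0+rv)\cdot\nabla_x f\bigr](z_0\circ rz),
\]
so the reassembly of the new velocity $v_0+rv$ happens automatically. On the diffusive side, $\nabla_v f_{r,z_0}(z)=r(\nabla_v f)(z_0\circ rz)$, whence
\[
\nabla_v\cdot(\rmA_{r,z_0}\nabla_v f_{r,z_0})(z)=r^2\bigl[\nabla_v\cdot(\rmA\nabla_v f)\bigr](z_0\circ rz),
\]
and the extra factor built into the definition $\rmB_{r,z_0}=r\rmB$ is exactly what produces the matching $r^2$ in $\rmB_{r,z_0}\cdot\nabla_v f_{r,z_0}$. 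Dividing \eqref{e:FPsuper} evaluated at $z_0\circ rz$ by $r^2$ then yields \eqref{e:FPrescale}.

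For the domain, property (i) gives $z_0\circ rz\in z'\circ Q_{r'}$ iff $rz\in (z_0^{-1}\circ z')\circ Q_{r'}$. Writing $Q_{r'}=rQ_{r'/r}$ and applying (ii) elementwise on the resulting set, $(z_0^{-1}\circ z')\circ Q_{r'}=r\bigl[((z_0^{-1}\circ z')/r)\circ Q_{r'/r}\bigr]$, so $z\in Q_{r'/r}((z_0^{-1}\circ z')/r)$, as claimed. The coefficient bounds transfer immediately: $\l\I\le\rmA_{r,z_0}(z)=\rmA(z_0\circ rz)\le\L\I$ since the argument lies in $Q_{r'}(z')$, while $|\rmB_{r,z_0}(z)|=r|\rmB(z_0\circ rz)|\le r\L\le\L$, which is the one place where the restriction $r<1$ is invoked. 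There is no genuine technical obstacle; the argument is purely algebraic, and the only subtlety to keep track of is that the emergence of $v_0+rv$ in the transport term and the factor $r$ attached to $\rmB$ in the definition of $\rmB_{r,z_0}$ are precisely what makes the kinetic Fokker--Planck operator covariant under this kinetic rescaling.
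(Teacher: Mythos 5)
Your verification is correct and matches the paper's implicit argument: the paper states the scaling covariance of the kinetic Fokker--Planck operator (the formulas for $\rmA_{r,z_0}$, $\rmB_{r,z_0}$ in \eqref{e:ABrescale}) and then asserts the Claim without further detail, leaving the chain-rule computation, the Lie-group bookkeeping for the domain, and the $r\le 1$ step for the drift bound to the reader. Your computation supplies exactly this: the emergence of $v_0+rv$ in the transport term, the matching $r^2$ factors on both sides, the use of associativity together with $r(z\circ z')=(rz)\circ(rz')$ and $rQ_{r'/r}=Q_{r'}$ to identify the transformed cylinder, and the observation that $|\rmB_{r,z_0}|=r|\rmB|\le r\Lambda\le\Lambda$ requires $r\le 1$. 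Nothing to add.
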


For $\w >0$ small let us introduce the following two non-overlapping cylinders
\[
Q^+_\w =Q_\w(1,0,0), \qquad Q^-_\w = Q_\w (\w^2, 0,0).
\] 
That is, $Q^+_\w $ is attached to the top of the basic cylinder $Q_1(1,0,0)$, and $Q^-_\w $ is lying on the bottom.

\begin{theorem}[Weak Harnack inequality, \cite{GI2021}] \label{t:wHarnack} There are constants $R_0,\w_0, p,C_0>0$ which depend only on $\l,L,n$ satisfying the following property. If $f$ is a super-solution to \eqref{e:FPsuper}, in the cylinder $Q^{R_0} = [0,1] \times \{ |x| <R_0\} \times \{|v| < R_0\}$  with $A,B$ satisfying \eqref{e:ABhypo}, then whenever 
\[
\left( \int_{Q_{\w_0}^-} f^p \dz  \right)^{1/p} \leq C_0 \inf _{Q_{\w_0}^+} f.
\]
\end{theorem}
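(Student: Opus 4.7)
The plan is to follow the Kruzhkov-style approach to weak Harnack inequalities for hypoelliptic kinetic equations, as developed by Guerand--Imbert and inspired by the earlier De Giorgi-type method of Golse--Imbert--Mouhot--Vasseur. The main technical challenge is that the diffusion in \eqref{e:FPsuper} acts only on the velocity variable, so expansion of positivity in $x$ and $t$ must be obtained indirectly, through the transport term together with the Lie-group scaling structure recorded in \eqref{e:FPrescale}--\eqref{e:ABrescale}.

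First I would reduce to a non-negative super-solution and prove an $L^\infty$-to-measure estimate: there exists $\delta_0 \in (0,1)$ and $\mu_0 > 0$, depending only on $\lambda,\Lambda,n$, such that if $f \geq 0$ solves \eqref{e:FPsuper} on $Q^{R_0}$ with $\inf_{Q^+_{\omega_0}} f \leq 1$, then
\[
\bigl|\{ f > \mu_0 \} \cap Q^-_{\omega_0}\bigr| \leq (1-\delta_0)\,|Q^-_{\omega_0}|.
\]
This step is the heart of the argument. It is proved by testing the super-solution equation against a suitable truncation $(K - f)_+$ (or $-\log f$), deriving a Caccioppoli-type energy estimate that controls $\nabla_v (K-f)_+$ in $L^2$, and then invoking a kinetic Poincaré or averaging lemma to transfer this $v$-regularity into joint $(t,x,v)$-regularity on a slightly smaller cylinder. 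The transport term $v\cdot\nabla_x$, combined with the fact that $\omega_0$ is small, then propagates the estimate from a neighborhood of $(1,0,0)$ backward in time to $Q^-_{\omega_0}$, which is exactly where the Lie-group structure and the unit travel time between $Q^+_{\omega_0}$ and $Q^-_{\omega_0}$ get used.

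Second, I would iterate this measure decrement by rescaling with the invariance $z \mapsto z_0 \circ r z$: applying the first step to $f/\mu_0$ on a rescaled cylinder shows that the set $\{f > \mu_0^k\}$ has density at most $(1-\delta_0)^k$ on $Q^-_{\omega_0}$. A discrete layer-cake summation then yields the good-$\lambda$ / $L^p$ bound
\[
\int_{Q^-_{\omega_0}} f^p \dz \leq C_0^p \Bigl(\inf_{Q^+_{\omega_0}} f\Bigr)^{p}
\]
for $p$ small enough that $\mu_0^{-p}(1-\delta_0) < 1$. In the parabolic setting one can use a purely pointwise Krylov--Safonov / ink-spots argument, but here one needs the kinetic variant of the ink-spots lemma (as proved in GIMV2019 or GI2021) which is compatible with the non-commutative geometry of $z_0 \circ rz$; choosing $R_0$ sufficiently large ensures every rescaled sub-cylinder remains inside $Q^{R_0}$ throughout the iteration.

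The main obstacle is the first step, in particular establishing enough hypoelliptic regularity from the degenerate diffusion to close the Caccioppoli--Poincaré loop. The standard workaround is to either (i) use a velocity-averaging lemma to gain fractional regularity in $(t,x)$ from the $v$-energy estimate and the transport structure of \eqref{e:FPsuper}, or (ii) follow Guerand--Imbert and use a more direct trajectory argument that exploits the explicit form of the Kolmogorov fundamental solution. I would pursue route (ii), since the constants $R_0, \omega_0, p, C_0$ can then be tracked explicitly in terms of $\lambda, \Lambda, n$, which is what the statement requires.
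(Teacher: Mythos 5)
This theorem is not proved in the paper at all: it is imported verbatim from the cited reference \cite{GI2021} (Guerand--Imbert) and used as a black box in the proof of Proposition~\ref{p:Gauss}. So there is no in-paper argument to compare yours against; what you have written is an outline of the proof of the external result itself.

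Judged on its own terms, your outline correctly identifies the architecture of the Guerand--Imbert argument: a De Giorgi-type measure-decrement lemma (if $\inf_{Q^+_{\w_0}} f \leq 1$ then $\{f>\mu_0\}$ cannot fill a definite fraction of $Q^-_{\w_0}$), obtained via a Caccioppoli estimate for a truncation or for $-\log f$ together with a kinetic Poincar\'e inequality, followed by iteration under the Lie-group scaling $z\mapsto z_0\circ rz$ and a layer-cake summation to produce the $L^p$ bound for small $p$. This is indeed the route taken in \cite{GI2021}, and your remark that $R_0$ must be large enough to contain all rescaled sub-cylinders is the right geometric point. However, as a proof the proposal is not self-contained: the two steps where all the analytic difficulty lives --- (i) the kinetic Poincar\'e/intermediate-value lemma that converts $v$-only energy control plus transport into a genuine measure estimate on a space-time cylinder, and (ii) the ink-spots covering lemma adapted to the non-commutative cylinders $z_0\circ Q_r$ --- are invoked by name rather than proved, and each is itself a substantial theorem. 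If your task is to justify Theorem~\ref{t:wHarnack} within this paper, the honest resolution is the one the author takes: cite \cite{GI2021} (and \cite{GIMV2019} for the ink-spots machinery) rather than reprove it; if your task is to actually prove it, the proposal is a roadmap, not a proof, and the gaps are precisely at steps (i) and (ii).
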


We now turn to proving \prop{p:Gauss}. The proof goes in several steps.  First, we rewrite the FPA \eqref{e:FPA2} as follows
\begin{equation}\label{}
\p_t f + v\cdot \n_x f  =  \st_\rho \D_v f+  (\st_\rho v + \rmw_\rho) \cdot \n_v f + n \st_\rho f.
\end{equation}
Since the last term is non-negative, $f$ is a super-solution to the truncated equation
\begin{equation}\label{e:FPAsuper}
\p_t f + v\cdot \n_x f  \geq  \st_\rho \D_v f+  (\st_\rho v + \rmw_\rho) \cdot \n_v f,
\end{equation}
which has the structure of \eqref{e:FPsuper}. We will be mindful of the fact, however, that $\rmB = \st_\rho v + \rmw_\rho$ is unbounded in $v$, and this will be taken into account in due course.

In the subsequent course of the proof the various constants denoted 
\[
c_0,c_1, \ldots ,\ \w_0, \w_1, \ldots ,\ T_0,T_1,\ldots,\ r_0,r_1,\ldots, \ R_0,R_1, \ldots
\]
 depend only on the parameters of the model, $T$, and $\oW, \oH$.  We call such constants {\em admissible}.
 
\smallskip
\noindent{\sc Step 1: choosing domain of ellipticity.}  Let us recall from \eqref{e:stlow} that the strength function is supported from below by a measure of ball-thickness at scale $r_0$ across the domain $\O = \T^n$.  Since $\T^n$ has finite volume by a covering argument, there exists a constant $c_1$ depending on $n$,  and  there exists $x'\in \T^n$ such that 
\begin{equation}\label{e:r04}
\orho_{r_0/4}(0, x') \geq c_1,
\end{equation}
Consequently,
\[
\orho_{r_0}(0, x) \geq c_2 , \quad \forall x\in B_{r_0/2}(x').
\]
Next, notice that $\orho_{r_1}$ satisfies the following equation
\[
\p_t\orho_{r_0}= - \n_x \cdot ( u \rho)_{\chi_{r_0}} = - (u \rho)_{\n \chi_{r_0} } \geq - c_3 \|u\|_{L^2(\rho)} \geq - c_3 \oH.
\]
So, for any $t > 0$, and any $ x\in B_{r_0/2}(x')$, we have
\[
\orho_{r_0}(t, x)  \geq c_2 -  t c_3 \oH.
\]
This implies that on the time interval $t\in [0 , T_1]$, where $T_1 = \left( \frac{ c_2 }{ 2 c_3 \oH } \right) \wedge T$, we have
\[
\orho_{r_0}(t, x)  \geq c_2 / 2,\quad \forall x\in B_{r_0/2}(x'),
\]
and in view of \eqref{e:stlow}, 
\[
\st_{\rho}(t,x) \geq \st(c_2/2) = \l, \quad \forall (t,x) \in [0 , T_1] \times B_{r_0/2}(x').
\]

Let us come back to \eqref{e:r04} and extract a thick subdomain for $f$ not too far in $v$-direction. We have
\[
\begin{split}
c_1  & \leq \int_{ B_{r_0/4}(x') } \int_{\R^n} f(0, x,v) \dv \dx\\
&  =  \int_{ B_{r_0/4}(x') } \int_{|v|<R} f(0, x,v) \dv \dx +  \int_{ B_{r_0/4}(x') } \int_{|v| \geq R} f(0, x,v) \dv \dx\\
& \leq \int_{ B_{r_0/4}(x') } \int_{|v|<R} f(0, x,v) \dv \dx + \frac{\oH}{R^2}.
\end{split}
\] 
So, for $R= R_1 = 1 \vee \sqrt{\frac{2\oH}{c_1}}$ we have
\begin{equation}\label{e:fmass}
\int_{ B_{r_0/4}(x') } \int_{|v|<R_1} f(0, x,v) \dv \dx  \geq \frac{c_1}{2} = c_3.
\end{equation}

Let us define our domain of ellipticity $\overline{\O} = [0,T_1] \times B_{r_0/2}(x') \times B_{2R_1}(0)$, where we have
\begin{equation}\label{e:ellbounds}
\l \leq \st_\rho \leq \L,\qquad  |\st_\rho v| + |\rmw_\rho| \leq \L,
\end{equation}
where $\L = \max\{ \oS, 2\oS R_1+ \overline{W}\}$, and $\oS$ is the common bound on the strength function by (ev4). The constants $\l,\L$ determine $R_0,\w_0, p,C_0>0$ from \thm{t:wHarnack}, which depend only on $\l,\L,n$, so they are admissible.  

\smallskip
\noindent{\sc Step 2: finding the initial plateau.}

We want to find a center of inflation $(0, x_0,v_0)$ in such a way that the point $(x_0,v_0)$ lies within the interior subdomain $B_{r_0/4}(x') \times B_{R_1}(0)$ and a small $\w$-cylinder around it has a substantial presence of $f$. That cylinder will be blown into $Q_{\w_0}^-$ resulting in $f$ having a substantial $L^p$-mass in it.  At the same time the domain of ellipticity $\oO$ will be blown to engulf the needed wide cylinder $Q^{R_0}$ to fulfill the assumptions of \thm{t:wHarnack}. The theorem then applies to obtain an admissible lower bound on $f$ at a later time. 

Thanks to \eqref{e:fmass} by the standard covering argument, for any small $\w$ one can find a point $(x_0,v_0)\in B_{r_0/4}(x') \times B_{R_1}(0)$ such that 
\begin{equation}\label{e:inw}
\int_{ B_{\w^3}(x_0) \times B_{\w}(v_0)} f(0, x,v) \dv \dx  \geq  c_3 c_4 |B_{\w^3}(x_0) \times B_{\w}(v_0)| = c_5 \w^{4n},
\end{equation}
We will choose $\w$  later. Let us prove now that the initial weight of $f$ in a cylinder as in \eqref{e:inw} stretches in time on the natural scale $\w^2$.

\begin{lemma}\label{}
Suppose initially
\[
\int_{ B_{\w^3}(x_0) \times B_{\w}(v_0)} f(0, x,v) \dv \dx  \geq c_5 \w^{4n}.
\]
Then
\begin{equation}\label{e:fwc6}
\int_{ Q_{2 \w}(4 \w^2, x_0,v_0) } f(z) \dz \geq c_6 \w^{8n+2}.
\end{equation}
\end{lemma}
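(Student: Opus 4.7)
The plan is to exploit the fact that on the ellipticity region $\oO$ both the drift $\st_\rho(v-\ave{u}_\rho)$ and the diffusion $\st_\rho$ are uniformly bounded by $\L$, so that on the short time scale $t\sim\w^2$ the Fokker--Planck--Alignment dynamics is a controlled perturbation of free transport. I would realize $f$ as the law of the stochastic characteristic process
\[
\dd X_t = V_t\dt,\qquad \dd V_t=-\st_\rho(t,X_t)(V_t-\ave{u}_\rho(t,X_t))\dt+\sqrt{2\st_\rho(t,X_t)}\,\dB_t,
\]
with initial law $f_0$, and track how this process transports the initial mass concentration.

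Using standard second-moment estimates for the process stopped at the exit time of $\oO$, one obtains for each $t\le 4\w^2$ and each initial point $(\tilde x,\tilde v)\in B_{\w^3}(x_0)\times B_\w(v_0)$ the bounds
\[
\E|V_t-\tilde v|^2 \lesssim \L t+\L^2 t^2 \lesssim \w^2,\qquad \E|X_t-\tilde x-t\tilde v|^2 \lesssim t^3+\L^2 t^4 \lesssim \w^6.
\]
By Chebyshev, a universal positive fraction of sample paths satisfies both $|V_t-\tilde v|\le C\w$ and $|X_t-\tilde x-t\tilde v|\le C\w^3$, and for suitable constants these conditions force $(X_t,V_t)$ into the kinetic cylinder $Q_{2\w}(4\w^2,x_0,v_0)$ on the time slice $t\in(2\w^2,4\w^2]$ (along the cylinder's characteristic center-line). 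Pushing this forward through the hypothesised initial mass lower bound and integrating in $t$ yields
\[
\int_{Q_{2\w}(4\w^2,x_0,v_0)} f\,\dd z \gtrsim c_5\w^{4n}\cdot\w^2,
\]
which is comfortably stronger than the claimed $c_6\w^{8n+2}$ (the latter is a convenient crude power on the right-hand side sufficient for the Harnack application in the next step).

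The main technical obstacle is controlling the stopping time: the moment estimates rely on the ellipticity bounds $\|\rmA\|_\infty,\|\rmB\|_\infty\le \L$, which hold only inside $\oO$. Since the characteristic started in $B_{\w^3}(x_0)\times B_\w(v_0)\subset B_{r_0/4}(x')\times B_{R_1}(0)$ wanders by at most $t|v_0|+O(\w^3)\lesssim \w^2 R_1$ in position and $O(\w)$ in velocity over $[0,4\w^2]$, choosing $\w\le\w_1$ admissibly small (depending only on $r_0$ and $R_1$) keeps the paths well inside $\oO$ with overwhelming probability, so the stopped and unstopped processes agree and the above moment estimates are genuine bounds on the FPA characteristics. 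An entirely equivalent deterministic alternative is to test the equation against a smooth cutoff $\phi\ge 0$ supported in $Q_{2\w}(4\w^2,x_0,v_0)$ and equal to one on a concentric half-sized cylinder, using $|\p_t\phi|\lesssim\w^{-2}$, $|\n_v\phi|\lesssim\w^{-1}$, $|\D_v\phi|\lesssim\w^{-2}$ together with the ellipticity bounds to apply Gr\"onwall to $t\mapsto\int f\phi\,\dx\,\dv$.
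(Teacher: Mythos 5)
Your probabilistic approach is in the right spirit --- track how the initial mass concentration is carried by the stochastic characteristics --- and if pushed through carefully it would in fact give the \emph{sharper} power $\w^{4n+2}$. But the Chebyshev step as written has a real gap. You need a \emph{lower} bound on $\P\bigl((X_t,V_t)\in Q_{2\w}(4\w^2,x_0,v_0)\text{-slice}\bigr)$, and since $|\tilde v - v_0|<\w$ this forces $|V_t-\tilde v|<\w$. Second-moment Chebyshev gives $\P(|V_t-\tilde v|\geq\w)\lesssim \L t/\w^2$, which is small only for $t\lesssim\w^2/\L$, \emph{not} on your chosen window $(2\w^2,4\w^2]$: there the velocity spread $\sim\sqrt{\L t}\sim\sqrt{\L}\,\w$ exceeds the target radius whenever $\L=\max\{\oS,\,2\oS R_1+\oW\}$ is not very small, which it generically is not. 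Getting a universal positive mass fraction into a ball \emph{below} the diffusion scale is a Gaussian-lower-bound fact about the kinetic transition density, not a Chebyshev fact; and that is essentially the class of estimate this section is working to \emph{produce} via the weak Harnack inequality and Harnack chains, so it would be circular to input it here. Your ``deterministic alternative'' has the same difficulty in different clothing: the term $\int f\,\p_t\phi$ in $\ddt\int f\phi$ lives on the boundary annulus of the cylinder, so with no $L^\infty$ control on $f$ the only available bound is $\w^{-2}\int f=\w^{-2}$, yielding a \emph{linear} decay $\ddt\int f\phi\geq -c\w^{-2}$, not a Gr\"onwall inequality.

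That linear decay is precisely the paper's mechanism and is the source of the crude exponent $8n+2$. Writing $g(t)=\int f(t,x_0+x+tv_0,v+v_0)\,h_\w(x,v)\,\dd v\dd x$ with $h_\w$ a kinetic cutoff, one shows directly from the equation and the ellipticity bounds that $\ddt g\geq -c_7\w^{-2}$; combined with $g(0)\geq c_5\w^{4n}$ this keeps $g$ positive only on the tiny self-generated lifetime $t\lesssim \w^{4n+2}$, and integrating $g$ over that window gives $\gtrsim\w^{4n}\cdot\w^{4n+2}=\w^{8n+2}$. Your intuition that $\w^{4n+2}$ is the truth is correct, and restricting your Chebyshev argument to the early window $t\lesssim\w^2/\L$ would indeed produce $\sim\w^{4n+2}/\L$ --- but that is not the window you used, and the paper deliberately settles for the weaker power because an admissible positive lower bound is all the subsequent Harnack iteration needs.
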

\begin{proof}
Let us fix a smooth cut-off function $h(r) = \one_{r<1}$ and $h(r) = 0$ for $r\geq 2$, bounded by $1$.  Let
\[
h_\w(x,v) = h(x/ \w^3) h(v/\w).
\]
Define the kinetic convolution
\[
g(t) = \int_{\domain} f(t,x_0+x+ t v_0, v+v_0) h_\w (x,v) \dv \dx
\]
Then initially, $g(0) \geq c_5 \w^{4n}$. Let us compute the derivative
\[
\ddt g = \int_{\domain} (\p_t f + v_0 \cdot \n_x f) h_\w \dv \dx = \int_{\domain} (\p_t f + (v_0 + v) \cdot \n_x f) h_\w  \dv \dx -  \int_{\domain} v  \cdot \n_x f  h_\w\dv \dx.
\]
Note that 
\[
\left| \int_{\domain} v  \cdot \n_x f  h_\w\dv \dx \right| = \left| \int_{\domain}  f v  \cdot \n_x h_\w\dv \dx \right| \leq c \| \n h \|_\infty  \w^{-2}.
\]
So,
\begin{equation}\label{}
\begin{split}
\ddt g & \geq \int_{\domain} [ \st_\rho\D_v f + (\st_\rho (v+v_0) + w_\rho)\cdot \n_v f ]h_\w\dv \dx - c \w^{-2} \\
& =  \int_{\domain} f [ \st_\rho  \D_v h_\w - n  \st_\rho  h_\w - (\st_\rho (v+v_0) + w_\rho) \cdot \n_v h_\w ]\dv \dx - c \w^{-2} \\
& \geq - \L \w^{-2}  - n \L - (\L 2R_1 + \oW) \w^{-1} - c \w^{-2} \geq - c_7  \w^{-2}.
\end{split}
\end{equation}
Hence,
\[
g(t) \geq c_5 \w^{4n} - c_7  \w^{-2} t.
\]
Integrating again we obtain
\[
\int_0^t g(s) \ds \geq c_5 \w^{4n} t - c_7  \w^{-2} \frac{ t^2}{2}
\]
Setting $t = c_5 \w^{4n+2}{c_7} \ll \w^2$ we obtain
\[
 \int_0^{\w^2} g(s) \ds \geq  c_6 \w^{8n +2}.
\]
Noting that $h_\w$ is supported on $B_{8\w^3} \times B_{2\w}$ and bounded by $1$, we obtain the desired result. 

\end{proof}

We now make a transformation 
\begin{equation}\label{e:trans0}
z\to z_0 \circ r z, \quad z_0 = (0,x_0,v_0), \quad r = \frac{\w_1}{\w_0}, \quad \w_1 = 2 \w.
\end{equation}
This insures that, whatever $\w$ is, the box $Q_{\w_0}^-$ gets transformed into our $Q_{2 \w}(4 \w^2, x_0,v_0)$. We now choose $\w$ such that the ambient domain $Q^{R_0}$ transforms inside our the lower half of the domain of ellipticity. Given that $x_0$ is within $B_{r_0/4}(x')$ and $v_0 \in B_{R_1}(0)$ it suffices to choose
\[
\w_1 = \w_0 \min\left\{  \sqrt{\frac{T_1}{4}} , \sqrt{\frac{r_0}{16 R_1}}, \frac{R_1}{2 R_0}\right\}.
\]
Under so defined rescaling the we have 
\[
Q_{\w_0}^- \to Q_{\w_1}( \w_1^2, x_0,v_0), \quad Q_{\w_0}^+ \to Q_{\w_1}( (\w_1/\w_0)^2, x_0 + v_0 (\w_1/\w_0)^2, v_0),
\]
and moreover $2 Q^{R_0} = [0,2] \times B_{2R_0} \times B_{2R_0}$ gets transformed inside the domain of ellipticity
\[
 2 Q^{R_0}  \hookrightarrow [0,T/2] \times B_{r_1/2}(x') \times B_{2R_1}(0) \ss \oO.
 \]
At the same time, the ellipticity bounds \eqref{e:ellbounds} remain the same (and in fact improve on the drift). Observe also that all the parameters involved so far are admissible.

In order to apply the weak Harnack inequality, we need to essentially interpolate the $L^1$-information on $f$ expressed by \eqref{e:fwc6} between $L^p$ and $L \log L$ in order to extract information on the $L^p$ level.

Since $\w_1= 2\w$ has been picked already and it is dependent only on the parameters of the model, and $T, \overline{W}, \overline{H}$, let us write  \eqref{e:fwc6} as follows
\begin{equation}\label{e:fwc8}
\int_{ Q_{\w_1}(\w_1^2, x_0,v_0) } f(z) \dz \geq c_8.
\end{equation}
We have
\[
\int_{Q_{\w_1}(\w_1^2, x_0,v_0)}  f | \log f| \dz \leq \w_1^2 \oH.
\]
Thus,
\[
\int_0^\infty | \{ f \geq \a\} \cap Q_{\w_1}(\w_1^2, x_0,v_0) | (|\log \a| + \sign(\a-1) ) \da \leq \w_1^2 \oH.
\]
Consequently, for $\a_0 >1$,
\[
\int_{\a_0}^\infty | \{ f \geq \a\}\cap Q_{\w_1}(\w_1^2, x_0,v_0)  | \da \leq \frac{1}{ \log \a_0} \w_1^2 \oH.
\]
Choosing $\a_0 = \exp\{ \frac{4\w_1^2 \oH}{c_8} \}$ we have
\[
\int_{\a_0}^\infty | \{ f \geq \a\}\cap Q_{\w_1}(\w_1^2, x_0,v_0)  | \da \leq \frac{c_8}{4}.
\]
At the same time for $\a_1 =  \frac{c_8}{4 \w_1^{2+ 4n}}$ we have
\[
\int_0^{\a_1} | \{ f \geq \a\}\cap Q_{\w_1}(\w_1^2, x_0,v_0)  | \da \leq \a_1 |Q_{\w_1}(\w_1^2, x_0,v_0)  | = \a_1 \w_1^{2+ 4n} = \frac{c_8}{4}.
\]
Consequently,
\[
\int_{\a_1}^{\a_0} | \{ f \geq \a\}\cap Q_{\w_1}(\w_1^2, x_0,v_0)  | \da \geq \frac{c_8}{4}.
\]
This implies that  
\[
 | \{ f \geq \a_1 \}\cap Q_{\w_1}(\w_1^2, x_0,v_0)  | \geq \frac{c_8}{4(\a_0 - \a_1)}: = c_9.
 \]

Note again that all the constants depend only on  the parameters of the model, and $T, \overline{W}, \overline{H}$.

 Using transformation \eqref{e:trans0} which has Jacobian $(\w_1/\w_0)^{4n+2}$ we obtain
 \[
  | \{  f_{r,z_0} \geq \a_1 \}\cap Q_{\w_0}^- | \geq (\w_1/\w_0)^{4n+2} c_9: = c_{10}.
  \]
Hence, by the Chebychev inequality,
\[
\left( \int_{Q_{\w_0}^-} f_{r,z_0}^p \dz  \right)^{1/p} \geq \left( \a_1^p |\{  f_{r,z_0} \geq \a_1 \}\cap Q_{\w_0}^- | \right)^{1/p} \geq \a_1 c_{10}^{1/p} : = c_{11}.
\]
\thm{t:wHarnack} applies to show that 
\[
 \inf _{Q_{\w_0}^+}  f_{r,z_0} \geq c_{12},
\]
 or in terms of the original function $f$,
 \[
  \inf _{Q_{\w_1}( (\w_1/\w_0)^2 , x_0 + v_0 (\w_1/\w_0)^2, v_0) }  f \geq c_{12}.
\]

\smallskip
\noindent{\sc Step 3: Harnack chains.}  It will be more efficient, in terms of notation, to remain in the new system of coordinates defined by \eqref{e:trans0}. Since the transformation involves only admissible parameters, any bound on $f$ obtained in the new system will translate into an admissible bound in the old system.

So, in the new coordinates, $f$ satisfies
\begin{equation}\label{e:superabw}
\p_t f + v\cdot \n_x f  \geq  \st(t,x) \D_v f+  (\rmb(t,x) v + \mathrm{w}(t,x)) \cdot \n_v f.
\end{equation}
We make another time-shift to make notation even simpler $z \to (1,0,0) \circ z$. Thus,
we have 
\begin{equation}\label{e:adriftell}
\l \leq \st \leq \L, \quad  |\rmb v| + |\rmw | \leq \L,
\end{equation}
on the new wide domain of ellipticity
\[
\oO = [-1,1] \times B_{2R_0} \times B_{2R_0}.
\]
Notice that the new quantities $\oW,\oH$ turn into another pair of admissible constants. 

On the previous step we established a bound, which in the new coordinate frame reads 
\begin{equation}\label{e:fQ0}
 \inf _{Q_{\w_0}}  f \geq c_{0},
\end{equation}
where $c_0$ is admissible. The goal now is to show that by the time $t = 1$ the solution spreads across the entire torus $\O$. 

It will also be more accommodating  to use \thm{t:wHarnack} where $Q_{\w_0}^-$ is replaced by $Q_{\w_0}$. This is clearly achievable by a slight rescaling and a shift which is allowed by our enlarged ellipticity domain $\oO$.  Also, notice that be rescaling the theorem also applies to the cylinders $Q_{\w_0/2}^\pm$ with $C_0$ being replaced with an absolute multiple of $C_0$, also admissible.

Now let us proceed with the construction of Harnack chains. The original idea goes back to  \cite{AS1967a,AS1967b} and has seen more recent adaptations for Fokker-Planck equation in \cite{AZ2021}. Our construction will be similar in spirit to the latter, although quite different in two technical aspects. First, we produce a chain that reaches the targeted velocity field in fewer steps, thus achieving the exact Gaussian tail on the first run. And second,  the estimates along the chain will take into account the loss of information that comes with the use of a weaker version of the Harnack inequality. 
\begin{lemma}\label{l:1stexp} Let \eqref{e:fQ0} hold. 
There exist admissible constants $a,b>0$ such that 
\begin{equation}\label{ }
f(t,x,v) \geq b e^{-a|v|^2},
\end{equation}
for all $(\w_0/4)^2 \leq t \leq (\w_0/2)^2$, $|x| \leq (\w_0 / 2)^3$ and all $v\in \R^n$.
\end{lemma}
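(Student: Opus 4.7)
The plan is to establish the Gaussian lower bound by iterating the weak Harnack inequality of \thm{t:wHarnack} along a finite chain of rescaled kinetic cylinders that connects the plateau $Q_{\w_0}$, on which $f \geq c_0$ by \eqref{e:fQ0}, to a cylinder around an arbitrary target point $(t,x,v)$ in the prescribed time window. The length of the chain is dictated by the kinetic distance to the target: moving from velocity $0$ to velocity $v$ across a bounded time interval requires $N \sim |v|^2$ steps, which, combined with the multiplicative loss $1/C_0$ per Harnack application, produces precisely a Gaussian factor $C_0^{-|v|^2} = e^{-|v|^2 \log C_0}$.

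For $|v|$ below an admissible threshold $M$, the chain is composed of a bounded number of cylinders of fixed scale $\sim \w_0$ contained in the ellipticity domain $\oO$, delivering a uniform lower bound. For $|v| \geq M$, the principal difficulty is that the drift $\rmB = \rmb v + \rmw$ grows linearly in $|v|$ and exceeds the ambient bound $\L$. We repair this by rescaling each Harnack step: for a cylinder of scale $r$ centered at $z_k = (t_k, x_k, v_k)$ with $|v_k| \leq |v|$, the rescaled coefficients
\begin{equation*}
\rmA_{r,z_k}(z) = \rmA(z_k \circ rz), \qquad \rmB_{r,z_k}(z) = r\, \rmB(z_k \circ rz)
\end{equation*}
satisfy the hypotheses of \thm{t:wHarnack} with the original $\l,\L$ provided $r(1+|v|) \leq c_*$ for some admissible $c_*$. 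Choosing $r = c_*\w_0/(1+|v|)$, each step shifts velocity by $\sim r$, so a chain of $N \sim |v|^2/(c_*\w_0)$ steps along the direction $v/|v|$ reaches the target velocity; the total elapsed time $Nr^2 \sim c_*\w_0^2$ lands inside the window $[(\w_0/4)^2,(\w_0/2)^2]$ for $c_*$ suitably chosen.

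Accumulating the $N$ Harnack losses yields
\begin{equation*}
f(t,x,v) \geq c_0\, C_0^{-N} \geq b\, e^{-a|v|^2},
\end{equation*}
with admissible $a \sim \log C_0/(c_*\w_0)$ and $b \sim c_0$, giving the claimed Gaussian tail. The main technical obstacle lies in two aspects. First, each image $z_k \circ r Q^{R_0}$ must remain inside the time-space ellipticity region $[-1,1]\times B_{2R_0}(x')$ so that the rescaled ellipticity constants $\l,\L$ actually govern the Harnack step; this forces careful control of the cumulative position drift $\sum_k r^2|v_k| \lesssim c_*\w_0$, which can be made as small as needed by shrinking $c_*$. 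Second, matching the target position $x$ exactly is handled using the translation invariance of the equation in $x$ and the periodic structure of $\T^n$: one first establishes the Gaussian bound at some $(\tilde{t},\tilde{x},v)$ reached by the chain, then translates to the prescribed $x$ by exploiting the periodicity of $f(t,\cdot,v)$.
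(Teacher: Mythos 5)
Your core Harnack-chain strategy matches the paper's: iterate the weak Harnack inequality of \thm{t:wHarnack} along a chain of rescaled kinetic cylinders from the plateau to the target velocity, with step size $r\sim 1/(1+|v|)$ and $N\sim |v|^2$ steps, accumulating a factor $C_0^{-N}$ that produces the Gaussian tail. Your scaling analysis — why $r(1+|v|)$ must be bounded to keep the rescaled drift within $\L$, why $Nr^2$ is then an admissible constant, why the cumulative spatial drift stays controlled — is correct and is exactly what the paper verifies.

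The genuine gap is in your final position-matching step. You claim to "translate to the prescribed $x$ by exploiting the periodicity of $f(t,\cdot,v)$" and "the translation invariance of the equation in $x$." Neither is available. The rescaled equation \eqref{e:superabw} has $x$-dependent coefficients $\st(t,x)$, $\rmb(t,x)$, $\rmw(t,x)$, so it is \emph{not} translation invariant in $x$; and while $f$ is periodic, periodicity only permits translation by multiples of $L_0$, which cannot match an arbitrary target $x\in B_{(\w_0/2)^3}$. Moreover, the ellipticity bounds \eqref{e:adriftell} hold only on the sub-domain $\oO$, not on the whole torus, so moving the chain by an arbitrary shift could exit the region where the Harnack theorem applies. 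The correct argument — and the one the paper uses — is that since the target $(t,x)$ lies in the very small window $(\w_0/4)^2\le t\le(\w_0/2)^2$, $|x|\le(\w_0/2)^3$ and the chain endpoint $(t_N,x_N)$ is calibrated to land in essentially the same small window, the shift $(t-t_N,x-x_N,0)$ is tiny. One then runs the whole iteration on $g(z)=f\bigl((t-t_N,x-x_N,0)\circ z\bigr)$, which still satisfies the equation on a slightly shrunk domain of ellipticity (e.g.\ $[-0.9,0.9]\times B_{1.9R_0}\times B_{1.9R_0}$), and the plateau \eqref{e:fQ0} propagates to a sub-cylinder $Q_{\w_0/2}\subset(t_N-t,x_N-x,0)\circ Q_{\w_0}$. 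No invariance of the equation is needed; only that the perturbation is small enough to stay inside the ambient cylinder. As written, your proof does not close.
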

\begin{proof}
Let us fix an $N\in \N$ to be determined later, and let $r = \frac{2|v|}{\w_0 N}$. Denote $\hat{v} = \frac{v}{|v|}$. Let us define the sequence of points
\[
z_0 = 0, \quad z_{l+1} = z_l \circ r (1,0, \frac{\w_0}{2} \hat{v} ), \qquad l=0, \ldots, N-1.
\]
In other words,
\[
z_l = (l r^2, l r^3 \frac{\w_0}{2} \hat{v}, l r \frac{\w_0}{2} \hat{v} ): = (t_l, x_l, v_l).
\]
Notice that the end-point
\[
z_N = (\frac{4 |v|^2}{N \w_0^2} ,\frac{4 |v|^3}{N^2 \w_0^2} \hat{v}, v)
\]
reaches the target velocity vector $v$ by cost of a small shift in time-space variable.  

Also notice the following embeddings of cylinders
\begin{equation}\label{e:embedQ0}
z_1 \circ r Q_{\w_0/2} \ss r Q_{\w_0}(1,0,0),
\end{equation}
which follows by direct verification.  Applying $z_l \circ$ from the left we obtain
\begin{equation}\label{e:embedQl}
z_{l+1} \circ r Q_{\w_0/2} \ss z_l \circ r Q_{\w_0}(1,0,0).
\end{equation}

We will be looking at the rescalings
\[
f_l(z) = f(z_l \circ r z).
\]
All these functions can be thought as defined on the same domain $\oO$ with the same ellipticity constants. 
Indeed, if $z = (s,y,w) \in \oO$, then
\[
z_l \circ r z = (t_l + r^2s,\ x_l + r^3 y + r^2 s v_l,\  r w + v_l).
\]
We have
\[
|t_l + r^2s| \leq \frac{4|v|^2}{\w^2_0 N} + \frac{4|v|^2}{\w^2_0 N^2} \leq  \frac{8 |v|^2}{\w^2_0 N}  <1
\]
provided $N \geq  \frac{8 |v|^2}{\w^2_0}$. Next,
\[
|x_l + r^3 y + r^2 s v_l| \leq \frac{4|v|^3}{\w^2_0 N^2 }  + 2R_0 \frac{8|v|^3}{\w^3_0 N^3 } +  \frac{4|v|^3}{\w^2_0 N^2 } \leq (2R_0+1) \frac{16|v|^3}{\w^3_0 N^2 } \leq 2R_0,
\]
provided $N^2 \geq \frac{2R_0+1}{2R_0} \frac{16|v|^3}{\w^3_0 }$.  This puts the $(t,x)$ pair into the box $[-1,1]\times B_{2R_0}$, and so, the ellipticity for $\st(z_l \circ r z)$ enjoys the same bounds \eqref{e:adriftell}. As to the drift term which gets rescaled to 
\[
\rmB_l = \rmb(t_l + r^2s,\ x_l + r^3 y + r^2 s v_l) r (r w + v_l)+ r\rmw(t_l + r^2s,\ x_l + r^3 y + r^2 s v_l) 
\]
notice that 
\[
|r w + v_l| \leq  2R_0 r+ |v| 
\]
so,
\[
|\rmB_l | \leq  \L r (2R_0 r+ |v| ) + r \L < \L,
\]
provided $N \geq c_1 |v|^2$, where $c_1$ is admissible.

The conclusion is that all functions $f_l$ if considered defined on $\oO$ satisfy the equation with the same ellipticity constants provided 
\[
N \geq c_2 \jap{v}^2,
\]
where $c_2$ is admissible.

Let us now start iteration of the weak Harnack inequality.  We have for $f_0(z) = f(r z)$ from the assumption \eqref{e:fQ0}, and since $r Q_{\w_0} \ss Q_{\w_0}$,
\[
\left( \int_{Q_{\w_0}} f^p_0(z) \dz  \right)^{1/p} \geq c_0 \w_0^{\frac{4n +2}{p}}.
\]
According to \thm{t:wHarnack},
\[
\inf_{Q_{\w_0}(1,0,0)} f_0 \geq C_0^{-1} c_0 (\w_0/2)^{\frac{4n +2}{p}},
\]
(we artificially divided  $\w_0$ by $2$ in order to fit with the general pattern later). According to \eqref{e:embedQ0} we have in particular
\[
\inf_{Q_{\w_0/2}} f_1 \geq C_0^{-1} c_0 (\w_0/2)^{\frac{4n +2}{p}}.
\]
Then by restricting to the cylinder $Q_{\w_0/2}$,
\[
\left( \int_{Q_{\w_0}} f^p_1(z) \dz  \right)^{1/p} \geq C_0^{-1} c_0 (\w_0/2)^{2 \frac{4n +2}{p}}.
\]
According to \thm{t:wHarnack},
\[
\inf_{Q_{\w_0}(1,0,0)} f_1 \geq C_0^{-2} c_0 (\w_0/2)^{2 \frac{4n +2}{p}}.
\]
We proceed in the same manner using \eqref{e:embedQl} and applying repeatedly \thm{t:wHarnack}.

On the last step we achieve the following bound
\[
\inf_{Q_{\w_0/2}} f_N \geq  c_0 [ (\w_0/2)^\frac{4n +2}{p} C_0^{-1} ]^N = c_0 c_3^N.
\]
In particular at the origin we obtain
\[
f_N(0,0,0) = f(z_N) = f(t_N,x_N,v) \geq c_0 c_3^N.
\]

Let us now fix a pair $(t,x)$ such that $(\w_0/4)^2 \leq t \leq (\w_0/2)^2$, $|x| \leq (\w_0 / 2)^3$ and consider the function
\[
g(z) = f((t-t_N,x-x_N,0) \circ z).
\]
This function satisfies the equation on the slightly shrunk domain of ellipticity $[-0.9, 0.9] \times B_{1.9R_0} \times B_{1.9R_0}$. At the same time
\[
\inf_{(t_N - t, x_N- x ,0) \circ Q_{\w_0}} g \geq c_0
\]
The same holds on the subcylinder  $Q_{\w_0/2} \ss (t_N - t, x_N- x ,0) \circ Q_{\w_0}$ (the inclusion follows from the assumptions on $(t,x)$). Applying the above proof to the new function $g$, we obtain
\[
g(t_N,x_N,v) = f(t,x,v) \geq c_0 c_4^N.
\]

Picking the minimal $N$ under which the above holds we find $N = c_5 \jap{v}^2$. Hence,
\[
 f(t,x,v) \geq c_0 e^{N \ln c_4} = c_0 e^{ - c_5 |\ln c_4| \jap{v}^2} ,
 \]
 and the proof is over.
\end{proof}

\smallskip
\noindent{\sc Step 4: spread of positivity in $x$.} Let us fix any point of time $(\w_0/4)^2 \leq t \leq (\w_0/2)^2$ and reset it to $0$. So, at the moment, we have 
\begin{equation}\label{e:fab}
f(0,x,v) \geq b e^{-a|v|^2},
\end{equation}
for all $|x| \leq (\w_0 / 2)^3: = r_3$ and all $v\in \R^n$. Note that $r_3$ is admissible.

The next goal is to establish spread of positivity across the entire periodic domain. Recall that after the rescaling \eqref{e:trans0} our distribution $f$ is defined on $L_0 \T^n \times \R^n$, where $L_0$ is an admissible new period. Also, recall that since the scaling parameter $r<1$, we still have global bounds on the coefficients 
\begin{equation}\label{}
|\st| \leq \oS,\quad |\rmb| \leq \oS, \quad |\rmw| \leq \oW.
\end{equation}

First, let us adopt a barrier construction from \cite{AZ2021} to our situation.

\begin{lemma}\label{l:barrier}
Suppose 
\[
f(0,x,v) \geq \d \one_{\{ |x| < r, \ |v| <R\}}.
\]
Then for any $\t>0$ we have
\[
f(t,x,v) \geq \frac{\d}{4}\one_{\{ |x - t v | < r/2, \ |v| <R/2\}}.
\]
for 
\begin{equation}\label{e:tlimit}
t \leq t_1: = \min\left\{ 1, \t, \frac{1}{8} \frac{1}{n  \oS ( \frac{1}{r^2} + \frac{1}{R^2} ) + (\oS R + \oW) (  \frac{ \t}{r} + \frac{1}{R} )} \right\}.
\end{equation}
\end{lemma}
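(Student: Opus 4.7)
The plan is to construct a smooth subsolution (barrier) $g(t,x,v)$ to the linear kinetic Fokker--Planck inequality satisfied by $f$ in \eqref{e:superabw}, namely
\[
\mathcal{L}g \; :=\; \partial_t g + v\cdot\nabla_x g - \mathsf{s}\Delta_v g - (\mathsf{b} v +\mathsf{w})\cdot\nabla_v g \;\le\; 0,
\]
with $g(0,\cdot,\cdot)\le\delta\,\one_{\{|x|<r,\,|v|<R\}}\le f(0,\cdot,\cdot)$ pointwise and such that at every $t\le t_1$ one still has $g(t,x,v)\ge\delta/4$ whenever $|x-tv|<r/2$ and $|v|<R/2$. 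Since $f$ is classical by \thm{t:lwp} and $\mathcal{L}$ is a linear degenerate parabolic operator with bounded coefficients on the compact domain under consideration, the parabolic comparison principle yields $g\le f$ on $[0,t_1]$, which is exactly the statement of the lemma.

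The natural ansatz is to work in the free-transport variable $y=x-tv$ to kill the transport term, and take
\[
 g(t,x,v) \;=\; \tfrac{\delta}{4}\,\eta(t)\,\phi\!\Big(\tfrac{x-tv}{r}\Big)\,\psi\!\Big(\tfrac{v}{R}\Big),
\]
where $\phi,\psi\in C^\infty_c(\R^n)$ are fixed radial bumps, identically $1$ on $B_{1/2}$, supported in $B_1$, with $0\le\phi,\psi\le 1$, and $\eta\in C^1([0,t_1])$ decreases from $\eta(0)=1$ to $\eta(t_1)=1/4$. The support and normalization of $\phi,\psi$ ensure $g(0,\cdot,\cdot)\le\delta\,\one_{\{|x|<r,|v|<R\}}$, while on the target set $\{|x-tv|<r/2,\ |v|<R/2\}$ one has $\phi=\psi=1$ so that $g\ge\delta\eta(t_1)/4=\delta/16$; the normalization constants can be tuned to recover the $\delta/4$ bound stated in the lemma. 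A direct chain rule gives $\partial_t g+v\cdot\nabla_x g=\frac{\delta}{4}\eta'(t)\phi\psi$, so the subsolution inequality reduces to
\[
 \tfrac{\delta}{4}\eta'(t)\phi\psi \;\le\; \mathsf{s}\,\Delta_v g + (\mathsf{b} v+\mathsf{w})\cdot\nabla_v g.
\]

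The main computation is an explicit pointwise estimate of the right-hand side. Differentiating through $y=(x-tv)/r$ and $w=v/R$,
\begin{align*}
 \nabla_v g &= \tfrac{\delta}{4}\eta(t)\Big[-\tfrac{t}{r}\nabla\phi(y)\psi(w)+\tfrac{1}{R}\phi(y)\nabla\psi(w)\Big],\\
 \Delta_v g &= \tfrac{\delta}{4}\eta(t)\Big[\tfrac{t^{2}}{r^{2}}\Delta\phi(y)\psi(w)-\tfrac{2t}{rR}\nabla\phi(y)\!\cdot\!\nabla\psi(w)+\tfrac{1}{R^{2}}\phi(y)\Delta\psi(w)\Big].
\end{align*}
On $\operatorname{supp}g$ one has $|v|\le R$, hence $|\mathsf{b} v+\mathsf{w}|\le\oS R+\oW$; using $|\mathsf{s}|\le\oS$ and the constraints $t\le1$, $t\le\tau$ to dominate $t^{2}/r^{2}\le 1/r^{2}$, $t/(rR)\le \tau/(rR)$, $t/r\le\tau/r$, the right-hand side is bounded below (and above) in absolute value by a constant multiple (depending only on $n$ and on fixed norms of $D^{2}\phi$, $D^{2}\psi$) of
\[
 M\,\tfrac{\delta}{4}\eta(t), \qquad M \;:=\; c\Big[n\oS\Big(\tfrac{1}{r^{2}}+\tfrac{1}{R^{2}}\Big)+(\oS R+\oW)\Big(\tfrac{\tau}{r}+\tfrac{1}{R}\Big)\Big],
\]
which is exactly the reciprocal of the quantity appearing in \eqref{e:tlimit}. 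Picking $\eta(t)=1-\tfrac{3t}{4t_1}$ (so that $\eta'\equiv -3/(4t_1)$ and $\eta\ge 1/4$ on $[0,t_1]$), the inequality $\tfrac{\delta}{4}\eta'\phi\psi\le -M\delta\eta/4$ is then enforced as soon as $t_1\le 1/(8M)$, which is precisely the third factor in the minimum defining $t_1$ in \eqref{e:tlimit}.

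The main technical obstacle is the familiar one with cut-off barriers: the right-hand side above is bounded by $M\delta\eta$, not by $M\delta\eta\,\phi\psi$, so in the transition region where $\phi\psi\to 0$ the subsolution inequality is the most delicate to verify. This is handled by choosing $\phi,\psi$ that vanish to sufficiently high order at the boundary of $B_1$ (e.g.\,$\phi(y)=\exp(-1/(1-|y|^{2}))$): then $\nabla\phi,\Delta\phi$ vanish to the same order as $\phi$ near $|y|=1$, so the actual value of $\mathsf{s}\Delta_v g+(\mathsf{b} v+\mathsf{w})\cdot\nabla_v g$ is comparable to $\delta\eta\,\phi\psi$ in this region and the inequality remains valid. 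Outside $\operatorname{supp}g$, all terms vanish. Piecing these local verifications together produces a genuine classical subsolution on the whole space-time domain, and the comparison principle concludes the proof.
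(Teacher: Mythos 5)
Your overall strategy is the right one (barrier plus comparison principle), but the concrete barrier you choose does not work, and the defect is exactly the one you flag as the ``main technical obstacle'' — your proposed repair of it is false.

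You write that choosing a cut-off vanishing to infinite order, e.g.\ $\phi(y)=\exp(-1/(1-|y|^2))$, makes $\nabla\phi$ and $\Delta\phi$ vanish \emph{to the same order as $\phi$}, so that the drift and diffusion terms stay comparable to $\delta\eta\,\phi\psi$ near $|y|=1$. This is not the case. Setting $s=1-|y|^2$, one has $\phi=e^{-1/s}$ and $|\nabla\phi|\sim s^{-2}\phi$, $|\Delta\phi|\sim s^{-4}\phi$, so the ratios $|\nabla\phi|/\phi$ and $|\Delta\phi|/\phi$ \emph{blow up} as $s\to 0^+$. In fact no smooth compactly supported barrier can have this property: if $\phi\ge 0$, $\phi>0$ in the interior, $\phi=0$ on $\partial(\supp\phi)$, and $|\nabla\phi|\le C\phi$ near the boundary, then integrating inward from a boundary point and applying Gr\"onwall forces $\phi\equiv 0$, a contradiction. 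Consequently, near $\partial(\supp\phi)$ the first-order term $-(\rmb v+\rmw)\cdot\tfrac{t}{r}\nabla\phi\,\psi$ and the cross term $-\tfrac{2t}{rR}\nabla\phi\cdot\nabla\psi$ are of strictly larger magnitude than $\phi\psi$ and of uncontrollable sign, so the subsolution inequality $\tfrac{\delta}{4}\eta'\phi\psi\le \st\Delta_v g+(\rmb v+\rmw)\cdot\nabla_v g$ genuinely fails there: the left side vanishes like $\phi\psi$ while the right side can be negative of order $s^{-2}\phi\psi$. No choice of cut-off fixes this.

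The paper avoids the issue entirely by using a \emph{signed} quadratic barrier
\[
\chi = -At + \d\Bigl(1 - \tfrac{|x-tv|^2}{r^2} - \tfrac{|v|^2}{R^2}\Bigr),
\]
which changes sign across an ellipsoid rather than being cut off. This has three crucial consequences that a nonnegative bump cannot replicate: (a) the comparison region is the set $\{\chi>0\}$, and on its lateral boundary $\{\chi=0\}$ the comparison $\chi\le f$ is trivial, so no delicate boundary-layer estimate is needed; (b) the transport identity $\p_t\chi+v\cdot\n_x\chi=-A$ is a negative \emph{constant}, not a product of bumps; (c) $\n_v^2\chi$ is constant and $\n_v\chi$ is bounded on the ellipsoid, so $\st\D_v\chi+(\rmb v+\rmw)\cdot\n_v\chi$ is bounded from below by $-A$ with $A$ chosen precisely so the subsolution inequality holds, and the arithmetic then produces exactly the time threshold \eqref{e:tlimit}. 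Your computation of the scalar $M$ is in the right spirit and correctly anticipates the form of the constant, but it needs to sit inside a barrier that is allowed to go negative; with a nonnegative cut-off the argument cannot close.
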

\begin{proof}
Let us fix $A>0$ to be determined later and consider the barrier function
\[
\chi = - At + \d  \left( 1 - \frac{|x-tv|^2}{r^2} - \frac{|v|^2}{R^2} \right)
\]
Note that $f(0,x,v) \geq \chi(0,x,v)$, and also for all $t>0$, $f(t,x,v) \geq \chi(t,x,v) = 0$, on the boundary $1 = \frac{|x-tv|^2}{r^2} + \frac{|v|^2}{R^2}$.  So, we have $f\geq \chi$ on the parabolic boundary in question. We now need to show that $\chi$ is a sub-solution inside the ellipsoid $1 \geq \frac{|x-tv|^2}{r^2} + \frac{|v|^2}{R^2}$. By the classical comparison principle it implies $f \geq \chi$ on the same region.

So, differentiating we obtain
\begin{equation}\label{}
\begin{split}
\chi_t + v \cdot \n_x \chi & = -A,\\
|\st \D_v \chi |&=  \st \d \left|  \frac{2t^2n}{r^2} + \frac{2n}{R^2} \right| \leq 2n \d \oS \left( \frac{1}{r^2} + \frac{1}{R^2} \right),\\
|(\rmb v + \rmw) \cdot \n_v \chi |& \leq  \d (\oS R + \oW)  \left( 2 t \frac{|x-t v|}{r^2} + \frac{2 |v|}{R^2} \right) \leq \d (\oS R + \oW) \left(  \frac{2 \t}{r} + \frac{2}{R} \right) 
\end{split}
\end{equation}
Let
\[
A = 2n \d \oS \left( \frac{1}{r^2} + \frac{1}{R^2} \right) +\d (\oS R + \oW) \left(  \frac{2 \t}{r} + \frac{2}{R} \right) .
\]
In view of the bounds above this implies that $\chi$ is a sub-solution. 

It remains to observe that as long as $t \leq \frac{\d}{4A}$ and  $|x - t v | < r/2, \ |v| <R/2$, we have
$\chi \geq \frac{\d}{4}$.
\end{proof}

We will be applying \lem{l:barrier} for $r=r_3$. Let us pick $\t$ and $R$ now. Our aim is to make sure that the time limitation giving by the bound \eqref{e:tlimit} is long enough that every corner of the torus $L_0\T^n$ is reachable in that time with velocities from the ball $|v| \leq R/4$. In other words,  we ask for $t_1 R \geq 4 L_0$, or 
\begin{align}
\t R & \geq 4 L_0, \quad R \geq 4L_0 \label{e:tRL}\\ 
R & \geq 32 L_0 \left[n  \oS \left( \frac{1}{r_3^2} + \frac{1}{R^2} \right) + (\oS R + \oW) \left(  \frac{ \t}{r_3} + \frac{1}{R} \right)  \right]. \label{e:R32}
\end{align}
So, first we fix $\t = \frac{r_3}{2\oS} $. This ensures that the leading order term in \eqref{e:R32} has coefficient $\frac12$. Next, we fix the minimal $R = R_1$ satisfying both \eqref{e:tRL}  and \eqref{e:R32}. Note that $R_1$ is admissible.

Setting $\d = b e^{-a R_1^2}$, which is also admissible, in view of \eqref{e:fab} we have
\[
f(0,x,v) \geq \d \one_{\{ |x| < r_1, \ |v| <R_1\}}.
\]
Then 
\[
f(t,x,v) \geq \frac{\d}{4}\one_{\{ |x - t v | < r_3/2, \ |v| <R_1/2\}}, \quad t \leq t_1.
\]

Fix any $x_0 \in L_0\T^n$. Then at time $t_1$ there exists $|v_0| <  R_1/4$ such that $t_1 v_0 = x_0$. Notice that if 
\[
|x - x_0| < r_3/4, \quad |v - v_0 | < r_3 / 4,
\]
then $|x- t_1 v| = |x -x_0 + t_1(v_0 - v) | < r_3 / 2$, and certainly, $|v|<R_1/2$. So,
\[
f(t_1,x,v) \geq \frac{\d}{4} \one_{\{ |x - x_0 | < r_3/4, \ |v - v_0| <r_3/4\}}.
\]

Let us recall that we have started from any point of time $(\w_0/4)^2 \leq t \leq (\w_0/2)^2$, and obtained a time $t_1$ independent of $t$.  So, we found that for any $x_0 \in L_0 \T^n$ there exists a $v_0$, $|v_0|<R_1 / 4$, which depends only on $x_0$ such that 
\begin{equation}\label{e:platx0}
f(t,x,v) \geq \frac{\d}{4} \one_{\{ (\w_0/4)^2 < t- t_1 < (\w_0/2)^2, \ |x - x_0 | < r_3/4, \ |v - v_0| <r_3/4\}}.
\end{equation}

In particular,
\[
\rho(t,x_0) = \int_{\R^n} f(t,x_0,v) \dv \geq  \int_{|v - v_0| <r_3/4} f(t,x_0,v) \dv \geq \l_1,
\]
where $\l_1$ is admissible, and $(\w_0/4)^2 \leq t- t_1 \leq (\w_0/2)^2$. So, for all such times, the density has a uniform lower bound $\l_1$. At the same time there exists an admissible $\L_1$ such that 
\[
\st(t,x) + |\rmb(t,x) v + \rmw(t,x)| \leq \L_1,
\]
for all $(t,x,v) \in [(\w_0/4)^2 +t_1, (\w_0/2)^2 +t_1] \times L_0 \T^n \times B_{4R_1} = \oO_1$.

This implies that we have another initial plateau \eqref{e:platx0}, but now around an arbitrary point $x_0\in L_0 \T^n$, and inside a large domain of ellipticity $\oO_1$.  Applying \lem{l:1stexp} to shifted and if necessary rescaled solution $f$, we find a time $t_2 < (\w_0/2)^2 +t_1$ and admissible $\w_1, a_1,b_1>0$ such that 
\[
f(t_2,x,v) \geq b_1 e^{-a_1|v|^2} \one_{|x-x_0| < \w_1}.
\]
The obtained admissible constants are independent of $x_0$ by virtue of the argument on Step 3. Thus,
\begin{equation}\label{e:expab2}
f(t_2,x,v) \geq b_1 e^{-a_1|v|^2}.
\end{equation}

Now, let us go back to Step 1 and recall that we started with time $0$ and found an admissible time $0<t_2< \frac12$ such that \eqref{e:expab2} holds.  Starting at any other initial time $1-t_2>t>0$, we find that \eqref{e:expab2} holds at $t+t_2$. This finishes the proof.

\subsection{Entropy and global well-posedness}\label{s:gwp} The main implication of  \prop{p:Gauss} can be expressed in terms of lower bound on the density.

\begin{corollary}\label{c:denslow}
For a given classical solution $f\in C_w ([0,T); H^k_l (\T^n))$ of \eqref{e:FPA1} on a  time interval $[0,T)$ there exists $\rho_-$ which depends only on the parameters of the model $\cM$, time $T$, and $\oW$, $\oH$ such that 
\[
\rho(t,x) \geq \rho_-, \qquad \forall (t,x) \in [T/2,T) \times \T^n.
\]
\end{corollary}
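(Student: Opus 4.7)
The plan is to derive the density lower bound as an immediate consequence of the pointwise Gaussian bound established in \prop{p:Gauss}. Since $\rho$ is the $v$-marginal of $f$, integrating the Gaussian bound in $v$ yields a positive constant depending only on the admissible parameters $a$ and $b$.

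More precisely, first I would apply \prop{p:Gauss} to obtain admissible constants $a,b>0$ (depending only on the parameters of $\cM$, $T$, $\oW$, and $\oH$) such that
\begin{equation*}
f(t,x,v)\geq b\, e^{-a|v|^2}, \qquad \forall (t,x,v)\in [T/2,T)\times \T^n \times \R^n.
\end{equation*}
Then I would simply integrate in $v$ to conclude
\begin{equation*}
\rho(t,x) = \int_{\R^n} f(t,x,v)\dv \geq b \int_{\R^n} e^{-a|v|^2}\dv = b\left(\frac{\pi}{a}\right)^{n/2} =: \rho_-.
\end{equation*}
Since $a$ and $b$ depend only on the parameters of $\cM$, on $T$, $\oW$, and $\oH$, so does $\rho_-$, which is what was claimed.

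There is essentially no obstacle here: the heavy lifting has been done in \prop{p:Gauss}, and the corollary is just the $v$-integrated statement of it. I would add one sentence noting that the resulting bound is uniform in $x\in \T^n$ and $t\in [T/2,T)$, exactly as stated.
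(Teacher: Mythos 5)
Your proposal is correct and is exactly the argument the paper intends: the corollary is stated as an immediate consequence of \prop{p:Gauss}, obtained by integrating the Gaussian lower bound $f\geq b e^{-a|v|^2}$ in $v$ to get $\rho\geq b(\pi/a)^{n/2}$ uniformly on $[T/2,T)\times\T^n$. Nothing is missing.
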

So, controlling $\oW$ and $\oH$ over any finite time interval prevents formation of vacuum, which by
\thm{t:lwp} implies global extension.  For a special class of our models  control over $\oW$ and $\oH$ can indeed be given a priori in terms of energy.  We start with $\oH$.  

First we recall that $\oH$ is controlled by the true entropy
\[
\cH = \frac12 \int_\domain |v|^2 f \dv \dx + \int_\domain  f \log f \dv \dx.
\]
Indeed, by the classical inequality, \cite{Lions1994,GJV2004}, there is an absolute constant $C>0$ such that 
\begin{equation}\label{e:flogf}
\int_\domain | f \log f | \dv\dx \leq \int_\domain f \log f \dv\dx + \frac14 \int_\domain |v|^2 f \dv\dx + C \leq \cH + C.
\end{equation}
So,
\begin{equation}\label{e:HH}
\oH \leq 2\cH + C.
\end{equation}
We also have control over the energy
\begin{equation}\label{e:EH}
\cE \leq  2\cH + C.
\end{equation}

\begin{lemma}\label{l:Hcontrol}
Suppose $\cM$ satisfies \eqref{e:unifL2L2}. Then $\oH $ is finite on any finite time interval. Moreover, if the model $\cM$ is conservative, then $\oH $ is globally bounded by initial data
\begin{equation}\label{e:Hunif}
\oH \leq 2\cH_0 + C.
\end{equation}
\end{lemma}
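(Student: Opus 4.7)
The proof hinges on the entropy-dissipation identity derived by multiplying \eqref{e:FPA1} by $\log f + \tfrac12 |v|^2$ and integrating by parts on $\T^n \times \R^n$: the transport term drops by periodicity, the Fokker--Planck contribution on both parts is computed directly, and the key velocity IBP $\int \st_\rho(v-\ave{u}_\rho)\cdot\n_v f\, \dv\dx = -n\int \st_\rho \rho\,\dx$ combines with the other pieces once I complete the square against the local Maxwellian centered at $\ave{u}_\rho$. The outcome is
\begin{equation*}
\ddt \cH = -\cD_F - \cA_1,\qquad \cD_F := \int_\domain \st_\rho f \,\bigl|\n_v \log f + (v - \ave{u}_\rho)\bigr|^2 \dv \dx,
\end{equation*}
with $\cA_1 = (u,\ave{u}_\rho)_{\k_\rho} - (\ave{u}_\rho,\ave{u}_\rho)_{\k_\rho}$ as in \eqref{e:EEE}. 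All subsequent work consists in bounding $\cA_1$ against the dissipation $\cD_F$.

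For the first (finite-time) claim, my plan is to test $\cD_F$ against the $v$-independent field $\phi(x) = \ave{u}_\rho(x)$: Cauchy--Schwartz in $L^2(\st_\rho f \dv\dx)$ produces $|\cA_1| \leq \cD_F^{1/2} \cE_2^{1/2} \leq \tfrac12 \cD_F + \tfrac12 \cE_2$. Jensen's inequality $|\ave{u}_\rho|^2 \leq \ave{|u|^2}_\rho$ together with assumption \eqref{e:unifL2L2} in the form \eqref{e:inftyuniform2} ($\|\st_\rho \ave{1}^*_\rho\|_\infty \leq C$) gives $\cE_2 \leq \int |u|^2 \st_\rho\ave{1}^*_\rho \rho \leq C\int|u|^2\rho \leq C\int|v|^2 f \leq C(\cH+1)$ via \eqref{e:EH}. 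Substituting yields $\ddt \cH \leq -\tfrac12 \cD_F + C(\cH+1)$, and Gronwall closes the estimate on every finite interval; by \eqref{e:HH} this transfers to $\oH$.

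The conservative case admits a sharper cancellation. Here $\bar u = \int u\rho\dx$ is time-invariant, so with $\d u = u - \bar u$ and $\tilde\cH = \cH - \tfrac12|\bar u|^2 = \int f\log f + \tfrac12 \int|v-\bar u|^2 f$, the analogous centered computation (using $\ave{u}_\rho - \bar u = \ave{\d u}_\rho$) gives
\begin{equation*}
\ddt \tilde\cH = -\cD_{F,\bar u} + (\ave{\d u}_\rho, \d u)_{\k_\rho},\qquad \cD_{F,\bar u} := \int_\domain \st_\rho f \,\bigl|\n_v \log f + (v - \bar u)\bigr|^2 \dv \dx.
\end{equation*}
For any $v$-independent test $\phi(x)$, Cauchy--Schwartz against $\cD_{F,\bar u}$ combined with the IBP identity $\int \st_\rho f \,\phi \cdot (\n_v \log f + (v-\bar u))\dv\dx = (\phi,\d u)_{\k_\rho}$ yields $|(\phi,\d u)_{\k_\rho}| \leq \|\phi\|_{L^2(\k_\rho)}\cD_{F,\bar u}^{1/2}$. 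Choosing $\phi = \d u$ first extracts the sharp bound $\|\d u\|^2_{L^2(\k_\rho)} \leq \cD_{F,\bar u}$; choosing $\phi = \ave{\d u}_\rho$ next, and using the $L^2(\k_\rho)$-contractivity of conservative models (\lem{l:conscontr}), gives $(\ave{\d u}_\rho,\d u)_{\k_\rho} \leq \|\d u\|_{L^2(\k_\rho)}\cD_{F,\bar u}^{1/2} \leq \cD_{F,\bar u}$. Hence $\ddt \tilde\cH \leq 0$, so $\tilde\cH(t) \leq \tilde\cH_0$, and restoring the conserved $\tfrac12|\bar u|^2$ gives $\cH(t) \leq \cH_0$; by \eqref{e:HH} this is exactly \eqref{e:Hunif}.

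The hard part is the exact cancellation in the conservative step: a single Young inequality on the cross term $(\ave{\d u}_\rho,\d u)_{\k_\rho}$ would leave a residual $\tfrac12 \cD_{F,\bar u}$ plus an $\cE$-remainder, which only closes Gronwall on finite intervals and is exactly what happens in the non-conservative argument. The two-step test-function chain—first recovering the \emph{sharp} bound $\|\d u\|^2_{L^2(\k_\rho)} \leq \cD_{F,\bar u}$ and then threading it through contractivity of $\ave{\cdot}_\rho$ in the \emph{same} $L^2(\k_\rho)$-norm that naturally weights the Fisher dissipation—is what makes $\cD_{F,\bar u}$ absorb the cross term entirely and produces the global-in-time bound.
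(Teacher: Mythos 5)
Your two centered entropy identities are both correct, and your treatment of the conservative case is a valid (if more elaborate) alternative to the paper's. But the finite-time argument has a genuine gap: it is run under a stronger hypothesis than the lemma actually grants.

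\textbf{The gap.} You complete the square at $\ave{u}_\rho$, obtaining $\ddt\cH = -\cD_F + \cE_2 - \cE_1$ with $\cD_F$ centered at $\ave{u}_\rho$, and then need to control $\cE_2 = \|\ave{u}_\rho\|^2_{L^2(\k_\rho)}$. You do this via Jensen plus the $L^\infty$ bound $\|\st_\rho\ave{1}_\rho^*\|_\infty\leq C$, which is condition \eqref{e:inftyuniform2}. But the lemma assumes only \eqref{e:unifL2L2}, i.e.\ uniform boundedness of $\st_\rho\ave{\cdot}_\rho$ on $L^2(\rho)$. The paper is explicit that \eqref{e:inftyuniform2} is a \emph{sufficient} ``simpler condition'' for \eqref{e:unifL2L2}, not an equivalent one, so you cannot read the hypothesis ``in the form'' \eqref{e:inftyuniform2}. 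Under \eqref{e:unifL2L2} alone you control $\|\st_\rho\ave{u}_\rho\|_{L^2(\rho)}$, but $\cE_2 = \int \st_\rho^{-1}|\st_\rho\ave{u}_\rho|^2\,\drho$ requires a lower bound on $\st_\rho$ to be comparable, and no such bound is available. The paper sidesteps this by completing the square at $u$ (not $\ave{u}_\rho$), giving \eqref{e:elaw1}: $\ddt\cH = -\int\st_\rho\frac{|\n_v f + (v-u)f|^2}{f} - \|u\|^2_{L^2(\k_\rho)} + (u,\ave{u}_\rho)_{\k_\rho}$. The residual term $(u,\ave{u}_\rho)_{\k_\rho} = \int u\cdot(\st_\rho\ave{u}_\rho)\,\drho$ pairs $u$ against $\st_\rho\ave{u}_\rho$ in $L^2(\rho)$, which is \emph{exactly} what \eqref{e:unifL2L2} bounds; then \eqref{e:EH} and Gr\"onwall finish. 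Your argument would close if you centered at $u$ instead of $\ave{u}_\rho$.

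\textbf{On the conservative case.} Your identity $\ddt\cH = -\cD_{F,\bar u} + (\ave{\d u}_\rho,\d u)_{\k_\rho}$ (centered at the conserved $\bar u$) checks out, and the two-step chain---testing the Fisher dissipation against $\d u$ to get the sharp $\|\d u\|^2_{L^2(\k_\rho)}\leq\cD_{F,\bar u}$, then using $L^2(\k_\rho)$-contractivity of conservative models on $\ave{\d u}_\rho$---does give $\ddt\cH\leq 0$ with no residual. This is genuinely different in shape from the paper's proof, which stays with the identity \eqref{e:elaw1} centered at $u$ and kills $-\|u\|^2_{L^2(\k_\rho)} + (u,\ave{u}_\rho)_{\k_\rho}\leq 0$ in one stroke by contractivity (\lem{l:conscontr}), yielding the stronger $\ddt\cH\leq -\int\st_\rho\frac{|\n_v f + (v-u)f|^2}{f}$. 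Your route buys nothing extra for the lemma's conclusion and costs more machinery, but it is correct. Centering at $u$ throughout would unify the two claims under the minimal hypothesis and is the cleaner path.
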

\begin{proof}
We have directly from the equation
\begin{equation}\label{e:Hraw}
\ddt \cH = -  \int_{\domain} \st_\rho \left[  \frac{|\n_v f|^2}{f} + 2  ( v - \ave{u}_\rho) \cdot \n_v f +  v \cdot (v - \ave{u}_\rho) f \right] \dv  \dx.
\end{equation}
Using the identities
\[
 \int_{\domain}   \st_\rho \ave{u}_\rho \cdot \n_v f \dx \dv = 0, \qquad  \int_{\domain}   \st_\rho v \cdot \ave{u}_\rho f \dx \dv = (u,\ave{u}_\rho)_{\k_\rho},
 \]
and  replace $\ave{u}_\rho$ with $u$ in the second term and compute the third as follows
\begin{equation*}\label{}
\begin{split}
\int_{\domain}  \st_\rho v \cdot (v - \ave{u}_\rho) f  \dv   \dx  & = \int_{\domain}  \st_\rho  v \cdot (v - u)  f  \dv   \dx  +  \int_{\domain}  \st_\rho  v \cdot (u - \ave{u}_\rho)  f  \dv   \dx \\
& =  \int_{\domain}  \st_\rho  |v - u|^2  f  \dv   \dx  + \|u\|_{L^2(\k_\rho)}^2 - (u,\ave{u}_\rho)_{\k_\rho}.
\end{split}
\end{equation*}
We obtain
\begin{equation}\label{e:elaw1}
\ddt \cH = -  \int_{\domain}   \st_\rho \frac{|\n_v f + (v - u) f |^2}{f}  \dv\dx  - \|u\|_{L^2(\k_\rho)}^2 + (u,\ave{u}_\rho)_{\k_\rho}.
\end{equation}
We can see that $\cH \leq \cH_0$ for conservative models and \eqref{e:Hunif} follows.

Under  \eqref{e:unifL2L2} we use \eqref{e:EH} to conclude that $\dot{\cH} \leq C_1 \cH + C_2$. The conclusion follows. 
\end{proof}

Immediately from \lem{l:Hcontrol} we obtain control over $\oW$ as well under  $L^2\to L^\infty$ boundedness on the averages.

\begin{lemma}\label{l:HWcontrol}
Suppose $\cM$ satisfies  \eqref{e:unifL2Linfty}. Then $\oH, \oW$ are finite on any finite time interval. Moreover, if the model $\cM$ is conservative, then $\oH, \oW$ are uniformly bounded by a constant depending only on the initial condition.
\end{lemma}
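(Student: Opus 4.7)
The strategy is simply to combine \lem{l:Hcontrol} with the stronger mapping assumption \eqref{e:unifL2Linfty}, using that the macroscopic velocity is always controlled by the kinetic energy. First I would observe that since $\rho\in\cP(\T^n)$ is a probability measure, the embedding $L^\infty(\rho)\hookrightarrow L^2(\rho)$ is continuous with norm one, so \eqref{e:unifL2Linfty} is strictly stronger than \eqref{e:unifL2L2}. Consequently, the previous \lem{l:Hcontrol} applies and delivers finiteness of the full entropy $\cH$ on every finite time interval, together with the global bound $\cH\leq \cH_0$ in the conservative case. In view of \eqref{e:HH} this already controls $\oH$ as required.

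For $\oW$ I would then invoke \eqref{e:unifL2Linfty} directly, which yields
\begin{equation*}
\oW = \sup_{t\in[0,T)} \|\st_\rho \ave{u}_\rho\|_\infty \leq C\sup_{t\in[0,T)} \|u\|_{L^2(\rho)}.
\end{equation*}
The macroscopic velocity is tied to the kinetic energy through Cauchy--Schwarz applied to the defining identity $u\rho = \int v f\dv$:
\begin{equation*}
\|u\|_{L^2(\rho)}^2 = \int_{\T^n}\frac{\left|\int_{\R^n} v f\dv\right|^2}{\rho}\dx \leq \int_{\T^n\times\R^n} |v|^2 f\dv\dx = 2\cE,
\end{equation*}
and \eqref{e:EH} gives $\cE\leq 2\cH + C$. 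Combining with the first step closes the bound on $\oW$, with the uniform global estimate in the conservative case inherited automatically from $\cH\leq\cH_0$.

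I do not anticipate any real obstacle: the lemma is a short consequence of the already-proved \lem{l:Hcontrol} together with the observation that \eqref{e:unifL2Linfty} both upgrades to \eqref{e:unifL2L2} on a probability environment and furnishes precisely the missing quantitative link between the alignment drift and the square-root of the kinetic energy. The only point to be slightly careful about is that $\oW$ is the pointwise supremum of $\st_\rho\ave{u}_\rho$ over $\T^n$; this is handled by the representation \eqref{e:warep} together with the guaranteeing condition \eqref{e:L2Linfty}, which delivers a genuine pointwise-in-$x$ estimate $|\st_\rho\ave{u}_\rho(x)|\leq \|\phi_\rho(x,\cdot)\|_{L^2(\rho)}\|u\|_{L^2(\rho)}$ uniformly in $\rho$.
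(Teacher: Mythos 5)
Your proof is correct and follows exactly the route the paper intends: the paper states this lemma as an immediate consequence of \lem{l:Hcontrol} combined with the $L^2\to L^\infty$ bound \eqref{e:unifL2Linfty}, which is precisely your chain $\oW\lesssim \|u\|_{L^2(\rho)}\leq\sqrt{2\cE}\lesssim\sqrt{\cH}$, together with the observation that \eqref{e:unifL2Linfty} implies \eqref{e:unifL2L2} on a probability environment so that \lem{l:Hcontrol} applies. Nothing is missing.
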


\begin{theorem}\label{t:gwp}
Suppose the model $\cM$ is regular  and satisfies \eqref{e:unifL2Linfty}. Then any local solution $f$ to the Fokker-Planck-Alignment equation \eqref{e:FPA1} in class $H^k_l(\domain)$ extends globally in time.  Consequently, \eqref{e:FPA1} is globally well-posed for thick data 
\[
f_0\in H^k_l(\domain), \ k,l \geq n+3, \quad
 \Th(\rho_0,\O) >0.
\] 
If in addition $\cM$ is conservative, then there exists a $\rho_->0$ depending only on the initial entropy $\cH_0$ and the parameters of the model, such that 
\begin{equation}\label{e:denslow}
\rho(t,x) \geq \rho_-, \qquad \forall (t,x) \in [1,\infty) \times \T^n.
\end{equation}
\end{theorem}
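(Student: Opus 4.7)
The plan is to combine the continuation criterion from the local well-posedness theorem with the lower density bound coming from the Gaussian spread of positivity. By \thm{t:lwp}, a local solution $f \in C_w([0,T); H^k_l)$ can be extended past $T$ provided $\inf_{[0,T)} \Th(\rho,\T^n) > 0$. So it suffices, for any fixed finite time horizon, to rule out vacuum formation.

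Fix any $T>0$ and suppose we have a solution on $[0,T)$ in the claimed class. Under the hypothesis \eqref{e:unifL2Linfty}, \lem{l:HWcontrol} gives that the entropy/energy quantity $\oH$ and the drift quantity $\oW = \sup_{[0,T)} \|\st_\rho \ave{u}_\rho\|_\infty$ are both finite on $[0,T)$ (with bounds depending only on $T$, the initial entropy, and the model parameters). Then \prop{p:Gauss}, applied on each subinterval $[t_0, t_0 + \tau] \subset [0,T)$ with $\tau$ an admissible sub-horizon, produces a pointwise Gaussian lower bound $f(t,x,v) \geq b e^{-a|v|^2}$ on the upper half of each such subinterval. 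Integrating in $v$, as in \cor{c:denslow}, yields a strictly positive lower bound $\rho(t,x) \geq \rho_-(T) > 0$ uniformly on a cofinal subinterval of $[0,T)$. Since by \eqref{e:stlow} the thickness dominates $\min \rho$ up to a constant, this gives $\inf_{[0,T)} \Th(\rho,\T^n) > 0$, and the continuation clause of \thm{t:lwp} then extends $f$ past $T$. As $T$ was arbitrary, the solution is global, proving the first two assertions.

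For the conservative case, the improvement comes from the fact that \lem{l:HWcontrol} then supplies \emph{uniform} (time-independent) bounds on $\oH$ and $\oW$ depending only on $\cH_0$ and model parameters. Now apply \prop{p:Gauss} to a sliding window: for each $t_0 \geq 1$, regard the solution on $[t_0 - 1, t_0]$ as a local solution with horizon length $1$, uniform entropy bound $\oH$, and uniform drift bound $\oW$. The Gaussian parameters $a,b$ in \prop{p:Gauss} depend only on these uniform quantities and on the window length, hence are independent of $t_0$. Integrating in $v$ yields a constant $\rho_->0$, depending only on $\cH_0$ and the model, such that $\rho(t,x) \geq \rho_-$ for all $t \geq 1$, which is exactly \eqref{e:denslow}.

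The main obstacle is really packaged inside \prop{p:Gauss} — the Harnack-chain argument that turns entropy/drift bounds into an $x$-independent Gaussian lower bound; here at the level of \thm{t:gwp} the only subtlety is bookkeeping, namely verifying that in the conservative case the constants in the Gaussian bound can indeed be chosen uniformly in $t_0$. This is immediate from the structure of \prop{p:Gauss}: its constants are admissible functions of $T$, $\oW$, $\oH$ only, all of which are controlled by $\cH_0$ via \lem{l:HWcontrol} and \eqref{e:HH}, so the sliding-window application goes through without loss.
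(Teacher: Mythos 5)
Your proof follows the same route as the paper: use \lem{l:HWcontrol} to bound $\oW$ and $\oH$, feed these into \prop{p:Gauss} and \cor{c:denslow} to obtain a density lower bound, convert to a thickness lower bound, and invoke the continuation criterion of \thm{t:lwp}; the conservative case then upgrades the bounds from time-dependent to uniform. Two points deserve attention.

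First, the citation supporting ``thickness dominates $\min\rho$'' should be item (iii) of \defin{d:th}, not \eqref{e:stlow}; the latter bounds the strength function $\st_\rho$ from below, not the thickness. This is a misattribution only, since the fact you need is indeed supplied by d:th(iii).

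Second, and more substantively, your chain of inferences has a small gap in passing from ``$\rho\ge\rho_-$ on a cofinal subinterval of $[0,T)$'' to ``$\inf_{[0,T)}\Th(\rho,\O)>0$.'' The Gaussian spread of positivity, applied with a fixed admissible sub-horizon $\tau$, gives you $\rho\ge\rho_-$ only on $[\tau/2, T)$; it says nothing about $[0,\tau/2)$, and the constants in \prop{p:Gauss} deteriorate as $\tau\to 0$, so you cannot push the coverage to $t=0$. To close this, you must argue separately on $[0,\tau/2]$: the thickness is positive at $t=0$ by hypothesis, and by \defin{d:th}(v) (or (iv) together with strong continuity of $\rho$ in $t$ on compact subintervals, which the regularity class provides) one has $\p_t\Th(\rho,\O)\ge -c\sqrt{\cE}$, so on a sufficiently short initial interval $\Th$ stays bounded below by $\frac12\Th(\rho_0,\O)$; combining this with the bound on $[\tau/2,T)$ yields $\inf_{[0,T)}\Th>0$. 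This is exactly the bookkeeping that the paper's remark following the theorem (the ``instantaneous disappearance of vacuum'' statement \eqref{e:denslowinst}) implicitly relies on, and without it the continuation criterion \eqref{e:rholow} is not actually verified. The conservative/sliding-window argument for \eqref{e:denslow} is fine as written.
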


A simple rescaling argument shows that in fact for any time $t_0>0$ there exists $\rho_->0$ depending on the initial entropy $\cH_0$, $t_0$, and the parameters of the model such that 
\begin{equation}\label{e:denslowinst}
\rho(t,x) \geq \rho_-, \qquad \forall (t,x) \in [t_0,\infty) \times \T^n.
\end{equation} 
So, vacuum disappears instantaneously. 

As shown on the third row of Table~\ref{t:rth} all our models are regular on compact environment, and hence the corresponding FPA are globally well-posed for thick data. In addition, the model  \ref{CS}, \ref{Mf}, and \ref{Mseg} due to being conservative, also enjoy the uniform bound from below on the density \eqref{e:denslow}.

\section{Global relaxation and hypocoercivity}\label{s:hypo}

The discussion in this section will be taking place on the compact domain $\O = \T^n$. 
The Fokker-Planck-Alignment equation
\begin{equation}\label{e:FPAhypo}
\p_t f + v \cdot \n_x f = \s \st_\rho \D_v f + \n_v \cdot ( \st_\rho (v - \ave{u}_\rho) f ), 
\end{equation}
has an obvious equilibrium 
\begin{equation}\label{e:Maxwellian}
\mu_{\s,\bar{u}} = \frac{1}{|\O|(2\pi \s)^{n/2}} e^{- \frac{|v - \bar{u}|^2}{2\s}},
\end{equation}
for any constant vector $\bar{u}$. In this section we demonstrate relaxation towards such equilibrium for large data. 

There are several issues that arise when comparing this result to the classical linear Fokker-Planck relaxation, see \cite{Villani}. First, the nonlinear alignment force pumps energy into the system as will be seen from \eqref{e:elaw0hypo}, which prevents direct sliding of the solution towards global Maxwellian. Second, the degeneracy of thermalization $\s \st_\rho$ needs to be avoided in order to retain uniform hypoellipticity of the equation. And third, since we are not assuming that $\cM$ is conservative, it is not immediately clear that the time dependent momentum $\bar{u}$ settles to a limiting vector $u_\infty$.  

We settle these issues in steps. Our first general result lists all the necessary functional requirements on the solution to ensure relaxation towards a moving Maxwellian. We then examine how these requirements are met in the context of  regularity properties stated in \sect{s:rth}  for specific classes of models and how the stabilization of momentum can be deduced.

\begin{proposition}\label{p:mainrelax}
Suppose $\cM$ is a material model. Let $f\in H^k_l(\domain)$ be a classical solution to \eqref{e:FPA} with density $\rho$ satisfying the following conditions uniformly in time
\begin{itemize}
\item[(i)] there exist constants $c_0,c_1,c_2>0$ such that $c_0 \leq \st_\rho \leq c_1$ and $\|\n \st_\rho\|_\infty \leq c_2$ for all $\rho \in \cD$;
\item[(ii)] there exists a constant $\e_0>0$ such that 
\begin{equation}\label{e:spagapii}
 \sup \left\{ (u, \ave{u}_\rho)_{\k_\rho}: \,  u\in L^2(\k_\rho),\, \bar{u} = 0,\, \ \|u\|_{L^2(\k_\rho)}=1 \right\} \leq 1-  \e_0.
\end{equation}
\item[(iii)]  $\| \st_\rho \ave{\cdot}_\rho\|_{ L^2(\rho) \to L^2(\rho)} + \| \n_x(\st_\rho \ave{\cdot}_\rho)\|_{ L^2(\rho) \to L^2(\rho)} \leq c_3$.
\end{itemize}
Then $f$ relaxes to the corresponding Maxwellian exponentially fast,
\begin{equation}\label{e:relaxexp}
\|f(t) - \mu_{\s,\bar{u}(t)}\|_{L^1(\domain)} \leq c_4 \sqrt{\s^{-1} \cI(f_0)}\ e^{- c_5 \s t},
\end{equation}
where $c_4, c_5 >0$ depend only on  the parameters of the model $\cM$ and $c_0,c_1,c_2,c_3,\e_0$. Here, $\cI(f_0)$ is the Fisher information defined in \eqref{e:fF}, and 
\[
\bar{u} = \int_\domain v f(t,x,v) \dv \dx.
\]
\end{proposition}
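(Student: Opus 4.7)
The plan is to adapt the hypocoercivity machinery of Villani \cite{Villani} to the nonlinear setting by working with a twisted Fisher-information-type functional measured relative to the \emph{moving} Maxwellian $\mu = \mu_{\s,\bar{u}(t)}$. Writing $h = f/\mu$, a direct calculation shows that $h$ satisfies a drift-diffusion equation of the schematic form
\begin{equation*}
\p_t h + v\cdot \n_x h = \s \st_\rho \D_v h - \st_\rho (v-\bar{u}) \cdot \n_v h + \st_\rho (\bar{u} - \ave{u}_\rho) \cdot \n_v h - \dot{\bar{u}} \cdot \frac{v-\bar{u}}{\s}\, ,
\end{equation*}
where the first two terms on the right are the linear Fokker–Planck part (with the inhomogeneous but elliptic coefficient $\st_\rho$), the third is the nonlinear alignment perturbation, and the last accounts for the motion of the target. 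The base energy identity \eqref{e:elaw1} of the excerpt already produces the key macroscopic cancellation: combining the negative quadratic term $-\|u\|_{L^2(\k_\rho)}^2$ with $(u,\ave{u}_\rho)_{\k_\rho}$ and invoking hypothesis (ii) yields $-\e_0 \|u\|_{L^2(\k_\rho)}^2$, so relative entropy already dissipates by $\e_0 \|u-\bar u\|^2_{L^2(\k_\rho)}$ plus the local Fisher information.

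Next, following the standard hypocoercive ansatz, I would introduce a perturbed functional of the form
\begin{equation*}
\Phi(t) = \int_{\domain} \left( |\n_v h|^2 + 2a \,\n_v h \cdot \n_x h + b\, |\n_x h|^2 \right) \mu \, \dv \dx\, ,
\end{equation*}
with small parameters $0<a^2 \ll b \ll 1$ chosen so that $\Phi$ is equivalent to the full Fisher information $\cI_v + \cI_x$. Differentiating $\Phi$ along the equation, the linear Fokker–Planck contribution produces (by the classical commutator computation $[\n_x, v\cdot \n_x]=0$, $[\n_v, v\cdot \n_x]=\n_x$) a coercive negative quadratic form in $(\n_v h, \n_x h)$ of strength $c\s c_0$, by virtue of uniform ellipticity hypothesis (i). The price to pay is the appearance of commutators involving $\n_x \st_\rho$ and $\n_x (\st_\rho \ave{u}_\rho)$, which are bounded using the pointwise and operator-norm bounds of (i) and (iii); these are absorbed into the coercive dissipation by making $a,b$ small enough relative to $\e_0 \s c_0$.

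The remaining terms — coming from the nonlinear drift $\st_\rho(\bar u - \ave{u}_\rho)$ and from $\dot{\bar u}$ — are macroscopic error terms. The crucial observation is that $\|\bar u - \ave{u}_\rho\|_{L^2(\k_\rho)}$ is bounded by $\|u-\bar u\|_{L^2(\k_\rho)}$ thanks to the contractivity built into the model (and more sharply, its square is controlled by the spectral gap term $\|u-\bar u\|^2_{L^2(\k_\rho)} - (u-\bar u, \ave{u-\bar u}_\rho)_{\k_\rho} \geq \e_0 \|u-\bar u\|^2_{L^2(\k_\rho)}$ from hypothesis (ii)). Since $\|u-\bar u\|_{L^2(\k_\rho)}$ itself is controlled by the local Fisher information via a weighted Poincar\'e inequality for the Maxwellian, the nonlinear error can be absorbed by a small fraction of the diffusive dissipation. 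Similarly, $|\dot{\bar u}|$ is controlled by $\|u-\bar u\|_{L^2(\k_\rho)}$ using hypothesis (iii) and the structure of the moment equation, so its contribution is likewise absorbed. The outcome is a Gronwall inequality
\begin{equation*}
\ddt \left[ \cH(f|\mu_{\s,\bar u(t)}) + \kappa \Phi(t) \right] \leq - \l \s \left[ \cH(f|\mu_{\s,\bar u(t)}) + \kappa \Phi(t) \right],
\end{equation*}
with $\l,\kappa>0$ depending only on the stated constants, giving exponential decay of the relative entropy.

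The final step is standard: the Csisz\'ar--Kullback inequality converts exponential decay of the relative entropy into \eqref{e:relaxexp}, with the initial relative entropy bounded by $\s^{-1} \cI(f_0)$ via the logarithmic Sobolev inequality for the Maxwellian. The main obstacle I foresee is the tuning of the parameters $a,b,\kappa$: one must simultaneously ensure that (a) the linear hypocoercive core dominates the commutator errors arising from $\n \st_\rho$, (b) the macroscopic nonlinear correction $\bar u - \ave{u}_\rho$ can be absorbed into the spectral-gap dissipation rather than the Fisher information (since the latter degenerates in the hydrodynamic limit of $f$ close to monokinetic), and (c) the time variation of $\bar u$ does not destroy the equivalence $\Phi \asymp \cI_v + \cI_x$. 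This is a delicate but standard bookkeeping that closes precisely under the three structural hypotheses of the proposition.
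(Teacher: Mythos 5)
Your overall strategy — build a hypocoercive functional around the $h=f/\mu_{\s,\bar u(t)}$ dynamics, combine it with the entropy, and use hypothesis (ii) to turn the nonlinear alignment term into a spectral-gap dissipation — is the same route the paper takes. But two of your mechanisms for closing the estimate have genuine problems.

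First, you write $\Phi$ as the $L^2$ Dirichlet form $\int(|\n_v h|^2+2a\,\n_vh\cdot\n_xh+b|\n_xh|^2)\mu$, whereas the paper works with the genuine Fisher functionals $\cI_{vv}=\s^2(\n_vh,\n_v\bar h)_\mu$, $\cI_{xv}$, $\cI_{xx}$ built from $\bar h=\log h$. This is not a cosmetic choice: the heart of the paper's argument is that, when one differentiates $\cI_{vv}$, $\cI_{xv}$, $\cI_{xx}$ along the flow, the nonlinear alignment contributions cancel completely up to a term $\lesssim\|u\|^2_{L^2(\k_\rho)}$ (and the $\dot{\bar u}$ contributions cancel to zero). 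Those cancellations, e.g.\ $J_u^2+J_u^3+J_u^4=0$, rest on identities for the log-density such as $h_{v_iv_j}=h\bar h_{v_iv_j}+\tfrac1h h_{v_i}h_{v_j}$ and on rewriting the alignment force as $\st_\rho A^*(\ave{u}_\rho h)$. For the Dirichlet form they do not transfer; the nonlinear term then produces genuine cross products of $h$ and $\n h$ that you cannot bound by $\|u\|^2$ alone. (Relatedly, your schematic $h$-equation is missing the zeroth-order multiplicative term $-\s^{-1}\st_\rho(\bar u-\ave{u}_\rho)\cdot(v-\bar u)\,h$ and the $h$-factor on the $\dot{\bar u}$ term; those are exactly what makes the paper's cancellations work.)

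Second, your proposed mechanism for absorbing the macroscopic error — a weighted Poincar\'e inequality bounding $\|u-\bar u\|^2_{L^2(\k_\rho)}$ by the velocity Fisher information — does not close. The inequality is true (one gets $\int\rho|u-\bar u|^2\dx\le \cI_{vv}$ by Cauchy–Schwarz against $\n_v f=0$), but in the derivative of $\cI_{vv}$ the good term is $-2c_0\cI_{vv}$ while the alignment error is $+2\|u\|^2_{L^2(\k_\rho)}\le 2c_1\cI_{vv}$, and since $c_0\le\st_\rho\le c_1$ forces $c_1\ge c_0$, the two cannot be netted into a negative quantity. You also contradict yourself: in the middle you say to absorb into the Fisher dissipation, while in (b) of your obstacles paragraph you correctly observe that the Fisher information degenerates in the monokinetic regime and the spectral gap must be used instead. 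The paper's resolution, which you need to make explicit, is that one does not absorb $\|u\|^2$ into $\cI_{vv}$ at all: one adds a large multiple $C\cH$ to $\tilde\cI$ and uses the refined entropy law \eqref{e:entropyclean}, $\ddt\cH\lesssim -\cI_{vv}-\|u\|^2_{L^2(\k_\rho)}$, so that the $+C\|u\|^2$ generated by the Fisher evolution is cancelled outright by the $-C\|u\|^2$ from the entropy dissipation. This is exactly the place where hypothesis (ii) enters: the spectral gap converts the alignment contribution $(u,\ave{u}_\rho)_{\k_\rho}-\|u\|^2$ in the entropy law into a coercive $-\e_0\|u\|^2_{L^2(\k_\rho)}$ term, and without it the $\|u\|^2$ error would have nowhere to go. Your final Gronwall inequality has the right shape, but you should replace the Poincar\'e detour with this cancellation, and work with the $\bar h$-based Fisher functionals rather than the $L^2$ Dirichlet form.
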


 \begin{proof}[Proof of \prop{p:mainrelax}]
 
We seek to estimate the relative entropy defined by
\begin{equation}\label{e:rele}
\cH  = \s \int_{\domain} f \log \frac{f}{\mu_{\s,\bar{u}}} \dv \dx.
\end{equation}
By the \CK, we have 
\[
c \s \|f - \mu_{\s,\bar{u}}\|_1^2 \leq \cH,
\]
for some absolute $c$. So, an exponential decay of the entropy would imply the desired result. Let us also recall that Sobolev densities $f\in H^k_l(\domain)$ have finite Fisher information (see below) which in turns control $\cH$, see \cite[Lemma 1]{TV2000}. We can therefore analyze $\cH$ classically. 

Since the model at hand is not assumed to be Galilean invariant or conservative the mean velocity $\bar{u}$ is time dependent and generally may not be assumed $0$ without changing the equation. It will, however, be beneficial  to pass to the reference frame centered at $\bar{u}$. So, we consider the change of variables
\[
\tilde{f}(x,v,t) = f(x,v+\bar{u},t), \quad \tilde{u} = u - \bar{u}, \quad \tilde{\rho} = \rho.
\]
In the new variables the equation becomes a system (dropping tildas)
\begin{equation}\label{e:FPAshift}
\begin{split}
\p_t f +(v+\bar{u}) \cdot \n_x f & = \bar{u}_t \cdot \n_v f + \s \st_\rho \D_v f + \n_v \cdot ( \st_\rho (v - \ave{u}_\rho) f ) \\
\bar{u}_t & = \int_\O (\ave{u}_\rho - u)\dk_\rho \\
\int_\O u \rho \dx & = 0.
\end{split}
\end{equation}
We also denote $\mu_\s = \mu_{\s,0}$. Again, let us note that the extra transport term $\bar{u}_t \cdot \n_v f $ appears because we do not assume that our model is conservative. We keep in mind that  $\bar{u}_t$ is  a constant vector at any point of time.

 The starting point in the proof is two forms of the entropy law.  One is \eqref{e:elaw1}, which after $\s$-rescaling reads
\begin{equation}\label{e:elaw1hypo}
\ddt \cH = -  \int_{\domain}   \st_\rho \frac{|\s \n_v f + (v - u) f |^2}{f}  \dv\dx  - \|u\|_{L^2(\k_\rho)}^2 + (u,\ave{u}_\rho)_{\k_\rho}.
\end{equation}
Using the spectral gap assumption (ii) we conclude
\[
\ddt \cH \leq - \e_0   \|u\|_{L^2(\k_\rho)}^2.
\]
And another form of entropy law  follows directly from \eqref{e:Hraw} (note that the extra transport term $\bar{u}_t \cdot \n_v f $ does not effect either of them)
 \begin{equation}\label{e:elaw0hypo}
\ddt \cH = -  \int_{\domain}   \st_\rho  \frac{|\s \n_v f + v f |^2}{f}  \dv\dx  + (u,\ave{u}_\rho)_{\k_\rho}.
\end{equation} 
Although this form is not dissipative, it gives access to the partial Fisher information
\[
\cI_{vv} =  \int_{\domain}  \frac{|\s \n_v f + v f |^2}{f}  \dv\dx .
\]
In view of (i) and (iii), we have
\begin{equation}\label{e:elaw01}
\ddt \cH \leq  -  c_0 \cI_{vv} + (u,\ave{u}_\rho)_{\k_\rho} \leq -  c_0 \cI_{vv} + c\|u\|_{L^2(\k_\rho)}^2.
\end{equation}
Combining with the previous form \eqref{e:elaw01} we obtain
\begin{equation}\label{e:entropyclean}
\ddt \cH \lesssim -   \cI_{vv} -   \|u\|_{L^2(\k_\rho)}^2.
\end{equation}

The next stage of the proof consists of showing that the classical hypocoercivity of the linear Fokker-Planck equation extends to the fully non-linear alignment model.  In contrast to the \ref{Mf}-model analyzed  in \cite{S-hypo} the general system \eqref{e:FPAshift} requires special attention due to presence of several additional ingredients such as inhomogeneity of diffusion and $\bar{u}$-shift in the transport term. These result in the slower exponential rate $\s$, as opposed to $\s^{1/2}$ for the \ref{Mf}-model.

Let us write the equation for the new distribution 
\[
h = \frac{f}{\mu_\s},
\]
\begin{equation}\label{e:FPAh}
h_t + (v + \bar{u}) \cdot \n_x h = \bar{u}_t \cdot \n_v h - \frac{v}{\s} \cdot \bar{u}_t h+ \st_\rho ( \s \D_v h - v \cdot \n_v h) + \st_\rho (\s^{-1}(\ave{u}_\rho \cdot v) h  - \ave{u}_\rho \cdot \n_v h ).
\end{equation}
The Fokker-Planck part of the equation \eqref{e:FPAh} has the traditional structure of an evolution semigroup. Denoting
\[
B = (v + \bar{u}) \cdot \n_x, \quad A =  \n_v, \quad A^* = (\frac{v}{\s}  -  \n_v) \cdot,
\]
where $A^*$ is understood relative to the inner product of the weighted space $L^2(\mu_\s)$,
we can write 
\begin{equation}\label{e:FPALN}
h_t =  - \s \st_\rho A^* A h- Bh+ \st_\rho A^*( \ave{u}_\rho h) - A^*(h \bar{u}_t).
\end{equation}

We consider Fisher information functionals
\[
\cI_{vv}(h) =  \s^2 \int_\domain \frac{|\n_v h|^2}{h} \dmu_\s,
\quad  \cI_{xv}(h) = \s^{3/2} \int_\domain \frac{\n_x h \cdot \n_v h}{h} \dmu_\s, \quad  \cI_{xx}(h) = \s \int_\domain \frac{|\n_x h|^2}{h} \dmu_\s,
 \]
 where $\dmu_\s = \mu_\s \dv \dx$. The full Fisher information defined by 
 \begin{equation}\label{e:fF}
\cI = \cI_{vv} +\cI_{xx} 
\end{equation}
dominates the relative entropy by the classical log-Sobolev inequality
\[
\cI_{vv} +\cI_{xx} \geq \l \cH.
\]

We now differentiate each of these functionals and obtain estimates on the obtained equations.  The coercivity will be restored by putting them together in a proper linear combination along with the entropy law \eqref{e:entropyclean}.

We will use the following notation: $(g)_\mu = \int_\domain g \dmu_\s$.

\begin{lemma}\label{}
We have
\[
\ddt \cI_{vv}(h)  \leq  -2 \s^3 \cD_{vv}-  2 c_0 \cI_{vv} -2 \s^{1/2} \cI_{xv} +  2  \|u\|_{L^2(\k_\rho)}^2,
\]
where
\[
\cD_{vv} = ( \st_\rho h |\n_v^2 \bar{h}|^2 )_\mu, \qquad \bar{h} = \log h.
\]
\end{lemma}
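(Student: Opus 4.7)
The plan is to compute $\ddt \cI_{vv}$ by expressing it in log-form as $\cI_{vv}(h) = \s^2 \int h |\n_v \bar h|^2 \dmu_\s$ with $\bar h = \log h$, differentiating in time, and using $\bar h_t = h_t/h$ to arrive at the working identity
\[
\ddt \cI_{vv} = -\s^2 \int h_t |\n_v \bar h|^2 \dmu_\s + 2\s^2 \int \n_v \bar h \cdot \n_v h_t \dmu_\s.
\]
I would then split $h_t$ into the four constituent pieces of \eqref{e:FPALN} and evaluate each contribution separately, exploiting that $\st_\rho$ and the constant vector $\bar u_t$ do not depend on $v$, so they commute with all $\n_v$-derivatives.

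For the Fokker-Planck piece $h_t = \st_\rho \Lambda h$ with $\Lambda = \s \D_v - v \cdot \n_v$, the Ornstein-Uhlenbeck generator relative to $\mu_\s$, I would apply the classical Bakry-Emery / $\Gamma_2$ calculation componentwise in $v$, using the commutator identity $[\n_v, \Lambda] = -\n_v$ together with the duality $\int \Lambda f\, g \, \dmu_\s = -\s \int \n_v f \cdot \n_v g \, \dmu_\s$. Since $\st_\rho$ acts as a non-negative $x$-only multiplier that commutes with $\n_v$, this produces
\[
-2\s^3 \int \st_\rho h\, |\n_v^2 \bar h|^2 \dmu_\s \; - \; 2 \s^2 \int \st_\rho h\, |\n_v \bar h|^2 \dmu_\s,
\]
which is exactly $-2\s^3 \cD_{vv}$ for the first summand, and at most $-2 c_0 \cI_{vv}$ for the second after invoking $\st_\rho \geq c_0$ from (i).

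For the transport piece $-Bh = -(v+\bar u)\cdot \n_x h$, the commutator $[\n_v, v\cdot \n_x] = \n_x$ produces a cross-term; after integration by parts in $x$ (using periodicity and $x$-independence of $\mu_\s$), the higher-order contributions involving $v\cdot\n_x |\n_v\bar h|^2$ cancel against the matching part of $-\s^2 \int h_t |\n_v\bar h|^2\dmu_\s$. The constant-$\bar u$ portion disappears similarly by periodicity. What survives is exactly
\[
-2\s^2 \int h\, \n_v \bar h \cdot \n_x \bar h\, \dmu_\s = -2\s^{1/2} \cI_{xv},
\]
matching the third term of the claim. The final alignment-drift contribution $\st_\rho A^*(\ave{u}_\rho h) - A^*(h\bar u_t)$ combines into $A^*(\mathrm{w} h)$ with $\mathrm{w}(t,x) := \st_\rho \ave{u}_\rho - \bar u_t$, a vector field on $\T^n$. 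Substituting into the working identity and integrating by parts in $v$, Cauchy-Schwartz against the Gaussian weight produces terms controlled by $\|\mathrm{w}\|_{L^2(\rho)}^2 + \|\n_x \mathrm{w}\|_{L^2(\rho)}^2$, with small-coefficient pieces absorbed into fractions of $\cD_{vv}$ and $\cI_{vv}$ borrowed from the Fokker-Planck step. Assumption (iii) then converts this into $\lesssim \|u\|_{L^2(\k_\rho)}^2$, while $|\bar u_t| \lesssim \|u\|_{L^2(\k_\rho)}$ follows from the $\bar u_t$-equation in \eqref{e:FPAshift} and the $L^2(\rho)\to L^2(\rho)$-bound on $\st_\rho\ave{\cdot}_\rho$.

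The main obstacle is this last step. Unlike classical hypocoercivity with smooth fixed drift, here $\mathrm{w}$ is a nonlocal nonlinear functional of $f$, and its $x$-derivative cannot be handled pointwise; assumption (iii) is tailored precisely to close this at the $L^2(\rho)$ level, and the small buffers of $-2\s^3\cD_{vv}$ and $-2c_0\cI_{vv}$ from the Fokker-Planck step are essential to absorb the Cauchy-Schwartz losses. The secondary subtlety is tracking constants to land the sharp coefficient $2$ in front of $\|u\|_{L^2(\k_\rho)}^2$ rather than a generic $C$, which requires splitting $A^*(\mathrm{w} h)$ into its drift $(v/\s)\cdot \mathrm{w}\,h$ and transport $\mathrm{w}\cdot\n_v h$ parts and balancing them explicitly against the $\bar u_t$ contribution.
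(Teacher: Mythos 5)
Your treatment of the Fokker--Planck piece (Bakry--\'Emery/$\Gamma_2$ with $\st_\rho$ as a nonnegative $x$-multiplier) and of the transport piece (cancellation via integration by parts in $x$, leaving only the cross-term $-2\s^{1/2}\cI_{xv}$) both match the paper. The genuine gap is in how you handle the alignment term $\st_\rho A^*(\ave{u}_\rho h)$ and the momentum term $-A^*(h\bar u_t)$.

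You propose to bundle these into $A^*(\rmw h)$ with $\rmw = \st_\rho\ave{u}_\rho - \bar u_t$, apply Cauchy--Schwarz, absorb small pieces into fractions of $\cD_{vv}$ and $\cI_{vv}$ ``borrowed from the Fokker--Planck step,'' and invoke assumption (iii) to control $\|\n_x\rmw\|_{L^2(\rho)}$. This cannot reproduce the stated inequality. First, any absorption into $\cD_{vv}$ or $\cI_{vv}$ would leave coefficients strictly less than $2$ in front of those terms, contradicting the clean $-2\s^3\cD_{vv} - 2c_0\cI_{vv}$ claimed. Second, assumption (iii) plays no role here at all; if it did, a constant $c_3$ would contaminate the bound and preclude the exact coefficient $2$ on $\|u\|^2_{L^2(\k_\rho)}$. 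Third, your argument is organized as if $\n_x\rmw$ must appear and be controlled; in fact it never shows up in this lemma. The paper's mechanism is not estimation but \emph{exact cancellation}. Writing $J_u = J_u^1 + J_u^2 + J_u^3 + J_u^4$, the first term is precisely $2\s^{-2}(\ave{u}_\rho\cdot u)_{\k_\rho}$, and the remaining three satisfy $J_u^2 + J_u^3 + J_u^4 = 0$ identically, by repeated use of the algebraic identity $h_{v_iv_j} = h\bar h_{v_iv_j} + h^{-1}h_{v_i}h_{v_j}$ and the fact that $\ave{u}_\rho$ is $v$-independent. This is the ``special cancellation in the alignment forcing'' the paper highlights as the key structural fact that lets the linear hypocoercivity machinery extend to the nonlinear problem. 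Similarly, the momentum term vanishes exactly, $J_{\bar u} = 0$; $\bar u_t$ being a spatially constant vector makes the three constituent pieces cancel by the product rule, with no estimate needed. Your approach substitutes estimation where the proof actually hinges on identities, and would therefore yield a weaker inequality with degraded constants and spurious dependence on the $\n_x$-mapping norm of the averaging.
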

\begin{proof} 

Let us write $\cI_{vv} = ( \n_v h \cdot \n_v \bar{h} )_\mu$. Computing the derivative  we obtain
\begin{equation*}\label{}
 \frac{1}{\s^2} \ddt \cI_{vv} = 2 ( \n_v h_t \cdot \n_v \bar{h} )_\mu - ( |\n_v \bar{h} |^2 h_t )_\mu   =  J_A + J_B + J_u + J_{\bar{u}},
 \end{equation*}
 where
 \begin{equation*}\label{}
\begin{split}
J_A & =   -2 \s ( \st_\rho \n_v A^*A h \cdot  \n_v \bar{h} )_\mu + \s (\st_\rho  |\n_v \bar{h} |^2 A^*A h )_\mu \\
J_B &= -2 ( \n_v B h\cdot  \n_v \bar{h} )_\mu + ( |\n_v \bar{h} |^2 B h )_\mu \\
J_u & = 2(\st_\rho \n_v A^*(\ave{u}_\rho h)\cdot \n_v \bar{h} )_\mu - (\st_\rho  |\n_v \bar{h} |^2 A^*(\ave{u}_\rho h) )_\mu\\
J_{\bar{u}} & = - 2 (\n_v A^*( h \bar{u}_t) \cdot \n_v \bar{h})_\mu + ( \n_v | \n_v \bar{h}|^2 \cdot \bar{u}_t h)_\mu.
\end{split}
\end{equation*}

Let us start with the most straightforward transport term $B$. We have
\[
J_B = -2 ( \n_x h \cdot  \n_v \bar{h} )_\mu - 2 ( ((v+\bar{u}) \cdot \n_x h_{v_i}) \bar{h}_{v_i} )_\mu  + ( |\n_v \bar{h} |^2 (v+\bar{u}) \cdot \n_x h )_\mu 
=: J_B^1 + J_B^2 + J_B^3.
\]
Observe that 
\[
J_B^1 =  -2 \s^{-3/2} \cI_{xv}.
\]
Next, as to the second term:
\begin{multline*}
J_B^2 = - 2 ((v+\bar{u}) \cdot \n_x h_{v_i} {h}_{v_i}{h}^{-1} )_\mu = - ( (v+\bar{u}) \cdot \n_x |h_{v_i}|^2 h^{-1} )_\mu \\
= -( |h_{v_i}|^2 (v+\bar{u}) \cdot \n_x h h^{-2} )_\mu =- ( |\bar{h}_{v_i}|^2 (v+\bar{u}) \cdot \n_x h )_\mu= - J_B^3
\end{multline*}
and so the two cancel. We obtain
\[
J_B = -2 \s^{-3/2} \cI_{xv}.
\]

Let us turn to the dissipation term $J_A$.  Using the identity 
\[
\p_{v_i} (A^*A h) = A^*Ah_{v_i} + \s^{-1}h_{v_i},
\]
we have
\begin{equation*}\label{}
J_A  = -2 \s (\st_\rho  A^*Ah_{v_i} \bar{h}_{v_i} )_\mu - 2 (\st_\rho \n_v h \cdot \n_v \bar{h} )_\mu + \s (\st_\rho |\n_v \bar{h} |^2 A^*A h )_\mu.
\end{equation*}
Note that the term in the middle is bounded above by $- 2 c_0\s^{-2}\cI_{vv} $ in view of (i). In the other two we switch $A^*$ to the opposite side,

\begin{equation*}\label{}
\begin{split}
J_A  & \leq  -2 \s (\st_\rho  Ah_{v_i} \cdot A \bar{h}_{v_i} )_\mu - 2 c_0\s^{-2}  \cI_{vv} + \s (\st_\rho A |\n_v \bar{h} |^2\cdot  A h  )_\mu \\
&= -2\s (\st_\rho  h A\bar{h}_{v_i}\cdot  A \bar{h}_{v_i} )_\mu -2\s (\st_\rho \bar{h}_{v_i} Ah\cdot  A \bar{h}_{v_i}  )_\mu - 2 c_0 \s^{-2}\cI_{vv} + 2 \s (\st_\rho  \bar{h}_{v_i}A \bar{h}_{v_i} \cdot  A h  )_\mu.
\end{split}
\end{equation*}

The second and  last terms cancel, while the first is exactly $-2 \s \cD_{vv}$:
\[
J_A  \leq -2 \s \cD_{vv}  -  2 c_0 \s^{-2}\cI_{vv}.
\]

For the alignment term we obtain the following the exact identity 
\begin{equation}\label{e:Juexact}
J_u = 2 \s^{-2}( \ave{u}_\rho \cdot u )_{\k_\rho}.
\end{equation}
We note, however, that there is no advantage of keeping the low energy here as the full energy will emerge later in the proof. So, we replace it with the full energy
\begin{equation}\label{e:nonJuexact}
J_u \leq  2(1-\e_0) \s^{-2}  \|u\|_{L^2(\k_\rho)}^2 \leq 2 \s^{-2}  \|u\|_{L^2(\k_\rho)}^2.
\end{equation}

To prove \eqref{e:Juexact} we manipulate with the formula for $J_u$ as follows
\begin{equation*}\label{}
\begin{split}
J_{u} & = 2(\st_\rho \n_v A^*(\ave{u}_\rho h)\cdot  \n_v \bar{h} )_\mu -(\st_\rho |\n_v \bar{h} |^2 A^*(\ave{u}_\rho h))_\mu \\
& = 2( \st_\rho \n_v ( \s^{-1} v \cdot \ave{u}_\rho h - \ave{u}_\rho \cdot \n_v h)\cdot  \n_v \bar{h} )_\mu- ( \st_\rho \n_v |\n_v \bar{h} |^2 \cdot  \ave{u}_\rho h )_\mu \\
& = 2\s^{-1} ( \st_\rho \ave{u}_\rho h  \cdot  \n_v \bar{h} )_\mu + 2\s^{-1} ( \st_\rho (v \cdot \ave{u}_\rho) \n_v h \cdot  \n_v \bar{h} )_\mu - 2(\st_\rho \n^2_v h \ave{u}_\rho  \cdot  \n_v \bar{h} )_\mu - 2( \st_\rho \n_v^2 \bar{h}(\n_v \bar{h}) \cdot \ave{u}_\rho h )_\mu\\
& = : J_u^1 + J_u^2+J_u^3+J_u^4,
\end{split}
\end{equation*}
 where $\n_v^2 h$ is the Hessian matrix of $h$. 
 
 Observe that the first term is exactly the lower energy
 \[
J_u^1 =  2 \s^{-1}(\st_\rho  \ave{u}_\rho \cdot  \n_v h )_\mu = 2 \s^{-2}(\st_\rho  \ave{u}_\rho \cdot v h )_\mu = 2 \s^{-2}( \ave{u}_\rho \cdot u )_{\k_\rho}.
 \] 
 
Now comes the crucial observation that the remaining terms that cannot be controlled  cancel altogether
\[
  J_u^2+J_u^3+J_u^4 = 0.
 \]
 Indeed, using 
 \begin{equation}\label{e:hbarh}
  h_{v_i v_j} = h \bar{h}_{v_i v_j} + \frac{1}{h} h_{v_i}  h_{v_j}
\end{equation}
let us compute $J_u^3$,
\[
J_u^3 = -2  (\st_\rho  h_{v_i v_j} \ave{u_j}_\rho \bar{h}_{v_i} )_\mu  = - 2 (\st_\rho  h \bar{h}_{v_i v_j}  \ave{u_j}_\rho \bar{h}_{v_i} )_\mu - 2 (\st_\rho   \frac{1}{h} h_{v_i}  h_{v_j} \ave{u_j}_\rho \bar{h}_{v_i} )_\mu = J_u^4 - 2 (\st_\rho \n_v h \cdot \ave{u}_\rho | \n_v \bar{h} |^2   )_\mu
\]
Also note that 
\[
J_u^2 = 2\s^{-1} ( \st_\rho (v \cdot \ave{u}_\rho h ) |\n_v \bar{h}|^2 )_\mu.
\]
Then we have
\[
J_u^2 + J_u^3 = J_u^4 + 2\s^{-1} ( \st_\rho (v \cdot \ave{u}_\rho h ) |\n_v \bar{h}|^2 )_\mu - 2 (\st_\rho \n_v h \cdot \ave{u}_\rho | \n_v \bar{h} |^2   )_\mu = J_u^4 + 2 ( \st_\rho A^*( \ave{u}_\rho h)  | \n_v \bar{h} |^2 )_\mu .
\]
Switching $A^*$ in the last term we obtain
\[
2 ( \st_\rho A^*( \ave{u}_\rho h)  | \n_v \bar{h} |^2 )_\mu  = 2 ( \st_\rho h  \ave{u}_\rho \cdot \n_v  | \n_v \bar{h} |^2 )_\mu = -2 J_u^4.
\]
The obtained terms sum up to zero.

Finally, we show that the momentum term vanishes $J_{\bar{u}} = 0$. Let us expand
\[
J_{\bar{u}} =2( \n^2_v h \n_v \bar{h} \cdot \bar{u}_t )_\mu - 2 \left( \frac{v}{\s} \cdot \bar{u}_t \frac{|\n_v h|^2}{h} \right)_\mu + ( \n_v | \n_v \bar{h}|^2 \cdot \bar{u}_t h)_\mu =: J_{\bar{u}}^1+J_{\bar{u}}^2 + J_{\bar{u}}^3.
\]
Let us look into the first term,
\begin{equation*}\label{}
\begin{split}
J_{\bar{u}}^1 & = \left( \frac{\n_v |\n_v h|^2}{h} \cdot \bar{u}_t \right)_\mu = \left( A \left(\frac{|\n_v h|^2}{h} \right) \cdot \bar{u}_t \right)_\mu +\left( \frac{|\n_v h|^2}{h^2} \n_v h \cdot \bar{u}_t \right)_\mu  \\
& = \left(  \frac{|\n_v h|^2}{h}   \frac{v \cdot \bar{u}_t}{\s} \right)_\mu + \left( |\n_v \bar{h} |^2 \n_v h \cdot \bar{u}_t \right)_\mu .
\end{split}
\end{equation*}
So,
\[
J_{\bar{u}}^1 + J_{\bar{u}}^2 = - \left(  \frac{|\n_v h|^2}{h}   \frac{v \cdot \bar{u}_t}{\s} \right)_\mu + \left( |\n_v \bar{h} |^2 \n_v h \cdot \bar{u}_t \right)_\mu = -  ( |\n_v \bar{h} |^2 A^*( h \bar{u}_t ) )_\mu = -  ( \n_v |\n_v \bar{h} |^2 h \bar{u}_t )_\mu = -J_{\bar{u}}^3.
\]
Thus, $J_{\bar{u}}^1 + J_{\bar{u}}^2 +J_{\bar{u}}^3 = 0$.

\end{proof}

\begin{lemma}\label{}
We have
\[
\ddt \cI_{xv}(h)  \leq - \frac14 \s^{1/2} \cI_{xx} + c(\s^{-1/2}+1) \cI_{vv}+ \frac12  \s^3 \cD_{vv} + \frac12 \s^2 \cD_{xv} + c(\s^{-1/2}+1) \| u \|_2^2,
\]
where
\[
\cD_{xv} = ( \st_\rho h |\n_v \n_x \bar{h}|^2 )_\mu.
\]
\end{lemma}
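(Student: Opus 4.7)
The plan is to mimic the structure of the preceding lemma that controlled $\ddt\cI_{vv}$, splitting the derivative
\[
\ddt \cI_{xv}(h) = \sigma^{3/2}\ddt ( \nabla_x h \cdot \nabla_v \bar h)_\mu
\]
into four pieces $K_A+K_B+K_u+K_{\bar u}$ corresponding to the four terms on the right-hand side of the rewritten equation \eqref{e:FPALN}, and to extract the coercive $-\tfrac14\sigma^{1/2}\cI_{xx}$ out of the transport piece $K_B$, while absorbing everything else into the error budget.

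The heart of the argument is the transport piece $K_B$. Here the crucial commutator is $[\nabla_v, (v+\bar u)\cdot\nabla_x] = \nabla_x$. Proceeding as in the companion lemma (expand, transport the $B$-derivative onto the test factor $\nabla_x h \cdot \nabla_v\bar h / \mu_\sigma^{-1}$, and note that the cubic $h^{-2}$ terms cancel against each other by the same identity used for $J_B^2+J_B^3$), the only surviving piece is the commutator contribution $-\sigma^{3/2}(|\nabla_x h|^2/h)_\mu = -\sigma^{1/2}\cI_{xx}$. The $\bar u$-part of the transport is harmless since $\bar u$ is a constant vector and produces only a mean-zero divergence; the constant $\frac14$ in front of $\cI_{xx}$ in the stated bound is arranged so as to leave room for reabsorbing small cross terms that appear later.

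Next, the diffusion piece $K_A$ is handled exactly as the term $J_A$ was, after switching one copy of $A^*$ to the velocity-gradient factor. Using the identity \eqref{e:hbarh} one finds a leading dissipation of the form $-\sigma^{5/2}(\st_\rho h\, \nabla_v^2\bar h : \nabla_v\nabla_x\bar h)_\mu$, which by Cauchy--Schwarz splits into $\tfrac12\sigma^3\cD_{vv}+\tfrac12\sigma^2\cD_{xv}$ and therefore can be kept on the right of the inequality with the stated coefficients. The $x$-dependence of $\st_\rho$ produces additional terms carrying one $\nabla_x\st_\rho$; by assumption (i) this factor is in $L^\infty$, and these terms can be controlled by a multiple of $\cI_{vv}$ together with a small share of $\cD_{vv}$ via Young's inequality, explaining the coefficient $c(\sigma^{-1/2}+1)\cI_{vv}$.

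For the alignment piece $K_u$, one mirrors the derivation of \eqref{e:Juexact}–\eqref{e:nonJuexact}, except that one of the $\nabla_v$'s is replaced by $\nabla_x$. The resulting expression involves both $\st_\rho\ave{u}_\rho$ and its $x$-gradient acting on $h$; here assumption (iii) (which is precisely the $H^1$-type bound on the nonlinear multiplier $\st_\rho\ave{\cdot}_\rho$) is used to close the estimate by $\|u\|_{L^2(\k_\rho)}^2$ modulo a small fraction of $\cI_{vv}$ and $\cD_{xv}$. Finally, the momentum term $K_{\bar u}$ is handled exactly as $J_{\bar u}$; since $\bar u_t$ is a constant vector in $(x,v)$ and $|\bar u_t|\lesssim\|u\|_{L^2(\k_\rho)}$, one again obtains a bound of type $c\|u\|_{L^2(\k_\rho)}^2+(\text{absorbed into }\cI_{vv})$. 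The main technical obstacle is bookkeeping: several cross terms with mixed $\nabla_x,\nabla_v$ derivatives of $\bar h$ and $\log h$ appear, and one must be careful to split them via Young's inequality with small constants so that the $\cI_{xx}$-coefficient in front remains strictly positive (hence the $\tfrac14$) while the $\cD_{vv}$, $\cD_{xv}$ coefficients stay below $1$.
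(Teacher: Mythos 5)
Your decomposition into transport, diffusion, alignment, and momentum pieces, the extraction of the coercive $-\sigma^{-1}\cI_{xx}$ from the commutator $[\nabla_v,(v+\bar u)\cdot\nabla_x]=\nabla_x$ in the $B$-term, the $A^*$-switching plus Young's inequality for the diffusion term, and the use of assumption (iii) for the alignment term all match the paper's proof essentially step by step. One small imprecision: the paper shows the momentum contribution $J_{\bar u}$ cancels \emph{identically} to zero (as in the preceding $\cI_{vv}$ lemma), rather than needing to be estimated by $c\|u\|^2$ plus small pieces of $\cI_{vv}$; your weaker bound would still yield the stated inequality, but the exact cancellation is what the paper uses.
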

\begin{proof}
Let us write
\[
\frac{1}{\s^{3/2}} \ddt \cI_{xv}(h) = (\n_x h_t \cdot \n_v \bar{h} )_\mu + ( \n_x \bar{h} \cdot \n_v {h}_t )_\mu - ( h_t \n_v \bar{h}\cdot \n_x \bar{h} )_\mu : = J_A + J_B + J_u+J_{\bar{u}},
\]
where as before $J_A,J_B,J_u,J_{\bar{u}}$ collect contributions from $A$, $B$, and alignment components, respectively.

For the $B$-term we have
\[
J_B = - ( \n_x ((v + \bar{u}) \cdot \n_x h) \cdot \n_v \bar{h} )_\mu - ( \n_x \bar{h} \cdot \n_v ((v + \bar{u}) \cdot \n_x h)  )_\mu + ( ((v + \bar{u}) \cdot \n_x h) \n_v \bar{h}\cdot \n_x \bar{h}  )_\mu: = J_B^1 + J_B^2 + J_B^3.
\]
For the middle term we expand
\[
J_B^2 =  - ( \n_x \bar{h} \cdot \n_x h )_\mu - ( \bar{h}_{x_i} (v_j +\bar{u}_j) h_{x_j v_i})_\mu.
 \]
The first term is exactly $- \s^{-1} \cI_{xx} $ and in the second integrating by parts in $x_j$, we obtain
 \[
 = - \s^{-1} \cI_{xx} + ( \bar{h}_{x_i x_j} (v_j +\bar{u}_j) h_{v_i} )_\mu
\]
using that $\bar{h}_{x_i x_j}= h^{-1} {h}_{x_i x_j} - \bar{ h}_{x_i} \bar{h}_{x_j}$,
\[
= - \s^{-1} \cI_{xx} +( {h}_{x_i x_j} (v_j +\bar{u}_j) \bar{h}_{v_i} )_\mu - ( \bar{h}_{x_i} \bar{h}_{x_j} (v_j +\bar{u}_j) h_{v_i} )_\mu = - \s^{-1} \cI_{xx} - J_B^1 - J_B^3.
\]
Hence,
\begin{equation}\label{e:JBxv}
J_B = -  \s^{-1}\cI_{xx} .
\end{equation}

Let us look into the $J_A$-term:
\[
\frac{1}{\s} J_A =   - (\n_x (\st_\rho A^*Ah) \cdot \n_v \bar{h} )_\mu - (\st_\rho  \n_x \bar{h} \cdot \n_v  A^*Ah )_\mu + (\st_\rho  A^*Ah \n_v \bar{h}\cdot \n_x \bar{h} )_\mu = J_A^1 + J_A^2 + J_A^3.
\]
For $J_A^1$ we obtain
\[
\begin{split}
J_A^1 & = - ( \st_\rho A^* A h_{x_i} \bar{h}_{v_i} )_\mu - ( (\st_\rho)_{x_i} A^* A h \bar{h}_{v_i} )_\mu = - ( \st_\rho \n_v h_{x_i} \cdot \n_v  \bar{h}_{v_i} )_\mu - ( (\st_\rho)_{x_i} \n_v h \n_v \bar{h}_{v_i} )_\mu \\
& = -  ( \st_\rho h \n_v \bar{h}_{x_i} \cdot \n_v  \bar{h}_{v_i} )_\mu- ( \st_\rho  \bar{h}_{x_i} \n_v h \cdot \n_v  \bar{h}_{v_i} )_\mu - \left( \frac{(\st_\rho)_{x_i}}{\st_\rho^{1/2}} \frac{\n_v h}{h^{1/2}} \cdot  \st_\rho^{1/2} h^{1/2} \n_v \bar{h}_{v_i} \right)_\mu .
\end{split}
\]
In view of assumption (i), 
\[
J_A^1 \leq  -  (\st_\rho  h \n_v \bar{h}_{x_i} \cdot \n_v  \bar{h}_{v_i} )_\mu- (\st_\rho  \bar{h}_{x_i} \n_v h \cdot \n_v  \bar{h}_{v_i} )_\mu + c \s^{-1} \sqrt{\cI_{vv} \cD_{vv}}.
\]

For $J_A^2$ we obtain
\[
J_A^2 = - \s^{-1} (\st_\rho \n_x \bar{h} \cdot \n_v h )_\mu - ( \st_\rho \bar{h}_{x_i} A^*A {h}_{v_i} )_\mu   = - \s^{-1} (\st_\rho \n_x \bar{h} \cdot \n_v h )_\mu -  ( \st_\rho h \n_v \bar{h}_{x_i} \cdot \n_v  \bar{h}_{v_i} )_\mu  -  ( \st_\rho  \bar{h}_{v_i} \n_v \bar{h}_{x_i} \cdot \n_v  h )_\mu  .
\] 
The two add up to 
\[
\begin{split}
J_A^1+J_A^2 & =  - \s^{-1} \left(\st_\rho \frac{\n_x {h}}{h^{1/2}} \cdot \frac{\n_v h}{h^{1/2}} \right)_\mu  - 2  (\st_\rho h \n_v \bar{h}_{x_i} \cdot \n_v  \bar{h}_{v_i} )_\mu - (\st_\rho  Ah \cdot A(\n_v \bar{h}\cdot \n_x \bar{h} ) )_\mu + c \s^{-1} \sqrt{\cI_{vv} \cD_{vv}} \\
& \leq  - c_0 \s^{-5/2} \sqrt{\cI_{xx} \cI_{vv}} + \sqrt{ \cD_{xv} \cD_{vv}} + c \s^{-1}\sqrt{\cI_{vv} \cD_{vv}} - J_A^3.
\end{split}
\]
Thus,
\[
J_A \leq c_0 \s^{-3/2}\sqrt{\cI_{xx} \cI_{vv}}  + \s \sqrt{ \cD_{xv} \cD_{vv}} +   c \sqrt{\cI_{vv} \cD_{vv}}.
\]

Let us examine the alignment term now,
\begin{equation*}\label{}
\begin{split}
J_u & = ( \n_x ( \st_\rho A^*(\ave{u}_\rho h)) \cdot \n_v \bar{h})_\mu + ( \n_x \bar{h} \cdot \n_v ( \st_\rho A^*(\ave{u} h))  )_\mu - (  \st_\rho A^*(\ave{u}_\rho h) \n_v \bar{h}\cdot \n_x \bar{h} )_\mu \\
& = (  (\st_\rho)_{x_i} A^*(\ave{u}_\rho h)  \bar{h}_{v_i})_\mu + ( \st A^*((\ave{u}_\rho)_{x_i} h) \bar{h}_{v_i})_\mu+ ( \st_\rho A^*(\ave{u}_\rho h_{x_i})  \bar{h}_{v_i})_\mu \\
& + (\st_\rho  \bar{h}_{x_i}   A^*(\ave{u}_\rho h_{v_i} ) )_\mu + \s^{-1} ( \st_\rho h \n_x \bar{h} \cdot \ave{u}_\rho)_\mu  - (  \st_\rho h \ave{u}_\rho  \cdot \n_v( \n_v \bar{h}\cdot \n_x \bar{h}) )_\mu \\
& = ( h (\st_\rho \ave{u}_\rho)_{x_i}  \cdot \n_v \bar{h}_{v_i})_\mu + ( \st_\rho h \ave{u}_\rho \bar{h}_{x_i} \cdot \n_v \bar{h}_{v_i})_\mu \\
& + (\st_\rho h \n_v \bar{h}_{x_i} \cdot   \ave{u}_\rho \bar{h}_{v_i}  )_\mu + \s^{-1} ( \st_\rho h \n_x \bar{h} \cdot \ave{u}_\rho)_\mu  - (  \st_\rho h \ave{u}_\rho  \cdot \n_v( \n_v \bar{h}\cdot \n_x \bar{h}) )_\mu
\end{split}
\end{equation*}
We can see that the 2rd, 3th, and 5th terms cancel, and we arrive at
\[
J_u = ( h (\st_\rho \ave{u}_\rho)_{x_i}  \cdot \n_v \bar{h}_{v_i})_\mu + \s^{-1} ( \st_\rho h \n_x \bar{h} \cdot \ave{u}_\rho)_\mu = J_u^1+J_u^2.
\]
We estimate $J_u^1$ using the assumption (iii), and the fact that $L^2(\k_\rho)$- and $L^2(\rho)$-norms are equivalent under (i),
\[
J_u^1  \leq c \| u \|_{L^2(\k_\rho)} \sqrt{\cD_{vv}}
\]
And again, by (iii),
\[
J_u^2 \leq \s^{-3/2} \| u \|_{L^2(\k_\rho)} \sqrt{\cI_{xx}} \leq \frac12 \s^{-1} \cI_{xx} + \s^{-2}  \| u \|_{L^2(\k_\rho)}^2 .
\]
Noticing that $\frac12 \s^{-1} \cI_{xx}$ is absorbed into \eqref{e:JBxv} and summing up all the terms we arrive at 
\begin{equation*}\label{}
\begin{split}
\ddt \cI_{xv}(h) & \leq - \frac12 \s^{1/2} \cI_{xx}  + c_0 \sqrt{\cI_{xx} \cI_{vv}} + c \s^{3/2}\sqrt{\cI_{vv} \cD_{vv}}+ \s^{5/2} \sqrt{ \cD_{xv} \cD_{vv}}+  c \s^{3/2} \| u \|_{L^2(\k_\rho)} \sqrt{\cD_{vv}} \\
&+ \s^{-1/2} \| u \|_{L^2(\k_\rho)}^2  \\
& \leq - \frac14 \s^{1/2} \cI_{xx} + c(\s^{-1/2}+1) \cI_{vv}+ \frac12  \s^3 \cD_{vv} + \frac12 \s^2 \cD_{xv} + c(\s^{-1/2}+1) \| u \|_{L^2(\k_\rho)}^2 .
\end{split}
\end{equation*}

Finally, let us look at the momentum term
\[
J_{\bar{u}} = - (A^*( \bar{u}_t \p_{x_i} h) \p_{v_i} \bar{h})_\mu - (\p_{x_i} \bar{h} \p_{v_i} A^*( \bar{u}_t h) )_\mu + (A^*( \bar{u}_t h)  \p_{v_i} \bar{h} \p_{x_i} \bar{h})_\mu := J_{\bar{u}}^1 + J_{\bar{u}}^2 + J_{\bar{u}}^3.
\]
Let us note the identity
\[
\p_{v_i} A^* = A^* \p_{v_i} + \frac{1}{\s} \Id,
\]
and expand on $J_{\bar{u}}^2$
\[
J_{\bar{u}}^2 = -  (\p_{x_i} \bar{h}  A^*( \bar{u}_t \p_{v_i} h) )_\mu - \s^{-1} (\p_{x_i} h  \bar{u}_t)_\mu.
\]
The last term vanishes since $\bar{u}_t$ is a constant vector.  Thus,
\[
J_{\bar{u}}^2= -  (\n_v \p_{x_i} \bar{h} \cdot  \bar{u}_t \p_{v_i} h )_\mu
\]
In the other two terms we switch $A^*$ as well
\[
\begin{split}
J_{\bar{u}}^1 &= - (\p_{x_i} h \bar{u}_t \cdot \n_v \p_{v_i} \bar{h})_\mu \\
J_{\bar{u}}^3 &= (h \bar{u}_t \cdot \n_v  (\p_{v_i} \bar{h} \p_{x_i} \bar{h}))_\mu
\end{split}
\]
The sum of the three is clearly zero by the product rule. So,
\[
J_{\bar{u}} =0.
\]
\end{proof}

\begin{lemma}\label{}
We have
\[
\ddt \cI_{xx}(h)  \leq   c \cI_{vv} - \s^2 \cD_{xv} + c \| u \|_{L^2(\k_\rho)}^2 .
\]
\end{lemma}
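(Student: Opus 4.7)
My plan is to mimic the structure of the previous two lemmas. Writing $\cI_{xx}(h) = \s (\n_x h \cdot \n_x \bar h)_\mu$, I differentiate to get
\[
\frac{1}{\s}\ddt \cI_{xx} = 2(\n_x h_t \cdot \n_x \bar h)_\mu - (|\n_x \bar h|^2 h_t)_\mu = J_A + J_B + J_u + J_{\bar u},
\]
according to the four contributions to $h_t$ coming from \eqref{e:FPALN}. I will treat each term separately.

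For the transport term, $\p_{x_i}Bh = (v+\bar u)\cdot \n_x h_{x_i}$, so after writing $2 \n_x h_{x_i} \bar h_{x_i} = \n_x(|h_{x_i}|^2/h) + |\bar h_{x_i}|^2 \n_x h$, an integration by parts in $x$ (using that $\mu_\s$ and $v+\bar u$ are $x$-independent) makes the first piece vanish and the remaining piece cancels exactly with the zeroth-order contribution, giving $J_B = 0$. For the Fokker-Planck term, splitting the gradient produces a main piece $-2\s(\st_\rho A^*Ah_{x_i}\bar h_{x_i})_\mu$ and a stray piece $-2\s((\st_\rho)_{x_i} A^*Ah\, \bar h_{x_i})_\mu$. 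Switching $A^*$ in the main piece and using $\n_v h_{x_i} = h \n_v \bar h_{x_i} + \bar h_{x_i}\n_v h$, the cross term combines with the companion term $\s(|\n_x \bar h|^2 \st_\rho A^*Ah)_\mu$ to cancel (by the identity $\sum_i \bar h_{x_i}\n_v \bar h_{x_i} = \tfrac12 \n_v |\n_x \bar h|^2$), leaving $-2\s \cD_{xv}$. The stray piece is estimated by Cauchy--Schwarz and assumption (i): it is bounded by $c\sqrt{\cI_{vv}\, \cD_{xv}}$, which Young's inequality (after the outer multiplication by $\s$) absorbs into a constant times $\cI_{vv}$ at the cost of a small fraction of $\s^2 \cD_{xv}$.

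For the alignment term, writing $W := \st_\rho \ave{u}_\rho$ (purely $x$-dependent) one has $\st_\rho A^*(\ave{u}_\rho h) = A^*(Wh)$, and since $A^*$ commutes with $\p_{x_i}$,
\[
J_u = 2(W_{x_i} h \cdot \n_v \bar h_{x_i})_\mu + 2(W h_{x_i}\cdot \n_v \bar h_{x_i})_\mu - (Wh\cdot \n_v|\n_x \bar h|^2)_\mu.
\]
Using $h_{x_i} = h\bar h_{x_i}$ and $\n_v|\n_x \bar h|^2 = 2\sum_i \bar h_{x_i}\n_v \bar h_{x_i}$, the last two terms cancel exactly, leaving only $J_u = 2(W_{x_i} h \cdot \n_v \bar h_{x_i})_\mu$. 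By Cauchy--Schwarz and assumption (iii), combined with the equivalence of $L^2(\rho)$ and $L^2(\k_\rho)$ norms from (i), this is bounded by $c\|u\|_{L^2(\k_\rho)} \sqrt{\cD_{xv}}$, again absorbed by Young into a small fraction of $\s^2\cD_{xv}$ plus a constant times $\|u\|_{L^2(\k_\rho)}^2$. For the momentum-shift term, since $\bar u_t$ is a constant vector and independent of $x$, the three contributions $J_{\bar u}^1,J_{\bar u}^2,J_{\bar u}^3$ combine via the product rule $A(\p_{v_i}\bar h\, \p_{x_i}\bar h) = \n_v \p_{x_i}\bar h\cdot \n_x\bar h + \p_{x_i}\bar h \cdot \n_v \p_{v_i}\bar h \cdot (\text{etc.})$ to give $J_{\bar u} = 0$ by the same bookkeeping used in the $\cI_{vv}$ and $\cI_{xv}$ lemmas.

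The main delicate point will be verifying the cancellation $J_u = 2(W_{x_i} h \cdot \n_v \bar h_{x_i})_\mu$: one must carefully track that the $h$-dependent and $\bar h$-dependent pieces produced by the three expansions exactly match. All other steps are routine integration-by-parts manipulations and Young-type absorptions; the coefficient in front of $-\s^2 \cD_{xv}$ will be some $c'>0$ (smaller than the nominal $2$), which is harmless since the lemma will be applied inside the linear combination producing the modified entropy, where constants can be normalized.
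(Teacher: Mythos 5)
Your proposal is correct and follows essentially the same route as the paper: decomposing into $J_A+J_B+J_u+J_{\bar u}$, killing $J_B$ and $J_{\bar u}$ by exact cancellations, extracting $-2\s^2\cD_{xv}$ from the main part of $J_A$ while bounding the $(\st_\rho)_{x_i}$-stray term by $c\sqrt{\cI_{vv}\cD_{xv}}$, isolating the $(\st_\rho\ave{u}_\rho)_{x_i}$-term in $J_u$ via the $h$/$\bar h$ cancellation, and absorbing the cross terms by Young. (One minor bookkeeping slip: unlike the $\cI_{vv}$/$\cI_{xv}$ lemmas, the $\bar u$-piece here produces only two terms, not three, because $\p_{x_i}$ commutes with $A^*$ without the extra $\s^{-1}\Id$ correction that $\p_{v_i}$ generates.)
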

 \begin{proof}
 We have
 \[
\frac{1}{\s} \ddt \cI_{xx}(h) =  2( \n_x h_t \cdot \n_x \bar{h} )_\mu - ( |\n_x \bar{h} |^2 h_t )_\mu : = J_A+ J_B + J_u+J_{\bar{u}}.
\]

The $B$-term cancels entirely, 
\[
\begin{split}
J_B & = -2 ( \n_x((v+\bar{u}) \cdot \n_x h) \cdot \n_x \bar{h} )_\mu + ( |\n_x \bar{h} |^2 (v+\bar{u}) \cdot \n_x h )_\mu \\
& = -2 ( ((v+\bar{u}) \cdot \n_x h_{x_i}) h_{x_i} h^{-1} )_\mu + ( |\n_x \bar{h} |^2 (v+\bar{u}) \cdot \n_x h )_\mu \\
& = - ( ((v+\bar{u}) \cdot \n_x |\n_x h|^2 h^{-1} )_\mu + ( |\n_x \bar{h} |^2 (v+\bar{u}) \cdot \n_x h )_\mu \\
& =  - ( (v+\bar{u}) \cdot \n_x h  |\n_x h|^2 h^{-2} )_\mu + ( |\n_x \bar{h} |^2 (v+\bar{u}) \cdot \n_x h )_\mu = 0.
\end{split}
\]
So is $J_{\bar{u}}$,
\begin{equation*}\label{}
\begin{split}
J_{\bar{u}} & = - 2 (\n_x A^*(\bar{u}_t h) \cdot \n_x \bar{h})_\mu + ( |\n_x \bar{h} |^2 A^*(\bar{u}_t h))_\mu \\
& = - 2 ( \p_{x_i} h\bar{u}_t \cdot \n_v \p_{x_i}\bar{h})_\mu + ( \n_v |\n_x \bar{h} |^2 \cdot \bar{u}_t h)_\mu  = 0.
\end{split}
\end{equation*}

The $A$-term is given by
\[
\begin{split}
 J_A & = -2 ( \n_x (\st_\rho A^*A h) \cdot \n_x \bar{h} )_\mu + (\st_\rho  |\n_x \bar{h} |^2 A^*Ah )_\mu\\
& =-2 ( (\st_\rho)_{x_i} A h \cdot A \bar{h}_{x_i} )_\mu -2 (\st_\rho  A h_{x_i} \cdot A \bar{h}_{x_i} )_\mu+ (\st_\rho A |\n_x \bar{h} |^2 \cdot Ah)_\mu \\
& \leq c \s^{-1} \sqrt{\cD_{xv} \cI_{vv}} -2 (\st_\rho \n_v (h \bar{h}_{x_i}) \cdot \n_v \bar{h}_{x_i} )_\mu + (\st_\rho \n_v |\n_x \bar{h} |^2 \cdot \n_vh )_\mu\\
& = c \s^{-1} \sqrt{\cD_{xv} \cI_{vv}} -2 (\st_\rho h \n_v \bar{h}_{x_i} \cdot \n_v \bar{h}_{x_i} )_\mu -2  (\st_\rho  \bar{h}_{x_i} \n_v h \cdot \n_v \bar{h}_{x_i} )_\mu + (\st_\rho \n_v |\n_x \bar{h} |^2 \cdot \n_vh )_\mu\\
& =c \s^{-1} \sqrt{\cD_{xv} \cI_{vv}} -2 \cD_{xv} -  (\st_\rho \n_v |\n_x \bar{h} |^2 \cdot \n_vh )_\mu+ (\st_\rho \n_v |\n_x \bar{h} |^2 \cdot \n_vh )_\mu \\
& = c \s^{-1} \sqrt{\cD_{xv} \cI_{vv}}-2 \cD_{xv}.
\end{split}
\]
Thus,
\[
J_A \leq c  \sqrt{\cD_{xv} \cI_{vv}} -2 \s \cD_{xv}.
\]

Finally, the alignment term is given by
\[
J_u = 2( \n_x (\st_\rho A^*(\ave{u}_\rho h) ) \cdot \n_x \bar{h} )_\mu - (\st_\rho  |\n_x \bar{h} |^2 A^*(\ave{u}_\rho h) )_\mu .
\]
In the second term we switch the operator $A^*$: 
\begin{equation}\label{e:auxxx1}
- (\st_\rho |\n_x \bar{h} |^2 A^*(\ave{u}_\rho h))_\mu = - (\st_\rho \n_v |\n_x \bar{h} |^2 \ave{u}_\rho h)_\mu.
\end{equation}
For the first term we obtain
\begin{equation*}\label{}
\begin{split}
 2( \n_x (\st_\rho A^*(\ave{u}_\rho h) ) \cdot \n_x \bar{h} )_\mu & = 2(h (\st_\rho\ave{u}_\rho)_{x_i}   \cdot \n_v \bar{h}_{x_i} )_\mu  +  2(\st_\rho   h  \bar{h}_{x_i} \ave{u}_\rho \cdot \n_v \bar{h}_{x_i} )_\mu.
  \end{split}
\end{equation*}
We can see that the last term cancels with \eqref{e:auxxx1}. We thus obtain, using assumption (iii),
\[
J_u =  2(h (\st_\rho\ave{u}_\rho)_{x_i}   \cdot \n_v \bar{h}_{x_i} )_\mu  \leq c \| u \|_{L^2(\k_\rho)} \sqrt{\cD_{xv}} .
\]

Putting together the obtained bounds we have
\[
\ddt \cI_{xx}(h)  \leq    c \s \sqrt{\cD_{xv} \cI_{vv}} -2 \s^2 \cD_{xv}+ c \s\| u \|_{L^2(\k_\rho)} \sqrt{\cD_{xv}}  \leq c \cI_{vv} - \s^2 \cD_{xv} + c \| u \|_{L^2(\k_\rho)}^2 .
\]

\end{proof}

To conclude the proof, let us combine together all the Fisher functionals in the following format
\[
\tilde{\cI} = \cI_{vv} + \d \s^{1/2} \cI_{xv}+\frac{c_0}{c}\cI_{xx},
\]
where $\d>0$ is small but dependent only on the parameters of the assumptions (i), (ii), (iii). Since $\s < 1$, for $\d$ small enough we have $\tilde{\cI} \sim \cI$. Then,
\[
\ddt \tilde{\cI} \leq -c_0 \cI_{vv} - c_1 \s  \cI_{xx} + C \|u\|_2^2.
\]
Invoking the entropy inequality \eqref{e:entropyclean}, we obtain with a properly chosen constant $C>0$,
\[
\ddt (\tilde{\cI} + C \cH) \leq -c_0 \cI_{vv} - c_1 \s  \cI_{xx} \leq -c_2 \s(\cI_{vv} + \cI_{xx}) \sim - \s (\tilde{\cI} + C \cH) .
\]
Hence,
\begin{equation}\label{e:relaxlast}
\tilde{\cI} + C \cH \leq c_3 \cI(f_0) e^{-c_2 \s t},
\end{equation}
 and the result follows. 

\end{proof}

\subsection{Applications}

Let us now  explore how \prop{p:mainrelax} implies relaxation for various situations.

First, we have  convergence near equilibrium for all models whose spectral gap is under control for densities near uniform.

\begin{proposition} \label{p:sid}
Suppose that $\cM$ is regular and has a uniformly positive spectral gap for any densities close to uniform 
\begin{equation}\label{ }
\inf\{ \e_0(\rho): \ \| \rho - 1/|\O| \|_1 \leq \d_0 \} >0.
\end{equation}
Then there exists a constant $c>0$ depending only on the parameters of the model such that for any initial condition $f_0\in H_l^k(\O)$ satisfying
\begin{equation}\label{e:Ismall2}
\cI(f_0) \leq  c \s \d_0,
\end{equation}
there exists a global classical solution converging to the Maxwellian exponentially fast.
\end{proposition}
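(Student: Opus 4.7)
The plan is to set up a continuity/bootstrap argument in which the smallness of the Fisher information controls the deviation of $\rho(t)$ from the uniform density $1/|\O|$, which in turn yields the three functional hypotheses (i)--(iii) of \prop{p:mainrelax}, which in turn forces exponential decay of the Fisher information. First I would translate smallness of $\cI$ into $L^1$-closeness of $\rho$ to $1/|\O|$. By the log-Sobolev inequality on $\domain$ relative to $\mu_{\s,\bar u}$ one has $\cH \leq \l^{-1}(\cI_{vv} + \cI_{xx}) = \l^{-1}\cI$, and the \CK\ applied to the $x$-marginal gives $\|\rho - 1/|\O|\|_1 \lesssim \|f - \mu_{\s,\bar u}\|_1 \lesssim (\cH/\s)^{1/2} \lesssim (\cI/(\s \l))^{1/2}$. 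Hence choosing the constant $c>0$ in \eqref{e:Ismall2} sufficiently small guarantees $\|\rho_0 - 1/|\O|\|_1 \leq \d_0/2$, and by the uniform regularity of $\cM$ (namely \eqref{e:ur2}) and continuity in $\rho$ of the strength function one gets $c_0 \leq \st_{\rho_0} \leq c_1$ and $\|\n_x(\st_{\rho_0} \ave{\cdot}_{\rho_0})\|_{L^2(\rho_0)\to L^2(\rho_0)}\leq c_3$; this is assumption (i) and the non-trivial part of (iii). The spectral gap hypothesis (ii) follows directly from the assumed $\e_0$ on the admissible set $\{\|\rho - 1/|\O|\|_1 \leq \d_0\}$.

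Next, since Fisher information controls higher Sobolev norms in a standard way and the initial datum is assumed in $H^k_l$, \thm{t:lwp} provides a classical solution on some maximal time interval $[0,T^*)$. Let $T^\sharp \in (0,T^*]$ be the largest time such that $\|\rho(t) - 1/|\O|\|_1 \leq \d_0$ on $[0,T^\sharp)$; by continuity $T^\sharp > 0$. On $[0,T^\sharp)$ conditions (i)--(iii) hold with constants independent of $t$, so \prop{p:mainrelax} (more precisely its closing estimate \eqref{e:relaxlast} for $\tilde\cI + C\cH$) applies and yields the differential inequality
\begin{equation*}
\ddt(\tilde\cI + C \cH) \leq -c_2 \s (\tilde\cI + C\cH),
\end{equation*}
which gives $\cI(f(t)) + \cH(f(t)) \leq c_3 \cI(f_0) e^{-c_2 \s t}$ on $[0,T^\sharp)$. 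Chaining this with the earlier Csiszar-Kullback estimate we get $\|\rho(t) - 1/|\O|\|_1 \leq C(\cI(f_0)/(\s\l))^{1/2} e^{-c_2 \s t/2}$, which is strictly smaller than $\d_0$ on $[0,T^\sharp)$ if $c$ is chosen small enough. This excludes the possibility $T^\sharp < T^*$, so the bootstrap closes: $\|\rho(t) - 1/|\O|\|_1 \leq \d_0$ on the entire interval of existence.

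Finally I would combine the uniform density bound with the extension criterion of \thm{t:lwp} (or its analogue for the models at hand): the closeness of $\rho$ to $1/|\O|$ together with uniform regularity of $\cM$ gives a uniform positive thickness $\Th(\rho(t),\O) \geq \d > 0$, and a uniform bound on the drift $\st_\rho \ave{u}_\rho$ in $L^\infty$ follows from (iii) and the decay of $\|u\|_{L^2(\k_\rho)}$ implicit in the Fisher-information decay. Hence $T^* = \infty$ and the solution is global, with exponential relaxation to $\mu_{\s,\bar u(t)}$ as provided by \eqref{e:relaxexp}.

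The technical obstacle I anticipate is precisely the bootstrap step: translating the $L^1$-closeness of $\rho$ to uniform into quantitative verification of (i) and (iii) with constants independent of $t$. For (i) this is clean thanks to the $W_1$-continuity \eqref{e:ur2} and compactness of $\T^n$ (so $L^1$-closeness implies $W_1$-closeness), but for (iii) one has to control not just $\st_\rho \ave{\cdot}_\rho$ but also its $x$-derivative as a bounded operator on $L^2(\rho)$, which requires regularity of the reproducing kernel at densities close to uniform---and this is what uniform regularity of $\cM$ is designed for. Once these continuity issues are settled the rest is a standard coupling of a hypocoercive estimate with a continuity argument.
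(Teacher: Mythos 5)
Your argument is essentially the same bootstrap/continuity argument the paper uses: translate smallness of $\cI(f_0)$ into $\|\rho_0 - 1/|\O|\|_1$ being small via $\cI \gtrsim \cH$ (log-Sobolev) and Csisz\'ar--Kullback, run the local solution as long as $\|\rho(t)-1/|\O|\|_1 < \d_0$, invoke \prop{p:mainrelax} on that interval to show the deviation stays strictly below $\d_0$, and use the thickness continuation criterion of \thm{t:lwp} to rule out finite-time breakdown. One small inaccuracy at the end: you claim the $L^\infty$ bound on the drift $\st_\rho\ave{u}_\rho$ follows from hypothesis (iii) together with decay of $\|u\|_{L^2(\k_\rho)}$, but (iii) is only an $L^2\to L^2$ bound and cannot by itself produce an $L^\infty$ bound; fortunately this is irrelevant, since the continuation criterion \eqref{e:rholow} in \thm{t:lwp} only requires $\inf_{[0,T)}\Th(\rho,\O)>0$, which your density bound together with Definition~\ref{d:th}~(iii),(iv) already ensures, so the global extension closes without any appeal to $\oW$.
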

\begin{proof}
By \defin{d:th} (iii),(iv) we can further reduce the size of $\d_0$ if necessary to have not only  the uniform spectral gap condition but also the uniform thickness condition satisfied, $\Th(\rho,\O) > c_4$. By the regularity assumption \eqref{e:r1}, the assumption on the spectral gap, and \eqref{e:stlow}, such densities fulfill all the conditions (i), (ii), (iii) of  \prop{p:mainrelax}, with constants $c_0,c_1,c_2,c_3,\e_0$ depending only on $\d_0$. And according to \thm{t:lwp} such data give rise to local classical solutions $f\in H_l^k$.

If \eqref{e:Ismall} holds, then by the \CK\ $\|\rho_0 -  \frac{1}{|\O|}\|_{L^1} \leq c_5 c \d_0$ for some absolute $c_5>0$. If $c< \frac{1}{2 c_5}$  then by continuity we have $\|\rho(t) -  \frac{1}{|\O|}\|_{L^1} < \d_0$ at least on some short time interval $[0,T)$. Let $T$ be the maximal time of existence of the local solution which satisfies the above. Note that the solution cannot blowup before it reaches the equality $\|\rho(t) -  \frac{1}{|\O|}\|_{L^1} = \d_0$, due to the continuation criterion \eqref{e:rholow}. Hence, if $T$ is finite it is only because we have  $\|\rho(T) -  \frac{1}{|\O|}\|_{L^1} =\d_0$ for the first time. 

The \prop{p:mainrelax}  then applies on $[0,T]$.  As a consequence,  $\|\rho(T) -  \frac{1}{|\O|}\|_{L^1} \leq c_6 c \d_0$ for all $t\leq T$ and some $c_6$ depending on the parameters of the model only. Assuming further that $c <  \frac{1}{2 c_6}$ we conclude that $T$ cannot be finite. Thus, the solution exists globally and satisfies $\|\rho(t) -  \frac{1}{|\O|}\|_{L^1} < \d_0$ for all time.   \prop{p:mainrelax} applies again to conclude the result.
\end{proof}

Before we state application of this result to particular models, let us address the issue of the time-dependent momentum for non-conservative case.  It turns out that  for all our core non-conservative models \ref{Mb}, including the Motsch-Tadmor model, the average momentum stabilizes.

\begin{lemma}
Suppose $\cM$ satisfies the assumptions of \prop{p:sid} and 
\begin{equation}\label{e:adjunif}
\ave{1}^*_{\frac{1}{|\O|} } = 1.
\end{equation}
For any solution $f$ that relaxes at an exponential rate in the sense of relative entropy there exists $u_\infty \in \R^n$ such that $\bar{u}(t) \to u_\infty$ exponentially fast, and consequently, 
\begin{equation}\label{e:relaxexp2}
\|f(t) - \mu_{\s,u_\infty}\|_{L^1(\domain)} \leq c_9 e^{- c_{10} t}.
\end{equation}
In particular, the conclusion applies to all  conservative and all \ref{Mb} models.
\end{lemma}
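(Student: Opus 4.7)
For conservative $\cM$ the ODE $\ddt\bar u = \int_\O(\ave{u}_\rho - u)\dk_\rho$, obtained by taking the first $v$-moment of \eqref{e:FPA}, vanishes identically, so $\bar u(t)\equiv\bar u(0)=:u_\infty$ and \eqref{e:relaxexp2} follows directly from \eqref{e:relaxexp}. In the non-conservative case my plan is to show that the same ODE has an exponentially small right-hand side, so that $\bar u(t)$ is Cauchy and converges exponentially.

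The key algebraic step is to isolate the obstruction to conservation. Passing to the $\k_\rho$-adjoint,
\begin{equation*}
\ddt\bar u = \int_\O u\,(\ave{1}_\rho^* - 1)\,\dk_\rho,
\end{equation*}
and observing that the scalar identity $\int_\O(\ave{1}_\rho^* - 1)\dk_\rho = 0$ holds automatically (pair $\ave{\cdot}^*$ against the constant $\one_\O$), the constant part of $u=\bar u + w$ drops out and I obtain
\begin{equation*}
\ddt\bar u = \int_\O w\,(\ave{1}_\rho^* - 1)\,\dk_\rho,\qquad w = u - \bar u.
\end{equation*}
Both factors should vanish at equilibrium. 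For the scalar factor, the reproducing-kernel identity $\st_\rho(y)\ave{1}_\rho^*(y) = \int_\O\phi_\rho(x,y)\drho(x)$, combined with the assumption $\ave{1}^*_{1/|\O|} = 1$ and the continuity-in-$\rho$ estimate \eqref{e:r2}, gives
\begin{equation*}
\|\ave{1}_\rho^* - 1\|_{L^\infty}\;\leq\;C\,\|\rho - 1/|\O|\|_{L^1(\O)},
\end{equation*}
which decays exponentially by \CK applied to the marginal density (the data-processing inequality bounds $\cH(\rho\,|\,1/|\O|)$ by the full relative entropy, which decays by \prop{p:mainrelax}). For the vector factor I use $\rho w = \int_{\R^n}(v-\bar u)(f - \mu_{\sigma,\bar u})\dv$ (since $\int v\,\mu_{\sigma,\bar u}\dv = \bar u$) and interpolate by Cauchy-Schwarz in both $v$ and $x$:
\begin{equation*}
\|\rho w\|_{L^1(\O)}\;\leq\;C\,\|f - \mu_{\sigma,\bar u}\|_{L^1(\domain)}^{1/2},
\end{equation*}
the implicit constant being controlled by the uniformly bounded second moment $\int(v-\bar u)^2(f+\mu_{\sigma,\bar u})\dv\dx$ (which follows from $f\in H^k_l$ with $l\geq 2$).

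Combining the two bounds with the decay rate from \eqref{e:relaxexp}, $|\ddt\bar u|\leq Ce^{-\alpha\sigma t}$ for some $\alpha>0$, which is integrable; hence $\bar u(t)$ is Cauchy and converges to some $u_\infty\in\R^n$ with $|\bar u(t) - u_\infty|\leq Ce^{-\alpha\sigma t}$. The triangle inequality $\|\mu_{\sigma,\bar u(t)} - \mu_{\sigma,u_\infty}\|_{L^1}\lesssim|\bar u(t) - u_\infty|$ together with \eqref{e:relaxexp} yields \eqref{e:relaxexp2}. To settle the ``in particular'' assertion, for every \ref{Mb} model a direct computation at $\rho\equiv 1/|\O|$ (where $\rho_\phi\equiv 1/|\O|$) shows $\ave{1}^*_{1/|\O|}\equiv 1$, so the theorem applies. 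The main subtlety I expect lies in recovering the uniform-in-$\rho$ continuity estimate \eqref{e:r2} for models (such as \ref{Mf}) whose regularity degrades at vacuum; this should be handled by using the global lower bound on density from \cor{c:denslow}, which applies precisely in the conservative/ball-positive regime where this relaxation machinery is deployed.
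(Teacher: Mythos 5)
Your proof is correct and is in substance the same as the paper's, though the bookkeeping differs. The paper computes $\bar u_t$ directly from the reproducing-kernel form, inserts the uniform-density quantities $\phi_{1/|\O|}$, $\st_{1/|\O|}$, uses \eqref{e:adjunif} to kill the residual term, and bounds each of the remaining differences by $C\,\|\rho-1/|\O|\|_1\sqrt{\cE}$ via the continuity assumptions; since the energy is uniformly bounded and $\|\rho-1/|\O|\|_1$ decays exponentially by the \CK, $\bar u_t$ is exponentially integrable. Your reformulation $\bar u_t=\int_\O w\,(\ave{1}^*_\rho-1)\dk_\rho$ is a clean way to package the same cancellation (the identity $\int_\O(\ave{1}^*_\rho-1)\dk_\rho=0$ you invoke does hold, by pairing against $\one_\O$ and using $\ave{\one_\O}_\rho=\one_\O$), and your $L^\infty$ bound on $\st_\rho\ave{1}^*_\rho-\st_\rho$ is exactly the paper's kernel comparison. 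Your additional step making the vector factor $w$ small via $\|\rho w\|_{L^1}\lesssim\|f-\mu_{\s,\bar u}\|_{L^1}^{1/2}$ is correct but unnecessary: the uniform bound $\|w\|_{L^1(\rho)}\lesssim\sqrt{\cE}$ already suffices, since all the required smallness sits in the scalar factor. One small correction to your closing remark: \cor{c:denslow} is not the right device to restore uniform regularity here, because the lemma must also cover non-conservative \ref{Mb} models where that corollary does not apply; the paper instead observes that exponential entropy decay forces $\|\rho-1/|\O|\|_1$ small via the \CK, whence uniform thickness follows from \defin{d:th} (iii)--(iv) — a mechanism you already use elsewhere in your argument, so the fix is immediate. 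Also note that for \ref{Mb} at $\rho\equiv 1/|\O|$ one has $\rho_\phi\equiv\|\phi\|_1/|\O|$ rather than $1/|\O|$, but the verification of \eqref{e:adjunif} goes through regardless.
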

\begin{proof}
By the \CK\ we have $\| \rho - \frac{1}{|\O|} \|_1 \lesssim e^{-c t}$. Thus, as we argued in the proof of \prop{p:sid}, the density eventually enters into a class satisfying all the functional requirements of  \prop{p:mainrelax} uniformly in time. 

We have
\begin{equation*}\label{}
\begin{split}
\bar{u}_t &= \int_\O \int_\O \phi_\rho(x,y) u(y) \rho(y) \dy \rho(x) \dx -  \int_\O \st_\rho(y) u(y) \rho(y) \dy \\
& =  \int_\O \int_\O (\phi_\rho(x,y) - \phi_{\frac{1}{|\O|}} (x,y) )  u(y) \rho(y) \dy \rho(x) \dx  \\
& +  \int_\O \int_\O \phi_{\frac{1}{|\O|}} (x,y)   u(y) \rho(y) \dy \left[ \rho(x) - \frac{1}{|\O|} \right] \dx \\
& +  \int_\O \left[  \frac{1}{|\O|} \int_\O \phi_{\frac{1}{|\O|}} (x,y) \dx  -\st_{\frac{1}{|\O|}}(y)  \right]  u(y) \rho(y) \dy \\
& + \int_\O \left[ \st_{\frac{1}{|\O|}}(y) - \st_\rho(y)  \right]  u(y) \rho(y) \dy .
\end{split}
\end{equation*}
By continuity assumptions all the terms on the right hand side are bounded by a constant multiple of $\| \rho - \frac{1}{|\O|} \|_1 \sqrt{\cE}$. Since the energy remains uniformly bounded all these terms  are exponentially decaying. This proves the exponential convergence $\bar{u}(t) \to u_\infty$ for some $u_\infty \in \R^n$.

Next, we have
\[
\int_{\domain} f \log \frac{f}{\mu_{\s,u_\infty}} \dv \dx =  \int_{\domain} f \log \frac{f}{\mu_{\s,\bar{u}}} \dv \dx +   \int_{\domain} f \log \frac{\mu_{\s,\bar{u}}}{\mu_{\s,u_\infty}} \dv \dx.
\]
The last term is a constant multiple of 
\[
\int_{\domain} f( |u_\infty - v|^2 - |\bar{u} - v|^2) \dv \dx =  |u_\infty|^2 - |\bar{u} |^2 + 2 \int_\O ( \bar{u} - u_\infty ) \cdot u \rho \dx \lesssim e^{-c t}.
\]
This finishes the proof.
\end{proof}

According to our computations of spectral gaps stated in \prop{p:gaps} and \prop{p:gapsMb}, we can apply the above results to conclude local relaxation for all core models. Let us gather all this in one statement. 

\begin{theorem}[Relaxation near equilibrium] \label{t:relaxlocal} Suppose that $\cM$ is a regular model satisfying
\begin{equation}\label{ }
\inf\{ \e_0(\rho): \ \| \rho - 1/|\O| \|_1 \leq \d_0 \} >0,
\end{equation}
and 
\[
\ave{1}^*_{\frac{1}{|\O|} } = 1.
\]
Then there exists a constant $c>0$ depending only on the parameters of the model such that for any initial condition $f_0\in H_l^k(\O)$ satisfying
\begin{equation}\label{e:Ismall}
\cI(f_0) \leq  c \s \d_0,
\end{equation}
there exists a global classical solution $f$ and there exists $ u_\infty \in \R^n$ such that 
\begin{equation}\label{e:relaxexp2}
\|f(t) - \mu_{\s,u_\infty}\|_{L^1(\domain)} \leq c_9 e^{- c_{10} t}.
\end{equation}
In particular, the conclusion applies to all core models \ref{CS}, \ref{MT}, \ref{Mb}, \ref{Mf}, \ref{Mseg}.
\end{theorem}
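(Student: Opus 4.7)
The plan is to reduce the theorem to the two results immediately preceding it and then check the hypotheses individually for each model in the list. The global existence together with exponential decay of $\|f(t) - \mu_{\s,\bar u(t)}\|_{L^1}$ toward the moving Maxwellian is exactly the content of \prop{p:sid}, while the promotion of $\bar u(t)$ to a limit $u_\infty$ with exponential rate — yielding \eqref{e:relaxexp2} — comes from the lemma stated just above the theorem, whose hypotheses match those of the theorem. Nothing more is needed on the abstract side, so the remaining task is to verify, for each of the five core models: (a) regularity of $\cM$, (b) a uniform spectral gap on $\{\|\rho - 1/|\O|\|_{L^1} \leq \d_0\}$ for some small $\d_0>0$, and (c) the adjoint-normalization $\ave{1}^*_{1/|\O|} = 1$.

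For the three symmetric ball-positive models \ref{CS} (with a Bochner-positive kernel), \ref{Mf}, and \ref{Mseg}, symmetry implies conservation, which by \eqref{e:*const} gives $\ave{1}^*_\rho = \one_\O$ for \emph{every} $\rho$, so (c) is automatic. Regularity (a) is entered in Table~\ref{t:rth}. For (b), \prop{p:gaps} furnishes lower bounds of the form $\e \gtrsim \orho_{r_0/2}^p(\O)$ (with $p=3,1,2L$ respectively), and since $\orho_{r_0/2}(\O)$ is continuous in $\rho$ in $L^1$ by \defin{d:th}(iv) and equals a fixed positive constant at $\rho \equiv 1/|\O|$, one selects $\d_0>0$ so that $\orho_{r_0/2}(\O)$ stays bounded away from zero throughout the $L^1$-ball of radius $\d_0$, producing a uniform gap $\e_0>0$.

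For the non-ball-positive models \ref{MT} and \ref{Mb} with $0\le \b<1$ (again assuming Bochner-positivity of the defining $\phi$ as in the setup for these estimates), hypothesis (b) is delivered verbatim by \prop{p:gapsMb} with an explicit $\d_0$ and $\e_0$ depending only on model parameters. Hypothesis (c) requires one direct check: at $\rho \equiv 1/|\O|$ the convolution $\rho_\phi$ is a constant, so the reproducing kernel $\phi_\rho(x,y) = \phi(x-y)/\rho_\phi^{1-\b}(x)$ reduces to a symmetric convolution-type kernel; the averaging then coincides with its $L^2(\k_\rho)$-adjoint, and (c) follows from (ev3). The only place where any computation occurs is this adjoint-at-uniform-density verification for \ref{MT} and \ref{Mb}; the rest of the proof is bookkeeping that packages \prop{p:sid}, the momentum-stabilization lemma, and the gap estimates of \prop{p:gaps}/\prop{p:gapsMb}, so no genuine analytical obstacle is expected.
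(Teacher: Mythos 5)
Your proposal is correct and matches the paper's intended proof, which is an assembly of the results immediately preceding the theorem: \prop{p:sid} gives global existence together with exponential decay of the relative entropy, the momentum-stabilization lemma upgrades $\bar u(t)$ to a limit $u_\infty$ under the adjoint-normalization hypothesis, and \prop{p:gaps} / \prop{p:gapsMb} supply the uniform spectral gap near the uniform density. Your checks of conditions (a)--(c) for each core model, including the observation that for \ref{MT}/\ref{Mb} the reproducing kernel becomes symmetric at $\rho\equiv 1/|\O|$ so $\ave{1}^*_{1/|\O|}=1$ follows from (ev3), and your use of Lipschitz continuity of the ball-thickness $\orho_r(\O)$ in $L^1$ to pick $\d_0$, are all sound; the only nuance worth keeping in mind is that, as you noted, the gap bounds from \prop{p:gaps}/\prop{p:gapsMb} already carry the standing Bochner-positivity assumption on $\phi$ for \ref{CS}, \ref{MT}, \ref{Mb}, which is exactly the hypothesis under which the paper proves those estimates.
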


We now gather a set of conditions which guarantees global relaxation.

\begin{theorem}[Global relaxation] \label{t:relax}
Suppose $\cM$ is a regular conservative model satisfying  \eqref{e:unifL2Linfty} and such that 
\begin{equation}\label{ }
\inf\{ \e_0(\rho): \orho_{r}(\O) > \d_0 \} >0,
\end{equation}
for some $r,\d_0 >0$. Then any classical global solution to the Fokker-Planck-Alignment equation \eqref{e:FPAhypo} relaxes to equilibrium as stated in \prop{p:mainrelax} with $\bar{u} = \bar{u}_0$.

In particular, global relaxation holds for the following models
\begin{itemize}
\item the Cucker-Smale model \ref{CS} with a Bochner-positive kernel $\phi = \psi \ast \psi$;
\item the \ref{Mf}-model with $\inf \phi > 0$;
\item the segregation model \ref{Mseg} with $\supp g_l = \O$ for all $l= 1,\ldots, L$.
\end{itemize}
\end{theorem}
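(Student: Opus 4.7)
The plan is to reduce to \prop{p:mainrelax} by verifying its three functional hypotheses uniformly in time from some moment $t_0>0$ onwards. Since $\cM$ is conservative, the total momentum is conserved along the flow of \eqref{e:FPAhypo}, so $\bar{u}(t)\equiv \bar{u}_0$ and the target Maxwellian is fixed. The first step is to produce a global classical solution together with a uniform-in-time lower bound on the density: \lem{l:HWcontrol} shows that conservativity combined with \eqref{e:unifL2Linfty} yields time-uniform bounds on $\oH$ and $\oW$; \thm{t:gwp} then provides a unique global classical solution in the class $H^k_l$, and the instantaneous density bound \eqref{e:denslowinst} yields, for any fixed $t_0>0$, a constant $\rho_->0$ depending only on $\cH_0$, $t_0$ and the parameters of $\cM$ such that $\rho(t,x)\geq \rho_-$ for all $t\geq t_0$.

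This density lower bound immediately gives the ball-thickness bound $\orho_r(\O)\geq c_n r^n \rho_-$ at every scale $r$. All spectral gap estimates of \prop{p:gaps} scale polynomially in $\orho_r(\O)$, so the infimum in the hypothesis of the theorem is positive for arbitrarily small $\d_0$; choosing $\d_0<c_n r^n \rho_-$ then produces a uniform spectral gap $\e_0>0$ for $t\geq t_0$, which is condition (ii). Condition (i) follows from (ev4) for the upper bound on $\st_\rho$, from \eqref{e:stlow} together with $\rho\geq \rho_-$ for the uniform lower bound $\st_\rho \geq c_0$, and from the regularity estimate \eqref{e:r1} for the gradient bound. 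For condition (iii), the $L^2(\rho)\to L^2(\rho)$ bound on $\st_\rho\ave{\cdot}_\rho$ follows from \lem{l:conscontr} ($\ave{\cdot}_\rho$ is $L^2(\k_\rho)$-contractive) combined with the two-sided bound on $\st_\rho$ and $\rho\geq \rho_-$; the gradient bound follows by differentiating the representation $\st_\rho \ave{u}_\rho(x)=\int_\O \phi_\rho(x,y)u(y)\drho(y)$ in $x$ and invoking the pointwise estimate $\|\p_x \phi_\rho\|_\infty \leq C(\rho_-)$ from \eqref{e:r1}, which yields $|\n_x(\st_\rho\ave{u}_\rho)(x)|\lesssim \|u\|_{L^1(\rho)}\lesssim \|u\|_{L^2(\rho)}$.

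With all three hypotheses verified, \prop{p:mainrelax} delivers exponential relaxation to $\mu_{\s,\bar{u}_0}$ on $[t_0,\infty)$, and the bound over the short initial interval $[0,t_0]$ is absorbed into the prefactor. For the three named examples it remains to verify the structural hypotheses: each is conservative because each is symmetric (\ref{CS} with $\phi=\psi\ast\psi$, \ref{Mf}, and \ref{Mseg} all appear in the symmetric row of the classification table in \sect{s:classes}), uniformly regular under the stated conditions on $\phi$ and $\{g_l\}$ (row three of Table~\ref{t:rth}), and satisfies \eqref{e:unifL2Linfty} (Table~\ref{t:admis}). Finally, \prop{p:gaps} furnishes the required ball-thickness-controlled spectral gap in each case, namely $\e \gtrsim \orho^3_{r_0/2}(\O)$, $\e \gtrsim \orho_{r_0/2}(\O)$, and $\e \gtrsim \orho_{\rseg}^{2L}(\O)$ respectively, all of which are positive whenever $\orho_r(\O)$ is. The main delicate point is the first case: ball-positivity requires the Bochner decomposition $\phi=\psi\ast\psi$, but this decomposition is compatible with uniform regularity because $\phi$ and all its derivatives remain smooth and bounded once $\psi$ is; no extra hypothesis beyond Bochner-positivity is therefore needed.
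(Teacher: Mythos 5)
Your argument follows the paper's own route exactly: invoke \thm{t:gwp} (enabled by \lem{l:HWcontrol}, since conservativity plus \eqref{e:unifL2Linfty} pins down $\oH$ and $\oW$) to obtain a uniform lower bound $\rho\geq\rho_-$ on $[t_0,\infty)$, then verify that this places the solution permanently in the class to which \prop{p:mainrelax} applies. The paper states this last step in a single sentence, whereas you unpack the verification of (i)--(iii) in full; that unpacking is correct, and the momentum identity $\bar u(t)\equiv\bar u_0$ for conservative models is also rightly recorded. One small quibble in your last paragraph: the \ref{CS} spectral-gap computation in \prop{p:gaps} uses not just $\phi=\psi\ast\psi$ but also the locality $\psi\geq c_0\one_{|x|<r_0}$ of \eqref{e:locker3}; Bochner-positivity alone does not force $\psi(0)>0$ (an annular $\psi$ is a counterexample), so "no extra hypothesis beyond Bochner-positivity" is slightly overstated, though this matches the standing assumption \eqref{e:locker} that the paper itself leaves implicit.
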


\begin{proof} According to  \thm{t:gwp} under the given conditions on $\cM$ any classical solution gains a uniform bound on the density from below  
$\rho(x,t) \geq \rho_-$, for $(x,t) \in \O \times [1,\infty)$. This automatically puts the solution into a class satisfying the assumptions of \prop{p:mainrelax} uniformly. 
\end{proof}

Let us remark that the only requirement that prevents global relaxation for \ref{Mf} and \ref{Mseg} models with general local kernels is the uniform $L^2 \to L^\infty$ boundedness \eqref{e:unifL2Linfty}, which is needed to control $\oW$.  However, this control can be regained if the solution is known to have uniformly bounded macroscopic velocities
\begin{equation}\label{ }
\sup_{t \geq 0} \| u(t) \|_\infty <\infty.
\end{equation}
This is precisely the result obtained for \ref{Mf}  in \cite{S-hypo}.

\section{Hydrodynamic limits}\label{s:hydrolim}

Supplementing the Vlasov equation \eqref{e:VA} with a strong penalization force 
\begin{equation}\label{e:VAF}
\p_t f^\e + v \cdot \n_x f^\e =  \n_v \cdot ( \st_\rho (v - \ave{u^\e}_{\rho^\e}) f^\e ) + \frac{1}{\e} F(f^\e),
\end{equation}
one can achieve regimes in which the distribution $f$ asymptotically takes a special form explicitly expressible in terms of the macroscopic quantities $u,\rho$. The limiting system satisfied by $u,\rho$ is called the Euler-alignment system \eqref{e:EAS}, in which the pressure law depends on the particular force $F$ used in the limit. In this study we will cover  two types of limits --  monokinetic and Maxwellian.

The monokinetic limit is achieved by enforcing strong local alignment $F = \n_v\cdot [ (v - u^\e) f^\e]$. The force penalizes deviation from the Dirac concentrated on $u^\e$, which drives the solution towards  monokinetic distribution $f = \rho(x,t) \d_{u(x,t)}(v)$, where $\rho, u$ solve the pressureless EAS
\begin{equation}\label{e:EASvelless}
\begin{split}
\p_t \rho + \n\cdot (u \rho) & = 0, \\
\p_t u + u \cdot \n u &  = \st_\rho (\ave{u}_\rho - u).
\end{split}
\end{equation}
Solutions to \eqref{e:EASvelless} will always be understood in smooth regularity classes such as
\begin{equation}\label{e:macroclass}
	(u,\rho) \in C_w([0,T); H^m \times (H^k\cap L^1_+)) \cap \Lip ([0,T); H^{m-1} \times (H^{k-1} \cap L^1_+)), 
\end{equation}
for $ m \geq k+1> \frac{n}{2} + 2$.  Local and global well-posedness theory for such solutions can be established for a variety of models and data, see \cite{Sbook} for a detailed analysis. Because of the maximum principle on $u$ which applies to solutions of \eqref{e:EASvelless} any initially compact flock $\supp \rho_0 \ss B_{R_0}$ will remain compactly supported on any finite time interval
\begin{equation}\label{e:comptsuppEAS}
\supp \rho(t) \ss B_{R(t)}, \quad R(t) \leq R_0 + A_0 t. 
\end{equation}

The history of this limit goes back to \cite{MV2008,KV2015} where the alignment term in \eqref{e:VA} is considered centered around zero velocity. In the settings of the classical Cucker-Smale model the hydrodynamic limit was studied in \cite{FK2019}. In both studies the force $F = \n_v \cdot [ (v - u^\e) f^\e]$ includes the rough macroscopic field $u^\e$ causing issues  with  uniqueness of characteristics of \eqref{e:VAF} and subsequently the transport of $f^\e$. These issues have been dealt with in  \cite{FK2019}  by imposing no vacuum condition $\rho >0$ and restricting analysis to the periodic domain. A more recent remake of Figalli-Kang's argument done in \cite{Sbook} avoids all these issues by replacing $u^\e$ with a mollified version if it, $u^\e_\d$, based on the \ref{Mf}-protocol. Such change allows to extend the limit to vacuous and compactly supported flocks on either $\T^n$ or $\R^n$. 

 In the context of the general environmental averaging models  this result can be broadly extended to include all uniformly regular models. Moreover,   in contrast to the previous studies  the convergence $f^\e \to f$ can be upgrade quantitatively to  Wasserstein-2 metric, see \thm{t:hydrolim}.

In the Maxwellian regime the force $F$ is given by the Fokker-Planck-Alignment operator 
\[
F = \D_v f^\e + \n_v\cdot[ (v - u^\e) f^\e].
\]
  The local thermodynamic equilibrium becomes the Maxwellian 
\[
f = \frac{ \rho(x,t) }{(2\pi)^{n/2}} e^{- \frac{|v - u(x,t)|^2}{2}},
\]
and so the corresponding macroscopic  model is given by the EAS with isothermal pressure
\begin{equation}\label{e:EASiso}
\begin{split}
\p_t \rho + \n\cdot (u \rho) & = 0, \\
\p_t (\rho u) + \n \cdot (\rho u \otimes u) + \n \rho &  = \rho\st_\rho   (\ave{u}_\rho - u).
\end{split}
\end{equation}
In the Cucker-Smale settings, this limit was justified in \cite{KMT2015} via the relative entropy method. Again, because of the roughness of $u^\e$ the result had to be cast in the settings of a special  class of weak solutions  established in \cite{KMT2013}, see also \cite{KMT2014} for the justification of a local alignment limit.  The work \cite{S-hypo} implemented similar method to prove hydrodynamic limit in the context of the \ref{Mf}-model. 

Now, we can cast the Maxwellian limit in the framework of general environmental averaging models with the additional implementation of the mollified local alignment field $u^\e_\d$ -- the same methodology we will be using in the monokinetic case. This allows to work in the class of classical solutions as stated in \thm{t:lwp} and \thm{t:gwp}. The limiting solution must be non-vacuous and the domain is restricted to the torus $\O = \T^n$. \thm{t:hydroMax} shows convergence $f^\e \to f$ in the relative entropy sense, which implies stronger convergence in $L^1$ by the \CK.

\subsection{Monokinetic limit}\label{s:mono}

In this section we discuss the monokinetic limit.  The analysis will be carried out on any environment $\O$, compact or not under the assumption of uniform regularity of $\cM$.

Let us consider solutions to the following Vlasov model with forced local alignment
\begin{equation}\label{e:VAe}
\p_t f^\e + v \cdot \n_x f^\e = \n_v \cdot ( \st_{\rho^\e} (v - \ave{u^\e}_{\rho^\e}) f^\e) + \frac{1}{\e}  \n_v\cdot( (v - u^\e_\d) f^\e),
\end{equation}
where subscript $\d$ designates a special mollification. 
To define it let us fix a smooth mollifier $\psi_\d(x) = \frac{1}{\d^n} \psi(x/\d)$, where $\psi >0$ on $\O$ and in the case of $\O = \R^n$ we assume that $\psi$ satisfies the algebraic decay condition \eqref{e:algker}.  Then let  $u_\d$ be the average of $u$ based on the $\cM_{\psi_\d}$-protocol,
\begin{equation}\label{e:fMTd}
u_\d =  \left( \frac{(u \rho)_{\psi_\d}}{\rho_{\psi_\d}} \right)_{\psi_\d}.
\end{equation}
Formally, \eqref{e:VAe} corresponds to the Vlasov equation \eqref{e:VA} based on the model given by
\[
\st^\e_\rho = \st_\rho + \frac{1}{\e}, \quad \ave{u}_\rho^{\e,\d} = \frac{\e \st_\rho}{\e \st_\rho +1} \ave{u}_\rho +\frac{1}{\e \st_\rho +1} u_\d.
\]
Clearly, since $\cM_{\psi_\d}$ and $\cM$ are uniformly regular, then so is the model above. 
Consequently, the global existence of classical compactly supported solutions to \eqref{e:VAe} is warranted in this case by \thm{t:VAgwp}.  Moreover, characteristics of  \eqref{e:VAe}  satisfy the usual maximum principle for velocities \lem{l:maxpr}. Hence, $|X^\e(t)| \leq R_0+t A_0$, where $R_0$ is the initial radius of the support in $x$ and $A_0$ is the maximal initial velocity. Thus, on any time interval $[0,T]$, the family $f^\e$ will be supported on a bounded region uniformly in $\e$ if initial $f_0$ is compactly supported.



Before we focus on the main convergence result let is go back to the defined mollification $u_\d$ and note another remarkable approximation property -- if $u$ is a smooth field, then $u_\d$ approximates $u$ with a quantitative bound {\em independent} of any regularity of $\rho$.  This allows to implement it to situations where the only information known on $\rho$ is its mass. The following is a generalization of such approximation property presented in \cite[Lemma 5.1]{Sbook}.

\begin{lemma}\label{l:mollest}
For any $u\in \Lip$ and for any $1\leq p <\infty$ one has
\begin{equation}\label{e:mollest}
\| u_\d - u \|_{L^p(\rho)} \leq C \d \|u\|_{\Lip},
\end{equation}
where $C>0$ is a constant depending only on the kernel $\psi$ and $p$. The estimate also holds for all $1 \leq p \leq \infty$ with $C$ independent of $p$ if $\psi$ is compactly supported.
\end{lemma}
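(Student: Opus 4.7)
The plan is to decompose $u_\d-u$ so that the (rough) density $\rho$ only ever enters via partitions of unity of total mass one, and then to close everything with two applications of Jensen's inequality. Writing $v_\d=(u\rho)_{\psi_\d}/\rho_{\psi_\d}$ for the inner Favre filtration, so that $u_\d=(v_\d)_{\psi_\d}$, and exploiting $\int\psi_\d\dz=1$, I would split
\[
u_\d(x)-u(x)=\int_\O \psi_\d(x-z)\,[v_\d(z)-u(z)]\dz+\int_\O \psi_\d(x-z)\,[u(z)-u(x)]\dz=:I(x)+II(x).
\]

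For $II$, a straight application of Jensen's inequality with the probability measure $\psi_\d(x-z)\dz$ followed by the Lipschitz bound $|u(z)-u(x)|\leq\|u\|_{\Lip}|z-x|$ and Fubini in the $x$-integral yields
\[
\int_\O |II(x)|^p\rho(x)\dx\leq \|u\|_{\Lip}^p\int_\O\int_\O \psi_\d(x-z)|z-x|^p\dz\,\rho(x)\dx=\|u\|_{\Lip}^p\d^p M_p,\qquad M_p=\int_\O \psi(w)|w|^p\dw,
\]
since $\rho$ has unit mass. For $I$, the same Jensen step (now in the variable $z$) plus Fubini with $\int\psi_\d(x-z)\rho(x)\dx=\rho_{\psi_\d}(z)$ (using symmetry of $\psi_\d$) gives
\[
\int_\O |I(x)|^p\rho(x)\dx\leq \int_\O |v_\d(z)-u(z)|^p\,\rho_{\psi_\d}(z)\dz.
\]
At this point the remaining task is to control the Favre defect $v_\d-u$ pointwise; here the Lipschitz regularity of $u$ again does the job once we view the Favre filtration as a weighted average: writing $v_\d(z)-u(z)=\int\psi_\d(z-y)[u(y)-u(z)]\rho(y)\dy/\rho_{\psi_\d}(z)$ and applying Jensen relative to the probability measure $\psi_\d(z-y)\rho(y)\dy/\rho_{\psi_\d}(z)$ produces
\[
|v_\d(z)-u(z)|^p\leq \|u\|_{\Lip}^p\,\frac{1}{\rho_{\psi_\d}(z)}\int_\O \psi_\d(z-y)|z-y|^p\rho(y)\dy.
\]

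Multiplying by $\rho_{\psi_\d}(z)$, integrating in $z$, and using Fubini once more cancels the $\rho_{\psi_\d}(z)$ and reduces the bound to the same $\|u\|_{\Lip}^p\d^p M_p$ as for $II$. Summing the two contributions gives $\|u_\d-u\|_{L^p(\rho)}\leq 2^{1/p}\,M_p^{1/p}\,\d\|u\|_{\Lip}$, which is \eqref{e:mollest} with $C=C(\psi,p)$. The only substantive requirement on the kernel is the finiteness of $M_p$, which is immediate when $\psi$ is compactly supported (in which case $M_p^{1/p}$ stays bounded even as $p\to\infty$, explaining the uniform constant in the last assertion) and, in the open-space case, follows from the algebraic tail \eqref{e:algker} provided $\gamma$ is large enough relative to $p$. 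The main subtlety I anticipate is purely bookkeeping, namely that $\rho$ is neither assumed regular nor bounded from below, so all kernel manipulations must be arranged so that $\rho_{\psi_\d}$ never needs to be inverted after the Fubini swap—this is precisely what the two-step Jensen argument above is designed to avoid.
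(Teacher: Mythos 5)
Your proof is correct and rests on the same decomposition as the paper's: both split $u_\d-u=(u_\d-u_{\psi_\d})+(u_{\psi_\d}-u)$ and both exploit the same cancellation $(u\rho)_{\psi_\d}-u\,\rho_{\psi_\d}=\int\psi_\d(\cdot-y)[u(y)-u(\cdot)]\rho(y)\,\dy$, so that the factor $\rho_{\psi_\d}$ never has to be inverted after Fubini. The execution differs: the paper dualizes against $f\in L^q(\rho)$ and closes the hard term with a H\"older step using the split weights $\rho_{\psi_\d}^{1/p}$ and $\rho_{\psi_\d}^{1/q}$ together with Minkowski, whereas you stay on the primal side and use Jensen twice, first with respect to $\psi_\d(x-z)\dz$ and then with respect to $\psi_\d(z-y)\rho(y)\dy/\rho_{\psi_\d}(z)$. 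Your route is slightly more elementary and symmetric between the two terms; the paper's duality version bounds the easy term $u_{\psi_\d}-u$ using only $\|f\|_{L^1(\rho)}$ and the first moment of $\psi$, while your Jensen treatment of that term uses the $p$-th moment — but since the hard term forces $M_p=\int|x|^p\psi(x)\dx<\infty$ in both arguments anyway (this is exactly the paper's constant $C_{p,\psi}$), nothing is lost. Both proofs share the same implicit tail requirement on $\psi$ in the open-space case and the same uniformity in $p$ for compactly supported $\psi$, which you correctly flag.
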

\begin{proof}
Let us fix a test-function $f\in L^q(\rho)$, where $q^{-1} + p^{-1} = 1$.  Then, let us split
\[
\int_\O f (u_\d - u) \rho \dx  =\int_\O f (u_\d - u_{\psi_\d}) \rho \dx + \int_\O f ( u_{\psi_\d} - u) \rho \dx : = I_1 + I_2.
\]
For $I_2$ we simply use the standard approximation property of mollification
\[
I_2 \leq \d \|u\|_{\Lip} \| f\|_{L^1(\rho)} \leq \d \|u\|_{\Lip} \| f\|_{L^q(\rho)}.
\] 
For $I_1$ we have, using Minkowskii and \HI,
\begin{equation*}\label{}
\begin{split}
I_1 &= \int_\O (f \rho)_{\psi_\d}  \frac{(u \rho)_{\psi_\d}}{\rho_{\psi_\d}} \dx + \int_\O (f \rho)_{\psi_\d}  \frac{u \rho_{\psi_\d}}{\rho_{\psi_\d}} \dx\\
&= \int_\O \frac{ (f \rho)_{\psi_\d} ( (u \rho)_{\psi_\d} - u \rho_{\psi_\d})}{\rho_{\psi_\d}} \dx = \int_\O \frac{ (f \rho)_{\psi_\d} }{\rho^{1/p}_{\psi_\d}}  \frac{ (u \rho)_{\psi_\d} - u \rho_{\psi_\d} }{\rho^{1/q}_{\psi_\d}} \dx \\
& \leq \left(  \int_\O \frac{| (f \rho)_{\psi_\d} |^q}{\rho^{q/p}_{\psi_\d}}  \dx \right)^{1/q} \left(  \int_\O \frac{| (u \rho)_{\psi_\d} - u \rho_{\psi_\d}  |^p}{\rho^{p/q}_{\psi_\d}}  \dx \right)^{1/p} \\
& \leq \left(  \int_\O | (|f|^q \rho)_{\psi_\d}  \dx \right)^{1/q} \left(  \int_{\O\times\O} |u(y) - u(x)|^p \rho(y) \psi_\d(x-y) \dy  \dx \right)^{1/p}  \\
& \leq \|f\|_{L^q(\rho)} \|u\|_{\Lip} \left(  \int_{\O\times\O} |x-y|^p \rho(y) \psi_\d(x-y) \dy  \dx \right)^{1/p}  = \d \|f\|_{L^q(\rho)} \|u\|_{\Lip}  C^{1/p}_{p,\psi},
\end{split}
\end{equation*}
where $C_{p,\psi} = \int_\O |x|^p \psi(x) \dx$.  This implies \eqref{e:mollest} for all $p<\infty$. If however $\psi$ is compactly supported, then $C_{p,\psi} \leq (\diam \supp \psi)^p$, and so the estimate holds also in the limit as $p\to \infty$.
\end{proof}

The main convergence result of this section will be quantified in terms of  $W_2$-metric:
\[
W_2^2(f_1,f_2) = \inf_{\g \in \Pi(f_1,f_2)} \int_{\O^2 \times \R^{2n}} |\w_1 - \w_2|^2 \dg(\w_1,\w_2),
\]
where $\Pi(f_1,f_2)$ is the set of probability measures with marginals $f_1$ and $f_2$, respectively.

\begin{theorem}\label{t:hydrolim}Suppose $\cM$ is a uniformly regular model.  Let $(\rho,u)$ be a classical solution to \eqref{e:EASvelless} on the time interval $[0,T)$ with compact support \eqref{e:comptsuppEAS}, and let $f = \rho(x,t) \d_{u(x,t)}(v)$. Suppose $f_0^\e \in C^k_0(\domain)$ is a family of initial conditions satisfying 
\begin{itemize}
	\item[(i)] $\supp f_0^\e \ss \{ |w| < R_0\}$;
	\item[(ii)] $W_2(f_0^\e, f_0) \leq \e$.
\end{itemize}
Then there exists a constant $C$ such that for all $t<T$ one has
\begin{equation}\label{e:W1ed}
W_2(f^\e_t, f_t)   \leq C  \sqrt{\e + \frac{\d}{\e}}.
\end{equation}
\end{theorem}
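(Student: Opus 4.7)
\medskip

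\noindent\textbf{Proof proposal for \thm{t:hydrolim}.} My plan is to implement a quantitative Lagrangian coupling argument along the lines of the Figalli--Kang scheme, adapted here to the \emph{general} environmental averaging setting and quantified in $W_2$ rather than $W_1$. Let $\pi_0\in\Pi(f_0^\e, f_0)$ be an optimal $W_2$-coupling, so that samples have the form $(\a,w;\b, u_0(\b))$ (since $f_0=\rho_0\otimes \d_{u_0}$), and $\int(|\a-\b|^2+|w-u_0(\b)|^2)\dpi_0\leq \e^2$. Flow $(\a,w)$ along the characteristics $(X^\e,V^\e)$ of \eqref{e:VAe} and $\b$ along the EAS Lagrangian $\dot{X}^0=u(X^0,t)$; by \thm{t:VAgwp} the former are globally defined and the push-forward $\pi_t$ of $\pi_0$ is a coupling of $f_t^\e$ and $f_t$. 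Define
\[
\cE(t) = \int \left[ |X^\e_t-X^0_t|^2 + |V^\e_t - u(X^0_t,t)|^2\right] \dpi_0,
\]
so that $W_2^2(f_t^\e, f_t)\leq \cE(t)$.

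The key differentiation step exploits the EAS momentum identity $(\p_t+ u\cdot \n)u = \st_\rho(\ave{u}_\rho - u)$, evaluated along $X^0_t$. Setting $\Delta V:=V^\e_t-u(X^0_t,t)$ and using Young's inequality to split the penalization term $\tfrac{1}{\e}(u^\e_\d(X^\e_t)-V^\e_t)= -\tfrac{1}{\e}\Delta V + \tfrac{1}{\e}(u^\e_\d(X^\e_t)-u(X^0_t,t))$, I would arrive at
\[
\ddt |\Delta V|^2 \leq -\tfrac{1}{\e}|\Delta V|^2 + \tfrac{1}{\e}\left|u^\e_\d(X^\e_t)-u(X^0_t,t)\right|^2 + 2\Delta V\cdot \left[\st_{\rho^\e}(X^\e_t)(\ave{u^\e}_{\rho^\e}(X^\e_t)-V^\e_t) - \st_\rho(X^0_t)(\ave{u}_\rho(X^0_t)-u(X^0_t,t))\right].
\]
The alignment difference in the bracket is controlled via uniform regularity of $\cM$: by \eqref{e:ur1}-\eqref{e:ur2} it is bounded by $C(|X^\e_t - X^0_t| + |\Delta V| + W_1(\rho^\e,\rho) + W_1(u^\e\rho^\e, u\rho))$, and since velocities are uniformly bounded (maximum principle \lem{l:maxpr}), $W_1$ of momenta is controlled by $W_2(f_t^\e,f_t)\leq \sqrt{\cE(t)}$. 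After an application of Cauchy--Schwarz, this contributes $\leq C\cE(t)$.

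The main obstacle, and the heart of the argument, is to control the $\frac{1}{\e}$-weighted mollification error via the telescoping
\[
u^\e_\d(X^\e_t)-u(X^0_t,t) = \bigl[u^\e_\d-u_\d\bigr](X^\e_t) + \bigl[u_\d(X^\e_t) - u_\d(X^0_t)\bigr] + \bigl[u_\d(X^0_t) - u(X^0_t,t)\bigr] =: T_1 + T_2 + T_3.
\]
The term $T_2$ is bounded by $\|\n u_\d\|_\infty|X^\e_t - X^0_t|$ (where $\|\n u_\d\|_\infty$ is controlled because $u$ is a classical EAS solution, not by $\delta$-dependent constants), integrating to $\leq C\cE^x(t)\leq C\cE(t)$. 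For $T_3$, \lem{l:mollest} applied to the smooth field $u$ against the marginal $\rho_t$ of $\pi_t$ gives $\int |T_3|^2 \dpi_0\leq C\d^2\|u\|^2_{\Lip}$, contributing the decisive $O(\d^2/\e)$ error. Finally, $T_1$ must be shown to be $O(\sqrt{\cE(t)})$ in $L^2(\pi_0)$; this follows from the structure of the double Favre filtration \eqref{e:fMTd} together with the uniform-in-$\e$ support/energy bounds and the uniform regularity of $\cM_{\psi_\d}$ (applied with fixed $\d$), which makes $u^\e_\d$ continuously depend on the pair $(\rho^\e,u^\e\rho^\e)$ with a modulus controlled by $W_1(\rho^\e,\rho)+W_1(u^\e\rho^\e,u\rho)\lesssim W_2(f^\e_t,f_t)\leq \sqrt{\cE(t)}$. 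Combining all contributions yields the Gr\"onwall-type inequality
\[
\ddt \cE(t) \leq C\cE(t) + \frac{C\d^2}{\e},
\]
whence $\cE(t)\leq e^{Ct}(\cE(0)+C\d^2/\e)\leq C_T(\e^2+\d^2/\e)\leq C_T(\e+\d/\e)$ for $\d,\e<1$, yielding \eqref{e:W1ed}. The crux of the argument lies in absorbing every $T_i$ into either $\cE$ or the admissible $O(\d^2/\e)$ error without introducing $\d$-dependent constants in the Gr\"onwall rate; this is the delicate balance between the penalization dissipation and the mollification width.
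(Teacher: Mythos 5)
Your setup (Lagrangian coupling, push-forward of $\pi_0$, decomposing the $W_2$ cost into position and velocity parts) mirrors the paper's starting point, but the way you handle the velocity component has a genuine gap that cannot be patched within your scheme. You differentiate $|\Delta V|^2$ pointwise along characteristics and apply Young's inequality to the penalization term, producing
$\ddt|\Delta V|^2 \leq -\tfrac{1}{\e}|\Delta V|^2 + \tfrac{1}{\e}|T_1+T_2+T_3|^2 + \text{(alignment)}$.
The fatal issue is the $\tfrac{1}{\e}$ prefactor on the cross terms $T_1$ and $T_2$: after integration against $\pi_0$ you obtain contributions of the form $\tfrac{C}{\e}\cE^x(t)$ (from $T_2$, via $|T_2|\lesssim \|\n u_\d\|_\infty |X^\e-X^0|$) and $\tfrac{C(\d)^2}{\e}\cE(t)$ (from $T_1$). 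The velocity dissipation $-\tfrac{1}{\e}|\Delta V|^2$ cannot absorb either: those terms live in the position and mixed parts of $\cE$, not the velocity part. The resulting Gr\"onwall rate is $O(1/\e)$, not $O(1)$, and the bound degenerates as $\e\to 0$. You remark that $T_2$ "integrates to $\leq C\cE^x$" but this omits the $\tfrac{1}{\e}$ weight. Moreover, your claim that $T_1=O(\sqrt{\cE})$ in $L^2(\pi_0)$ requires the uniform regularity/continuity constants for the mollification model $\cM_{\psi_\d}$, and those constants \emph{do} depend on $\d$ (the Lipschitz norms of $\psi_\d$ scale like $\d^{-n-1}$); taking the natural scaling $\d\sim\e^2$ makes this worse, not better.

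The paper sidesteps both problems by abandoning the pointwise characteristic estimate for the velocity part. It bounds $W_v \leq \en(f^\e|u) + C W_x$ \emph{kinematically} (no time derivative of $W_v$), and then tracks the evolution of the modulated kinetic energy $\en(f^\e|u)$ using the macroscopic energy balance. The crucial ingredient there is a duality step: the local alignment error $A_\loc = \tfrac{1}{\e}\int \rho^\e(u^\e_\d - u^\e)\cdot u$ is rewritten, using the symmetry of the $\cMfmt$-filtration defining $u^\e_\d$, as $\tfrac{1}{\e}\int\rho^\e u^\e\cdot(u_\d - u)$, shifting the $\d$-mollification from the rough field $u^\e$ onto the smooth field $u$. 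Lemma~\ref{l:mollest} then gives $A_\loc\lesssim \d/\e$ with constants independent of $\rho^\e$, $\e$, and $\d$. That symmetry trick is exactly what your telescoping misses; without it, the $\tfrac{1}{\e}$-weighted mollification error is not controllable. Your use of Lemma~\ref{l:mollest} for $T_3$ alone captures only a third of the decomposition and not the part that actually threatens the estimate.
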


\begin{remark}
Let us note that the scaling regime $\d = \e^2$ appears to be the most optimal:  if $\d \ll \e^2$, the model becomes over-resolved without improvement on convergence rate of solutions, if $\d \gg \e^2$, the model is under-resolved and the convergence rate  slows down. We obtain in this case the optimal rate of $\sqrt{\e}$:
\begin{equation}\label{e:W1e}
W_2(f^\e, f)   \leq C  \sqrt{\e}.
\end{equation}
\end{remark}

\begin{remark}
Not that $W_2(f^\e, f) \to 0$ also implies convergence of densities, simply because $\rho$'s are marginals of $f$'s: $W_2(\rho^\e, \rho) \leq W_2(f^\e, f)$. Similarly, since all distributions are confined to a bounded set, we also have $W_1(u^\e \rho^\e, u \rho) \leq C W_1(f^\e, f) \leq C W_2(f^\e, f) $. So, this also implies  the convergence of momenta.
\end{remark}

\begin{remark} The theorem applies to a range of core models listed in Table~\ref{t:rth}.
However, we also note that the uniform regularity  is only needed to facilitate global existence of solutions. The actual assumptions that are needed to run the argument for a given family of solutions are  \eqref{e:unifL2L2}, \eqref{e:glob1} - \eqref{e:glob2}, where $\rho' = \rho$ is the limiting density, and $\| \p_y \phi_\rho \|_\infty <C$. Thus, if the limiting density is known to be thick and the model is simply regular and satisfies  \eqref{e:unifL2L2}, \eqref{e:glob1} - \eqref{e:glob2},  then the theorem applies just as well to putative solutions and extends to a much wider class of models listed in the last row of Table~\ref{t:rth}.
\end{remark}

\begin{proof} Let us first note that since all densities are compactly supported the model satisfies all the estimates 
\eqref{e:ur1}-\eqref{e:ur2}  for $\rho$ and $\rho^\e$ uniformly on $[0,T]$.

Denoting
\[
\cE_\e  = \frac{1}{2} \int_{\domain} | v |^2 f^\e(x,v) \dx \dv,
\]
we have the following energy balance relation  for solutions of \eqref{e:VAe}:
\begin{equation}\label{e:ee1}
\ddt \cE_\e  = - \int_{\domain} \st_{\rho^\e} | v |^2 f^\e \dv \dx + (u^\e , \ave{u^\e}_{\rho^\e} )_{\k_{\rho^\e}} +  \frac{1}{\e} \int_{\O} \frac{ | (\rho^\e u^\e)_{\psi_\d} |^2}{\rho^\e_{\psi_\d}} \dx - \frac{2}{\e} \cE_\e.
\end{equation}
Noting that 
\[
 \int_{\domain} \st_{\rho^\e} | v |^2 f^\e \dv \dx \geq  (u^\e , u^\e )_{\k_{\rho^\e}}
\]
we obtain
\begin{equation}\label{e:ee2}
\ddt \cE_\e  \leq   (u^\e , \ave{u^\e}_{\rho^\e} - u^\e )_{\k_{\rho^\e}} +  \frac{1}{\e} \int_{\O} \frac{ | (\rho^\e u^\e)_{\psi_\d} |^2}{\rho^\e_{\psi_\d}} \dx - \frac{2}{\e} \cE_\e.
\end{equation}
Obviously the last two terms store a lot of dissipative information. The crucial observation is that they control internal energies of $f^\e$ both the native one relative to the local field $u^\e$ and relative to the filtered field $u^\e_\d$. To see that let us note the following two identities
\begin{align*}
\en(f^\e | u^\e) &: =	\frac{1}{2} \int_{\domain} | v - u^\e|^2 f^\e \dv \dx =\ \cE_\e - 	\frac{1}{2} \int_{\R^{n}} \rho^\e |u^\e|^2 \dx, \\
\en(f^\e | u^\e_\d) &: =	\frac{1}{2} \int_{\domain} | v - u^\e_\d|^2 f^\e \dv \dx = \ \cE_\e -  \int_{\O} \frac{ | (\rho^\e u^\e)_{\psi_\d} |^2}{\rho^\e_{\psi_\d}} \dx  + \frac12 \int_{\O}  \rho^\e |u^\e_\d|^2 \dx.
\end{align*}
Summing up we obtain
\[
\en(f^\e | u^\e) + \en(f^\e | u^\e_\d) = 2\cE_\e -  \int_{\O} \frac{ | (\rho^\e u^\e)_{\psi_\d} |^2}{\rho^\e_{\psi_\d}} \dx + \frac12 \int_{\R^{n}} \rho^\e |u^\e_\d|^2  \dx - 	\frac{1}{2} \int_{\R^{n}} \rho^\e |u^\e|^2 \dx,
\]
and since the $\cMfmt$-model is contractive, the last two terms add up to a non-positive value. Thus,
\[
2\cE_\e -  \int_{\O} \frac{ | (\rho^\e u^\e)_{\psi_\d} |^2}{\rho^\e_{\psi_\d}} \dx \geq  \en(f^\e | u^\e) + \en(f^\e | u^\e_\d).
\]
Consequently, plugging this pack into \eqref{e:ee2} we obtain
\begin{equation}\label{e:ened}
\ddt \cE_\e \leq  (u^\e , \ave{u^\e}_{\rho^\e} - u^\e )_{\k_{\rho^\e}} - \frac{1}{\e} [\en(f^\e | u^\e) + \en(f^\e | u^\e_\d)].
\end{equation}
 
The energy inequality \eqref{e:ened} already shows that the solution concentrates to a monokinetic form near its own macroscopic field. However, the quantity that controls how far that concentration is from $u$, is the modulated kinetic energy:
\[
\en(f^\e | u) = \frac12 \int_{\domain} | v - u|^2 f^\e \dv \dx.
\]
This quantity plays a key role in the argument. It should be noted that it controls the corresponding macroscopic relative entropy
\begin{equation}\label{e:mrl}
\begin{split}
	\int_{\O}  \rho^\e|u^\e - u|^2 \dx & =  \int_{\O}(  |u^\e|^2 \rho^\e - 2 u^\e \cdot u \rho^\e + |u|^2 \rho^\e) \dx \\
	&\leq \int_{\domain} (|v|^2 f^\e - 2 u \cdot v f^\e + |u|^2 f^\e )\dx \dv = \en(f^\e|u).
\end{split}
\end{equation}

According to (ii) we can fix an initial $\g_0 \in \Pi(f_0^\e,f_0)$ such that 
\[
\int_{\O^2 \times \R^{2n}} |\w_1 - \w_2|^2 \dg_0(\w_1,\w_2) \leq 2\e^2.
\]
Let us now propagate $\g_0$ along the direct product of characteristic maps of \eqref{e:VAe} and \eqref{e:VA}, i.e. let $\g_t$ by the measure-valued solution to the transport equation
\[
\p_t \g + v_1 \cdot \n_{x_1} \g + v_2 \cdot \n_{x_2} \g + \n_{v_1}[ \g (\st_{\rho^\e} (v_1 - \ave{u^\e}_{\rho^\e}) + \frac{1}{\e} (v_1 - u^\e_\d)) ] + \n_{v_2}[ \g \st_\rho( v_2 - \ave{u}_\rho)] = 0.
\]
Integrating upon pairs $(x_1,v_1)$ and $(x_2,v_2)$ we can see that the marginals of $\g$ satisfy the same transport equations as $f$ and $f^\e$, respectively. Consequently, by uniqueness, $\g_t \in \Pi(f^\e_t,f_t)$ for all time. This means that the cost of $\g_t$ dominates the $W_2$-distance at any time, 
\[
W : = \int_{\O^2 \times \R^{2n}} |\w_1 - \w_2|^2 \dg_t(\w_1,\w_2)  \geq W_2^2(f^\e,f).
\]

Let us split $W$ into potential and kinetic components
\[
W =  \int_{\O^2 \times \R^{2n}} |v_1 - v_2|^2 \dg +  \int_{\O^2 \times \R^{2n}} |x_1 - x_2|^2 \dg : =  W_v + W_x.
\]
Evolution of the potential component is easily estimated using the transport of $\g$
\begin{multline*}
\ddt W_x = \ddt  \int_{\O^2 \times \R^{2n}} |X^\e(\w_1,t) - X(\w_2,t) |^2 \dg_0 \\
 = 2 \int_{\O^2 \times \R^{2n}} (X^\e(\w_1,t) - X(\w_2,t)) \cdot  (V^\e(\w_1,t) - V(\w_2,t))  \dg_0 \leq W_x+ W_v.
\end{multline*}

Instead of writing the evolution equation for $W_v$ we subordinate it to the internal energy, and trace its evolution. Let us make the following estimate
\begin{equation*}\label{}
\begin{split}
W_v & \leq  \int_{\O^2 \times \R^{2n}} |v_1 - u(x_1) |^2 \dg + \int_{\O^2 \times \R^{2n}} |u(x_1) - u(x_2)|^2 \dg +  \int_{\O^2 \times \R^{2n}} |u(x_2) - v_2|^2 \dg  \\
& \leq \int_{\O \times \R^{n}} |v - u(x) |^2 f^\e(x,v)\dv\dx + C  \int_{\O^2 \times \R^{2n}} |x_1 - x_2|^2 \dg + 0
\end{split}
\end{equation*}
where the last term canceled thanks to the monokinetic nature of $f$,
\[
= \en(f^\e|u) + C W_x.
\]

We have obtained so far
\begin{equation}\label{e:syseW}
\begin{split}
\ddt W_x & \leq \en(f^\e|u) + c_1 W_x, \\
W_v & \leq \en(f^\e|u) + c_2 W_x.
\end{split}
\end{equation}
To complete this system we now investigate evolution of the internal energy itself.

Before we write the equation for the modulated energy $\en(f^\e|u)$, let us recall that we are dealing with smooth solutions to both so all the computations are legitimate. From \eqref{e:VAe} we can read off  the macroscopic system for the $\e$-density and momentum
\[
\left\{
\begin{split}
\rho^\e_t + \n \cdot (\rho^\e u^\e) & = 0, \\
(\rho^\e u^\e)_t + \n_x \cdot (\rho^\e u^\e \otimes u^\e + \cR_\e) &= \k_{\rho^\e} (  \ave{u^\e}_{\rho^\e} - u^\e ) + \frac{1}{\e} \rho^\e ( u^\e_\d - u^\e),
\end{split} \right. 
\]
where the Reynolds stress is given by
\[
\cR_\e = \int_{\R^n}( v - u^\e) \otimes (v - u^\e) f^\e(x,v,t) \dv.
\]

Let us expand $\en(f^\e|u)$ into three parts
\[
	\en(f^\e|u)  = \cE_\e - \int_{\R^{n}} \rho^\e u^\e \cdot u \dx + \frac12\int_{\R^{n}} \rho^\e  |u|^2 \dx.
\]
From the energy inequality \eqref{e:ened} we will only retain the alignment component (to be used later) and the  native internal energy
\begin{equation}\label{e:enDpure}
\ddt  \cE_\e  \leq (u^\e , \ave{u^\e}_{\rho^\e} - u^\e )_{\k_{\rho^\e}}   -  \frac{1}{\e} \en(f^\e | u^\e).
\end{equation}
Let us work out the equation for the macroscopic part:
\begin{align}
 \ddt \int_{\O} \rho^\e u^\e \cdot u \dx  &= \int_{\O} \p_t (\rho^\e u^\e) \cdot u \dx + \int_{\O} \rho^\e u^\e \cdot \p_t u \dx \label{e:eeu} \\
&= \int_{\R^{n}}  ( \rho^\e u^\e \otimes u^\e + \cR_\e ) : \n u \dx  - \int_{\O} \rho^\e u^\e \otimes u : \n u \dx & & \text{(inertia)}  \notag\\
& + ( \ave{u^\e}_{\rho^\e} - u^\e , u )_{\k_{\rho^\e}}  +  \int_\O \rho^\e u^\e \cdot ( \ave{u}_\rho - u) \st_\rho \dx & & \text{(native alignment)} \notag \\
&+ \frac{1}{\e} \int_{\O} \rho^\e ( u^\e_\d - u^\e) \cdot u \dx  & & \text{(local alignment)}  \notag\\
\ddt  \frac12\int_{\O} \rho^\e |u|^2 \dx & = \int_{\O} \rho^\e  u \cdot \p_t u \dx +  \frac12\int_{\O} \p_t \rho^\e |u|^2  \dx \label{e:euu}\\
& = - \int_{\O} \rho^\e u \otimes u : \n u \dx +  \int_{\O} \rho^\e u \otimes u^\e : \n u \dx & & \text{(inertia)}  \notag \\
& +  \int_\O \rho^\e u \cdot ( \ave{u}_\rho - u) \st_\rho \dx  & & \text{(native alignment)}. \notag
\end{align}
Putting the two equations together and collecting all the inertia terms and using \eqref{e:mrl} we obtain
\[
- \int_{\O}  \rho^\e (u^\e - u) \otimes (u^\e - u) : \n u \dx \leq \|\n u\|_\infty  \int_{\O}  \rho^\e|u^\e - u|^2 \dx \lesssim  \en(f^\e|u).
\]

The Reynolds stress is estimated similarly
\[
 \int_{\O}   \cR_\e  : \n u \dx \leq \|\n u\|_\infty  \int_{\domain} | v - u^\e(x,t)|^2 f^\e(x,v,t) \dx \dv \lesssim \en(f^\e|u^\e).
\]

As to the local alignment term, we use the symmetry and approximation property of  the \ref{Mf}-averaging used to define $u^\e_\d$, which is crucially independent of regularity of $\rho^\e$. Namely, by \lem{l:mollest} with $p=2$, we have
\begin{multline*}
	\int_{\O} \rho^\e ( u^\e_\d - u^\e) \cdot u \dx = \int_{\O} \rho^\e u^\e_\d \cdot u \dx - \int_{\O} \rho^\e  u^\e \cdot u \dx =  \int_{\O} \rho^\e u^\e \cdot u_\d \dx - \int_{\O} \rho^\e  u^\e \cdot u \dx \\
	= \int_{\O} \rho^\e u^\e \cdot ( u_\d- u) \dx \leq C \|u^\e\|_{L^2(\rho^\e)}  \d \|\n u\|_\infty.
\end{multline*}
Thus, the local alignment term can be estimated by 
\begin{equation}\label{e:amplify}
A_\loc = \frac{1}{\e} \int_{\R^{n}} \rho^\e ( u^\e_\d - u^\e) \cdot u \dx    \lesssim  \|u^\e\|_{L^2(\rho^\e)} \frac{\d}{\e}.
\end{equation}
Note that the energy $  \|u^\e\|_{L^2(\rho^\e)}$ remains uniformly bounded in $\e$, so,
\begin{equation}\label{e:amplify2}
A_\loc   \lesssim  \frac{\d}{\e}.
\end{equation}

Let us collect the obtained estimates \eqref{e:enDpure}, \eqref{e:eeu}, \eqref{e:euu}, and simplify the native alignment components
\begin{equation}\label{}
 \ddt \en(f^\e|u)  \lesssim  \en(f^\e|u) + \frac{\d}{\e} +(u^\e - u , \ave{u^\e}_{\rho^\e} - u^\e )_{\k_{\rho^\e}} +  \int_\O \rho^\e (u - u^\e) \cdot ( \ave{u}_\rho - u) \st_\rho \dx .
\end{equation}
It remains to estimate the alignment terms. Let us rearrange them as follows
\begin{equation*}\label{}
\begin{split}
\d A & = (u^\e - u , \ave{u^\e}_{\rho^\e} - u^\e )_{\k_{\rho^\e}} +  \int_\O \rho^\e (u - u^\e) \cdot ( \ave{u}_\rho - u) \st_\rho \dx  \\
& = \int_\O \rho^\e (u - u^\e) \cdot ( \st_{\rho^\e} \ave{u^\e}_{\rho^\e} -   \st_\rho \ave{u}_\rho +\st_\rho u - \st_{\rho^\e} u^\e) \dx \\
& = (u^\e - u , \ave{u^\e - u }_{\rho^\e}  )_{\k_{\rho^\e}} +  \int_\O \rho^\e(u - u^\e) \cdot ( \st_{\rho^\e} \ave{u}_{\rho^\e} -   \st_\rho \ave{u}_\rho ) \dx  \\
& - \int_\O \rho^\e |u - u^\e|^2 \st_{\rho^\e} \dx + \int_\O \rho^\e(u - u^\e) \cdot u ( \st_{\rho^\e} -\st_{\rho} )\dx : = I + II + III + IV.
\end{split}
\end{equation*}
Note that I and III would add up to a non-positive constant had we assumed that our model was contractive. Instead, we simply drop III and  use the uniform boundedness   \eqref{e:unifL2L2}  which is implied by \eqref{e:ur1} to estimate I by 
the macroscopic relative entropy
\[
I \lesssim \int_{\O}  \rho^\e|u^\e - u|^2 \dx.
\]
Since  $u\in W^{1,\infty}$, using \eqref{e:ur2} the second term is bounded by
\begin{equation*}\label{}
\begin{split}
 \int_\O \rho^\e(u - u^\e) \cdot ( \st_{\rho^\e} \ave{u}_{\rho^\e} -   \st_\rho \ave{u}_\rho ) \dx& \lesssim \int_{\O}  \rho^\e |u^\e - u|^2 \dx +  \int_{\O}   \left| \int_\O (\phi_{\rho^\e} (x,y) \rho^\e(y) -\phi_{\rho} (x,y) \rho(y)) u(y) \dy \right|^2 \rho^\e(x)\dx  \\
 &\leq \int_{\O}  \rho^\e |u^\e - u|^2 \dx+ \int_{\O}   \left| \int_\O (\phi_{\rho^\e} (x,y) -\phi_{\rho} (x,y)) \rho^\e(y) u(y) \dy \right|^2 \rho^\e(x)\dx \\
 & + \int_{\O}   \left| \int_\O \phi_{\rho} (x,y) (\rho^\e(y) -  \rho(y)) u(y) \dy \right|^2 \rho^\e(x)\dx\\
 & \leq  \int_{\O}  \rho^\e |u^\e - u|^2 \dx + \|u\|_\infty^2  \int_{\O}  \int_\O | \phi_{\rho^\e} (x,y) -\phi_{\rho} (x,y)|^2 \rho^\e(y)  \rho^\e(x) \dy \dx\\
 & + \| \p_y ( \phi_{\rho} u)\|_\infty^2 W^2_1(\rho^\e,\rho)   ,
\end{split}
\end{equation*}
which by \eqref{e:ur1}-\eqref{e:ur2} (or in fact by a weaker assumption \eqref{e:glob1} - \eqref{e:glob2}) is bounded further by 
\[
\lesssim  \int_{\O}  \rho^\e |u^\e - u|^2 \dx + W^2_1(\rho^\e,\rho) .
\]
Finally, by \eqref{e:ur2} the last term is bounded by the same quantity
\[
\int_\O \rho^\e(u - u^\e) \cdot u ( \st_{\rho^\e} -\st_{\rho} )\dx \lesssim \int_{\O}  \rho^\e |u^\e - u|^2 \dx +  \int_{\O}  \rho^\e | \st_{\rho^\e} - \st_\rho|^2 \dx \leq   \int_{\O}  \rho^\e |u^\e - u|^2 \dx + W^2_1(\rho^\e,\rho). 
\]
In summary, the alignment term is bounded by 
\begin{equation}\label{e:Amono}
\d A \lesssim \int_{\O}  \rho^\e|u^\e - u|^2 \dx +W^2_1(\rho^\e,\rho)  \leq \en(f^\e|u) +  W^2_2(\rho^\e,\rho).
\end{equation}

Collecting all the estimates together we obtain 
\[
\ddt   \en(f^\e|u)  \lesssim   \en(f^\e|u)  + \frac{\d}{\e} + W^2_2(\rho^\e,\rho).
\]
Note that since the $(x_1,x_2)$-marginal of $\g$ belongs to $\Pi(\rho^\e,\rho)$ we further find $W^2_2(\rho^\e,\rho) \leq W_x$. So, we have obtained the system
\begin{equation*}\label{}
\begin{split}
\ddt W_x & \leq \en(f^\e|u) + c_1 W_x, \\
\ddt   \en(f^\e|u) & \leq c_2 \left( \en(f^\e|u) + W_x + \frac{\d}{\e} \right).
\end{split}
\end{equation*}

Note that the initial value of $\en(f^\e|u)  + W_x$ is bounded by a constant multiple of $\e$ in view of the choice of $\g_0$ for $W_x$ (even $\e^2$ in this case), and
\[
\en(f^\e_0|u_0) = \int_\domain |v - u_0 |^2 \df_0^\e =  \int_\domain |v - u_0 |^2 [ \df_0^\e - \df_0] \leq C W_1(f_0^\e,f_0) \leq C W_2(f_0^\e,f_0) \leq \e.
\]
\GL\  implies $\en(f^\e|u) + W_x \lesssim \e + \frac{\d}{\e}$, and thanks to \eqref{e:syseW},
\[
W_v \leq \e + \frac{\d}{\e}.
\]
We have established \eqref{e:W1ed}.

\end{proof}

\subsection{Maxwellian limit}\label{s:Max}

In this section we provide a derivation of the Euler-alignment system with isothermal pressure for material models on the torus $\O = \T^n$,
\begin{equation}\label{e:macroEAS}
\begin{split}
\rho_t  + \n \cdot (u \rho) & = 0\\
(\rho u)_t + \n \cdot (\rho u \otimes u) +  \n \rho &=  \rho\st_\rho ( \ave{u}_\rho - u).
\end{split}
\end{equation}
Well-posedness of this system has been established for non-vacuous solutions for various models, see \cite{Choi2019,CDS2019}. 

As outlined in the beginning of this section our strategy will be to consider the equation with strong Fokker-Planck penalization force
\begin{equation}\label{e:FPAe2}
\p_t f^\e + v\cdot \n_x f^\e = \frac{1}{\e} [ \D_v f^\e + \n_v \cdot ((v - u^\e_\d)f^\e ) ] + \n_v \cdot (\st_{\rho^\e}(v - \ave{u^\e}_{\rho^\e})f^\e )  ,
\end{equation}
where $u^\e$ is the macroscopic velocity field associated with $f^\e$, and $u^\e_\d$ is the same mollification as defined in the previous monokinetic study. 

Let us briefly discuss regularity of \eqref{e:FPAe2}. In what follows we will study solutions of \eqref{e:FPAe2} that exist on a common tine interval $[0,T]$ independent of $\e$. Unfortunately the local existence result alone stated in \thm{t:lwp} will not provide such  solutions, because the energy bounds (or entropy for that matter) will deteriorate with $\e$. So, the only way to ensure common existence is to guarantee global well-posedness of \eqref{e:FPAe2}. According to \thm{t:gwp} the equation is globally well-posed for thick data if both models -- the native $\cM$ and the mollification $u_\d$ based on $\cM_\psi$ -- are regular and satisfy  \eqref{e:unifL2Linfty}.   Assuming that $\supp \psi = \O$ the model $\cM_\psi$ will fulfill these conditions, and as to the defining model $\cM$, we will make it as an assumption. The focus will now be turned to establishing convergence of the hydrodynamic limit for a given family of solutions.

Let us write out the corresponding macroscopic system
\begin{equation}\label{e:macroe}
\begin{split}
\rho^\e_t  + \n \cdot (u^\e \rho^\e) & = 0\\
(\rho^\e u^\e)_t + \n \cdot (\rho^\e u^\e \otimes u^\e) +  \n \rho^\e + \n_x \cdot \cR_\e &=  \rho^\e \st_{\rho^\e}( \ave{u^\e}_{\rho^\e} - u^\e) + \frac{1}{\e}\rho^\e(u^\e_\d - u^\e)\\
\cR_\e & = \int_{\R^n} ( (v -u^\e) \otimes (v -u^\e) - \I ) f^\e \dv.
\end{split}
\end{equation}
Here, $\I$ is the identity matrix.

We measure the distance between pairs $(u^\e, \rho^\e)$ and $(u, \rho)$ by using the relative entropy between the corresponding local Maxwellians:
\begin{equation}\label{e:MaxLim}
\mu =\frac{ \rho(x,t) }{(2\pi)^{n/2}} e^{- \frac{|v - u(x,t)|^2}{2}}, \quad  \mu^\e =\frac{ \rho^\e(x,t) }{(2\pi)^{n/2}} e^{- \frac{|v - u^\e(x,t)|^2}{2}}.
\end{equation}
In fact such entropy is encoded into the total relative entropy between $f^\e$ and $\mu$:
\[
\cH(f^\e | \mu) = \int_{\domain} f^\e\log \frac{f^\e}{\mu} \dv \dx.
\]
Indeed, the following identity holds,
\begin{equation}\label{e:H1}
\cH(f^\e | \mu) = \cH(f^\e| \mu^\e) + \cH(\mu^\e | \mu),
\end{equation}
\begin{equation}\label{e:H2}
\cH(\mu^\e| \mu) = \frac12 \int_{\O} \rho^\e | u^\e - u|^2 \dx +  \int_{\O}  \rho^\e \log( \rho^\e /\rho) \dx.
\end{equation}
So, if $\cH(f^\e | \mu) \to 0$, then also $\cH(\mu^\e | \mu) \to 0$. Recall that by the classical \CK, see for example \cite{Sbook}, the relative entropy controls $L^1$-distance between the probability densities,
\[
\cH(f|g) \geq c \|f-g\|_{L^1}^2.
\]
So, vanishing of the relative entropy $\cH(\mu^\e | \mu) \to 0$ implies strong limits
\begin{equation}\label{e:macrolimit}
\begin{split}
\rho^\e & \to \rho,\\
\rho^\e u^\e & \to \rho u, \\
\rho^\e |u^\e|^2 & \to \rho |u|^2.
\end{split}
\end{equation}
 in $L^1(\O)$.

\begin{theorem}\label{t:hydroMax} Suppose $\cM$ is a regular model on $\T^n$ satisfying \eqref{e:glob1} - \eqref{e:glob2} and \eqref{e:unifL2Linfty}. Let $(u,\rho)$ be a given smooth non-vacuous solution to \eqref{e:macroEAS} on a time interval $[0,T]$. Suppose that initial distributions $f_0^\e \in H^k_l$ converge to $\mu_0$ in the sense of entropies as $\e \to 0$:
\[
\cH(f^\e_0 | \mu_0) \to 0.
\]
Then for all $\e$ small enough there exists a unique global solution $f^\e \in H^k_l$, and  as long as $\d = o(\e)$, we have
\begin{equation}\label{e:vanrelent}
\sup_{t\in [0,T]} \cH(f^\e | \mu) \to 0.
\end{equation}
\end{theorem}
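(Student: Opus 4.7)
\medskip

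\noindent\emph{Proof proposal.} The plan is to run the modulated (relative) entropy method adapted to the general environmental averaging setting, using $\cH(f^\e|\mu)$ itself as the Lyapunov functional. The decomposition \eqref{e:H1}--\eqref{e:H2} together with the \CK\ will translate smallness of $\cH(f^\e|\mu)$ into the desired macroscopic convergences \eqref{e:macrolimit}.

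\smallskip

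\textit{Step 1: Global existence.} I would view \eqref{e:FPAe2} as an instance of \eqref{e:FPAhypo} with diffusion coefficient $\sigma = 1/\e$ and a composite environmental averaging obtained by convex interpolation between $\cM$ and the mollification model based on $\cM_\psi$. Since both are regular and, thanks to $\supp\psi = \T^n$, satisfy \eqref{e:unifL2Linfty}, the composite model inherits these properties (uniformly in $\e$ on any fixed $\e$-interval $[\e_0,1]$). \thm{t:gwp} then gives a global classical $H^k_l$ solution for each sufficiently small $\e$; thickness of initial data, needed to invoke the local theory, follows from the assumption $\cH(f^\e_0|\mu_0)\to 0$ together with non-vacuity of $\rho_0$.

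\smallskip

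\textit{Step 2: Entropy law.} Differentiating $\cH(f^\e|\mu) = \int f^\e\log f^\e - \int f^\e \log\mu$ along \eqref{e:FPAe2} and using $-\log\mu = -\log\rho + \tfrac12|v-u|^2 + \const$ produces four kinds of contributions: (a) the penalization dissipation
\[
-\frac{1}{\e}\int_{\domain}\frac{|\n_v f^\e + (v-u^\e_\d)f^\e|^2}{f^\e}\dv\dx =: -\frac{1}{\e}\cD_\e,
\]
which is a Fisher information of $f^\e$ relative to the Maxwellian with drift $u^\e_\d$; (b) the native alignment integral $(u-u^\e,\, \ave{u^\e}_{\rho^\e}-u^\e)_{\k_{\rho^\e}}$ that arises from testing $\p_t f^\e$ against $\tfrac12|v-u|^2$; (c) modulation terms obtained from $\p_t u$ and $\p_t\log\rho$, which after invoking \eqref{e:macroEAS} yield a Reynolds-stress term $\int \cR_\e:\n u$, the usual modulated-kinetic term $\int \rho^\e (u^\e-u)\otimes (u^\e-u):\n u$, the native macroscopic alignment $\int \rho^\e(u-u^\e)\cdot\st_\rho(\ave{u}_\rho-u)$, and a clean cancellation of the pressures $\n\rho$ and $\n\rho^\e$; (d) the forced local alignment correction $\tfrac{1}{\e}\int \rho^\e(u^\e_\d-u^\e)\cdot(u-u^\e)\dx$.

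\smallskip

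\textit{Step 3: Controlling the remainders.} Each non-dissipative piece will be bounded by $\cH(\mu^\e|\mu) + \tfrac{c_0}{\e}\cD_\e + o_\e(1)$. The modulated-kinetic term is dominated by $\|\n u\|_\infty \int \rho^\e |u^\e-u|^2\le \|\n u\|_\infty \cH(\mu^\e|\mu)$. Combining (b) with the alignment piece of (c) produces the expression
\[
\int_\O \rho^\e(u-u^\e)\cdot\bigl[\st_{\rho^\e}\ave{u^\e}_{\rho^\e} - \st_\rho \ave{u}_\rho\bigr]\dx \;-\; (u^\e-u,\,\ave{u^\e-u}_{\rho^\e})_{\k_{\rho^\e}},
\]
which I would estimate exactly as in \eqref{e:Amono} using the $\rho^\e$-continuity assumptions \eqref{e:glob1}--\eqref{e:glob2} together with the uniform $L^2\to L^2$ boundedness \eqref{e:unifL2L2}, arriving at a bound $\lesssim \int \rho^\e|u^\e-u|^2 + W_1^2(\rho^\e,\rho)\lesssim \cH(\mu^\e|\mu)$ by the \CK. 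The forced term (d) is handled by the symmetry of $\cM_\psi$ and \lem{l:mollest}: adding and subtracting $u_\d$ gives $\bigl|\int\rho^\e(u^\e_\d - u^\e)\cdot u\bigr|\lesssim \d\|\n u\|_\infty$, producing an $O(\d/\e)$ contribution that vanishes under the scaling $\d = o(\e)$.

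\smallskip

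\textit{Step 4: Reynolds stress and closure.} The real obstacle is the Reynolds-stress integral $\int \cR_\e:\n u$. Here I would split $v - u^\e = (v-u^\e_\d) + (u^\e_\d - u^\e)$: the portion $\int(v-u^\e_\d)\otimes(v-u^\e_\d)f^\e:\n u$ after subtracting the trace $\rho^\e\I$ is controlled via Cauchy--Schwarz by $\sqrt{\cE_\e\cdot \cD_\e}$ and absorbed into the $1/\e$ dissipation; the remaining piece involving $u^\e_\d - u^\e$ is treated by \lem{l:mollest} and Young's inequality, contributing $O(\d/\e) + \cH(\mu^\e|\mu)$. Assembling everything,
\[
\ddt\cH(f^\e|\mu) + \frac{c}{\e}\cD_\e \;\le\; C\cH(f^\e|\mu) + o_\e(1),
\]
and Gronwall together with $\cH(f_0^\e|\mu_0)\to 0$ delivers \eqref{e:vanrelent}. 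The main obstacle is the Reynolds-stress handling: unlike the monokinetic case where $\cR_\e$ is absorbed into the native internal energy, the isothermal limit requires the exact algebraic cancellation of $\n\rho^\e$ (from the macroscopic momentum equation \eqref{e:macroe}) against the trace of $\cR_\e$, leaving only a traceless remainder that is quantitatively small only because of the stronger $1/\e$ Fisher-information dissipation provided by the Fokker--Planck operator. A secondary delicate point is the uniformity in $\e$ of the continuity constants in \eqref{e:glob1}--\eqref{e:glob2}, which is secured by the lower bound on the limit $\rho$ on $[0,T]$ together with the fact that $\cH(\mu^\e|\mu)$, bootstrapped along the Gronwall argument, controls $W_1(\rho^\e,\rho)$ uniformly.
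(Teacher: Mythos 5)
Your overall strategy — modulated relative entropy, the $\d$-mollified local alignment transferred onto the smooth limit $u$ by the symmetry of $\cM_\psi$, the alignment remainder handled as in \eqref{e:Amono} via \eqref{e:glob1}--\eqref{e:glob2}, and Gronwall — matches the paper's. But Step~4, your treatment of the Reynolds stress, has a genuine gap. You split $v-u^\e = (v-u^\e_\d)+(u^\e_\d - u^\e)$ and propose to control the residual piece, which after integrating in $v$ reduces to $-\rho^\e(u^\e_\d - u^\e)\otimes(u^\e_\d - u^\e)$, by invoking \lem{l:mollest}. That lemma requires a Lipschitz bound on the field being filtered, and you would be applying it to $u^\e$, the macroscopic velocity of the $\e$-solution, for which there is no $\e$-uniform Lipschitz estimate. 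As stated this step does not close.

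The paper avoids this by never decomposing the Reynolds stress through $u^\e_\d$ at all. In the entropy law (Lemma~\ref{l:elawepsilon}) the mollified $u^\e_\d$ is eliminated immediately: after completing the square one finds a manifestly nonnegative term $\tfrac{1}{\e}\int\rho^\e|u^\e - u^\e_\d|^2$ that is simply \emph{dropped}, leaving a Fisher-information functional
\begin{equation*}
\cI_\e = \int_{\domain}\frac{\bigl|\n_v f^\e+ (1+ \tfrac{\e}{2}\st_{\rho^\e})(v -u^\e) f^\e\bigr|^2}{f^\e}\dv\dx
\end{equation*}
built on the \emph{native} macroscopic velocity $u^\e$, with the crucial weight $(1+\tfrac{\e}{2}\st_{\rho^\e})$ coming from completing the square against the alignment contribution. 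The Reynolds stress is then written as $\cR_\e = \int [2\n_v\sqrt{f^\e} + (1+\tfrac{\e}{2}\st_{\rho^\e})(v-u^\e)\sqrt{f^\e}]\otimes[(v-u^\e)\sqrt{f^\e}] - \tfrac{\e}{2}\st_{\rho^\e}\int(v-u^\e)\otimes(v-u^\e)f^\e$, giving $\int|\cR_\e|\lesssim\sqrt{\cI_\e}+\e$ by Cauchy--Schwarz and the uniform energy bound. The only place \lem{l:mollest} enters is the local alignment term $A_\loc = \tfrac{1}{\e}\int\rho^\e(u^\e_\d - u^\e)\cdot u$, where symmetry of the $\cM_\psi$-average lets you shift the filtration onto the \emph{smooth} limiting $u$, producing the $O(\d/\e)$ bound. (A patch for your argument would be to keep the dropped $-\tfrac{1}{\e}\int\rho^\e|u^\e - u^\e_\d|^2$ term and absorb $\|\n u\|_\infty\int\rho^\e|u^\e_\d - u^\e|^2$ into it for small $\e$, rather than invoking \lem{l:mollest}; but that is a different argument than the one you wrote.) Finally, a minor presentational point: the paper does not run the argument directly on $\cH(f^\e|\mu)$ but first splits it into a purely kinetic part $\cH_\e$ and a macroscopic cross-part $\cG_\e$, proving separate evolution inequalities (Lemmas~\ref{l:elawepsilon} and the following) whose alignment-energy residuals $\|u^\e\|_{L^2(\k^\e)}^2 - (u^\e,\ave{u^\e}_{\rho^\e})_{\k^\e}$ cancel on addition — a bookkeeping device, but one that keeps the $\cI_\e$ structure visible.
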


\begin{remark}
Going back to the discussion of \sect{s:rth} we can see that the theorem applies to many core models on our list. Specifically, we have it for \ref{CS} and \ref{CStopo} unconditionally,  for \ref{Mb} we have it for all local kernels if $\b \geq\frac12$ and for all-to-all kernels $\phi>0$ for any $\b$, the \ref{Mf} requires $\phi >0$ as well, and \ref{Mseg} requires $\supp g_l = \O$.
\end{remark}
\begin{remark}
We also note that in the course of the proof, just like in the monokinetic case, the regularity and  \eqref{e:unifL2Linfty} conditions are only needed to facilitate global existence of solutions, while the bounds  \eqref{e:unifL2L2}, \eqref{e:glob1} - \eqref{e:glob2}  are used in the actual estimates. So, for putative classical solutions  to \eqref{e:FPAe2} the result extends to a wider range of models listed on the last row of Table~\ref{t:rth}.
\end{remark}

\begin{proof} First let us notice that by the \CK,
\[
\cH(f^\e_0 | \mu_0) \geq \| \rho_0^\e - \rho_0\|_1^2.
\]
Since, $\rho_0 >0$ on $\O$, it implies that $\Th(\rho_0,\O) >0$ by \defin{d:th} (iii), and by (iv) we have
\[
|\Th(\rho^\e_0,\O) - \Th(\rho_0,\O)| \leq c \| \rho_0^\e - \rho_0\|_1 \to 0,
\]
so starting from some $\e_0$ we have $\Th(\rho^\e_0,\O) >\d >0$, for $\e<\e_0$. Such initial conditions give rise to global solutions by  \thm{t:gwp}.

Let us break down the relative entropy into kinetic and macroscopic parts:
\begin{equation}\label{}
\begin{split}
\cH(f^\e | \mu) & = \cH_\e + \cG_\e \\
\cH_\e & = \int_{\domain} \left( f^\e \log f^\e + \frac12 |v|^2 f^\e \right)\dv \dx +  \frac{n }{2} \log(2\pi)\\
\cG_\e & = \int_{\O} \left( \frac12 \rho^\e |u|^2 - \rho^\e u^\e \cdot u - \rho^\e \log \rho \right) \dx.
\end{split}
\end{equation}

Let us state the energy bounds for each component.  In the sequel we denote for short $\k^\e = \k_{\rho^\e}$.
\begin{lemma}\label{l:elawepsilon} There are constants $c_1,c_2,c_3$ that depend only on the model such that we have the following entropy law:
\begin{align}
\cH_\e, \cE_\e & \in L^\infty([0,T]) \text{ uniformly in } \e, \label{e:HM} \\ 
\ddt \cH_\e  & \leq - \frac{1}{\e} \cI_\e +c_2 \,\e\, \en(f^\e | u^\e)     - \|u^\e\|^2_{L^2(\k^\e)} + ( u^\e, \ave{u^\e}_{\rho^\e})_{\k^\e} , \label{e:Hident}
\end{align}
where 
\[
\cI_\e = \int_{\domain}\frac{|\n_v f^\e+ (1+ \e \st_{\rho^\e} / 2)   (v -u^\e) f^\e|^2}{f^\e}  \dv\dx.
\]
\end{lemma}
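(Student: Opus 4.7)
The plan is to obtain the entropy identity by direct differentiation of $\cH_\e = \int f^\e \log f^\e \dv\dx + \tfrac12 \int |v|^2 f^\e \dv\dx + \tfrac{n}{2}\log(2\pi)$, testing the FPA equation \eqref{e:FPAe2} against $\log f^\e + 1$ and against $\tfrac12 |v|^2$. Since $\O=\T^n$ the transport term vanishes in both computations. The singular Fokker--Planck part $\tfrac{1}{\e}[\D_v f^\e + \n_v\cdot((v-u^\e_\d)f^\e)]$ produces, after integration by parts, the diffusion dissipation $-\tfrac{1}{\e}\int \tfrac{|\n_v f^\e|^2}{f^\e}$ together with constant terms and a term $\tfrac{1}{\e}\int u^\e_\d\cdot u^\e \rho^\e$. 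The native alignment part $\n_v\cdot(\st_{\rho^\e}(v-\ave{u^\e}_{\rho^\e})f^\e)$ produces the familiar pair $(\ave{u^\e}_{\rho^\e},u^\e)_{\k^\e} - \int \st_{\rho^\e}|v|^2 f^\e \dv\dx$, which after the identity $\int \st_{\rho^\e}|v|^2 f^\e = \int \st_{\rho^\e}|v-u^\e|^2 f^\e + \|u^\e\|^2_{L^2(\k^\e)}$ leaves $-\|u^\e\|^2_{L^2(\k^\e)}+(\ave{u^\e}_{\rho^\e},u^\e)_{\k^\e}$ together with a non-positive contribution $-\int \st_{\rho^\e}|v-u^\e|^2 f^\e$.

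The algebraic step that reshapes the diffusion dissipation into $\cI_\e$ is where the main care is required. Expanding
\[
\cI_\e = \int \frac{|\n_v f^\e|^2}{f^\e}\dv\dx + 2\int \bigl(1+\tfrac{\e \st_{\rho^\e}}{2}\bigr)\n_v f^\e\cdot(v-u^\e)\dv\dx + \int \bigl(1+\tfrac{\e \st_{\rho^\e}}{2}\bigr)^2 |v-u^\e|^2 f^\e \dv\dx,
\]
integrating the cross term by parts (which yields the constants $-2n - n\e \k^\e(\O)$), and solving for $\int \tfrac{|\n_v f^\e|^2}{f^\e}$, one obtains
\[
-\tfrac{1}{\e}\int \tfrac{|\n_v f^\e|^2}{f^\e} = -\tfrac{1}{\e}\cI_\e -\tfrac{2n}{\e}-n\k^\e(\O) + \tfrac{2}{\e}\en(f^\e|u^\e) + \int \st_{\rho^\e}|v-u^\e|^2 f^\e + \tfrac{\e}{4}\int \st_{\rho^\e}^2 |v-u^\e|^2 f^\e.
\]
Substituting back and collecting terms, the $\int \st_{\rho^\e}|v-u^\e|^2 f^\e$ contribution cancels against its negative counterpart from the alignment part, the constants $2n/\e$, $n\k^\e(\O)$ cancel against the analogous ones from the test function $\tfrac12|v|^2$ on $A_1$, and the remaining macroscopic $1/\e$-level terms collapse to the single expression $\tfrac{1}{\e}\int (u^\e_\d - u^\e)\cdot u^\e \rho^\e$ using $2\en(f^\e|u^\e) = 2\cE_\e - \int \rho^\e |u^\e|^2$. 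The residual Hessian term is controlled by $\tfrac{\e}{4}\int \st_{\rho^\e}^2|v-u^\e|^2 f^\e \leq \tfrac{\e \oS^2}{2}\en(f^\e|u^\e)$, giving the constant $c_2$.

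The subtle point --- and what I expect to be the main obstacle --- is disposing of the leftover $\tfrac{1}{\e}\int (u^\e_\d - u^\e)\cdot u^\e \rho^\e$, which is formally of order $1/\e$. Here I would invoke the fact that $u_\d$ is obtained through the $\cMf$-averaging with symmetric kernel $\psi_\d$, hence by \lem{l:conscontr} this operation is contractive on $L^2(\rho^\e)$: $\|u^\e_\d\|_{L^2(\rho^\e)} \leq \|u^\e\|_{L^2(\rho^\e)}$. Cauchy--Schwarz then gives $\int u^\e_\d \cdot u^\e \rho^\e \leq \|u^\e_\d\|_{L^2(\rho^\e)}\|u^\e\|_{L^2(\rho^\e)} \leq \int |u^\e|^2\rho^\e$, so $\int (u^\e_\d - u^\e)\cdot u^\e \rho^\e \leq 0$ and this term can be discarded, yielding the stated inequality. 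For the first bullet, $\cE_\e \leq 2\cH_\e + C$ follows from \eqref{e:flogf}; combining with the obtained entropy inequality and the assumption \eqref{e:unifL2L2} (which bounds $(u^\e,\ave{u^\e}_{\rho^\e})_{\k^\e} \lesssim \|u^\e\|^2_{L^2(\k^\e)} \lesssim \oS \cE_\e$) produces a differential inequality $\ddt \cH_\e \leq c' \cH_\e + c''$, and Gronwall closes the uniform bounds on $[0,T]$ from the initial convergence hypothesis $\cH(f^\e_0|\mu_0)\to 0$.
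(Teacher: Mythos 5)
Your proposal is correct and follows essentially the same approach as the paper. The only differences are cosmetic: the paper first completes the Fokker--Planck square around $u^\e_\d$ (giving the intermediate identity \eqref{e:He}) and then shifts to $u^\e$, discarding the resulting non-positive terms $-\tfrac{1}{\e}\int |u^\e-u^\e_\d|^2 f^\e$ and $-\tfrac{1}{\e}[(u^\e_\d,u^\e)_{\rho^\e}-(u^\e_\d,u^\e_\d)_{\rho^\e}]$, the latter via ball-positivity of $\cMf$, whereas you complete directly around $u^\e$ and discard the single residue $\tfrac{1}{\e}\int(u^\e_\d-u^\e)\cdot u^\e\rho^\e$ via contractivity plus Cauchy--Schwarz; as you can check, the sum of the paper's two dropped terms equals your single dropped term, so the two routes agree exactly. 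Your sign argument for the residue uses only conservativity/contractivity (\lem{l:conscontr}) rather than the strictly stronger ball-positivity, which is a minor economy, and your computation correctly gives the coefficient $\tfrac{\e}{4}\int\st_{\rho^\e}^2|v-u^\e|^2 f^\e$ (the paper's proof displays $\st_{\rho^\e}$ unsquared here, evidently a typo, immaterial since both are $\leq c\,\e\,\en(f^\e|u^\e)$ by $\st_{\rho^\e}\leq\oS$).
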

\begin{proof}
Differentiating,
\begin{equation}\label{e:He}
\begin{split}
\ddt \cH_\e  = &- \frac{1}{\e} \int_{\domain}\left[ \frac{|\n_v f^\e|^2}{f^\e} + 2 \n_v f^\e \cdot (v -u^\e_\d) + |v -u^\e_\d|^2 f^\e \right]  \dv\dx \\
 &- \frac{1}{\e} [ (u^\e_\d,u^\e)_{\rho^\e} - (u^\e_\d,u^\e_\d)_{\rho^\e}]\\
& - \int_{\domain} \st_{\rho^\e}[\n_v f^\e \cdot (v - \ave{u^\e}_{\rho^\e})  + v\cdot (v - \ave{u^\e}_{\rho^\e}) f^\e ] \dv\dx.
\end{split}
\end{equation}
To prove \eqref{e:HM} we simply dismiss the first information term, and recall that the $\d$-mollification constitutes the \ref{Mf}-averaging which is ball-positive. So, the second term, according to \eqref{e:bpenergies} is also non-negative and we dismiss it too. We estimate the third term as follows
\begin{equation*}\label{}
\begin{split}
& - \int_{\domain}  \st_{\rho^\e} [\n_v f^\e \cdot (v - \ave{u^\e}_{\rho^\e})  + v \cdot (v - \ave{u^\e}_{\rho^\e}) f^\e ] \dv \dx \\
& = n  \int_{\domain}  \st_{\rho^\e} f^\e \dv \dx -  \int_{\domain}  \st_{\rho^\e} |v|^2 f^\e \dv\dx +( u^\e, \ave{u^\e}_{\rho^\e})_{\k^\e}  \leq C - \int_\O   \st_{\rho^\e} \rho^\e |u^\e|^2 \dx +( u^\e, \ave{u^\e}_{\rho^\e})_{\k^\e} \\
& =  c_1 - \|u^\e\|^2_{L^2(\k^\e)} + ( u^\e, \ave{u^\e}_{\rho^\e})_{\k^\e}.
\end{split}
\end{equation*}
Now, according to \eqref{e:unifL2L2} the averaging operators are uniformly bounded on $L^2(\k_{\rho^\e})$. So, we obtain
\[
\ddt \cH_\e \leq c_1 + c_2 \cE_\e, \qquad \cE_\e = \frac12 \int_{\domain} |v|^2 f^\e \dv \dx,
\]
and according to \eqref{e:EH},
\[
\ddt \cH_\e \leq c_3 + c_4 \cH_\e.
\]
This proves  \eqref{e:HM}.

To show \eqref{e:Hident} we replace all the macroscopic velocities in \eqref{e:He} with the native one $u^\e$. Indeed, in the information term we have 
\begin{equation*}\label{}
\begin{split}
& -  \frac{1}{\e} \int_{\domain}\left[ \frac{|\n_v f^\e|^2}{f^\e} + 2 \n_v f^\e \cdot (v -u^\e_\d) + |v -u^\e_\d|^2 f^\e \right]  \dv\dx \\
 =& - \frac{1}{\e} \int_{\domain}\left[ \frac{|\n_v f^\e|^2}{f^\e} + 2 \n_v f^\e \cdot (v -u^\e) + |v -u^\e|^2 f^\e + |u^\e - u^\e_\d|^2 f^\e \right]  \dv\dx \\
 \leq &  - \frac{1}{\e} \int_{\domain}\left[ \frac{|\n_v f^\e|^2}{f^\e} + 2 \n_v f^\e \cdot (v -u^\e) + |v -u^\e|^2 f^\e \right]  \dv\dx \\
  = &- \frac{1}{\e}  \int_{\domain}\frac{|\n_v f^\e+  (v -u^\e) f^\e|^2}{f^\e}  \dv\dx .
\end{split}
\end{equation*}
For the alignment term we obtain similarly,
\[
\begin{split}
&- \int_{\domain}    \st_{\rho^\e} [\n_v f^\e \cdot (v - \ave{u^\e}_{\rho^\e})  + v \cdot (v - \ave{u^\e}_{\rho^\e}) f^\e ] \dv\dx  \\
  = & - \int_{\domain}    \st_{\rho^\e}  \n_v f^\e \cdot (v - u^\e) \dv\dx  - \int_{\domain} \st_{\rho^\e} [ v \cdot (v - u^\e) f^\e + v\cdot(u^\e - \ave{u^\e}_{\rho^\e}) f^\e ] \dv\dx \\ 
=  &-   \int_{\domain}    \st_{\rho^\e} (  \n_v f^\e \cdot (v - u^\e)+  |v - u^\e |^2 f^\e ) \dv\dx  - \|u^\e\|^2_{L^2(\k^\e)} + ( u^\e, \ave{u^\e}_{\rho^\e})_{\k^\e} .
\end{split}
\]
Combing the two expressions and completing the squares 
\begin{equation*}\label{}
\begin{split}
\ddt \cH_\e & \leq  - \frac{1}{ \e}  \int_{\domain}  \frac{|\n_v f^\e+ (1+ \e \st_{\rho^\e} / 2) (v -u^\e) f^\e|^2}{f^\e}  \dv\dx  + \frac{\e}{4}   \int_{\domain}    \st_{\rho^\e}|v - u^\e|^2 f^\e  \dv\dx \\
& \hspace{3.2in}- \|u^\e\|^2_{L^2(\k^\e)} + ( u^\e, \ave{u^\e}_{\rho^\e})_{\k^\e}\\
& \leq -\frac{1}{\e} \cI_\e  + c \e \en(f^\e | u^\e)     - \|u^\e\|^2_{L^2(\k^\e)} + ( u^\e, \ave{u^\e}_{\rho^\e})_{\k^\e} 
\end{split}
\end{equation*}
We have obtained \eqref{e:Hident}.
\end{proof}

\begin{lemma}\label{}
We have the following inequality
\begin{equation}\label{e:Ge}
\ddt \cG_\e \leq C \cH(f^\e | \mu) + C \sqrt{ \cI_\e} + C\e   + \frac{\d}{\e}+   \|u^\e\|^2_{L^2(\k^\e)} - ( u^\e, \ave{u^\e}_{\rho^\e})_{\k^\e} ,
\end{equation}
where $C$ is independent of $\e$.
\end{lemma}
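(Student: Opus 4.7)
The plan is to differentiate $\cG_\e$ using the macroscopic $\e$-system \eqref{e:macroe} for $(\rho^\e,u^\e)$ and the target system \eqref{e:macroEAS} for $(\rho,u)$, then identify each contribution. The three pieces of $\cG_\e$ are handled separately:  $\ddt \int \tfrac12 \rho^\e |u|^2 \dx$ uses continuity for $\rho^\e$ and the momentum equation for $u$; $\ddt \int \rho^\e u^\e \cdot u \dx$ uses the momentum equation for $\rho^\e u^\e$ (and picks up contributions from $\nabla\rho^\e$, $\nabla\cdot\cR_\e$, the native alignment, and the local alignment $\frac{1}{\e}\rho^\e(u^\e_\d - u^\e)$); and $\ddt \int \rho^\e \log\rho \dx$ uses continuity for both densities. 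This is the classical modulated energy/entropy bookkeeping already performed for $\en(f^\e|u)$ in the proof of \thm{t:hydrolim}, transplanted to the isothermal setting.

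Assembling the inertia pieces produces a relative quadratic form $-\int \rho^\e (u^\e - u)^{\otimes 2} : \nabla u \dx$, bounded in absolute value by $\|\nabla u\|_\infty \int \rho^\e |u^\e - u|^2 \dx \leq C \cH(\mu^\e|\mu) \leq C\cH(f^\e|\mu)$ thanks to \eqref{e:H2}. The pressure contribution $-\int \nabla\rho^\e \cdot u\dx$ and the $\log\rho$ piece combine, via $\partial_t \rho + \nabla\cdot(u\rho) = 0$ and integration by parts, into $\int [\rho^\e \log(\rho^\e/\rho) - (\rho^\e - \rho)]\dx$-type terms that are again dominated by the relative entropy $\cH(\mu^\e|\mu)\leq \cH(f^\e|\mu)$ (plus harmless lower order terms). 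The local alignment term is handled by the symmetry of the $\cMf$-filtration: $\frac{1}{\e}\int \rho^\e (u^\e_\d - u^\e)\cdot u \dx = \frac{1}{\e}\int \rho^\e u^\e \cdot (u_\d - u) \dx$, and by \lem{l:mollest} with $p=2$ together with the uniform energy bound \eqref{e:HM}, this is dominated by $\frac{\delta}{\e}\|u\|_{\Lip}\|u^\e\|_{L^2(\rho^\e)} \lesssim \frac{\delta}{\e}$, exactly as in the monokinetic case \eqref{e:amplify}--\eqref{e:amplify2}.

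The critical step is the Reynolds stress contribution $\int \cR_\e : \nabla u \dx$. Here I would exploit integration by parts in $v$: writing $(v - u^\e)_j f^\e = -\partial_{v_j}f^\e + [\partial_{v_j} f^\e + (v - u^\e)_j f^\e]$ and noting $-\int (v-u^\e)_i \partial_{v_j} f^\e \dv = \rho^\e \delta_{ij}$, one obtains
\begin{equation*}
\cR_\e = \int_{\R^n} (v - u^\e)\otimes g_\e \dv, \qquad g_\e = \nabla_v f^\e + (v - u^\e) f^\e,
\end{equation*}
so that Cauchy--Schwarz gives $|\int \cR_\e : \nabla u\dx| \lesssim \|\nabla u\|_\infty \sqrt{\cE_\e}\bigl(\int |g_\e|^2/f^\e \dv\dx\bigr)^{1/2}$. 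Comparing $g_\e$ with the integrand of $\cI_\e$, the two differ by $\tfrac{\e}{2}\st_{\rho^\e}(v-u^\e)f^\e$, whose $L^2(\df^\e)^{-1}$ norm is $\lesssim \e\sqrt{\en(f^\e|u^\e)}\leq C\e$ by \eqref{e:HM}. Hence $|\int \cR_\e : \nabla u\dx| \lesssim \sqrt{\cI_\e} + \e$, which yields the $C\sqrt{\cI_\e}+C\e$ term.

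Finally, the native alignment produces $\int\rho^\e \st_{\rho^\e}(\ave{u^\e}_{\rho^\e} - u^\e) \cdot u\dx$ in $\ddt\cG_\e$. Splitting this as in the $\delta A$ estimate from the proof of \thm{t:hydrolim} into $(u^\e - u, \ave{u^\e - u}_{\rho^\e})_{\k^\e} - \|u^\e\|^2_{L^2(\k^\e)} + (u^\e, \ave{u^\e}_{\rho^\e})_{\k^\e}$ and the model-continuity remainders $\int\rho^\e(u - u^\e)\cdot[\st_{\rho^\e}\ave{u}_{\rho^\e} - \st_\rho\ave{u}_\rho] + \rho^\e(u - u^\e)\cdot u[\st_{\rho^\e} - \st_\rho]\dx$, the first piece contains (with opposite sign, after combining with $\ddt \cG_\e$'s own sign) the desired $\|u^\e\|^2_{L^2(\k^\e)} - (u^\e,\ave{u^\e}_{\rho^\e})_{\k^\e}$. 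The cross-term $(u^\e - u, \ave{u^\e - u}_{\rho^\e})_{\k^\e}$ is bounded by $\int\rho^\e |u^\e - u|^2\dx$ using uniform $L^2(\k^\e)$-boundedness \eqref{e:unifL2L2}, and the model-continuity remainders are bounded by $\int\rho^\e|u^\e - u|^2\dx + W_1^2(\rho^\e,\rho)$ via \eqref{e:glob1}--\eqref{e:glob2}; by the \CK\ both are $\leq C\cH(f^\e|\mu)$. The main obstacle I anticipate is tracking the $(1 + \e\st_{\rho^\e}/2)$-shift inside $\cI_\e$ in the Reynolds stress step so that the residual is absorbed cleanly into the $C\e$ term; once that accounting is correct, the rest of the proof is essentially a rearrangement of the monokinetic argument.
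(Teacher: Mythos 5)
Your proposal follows essentially the same route as the paper: the same piecewise differentiation of $\cG_\e$, the same extraction of the relative quadratic form from the inertia terms, the same handling of $A_\loc$ via \lem{l:mollest} and symmetry of $u_\d$, the same decomposition $A = \d A + \|u^\e\|^2_{L^2(\k^\e)} - (u^\e,\ave{u^\e}_{\rho^\e})_{\k^\e}$ with $\d A$ bounded via \eqref{e:glob1}--\eqref{e:glob2} and the \CK. Your Reynolds-stress argument is the Mellet--Vasseur factorization in disguise (note $2\n_v\sqrt{f^\e} = \n_v f^\e/\sqrt{f^\e}$, so your $g_\e$ is exactly the paper's left-hand factor times $\sqrt{f^\e}$), and your triangle-inequality accounting for the $(1+\e\st_{\rho^\e}/2)$-shift produces the same $C\sqrt{\cI_\e}+C\e$ bound. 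One small bookkeeping slip: the pressure term $-\int \n\rho^\e\cdot u\dx$ and the $\log\rho$ contributions do not leave a relative-entropy-sized residual --- once you write $\ddt\int\rho^\e\log\rho\dx = \int[\rho^\e u^\e\cdot\n\log\rho - \rho^\e u\cdot\n\log\rho - \rho^\e\n\cdot u]\dx$ and assemble all three pieces of $\cG_\e$, the $\n\log\rho$ and $\n\cdot u$ terms cancel \emph{exactly}, so no estimate is needed there; this does not affect your final inequality but is worth getting straight.
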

\begin{proof} Let us compute the derivative of each component of $\cG_\e$
\[
\begin{split}
\ddt \frac12 \int_{\O} \rho^\e |u|^2 \dx & = \int_\O [ \rho^\e (u^\e - u) \cdot \n u \cdot u - \rho^\e  u \cdot \n \log \rho  + \rho^\e \st_\rho   ( \ave{u}_\rho - u) \cdot u ] \dx \\
\ddt \int_{\O^n} \rho^\e u^\e \cdot u  \dx& = \int_\O[ \rho^\e (u^\e - u) \cdot \n u \cdot u^\e + \rho^\e \n \cdot u - \rho^\e u^\e \cdot \n \log \rho - \n u: \cR_\e \\
&+  \rho^\e \st_{\rho^\e}  (\ave{u^\e}_{\rho^\e} - u^\e) \cdot u +  \rho^\e \st_\rho (\ave{u}_\rho - u) \cdot u^\e + \frac{1}{\e}\rho^\e(u^\e_\d - u^\e)\cdot u ] \dx\\
\ddt \int_{\O^n} \rho^\e \log \rho \dx &= \int_\O [ \rho^\e u^\e \cdot \n \log \rho - \rho^\e  u \cdot \n \log \rho - \rho^\e \n \cdot u ] \dx.
\end{split}
\]
Thus,
\[
\ddt \cG_\e =\int_\O[ \n u: \cR_\e  -  \rho^\e (u^\e - u) \cdot \n u \cdot (u^\e - u)  ] \dx + A + A_\loc,
\]
where $A_\loc$ is the same local alignment terms as appeared in the previous section, and
\begin{equation*}\label{}
\begin{split}
A & =  \int_\O[ \rho^\e \st_\rho   ( \ave{u}_\rho - u) \cdot u -  \rho^\e \st_{\rho^\e}  (\ave{u^\e}_{\rho^\e} - u^\e) \cdot u -  \rho^\e  \st_\rho (\ave{u}_\rho - u)\cdot u^\e  ]\dx \\
& = \d A + \|u^\e\|^2_{L^2(\k^\e)} - ( u^\e, \ave{u^\e}_{\rho^\e})_{\k^\e},
\end{split}
\end{equation*}
where $\d A$ is again the same alignment term that appeared in the previous section. We estimate $A_\loc$ and $\d A$ as before using \eqref{e:amplify2} and the intermediate estimate in \eqref{e:Amono}. We  recall that only  \eqref{e:glob1} - \eqref{e:glob2} and regularity of the kernel $\phi_\rho$ are necessary to prove \eqref{e:Amono}. Since $\rho \geq  \rho_->0$ we have both by the assumptions. Thus,
\begin{equation}\label{ }
\d A + A_\loc \lesssim  \int_{\O}  \rho^\e|u^\e - u|^2 \dx +W^2_1(\rho^\e,\rho) + \frac{\d}{\e}.
\end{equation}
Keeping in mind that both the macroscopic relative entropy $ \int_{\O}  \rho^\e|u^\e - u|^2 \dx$ and $W_1^2(\rho^\e,\rho) \leq \| \rho^\e - \rho\|_1^2$ are controlled by $\cH(f^\e|\mu)$, see \eqref{e:H1}, \eqref{e:H2} we obtain
\begin{equation}\label{ }
\d A + A_\loc \lesssim  \cH(f^\e|\mu) + \frac{\d}{\e}.
\end{equation}

Next, given that $u$ is smooth we have
\begin{equation}\label{e:Ge1}
\left| \int_\O \rho^\e (u^\e - u) \cdot \n u \cdot (u^\e - u)  ] \dx \right|  \lesssim \int_{\O}  \rho^\e |u^\e - u|^2  \dx \leq \cH(f^\e|\mu).
\end{equation}

As to the Reynolds stress, we will use a well-known estimate from \cite{MV2008} that establishes  a bound in terms of information and energy. Let us rerun this argument to account for the $\e$-correction. We simply note that 
\[
\cR_\e = \int_{\R^n}  [ 2 \n_v \sqrt{f^\e} + (v -u^\e) \sqrt{f^\e}] \otimes [ (v -u^\e) \sqrt{ f^\e}] \dv.
\]
then we reinsert the $\e$-correction to obtain
\begin{equation*}\label{}
\cR_\e  = \int_{\R^n}  [ 2 \n_v \sqrt{f^\e} + (1+ \e \st_{\rho^\e}/2)(v -u^\e) \sqrt{f^\e}] \otimes [ (v -u^\e) \sqrt{ f^\e}]  \dv - \e \st_{\rho^\e}/2 \int_{\R^n}   (v -u^\e)\otimes  (v -u^\e)  f^\e   \dv .
\end{equation*}
So, 
\[
 \int_{\O}|\cR_\e | \dx \lesssim  \sqrt{\en(f^\e | u^\e)  \cI_\e} + \e \en(f^\e | u^\e)  \lesssim  \sqrt{ \cI_\e} + \e.
\]
Collecting the obtained estimates together we obtain \eqref{e:Ge}.
\end{proof}

Combining the equations on $\cH_\e$ and $\cG_\e$, \eqref{e:Hident}, \eqref{e:Ge},  
we see that the residual alignment-energy terms cancel and we obtain
\[
\ddt \cH(f^\e | \mu)  \lesssim \cH(f^\e | \mu)   - \frac{1}{\e} \cI_\e  +  \e + \frac{\d}{\e} +  \sqrt{  \cI_\e} \leq \cH(f^\e | \mu)  - \frac{1}{2\e} \cI_\e + 2 \e + \frac{\d}{\e} \lesssim  \cH(f^\e | \mu) +\e + \frac{\d}{\e}.
\]
By the \GL\ we obtain
\[
\cH(f^\e | \mu) \leq \cH(f^\e_0 | \mu_0) e^{C T} + C (\e + \frac{\d}{\e}) e^{CT}, \quad \forall t \leq T,
\]
where $C$ depends only on the parameters of the model and the regularity of $(u,\rho)$. This finishes the proof.

\end{proof}

\begin{remark}\label{}
The same observation can be made here as in the monokinetic case. If we quantify the initial entropy 
\[
\cH(f^\e_0 | \mu_0) \leq \e,
\]
then the proof produces the bound
\[
\cH(f^\e | \mu) \leq \e +  \frac{\d}{\e}.
\]
So, again, the optimal convergence is achieved when $\d \sim \e^2$. However, unlike in the monokinetic case, here we do not loose on the magnitude of the entropy at positive times. 
\end{remark}

\subsection{Remarks on the pressureless Euler Alignment System}\label{s:EASnew} We will leave discussion of the well-posedness of macroscopic systems that arise from general models $\cM$ to a future research, see \cite{TT2014,CCTT2016,Sbook,LS-uni1,MT2014,HeT2017} for the literature on this problem specifically for smooth communication models.  The most clear-cut result obtained in \cite{CCTT2016} pertains to the regularity of the 1D pressureless EAS based on the Cucker-Smale protocol
\[
\p_t u + uu_x = \rho_\phi u - (u\rho)_\phi.
\]
Here, one finds an additional conserved quantity
\[
e = u_x + \rho_\phi
\]
which controls $u_x$ and hence regularity of the system. In fact, $e$ satisfies
\[
\p_t e + \p_x( u e ) = 0
\]
or in Lagrangian coordinates associated with $u$,
\[
\ddt e =  e \rho_\phi - e^2 = e(  \rho_\phi  - e),
\]
which is  a non-homogeneous logistic ODE.  The critical threshold for regularity becomes $e_0 \geq 0$.

In multi-D, the law of $e$ is given by 
\begin{equation}\label{e:eD}
\begin{split}
e & = \diver u + \rho_\phi, \\
\partial_t e+\n \cdot(u  e) & =(\n \cdot u )^{2}-\Tr[(\nabla u )^{2}].
\end{split}
\end{equation}
Although the right hand side in this case involves $\n u$, in some cases this still allows to obtain partial regularity results in multi-D, for example for small data or for unidirectional flocks, see \cite{TT2014,HeT2017,LS-uni1}.  The latter are solutions of the form
\[
u = (u(x_1,\ldots,x_n),0,\ldots,0).
\]
For these the right hand side of \eqref{e:eD} vanishes.

While the existence of $e$ is attributed to the particular commutator structure of the alignment forcing of the Cucker-Smale model, in general, it can be seen  as a consequence of another property of the model -- transport of the specific strength function $\st_\rho$ itself. Indeed, let us notice that in the \ref{CS}-case we have
\begin{equation}\label{e:scons}
\p_t \st_\rho + \n\cdot ( \st_\rho \ave{u}_\rho) = 0,
\end{equation}
simply because $\rho_\phi$ is transported by the Favre-filtration $\uF =(u\rho)_\phi / \rho_\phi$.  This turns out to be the general reason for the conservation of $e$. 

\begin{lemma}\label{ }
If for any solution of the pressureless EAS \eqref{e:EASvelless} the strength function satisfies \eqref{e:scons}, then
$e = \diver u + \st_\rho$ satisfies \eqref{e:eD}. In particular, $e$ is conserved for all solutions in 1D and unidirectional solutions in multi-D.
\end{lemma}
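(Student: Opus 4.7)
The plan is a direct computation, using the momentum equation of \eqref{e:EASvelless}, the given transport identity \eqref{e:scons} for $\st_\rho$, and the standard commutator identity $\diver(u\cdot \n u) = u\cdot \n(\diver u) + \Tr[(\n u)^2]$.

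First I would take the divergence of the momentum equation
\[
 \p_t u + u\cdot \n u = \st_\rho(\ave{u}_\rho - u)
\]
to obtain
\[
\p_t(\diver u) + u\cdot \n(\diver u) + \Tr[(\n u)^2] = \diver(\st_\rho \ave{u}_\rho) - \diver(\st_\rho u).
\]
Rewriting the left-hand side in conservative form, $u\cdot \n(\diver u) = \n\cdot(u\diver u) - (\diver u)^2$, gives
\[
\p_t(\diver u) + \n\cdot(u\diver u) = (\diver u)^2 - \Tr[(\n u)^2] + \diver(\st_\rho \ave{u}_\rho) - \diver(\st_\rho u).
\]

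Next I would invoke the key hypothesis \eqref{e:scons}, which reads $\diver(\st_\rho \ave{u}_\rho) = -\p_t \st_\rho$. Substituting and moving the $\p_t \st_\rho$ term to the left, together with $\n\cdot(u\diver u) + \diver(\st_\rho u) = \n\cdot(u (\diver u + \st_\rho)) = \n\cdot(ue)$, yields exactly
\[
\p_t e + \n\cdot(ue) = (\diver u)^2 - \Tr[(\n u)^2],
\]
which is \eqref{e:eD}.

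Finally, the two conservation statements are immediate from the form of the right-hand side. In $1$D, $\n u$ is a scalar $u_x$, so $(\diver u)^2 = u_x^2 = \Tr[(\n u)^2]$. For a unidirectional flow $u = (u(x_1,\dots,x_n),0,\dots,0)$, the matrix $A = \n u$ has its only nonzero row being the first, so $A_{ij} = 0$ for $i \neq 1$; then $(A^2)_{ii} = \sum_k A_{ik} A_{ki}$ vanishes for $i \neq 1$, and for $i=1$ reduces to $A_{11}^2 = u_{x_1}^2$, which equals $(\diver u)^2$. In both cases the right-hand side vanishes and $e$ is transported by $u$. There is no real obstacle here; the main point is simply that the hypothesis \eqref{e:scons} is precisely what is needed to turn the alignment source term $\diver(\st_\rho(\ave{u}_\rho - u))$ into a perfect time derivative plus a transport term for $\st_\rho$.
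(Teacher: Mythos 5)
Your computation is correct and is exactly the "direct verification" the paper invokes without writing out: taking the divergence of the momentum equation, using $\diver(u\cdot\n u)=u\cdot\n(\diver u)+\Tr[(\n u)^2]$, and substituting $\diver(\st_\rho\ave{u}_\rho)=-\p_t\st_\rho$ from \eqref{e:scons} assembles precisely into \eqref{e:eD}, and your check that $(\diver u)^2=\Tr[(\n u)^2]$ in 1D and for unidirectional fields is also right. No gaps.
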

\begin{proof}\label{ }
By direct verification.
\end{proof}

The above observation motivates to consider a system where the strength is not fixed  but rather evolves according to the `natural law' \eqref{e:scons}, whereby the strength itself becomes another unknown. This leads to the following system
\begin{equation}\label{e:EASentropic}
\begin{split}
\p_t \rho & + \n\cdot (u \rho)  = 0, \\
\p_t \st &+ \n\cdot ( \st \ave{u}_\rho)  = 0,\\
\p_t u &+ u \cdot \n u   = \st (\ave{u}_\rho - u).
\end{split}
\end{equation}
All such systems will satisfy the $e$-law by design, where $e = \diver u + \st$.

For example, if we start from the initial Favre-based model, $\ave{u}_\rho = \uF$ and set $\st_0 = 1$, like for instance in the \ref{MT}-model,  the future value of strength will be determined by the transport along the averaged velocity $\ave{u}_\rho$, rather than being forcefully set at $\st=1$ for all times.  Given that both $\st$ and $\rho_\phi$ solve the same continuity equation in this case, we also have transport of the ratio
\[
\p_t \frac{\st}{\rho_\phi} + \uF \cdot \n_x \frac{\st}{\rho_\phi} = 0.
\]
This implies 
\[
c_1 \rho_\phi \leq \st \leq c_2\rho_\phi, \quad \forall (t,x) \in [0,\infty) \times \O,
\]
if initially so. In particular $\st$ remains uniformly bounded regardless of the regularity of $u_F$ (!).

A thorough study of this model has been recently completed for Favre-based modes in \cite{STT2023} during the review of this present work. 

\section{Appendix: proof of \eqref{e:KMTrhobeta}}

We start as in \cite[Lemma 5.2]{KMT2013}. Let us fix $x\in \O$ and consider a cover of the ball $B_R(x)$ by balls of radius $r/2$:
\[
B_R(x) \ss \bigcup_{i=1}^I B_{r/2}(x_i),
\]
where $N$ depends only on $n$ and $R/r$. Then assuming \eqref{e:kerRr} we obtain
\begin{equation*}\label{}
\begin{split}
\left( \frac{\rho}{\rho^{1-\b}_\phi}\right)_\phi (x) = \int_{B_R(x)} \frac{\rho(y) \phi(x-y)}{\left( \int_\O \rho(z) \phi(y-z) \dz \right)^{1-\b}} \dy \leq \sum_{i=1}^I \int_{B_{r/2}(x_i)} \frac{\rho(y) \phi(x-y)}{\left( \int_{B_{r/2}(x_i)} \rho(z) \phi(y-z) \dz \right)^{1-\b}} \dy .
\end{split}
\end{equation*}
Since $y,z\in B_{r/2}(x_i)$, we have $|y-z|\leq r$, and by the lower bound on the kernel we obtain
\[
\left( \frac{\rho}{\rho^{1-\b}_\phi}\right)_\phi (x) \lesssim \sum_{i=1}^I \int_{B_{r/2}(x_i)} \frac{\rho(y) \phi(x-y)}{\left( \int_{B_{r/2}(x_i)} \rho(z)  \dz \right)^{1-\b}} \dy.
\]
If $\b = 0$, we remove the kernel by $\|\phi\|_\infty$, and the rest adds up to $I$. 

In the case when $\b>0$ we have
\[
\left( \frac{\rho}{\rho^{1-\b}_\phi}\right)_\phi (x) \lesssim \sum_{i=1}^I \int_{B_{r/2}(x_i)} \frac{\rho(y) \phi(x-y)}{\int_{B_{r/2}(x_i)} \rho(z)  \dz} \left( \int_{B_{r/2}(x_i)} \rho(z)  \dz \right)^{\b} \dy.
\]
Treating 
\[
\one_{B_{r/2}(x_i)}(y) \frac{\rho(y)}{\int_{B_{r/2}(x_i)} \rho(z)  \dz} \dy
\]
as a probability measure for each integral, by the \HI, we obtain
\begin{equation*}\label{}
\begin{split}
\left( \frac{\rho}{\rho^{1-\b}_\phi}\right)_\phi (x) & \lesssim \sum_{i=1}^I \left( \int_{B_{r/2}(x_i)} \frac{\rho(y) \phi^{\frac{1}{\b}}(x-y)}{\int_{B_{r/2}(x_i)} \rho(z)  \dz} \int_{B_{r/2}(x_i)} \rho(z)  \dz  \dy \right)^{\b} \\
& = \sum_{i=1}^I \left( \int_{B_{r/2}(x_i)} \rho(y) \phi^{\frac{1}{\b}}(x-y)\dy \right)^{\b} \\
& \leq \|\phi\|_\infty^{1-\b} \sum_{i=1}^I \left( \int_{B_{r/2}(x_i)} \rho(y) \phi(x-y)\dy \right)^{\b} \leq I \|\phi\|_\infty^{1-\b} \rho_\phi^\b(x).
\end{split}
\end{equation*}

\section{Appendix: averagings on finite sets and proof of \prop{p:bpcons}}\label{a:fm}

In this section we will prove \prop{p:bpcons}.

To achieve it we first study properties of models on finite sets -- to which as we will  see the result is reduced. We will use the notation of \exam{ex:finite} below.

For models on finite sets being conservative is equivalent to 
\[
A^{\top} \k = \k, \qquad \k = (\k_1, \dots, \k_N).
\]
Denoting $K = \diag\{ \k_1, \dots, \k_N\}$ one can see that being symmetric is equivalent to the matrix $KA$ being symmetric,
\[
(KA)^{\top} = KA.
\]
Similarly, the ball-positivity is equivalent to the matrix $A$ being ball-coercive relative to the inner product $(\cdot,\cdot)_K = (K \cdot, \cdot)$:
\[
( A u, u)_K \geq  (Au,Au)_K.
\]
\begin{lemma}\label{}
If $\cM$ is ball-positive on a 2-point set, then $\cM$ is symmetric.
\end{lemma}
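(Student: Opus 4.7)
The plan is to combine two facts: ball-positivity is equivalent to positive semi-definiteness of a specific symmetric matrix, and the constant vector $\one = (1,1)^{\top}$ automatically witnesses equality in the ball-positivity inequality and so must lie in the kernel of that matrix. Writing $A = \begin{pmatrix} 1-a & a \\ b & 1-b \end{pmatrix}$ (with $a,b\in[0,1]$) and $K = \diag(\k_1,\k_2)$, the assumption that $\cM$ is ball-positive translates into $S \succeq 0$, where $S := \tfrac12(KA + A^{\top} K) - A^{\top} K A$. Symmetry of $\cM$, on the other hand, is precisely the relation $KA = A^{\top} K$, which in this $2\times 2$ setting reduces to the single scalar identity $\k_1 a = \k_2 b$.

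First I would observe that $A\one = \one$ (right-stochasticity, axiom (ev3)) automatically degenerates the ball-positivity inequality at $u = \one$: both $(A\one,\one)_K$ and $\|A\one\|_K^2$ equal $\|\one\|_K^2 = \k_1+\k_2$, and hence $(\one, S\one) = 0$. Since $S$ is symmetric and positive semi-definite, the Cauchy--Schwarz inequality for the bilinear form $(u,v) \mapsto (u, Sv)$ yields $|(u,S\one)|^2 \le (u,Su)(\one,S\one) = 0$ for every $u$; consequently $\one$ actually lies in the kernel, $S\one = 0$.

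The last step is a direct computation of $S\one$. Using $A\one = \one$ to collapse $KA\one = K\one$ and $A^{\top}KA\one = A^{\top}K\one$, one is left with $S\one = \tfrac12(K\one - A^{\top}K\one)$, so $S\one = 0$ is equivalent to $A^{\top}K\one = K\one$. Component-wise this reads $\k_1 = \k_1(1-a) + \k_2 b$ and $\k_2 = \k_1 a + \k_2(1-b)$, each of which collapses to $\k_1 a = \k_2 b$, which is exactly the symmetry of $\cM$.

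There is no real obstacle here; the only conceptual step is recognising $\one$ as an automatic kernel vector of $S$, after which the rest is pure bookkeeping. Notably the same argument transfers verbatim to $N$-point sets, and (with $\one_\O$ in place of $\one$, and working in $L^2(\k_\rho)$) to the infinite-dimensional setting, where it produces $\ave{\cdot}^{*}_{\rho}\one_\O = \one_\O$, i.e., the conservativity characterisation \eqref{e:*const}. This suggests that the present lemma is, in fact, the natural prototype for the general \prop{p:bpcons}.
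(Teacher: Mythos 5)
Your proof is correct and takes a genuinely cleaner route than the paper. The paper parametrizes the quadratic form $\a u_1^2 + \b u_1 u_2 + \g u_2^2 \geq 0$ arising from ball-coercivity, writes out the discriminant condition $4\a\g \geq \b^2$, and after ``a long but elementary computation'' collapses it to $(\k_1 a_{12} - \k_2 a_{21})^2 \leq 0$. You instead observe that right-stochasticity makes $\one$ an automatic equality case in the ball-positivity inequality, so $\one$ lies in the kernel of the positive semi-definite Gram matrix $S = \tfrac12(KA + A^\top K) - A^\top K A$; the identity $S\one = \tfrac12(K\one - A^\top K\one)$ then reads off $A^\top \k = \k$, which in the $2\times 2$ stochastic case is a single equation $\k_1 a = \k_2 b$ equivalent to symmetry of $KA$. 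This bypasses the discriminant entirely. Your closing observation is also on the mark and worth emphasizing: the computation $S\one = 0 \Leftrightarrow T^*\one = \one$ transfers verbatim to $L^2(\k_\rho)$ with $T = \ave{\cdot}_\rho$, since $(S\one,\one)_{\k_\rho} = \|\one\|^2_{\k_\rho} - \|T\one\|^2_{\k_\rho} = 0$ and $S \geq 0$ symmetric forces $S\one = 0$, i.e., $\ave{\one_\O}_\rho^* = \one_\O$. That is conservativity, so this argument gives \prop{p:bpcons} directly and obviates the paper's reduction to two-point partitions. One small caveat to keep in mind: the equivalence ``conservative $\Leftrightarrow$ symmetric'' that makes your proof terminate is a $2\times 2$ accident (one free off-diagonal parameter, one linear constraint); for $N\geq 3$ points ball-positivity yields conservativity but not symmetry, as the paper's \appx{a:fm} counterexample shows, so the jump from your kernel computation to symmetry really does use the two-point hypothesis.
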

\begin{proof}
The result reduces to showing that $\k_1 a_{12} = \k_2 a_{21}$ for any ball-coercive model.  Coercivity is equivalent to 
\[
\k_1 a_{11} u_1^2 + (\k_1 a_{12} + \k_2 a_{21}) u_1 u_2 + \k_2 a_{22} u_2^2 \geq \k_1(a_{11} u_1 + a_{12} u_2)^2 + \k_2(a_{21} u_1 + a_{22} u_2)^2 .
\]
Collecting coefficients in front of each monomial we obtain
\[
\a u_1^2 +  \b u_1 u_2 +  \g u_2^2 \geq 0,
\]
where 
\[
\a = \k_1 a_{11} -\k_1 a_{11}^2 - \k_2 a_{21}^2, \quad \b = \k_1 a_{12} + \k_2 a_{21} - 2 \k_1 a_{11} a_{12} - 2\k_2 a_{21} a_{22}, \quad \g = \k_2 a_{22} - \k_1 a_{12}^2 - \k_2 a_{22}^2.
\]
This means that the determinant of the quadratic form is non-negative
\[
4 \a \g \geq \b^2.
\]
Using stochasticity of $A$ and after a long but elementary computation, the above condition reduces to
\[
(\k_1 a_{12} - \k_2 a_{21})^2 \leq 0,
\]
which proves the result.
\end{proof}

\begin{proof}[Proof of \prop{p:bpcons}]

Since the averages act coordinatewise it is sufficient to prove the result for scalar fields $u$. 

Let us fix $\rho$. Let us pick any partitioning of $\O$ into two sets $A,B$ and assume that $\nu(A), \nu(B) >0$. Let us denote
\begin{align*}
a_{11} & = \frac{1}{\k_\rho(A)}\int_A \ave{\one_A}_\rho \dk_\rho, \qquad a_{12}  = \frac{1}{\k_\rho(A)}\int_A \ave{\one_B}_\rho \dk_\rho; \\
a_{21} & = \frac{1}{\k_\rho(B)}\int_B \ave{\one_A}_\rho \dk_\rho, \qquad a_{22}  = \frac{1}{\k_\rho(B)}\int_B \ave{\one_B}_\rho\dk_\rho.
\end{align*}
Note that the matrix $A = ( a_{ij} )_{i,j = 1}^2$ is right stochastic.  Denoting $\k_1 = \k_\rho(A)$, $\k_2 = \k_\rho(B)$ and verifying coercivity on functions of the form $u = u_1 \one_A + u_2 \one_B$ we obtain
\[
\k_1 a_{11} u_1^2 + (\k_1 a_{12} + \k_2 a_{21}) u_1 u_2 + \k_2 a_{22} u_2^2  \geq \int_\O | u_1 \ave{\one_A}_\rho + u_2 \ave{\one_B}_\rho |^2 \dk_\rho.
\]
Breaking down the integral and using the \HI, we obtain
\begin{equation*}\label{}
\begin{split}
 \int_\O | u_1 \ave{\one_A}_\rho + u_2 \ave{\one_B}_\rho |^2 \dk_\rho & =  \int_A | u_1 \ave{\one_A}_\rho + u_2 \ave{\one_B}_\rho |^2 \dk_\rho +  \int_B | u_1 \ave{\one_A}_\rho + u_2 \ave{\one_B}_\rho |^2 \dnu \\
& \geq \frac{1}{\k_\rho(A)} \left|  \int_A (u_1 \ave{\one_A}_\rho + u_2 \ave{\one_B}_\rho )\dk_\rho \right|^2 +\frac{1}{\k_\rho(B)} \left|  \int_B (u_1 \ave{\one_A} _\rho+ u_2 \ave{\one_B}_\rho ) \dk_\rho\right|^2 \\
& = \k_1(a_{11} u_1 + a_{12} u_2)^2 + \k_2(a_{21} u_1 + a_{22} u_2)^2,
\end{split}
\end{equation*}
which implies that the $2$-point model with $A$ and $\k$ defined above is ball-positive. The previous lemma implies that 
\begin{equation}\label{e:ABsymm}
\int_A \ave{\one_B}_\rho \dk_\rho = \int_B \ave{\one_A}_\rho \dk_\rho.
\end{equation}
We further conclude
\begin{equation*}\label{}
\begin{split}
\int_\O \ave{\one_A}_\rho \dk_\rho & = \int_A \ave{\one_A}_\rho \dk_\rho + \int_B \ave{\one_A}_\rho \dk_\rho \\
& =  \int_A \ave{\one_A}_\rho \dk_\rho + \int_A \ave{\one_B}_\rho \dk_\rho = \int_A \ave{\one_\O}_\rho \dk_\rho = \k_\rho(A).
\end{split}
\end{equation*}
In other words, the conservative property holds for all characteristic functions. Since it is also linear and the average, by our assumption, is a bounded operator on $L^2(\k_\rho)$ we obtain the result by the standard approximation.
\end{proof}

It would seem like \eqref{e:ABsymm} is suggestive of symmetry as it holds for any pair of partitioning sets. However, to prove general symmetry one would have to make the same conclusion for any pair of disjoint sets not necessarily partitioning $\O$, or for any triple of partitioning sets. The above argument fails to do it, and in fact the implication 
`` ball-positive  $\Rightarrow$ symmetric " 
is generally not true.  A finite dimensional example can be found via a $3$-point construction.

\begin{example}\label{}
Let us assume for simplicity that $\k = \one = (1,1,1)$. Then we are looking for a matrix that is non-symmetric yet doubly stochastic, $A\one = A^\top \one = \one$, and ball-positive.  

Thanks to stochasticity, $A$ leaves the space $X = \one^\perp$ invariant, and so it is enough to properly define $A$ on the 2-dimensional space $X$ only. Let us fix a non-orthogonal basis in $X$:  $e_1 = (1,-1,0)$, $e_2 = (1,0,-1)$, and complement it to $e_3 = \one$. We define 
\[
A e_1 = \l_1 e_1, \qquad Ae_2 = \l_2 e_2,
\]
where $1>\l_i >0$ and $\l_1 \neq \l_2$. This choice guarantees that the matrix $A$ is not symmetric. Now, we need to make sure that $A$ is ball-positive.  Again, by stochasticity, ball-positivity reduces to that of the restriction $\rest{A}{X}$. The latter is equivalent to the condition 
\[
(e_1 + t e_2) \cdot (\l_1 e_1 + t \l_2 e_2) \geq | \l_1 e_1 + t \l_2 e_2|^2,
\]
for all $t\in \R$. Expanding we obtain
\[
(\l_2 - \l_2^2) t^2 + [ \frac12(\l_1+ \l_2) - \l_1 \l_2 ] t + \l_1 - \l_1^2 \geq 0.
\]
This is equivalent to
\begin{equation}\label{e:l1}
(\l_1+ \l_2 - 2 \l_1 \l_2 )^2 \leq 16 (\l_2 - \l_2^2)(\l_1 - \l_1^2).
\end{equation}
In addition we need to ensure that all the entries of the matrix $A$ in the original system of coordinates are non-negative. We can write down these entries explicitly:
\[
A = \frac13 
\begin{pmatrix}
1+\l_1 + \l_2 & 1+ \l_2 - 2 \l_1 & 1+\l_1 - 2\l_2 \\
1-\l_1 & 1 + 2\l_1 & 1-\l_1 \\
1-\l_2 & 1 - \l_2 &  1+2\l_2 
\end{pmatrix}.
\]
So the only conditions to guarantee are
\begin{equation}\label{e:l2}
1+ \l_2 - 2 \l_1 \geq 0, \quad  1+\l_1 - 2\l_2 \geq 0.
\end{equation}
There are plenty of choices to fulfill both \eqref{e:l1} and \eqref{e:l2}. For example, $\l_1 = \frac12$, $\l_2 = \frac13$. This concludes the construction.
\end{example}

\section{Appendix: on spectral gaps} \label{a:gap}

With regard to the discussion of \rem{r:gaps}, we prove a lemma that establishes equivalence of numerical ranges on the space of zero-momenta and the mean-zero functions.  

\begin{lemma}\label{l:sgap}
Suppose $\cM$ is conservative and satisfies the following
\begin{align}
c_0 \leq \st_\rho(x) \leq c_1,\quad \forall x \in \supp \rho,& \label{e:sbounds} \\
\sup \left\{ (u, \ave{u}_\rho)_{\k_\rho}: \,   u\in L^2_0(\k_\rho),\, \ \|u\|_2=1 \right\} & \leq 1-  \e. \label{e:sgap0}
\end{align}
Then
 \begin{equation}\label{e:sgap1}
\sup \left\{ (u, \ave{u}_\rho)_{\k_\rho}: \,  u\in L^2(\k_\rho),\, \bar{u} = 0,\, \ \|u\|_2=1 \right\} \leq 1-  \e \frac{c_0}{c_0 + c_1}.
\end{equation}
Conversely, if
 \begin{equation}
\sup \left\{ (u, \ave{u}_\rho)_{\k_\rho}: \,  u\in L^2(\k_\rho),\, \bar{u} = 0,\, \ \|u\|_2=1 \right\} \leq 1-  \d,
\end{equation}
then
\begin{equation}
\sup \left\{ (u, \ave{u}_\rho)_{\k_\rho}: \,  u\in L^2_0(\k_\rho),\,  \|u\|_2=1 \right\} \leq 1-  \d \frac{c_0}{c_0 + c_1}.
\end{equation}
\end{lemma}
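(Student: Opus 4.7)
The plan is to prove each implication by decomposing the test function along the constant direction and using conservation to kill the cross terms. For the forward implication, given $u\in L^2(\k_\rho)$ with $\bar u=\int u\, d\rho=0$ and $\|u\|_{L^2(\k_\rho)}=1$, I would write $u=u_0+c$ where $c=\k_\rho(\O)^{-1}\int u\,d\k_\rho$ and $u_0\in L^2_0(\k_\rho)$. Because $\cM$ is conservative, both $\ave{\cdot}_\rho$ and $\ave{\cdot}^*_\rho$ preserve constants, so the cross terms $c(u_0,1)_{\k_\rho}$ and $c(1,\ave{u_0}_\rho)_{\k_\rho}=c(\ave{1}^*_\rho,u_0)_{\k_\rho}$ vanish, leaving
\[
(u,\ave{u}_\rho)_{\k_\rho}=(u_0,\ave{u_0}_\rho)_{\k_\rho}+c^2\k_\rho(\O),\qquad \|u\|^2_{L^2(\k_\rho)}=\|u_0\|^2_{L^2(\k_\rho)}+c^2\k_\rho(\O).
\]
Plugging the hypothesis \eqref{e:sgap0} into the first identity yields $(u,\ave{u}_\rho)_{\k_\rho}\leq \|u\|^2-\e\|u_0\|^2$.

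The one nontrivial step is then to lower bound $\|u_0\|^2_{L^2(\k_\rho)}$ in terms of $\|u\|^2_{L^2(\k_\rho)}$, equivalently, to bound $c^2\k_\rho(\O)$. Here is where the zero-momentum assumption enters: since $\bar u=0$ we have $c=-\int u_0\,d\rho$, so Cauchy--Schwarz with \eqref{e:sbounds} gives
\[
c^2 \leq \int u_0^2\,d\rho \leq \frac{1}{c_0}\int u_0^2\,d\k_\rho=\frac{1}{c_0}\|u_0\|^2_{L^2(\k_\rho)},
\]
and together with $\k_\rho(\O)\leq c_1$ this yields $c^2\k_\rho(\O)\leq (c_1/c_0)\|u_0\|^2_{L^2(\k_\rho)}$. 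Rearranging produces $\|u_0\|^2_{L^2(\k_\rho)}\geq \tfrac{c_0}{c_0+c_1}\|u\|^2_{L^2(\k_\rho)}$, which inserted into the preceding inequality delivers \eqref{e:sgap1}.

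For the converse, I would use the dual decomposition $u=u_1+c$ where now $c=\int u\,d\rho$ and $u_1$ has zero momentum. The constraint $u\in L^2_0(\k_\rho)$ forces $c\k_\rho(\O)=-\int u_1\st_\rho\,d\rho$, so conservation gives $(u_1,1)_{\k_\rho}=(1,\ave{u_1}_\rho)_{\k_\rho}=-c\k_\rho(\O)$ and a short expansion produces
\[
(u,\ave{u}_\rho)_{\k_\rho}=(u_1,\ave{u_1}_\rho)_{\k_\rho}-c^2\k_\rho(\O),\qquad \|u_1\|^2_{L^2(\k_\rho)}=\|u\|^2_{L^2(\k_\rho)}+c^2\k_\rho(\O).
\]
Applying the converse hypothesis to $u_1$ gives $(u,\ave{u}_\rho)_{\k_\rho}\leq (1-\d)\|u\|^2-\d c^2\k_\rho(\O)\leq (1-\d)\|u\|^2$, which is even stronger than the advertised bound.

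The main conceptual obstacle is checking that conservation in its dual form $\ave{1}^*_\rho=1$ is what makes all cross terms vanish; once that is observed, everything reduces to bookkeeping and the elementary Cauchy--Schwarz estimate using $c_0\leq \st_\rho\leq c_1$. The contraction factor $c_0/(c_0+c_1)$ is intrinsic to the forward direction and is exactly the geometric penalty for passing between two different orthogonality constraints with weights differing by a factor $c_1/c_0$.
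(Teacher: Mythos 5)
Your proof is correct. The forward direction is the paper's own projection-onto-constants argument, but you implement the weighted Cauchy--Schwarz step sharply: from $\bar u=0$ you get $c^2\leq\|u_0\|^2_{L^2(\rho)}\leq c_0^{-1}\|u_0\|^2_{L^2(\k_\rho)}$, hence the \emph{squared} inequality $\|\P u\|^2_{L^2(\k_\rho)}=c^2\k_\rho(\O)\leq\tfrac{c_1}{c_0}\|u_0\|^2_{L^2(\k_\rho)}$, which is precisely what produces the constant $\tfrac{c_0}{c_0+c_1}$. (The paper's displayed intermediate step $\|u-\P u\|_2\geq\tfrac{c_0}{c_1}\|\P u\|_2$ is the weaker unsquared bound coming from an $L^1$-via-$L^2$ estimate, and on its own it would only yield the factor $\tfrac{c_1^2}{c_0^2+c_1^2}$; the squared version, which your Cauchy--Schwarz in $L^2(\k_\rho)$ delivers, is what the paper's subsequent line actually uses.) Your converse is a genuine improvement. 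Because the decomposition $u=u_1+\bar u$ is orthogonal in $L^2(\rho)$ rather than in $L^2(\k_\rho)$, conservation gives $(u,\ave{u}_\rho)_{\k_\rho}=(u_1,\ave{u_1}_\rho)_{\k_\rho}-c^2\k_\rho(\O)$ while $\|u_1\|^2_{L^2(\k_\rho)}=\|u\|^2_{L^2(\k_\rho)}+c^2\k_\rho(\O)$, and both extra terms push in the favorable direction, so you obtain $(u,\ave{u}_\rho)_{\k_\rho}\leq(1-\d)\|u\|^2_{L^2(\k_\rho)}$ with no geometric penalty at all. This strictly sharpens the paper's stated $1-\d\,\tfrac{c_0}{c_0+c_1}$, which is obtained there by merely symmetrizing the forward argument. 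The loss factor $\tfrac{c_0}{c_0+c_1}$ is therefore one-directional: it is the cost of passing from a gap on $L^2_0(\k_\rho)$ to one on the zero-momentum subspace of $L^2(\k_\rho)$, not the reverse.
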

\begin{proof} First let us observe that the bounds on $\st_\rho$, \eqref{e:sbounds}, imply bounds on $\k_\rho$-masses
\begin{equation}\label{e:massmass}
c_0 \leq \k_\rho(\O) \leq c_1.
\end{equation}

Let us denote $\P : L^2(\k_\rho) \to \R^n$ the orthogonal projection onto the space of constant fields. We have for all $u$ with $\bar{u} = 0$,
\[
\left | \int_\O (u - \P u ) \rho \dx \right| = |\P u|  = \frac{1}{\sqrt{\k_\rho(\O)}} \| \P u\|_2.
\]
On the other hand, by (i),
\[
\left | \int_\O (u - \P u ) \rho \dx \right|  = \left | \int_\O (u - \P u ) \frac{1}{\st_\rho} \dk_\rho \right| \leq \frac{\sqrt{\k_\rho(\O)}}{c_0} \| u - \P u\|_2.
\] 
Using  compatibility of masses \eqref{e:massmass}, 
\[
\| u - \P u\|_2 \geq \frac{c_0}{c_1}  \| \P u\|_2.
\]
 Hence,
 \[
 \| u\|_2^2 = \| u - \P u\|_2^2 + \| \P u\|_2^2 \geq (1 + \frac{c_0}{c_1}  ) \| \P u\|_2^2,
 \]
 or
 \begin{equation}\label{e:Pue1}
\| \P u\|_2^2 \leq \frac{c_1}{c_0 + c_1} \| u\|_2^2.
\end{equation}

 Now, let us compute the numerical range, noting that $\ave{\P u}_\rho = \P u $, 
 \[
 (u,\ave{u}_\rho)_{\k_\rho} = ( u - \P u,\ave{u - \P u}_\rho )_{\k_\rho}  + (u - \P u, \P u )_{\k_\rho} +( \P u,\ave{u- \P u}_\rho )_{\k_\rho} + \| \P u \|_2^2.
\]
The second term vanishes due to orthogonality. For the third term we observe that due to the conservative property of the average integrating against a constant field produces the same result as integrating without the average.  So,
\[
( \P u,\ave{u- \P u}_\rho )_{\k_\rho} = ( \P u, u- \P u )_{\k_\rho} = 0.
\]
Using the spectral gap condition for the first term and \eqref{e:Pue1} for the last one,  we obtain
\begin{multline*}
 (u,\ave{u})_{\k_\rho} \leq (1-\e_0) \| u - \P u \|_2^2 + \|\P u \|_2^2 = (1-\e_0) \|u\|_2^2 + \e_0  \|\P u \|_2^2 \\ \leq \left(1-\e_0 +\e_0 \frac{c_1}{c_0 + c_1}  \right) \|u\|_2^2 =  \left(1- \e_0 \frac{c_0}{c_0 + c_1} \right) \|u\|_2^2 .
 \end{multline*}
 
 To obtain the converse statement, apply the same argument replacing the roles of $\rho$ and $\k_\rho$, and note that $1/c_1 \leq 1/\st_\rho \leq 1/c_0$.
 \end{proof}

\section{Appendix: categorial considerations}

Environmental averagings form an 'ecosystem' of models. On a more formal level they can be thought of as a category of objects and we can discuss relationships between them.  

For a couple of models $\cM'$, $\cM''$ defined over $\O'$ and $\O''$, respectively, a morphism $\cM' \to \cM''$ is defined by a volume preserving homeomorphism 
$\t : \O' \to \O''$ such that if $\rho''\circ \t = \rho'$ and $u'' \circ \t = u'$, then
\[
\ave{u''}_{\rho''}'' \circ \t = \ave{u'}_{\rho'}',
\]
and there exist two constants $c,C>0$ such that 
\[
c \dk'_{\rho'} \leq \dk''_{\rho''} \circ \t \leq C\dk'_{\rho'}.
\]
For material models, the latter can be restated in terms of specific strengths
\[
c \st'_{\rho'} \leq \st''_{\rho''} \circ \t \leq C \st'_{\rho'}.
\]
We have tacitly employed this concept in Appendix~\ref{a:fm} when discussing models on finite sets.

On a given environment $\O$ all models can be partially ordered is several ways. The most straightforward definition of $\cM'  \preceq \cM''$ is
\[
\ave{\ave{u}_\rho'}''_\rho = \ave{\ave{u}''_\rho}'_\rho = \ave{u}'_\rho, \quad \forall u\in L^\infty(\O). 
\]
For example, among rough segregation models we have $\cM_{\cF'} \preceq \cM_{\cF''}$ provided $\cF' \ss \cF''$. The identity model \ref{Id} is the finest of all material ones  (although if we defined it to be $\ave{u} = u$ irrelevant of the $\supp \rho$, then it would have become the finest of all). At the same time \ref{global} is the coarsest among all conservative ones with $\st_\rho = 1$. 

A more refined definition of order can be given on classes of equivalence where we say $\cM' \sim \cM''$ if there exist intermediate averagings $\cM_1, \ldots, \cM_n$ such that for any $\rho \in \cP$ there exist  $\rho_1, \ldots, \rho_n \in \cP$ such that 
\[
\ave{ \dots \ave{\ave{u}''_\rho}^1_{\rho_1} \dots }^n_{\rho_n}=\ave{u}'_\rho, \quad \forall u\in L^\infty(\O),
\]
and there exist intermediate averagings $\cM_{n+1}, \ldots, \cM_{n+m}$ such that for any $\rho \in \cP$ there exist  $\rho_{n+1}, \ldots, \rho_{n+m} \in \cP$ such that 
\[
\ave{ \dots \ave{\ave{u}'_\rho}^{n+1}_{\rho_{n+1}} \dots }^{n+m}_{\rho_{n+m}}=\ave{u}''_\rho, \quad \forall u\in L^\infty(\O).
\]
Then for a pair of models representing their equivalence classes we say $\cM' \preceq \cM''$ if only one half of the definition above holds, namely, there exist intermediate averagings $\cM_1, \ldots, \cM_n$ such that for any $\rho \in \cP$ there exist  $\rho_1, \ldots, \rho_n \in \cP$ such that 
\[
\ave{ \dots \ave{\ave{u}''_\rho}^1_{\rho_1} \dots }^n_{\rho_n}=\ave{u}'_\rho, \quad \forall u\in L^\infty(\O).
\]

Under this partial ordering, more subtle examples emerge. For instance, for Cucker-Smale models with Bochner-positive kernels, it can be seen from the identity \eqref{e:CSorder} that if $\phi = \psi \ast \psi$, and assuming that $\int \psi = 1$, then the \ref{CS}-model based on $\psi$ is finer than that based on $\phi$, $\cM_{\mathrm{CS}}^\phi \preceq \cM_{\mathrm{CS}}^\psi$. The same applies for \ref{MT}-models as those are based on the same averaging.

One can build new averaging models from old ones by superimposing averages as long as they are defined over the same strength measures. So, if 
\[
\cM_i = \{ (\k_\rho, \ave{\cdot}^i_\rho): \rho \in \cP(\O) \}, \quad i=1,2
\]
are two averaging models, then
\begin{equation}\label{e:superM}
\cM_2 \circ \cM_1= \left\{ (\k_\rho, \ave{ \ave{\cdot}^1_\rho }_\rho^2): \rho \in \cP(\O) \right\}
\end{equation}
defines another averaging model.

Certain compositions preserve special properties. For example, if $\cM_i$ are ball-positive and symmetric the conjugation $(\k_\rho, \ave{\ave{\ave{\cdot}^1_\rho}^2_\rho}^1_\rho)$ is also ball-positive and symmetric.


\newcommand{\etalchar}[1]{$^{#1}$}

\end{document}